\newcommand{\R}{{\mathbb{R}}}
\newcommand{\Z}{{\mathbb{Z}}}
\newcommand{\scriptS}{{\mathcal{S}}}
\DeclareMathOperator*{\dist}{dist}
\DeclareMathOperator*{\opt}{opt}
\DeclareMathOperator*{\wklim}{wk\,lim}
\newcommand{\eps}{\varepsilon}
\DeclareMathOperator*{\supp}{supp}
\newcommand{\loc}{\textrm{loc}}
\newtheorem{theorem}{Theorem}[section]
\newtheorem{proposition}[theorem]{Proposition}
\newtheorem{prop}[theorem]{Proposition}
\newtheorem{lemma}[theorem]{Lemma}
\newtheorem{corollary}[theorem]{Corollary}
\theoremstyle{definition}
\newtheorem{definition}[theorem]{Definition}
\theoremstyle{remark}
\newtheorem*{remark}{Remark}
\def\tempsepline{{}-\!\!\!-\!\!\!-\!\!\!-\!\!\!-\!\!\!-\!\!\!-}
\def\tempsep{\smallskip\centerline{$\tempsepline\!\!\!\!\hbox to 0mm{\hss//\hss}\!\tempsepline$}\smallskip}
\newcommand{\jp}[1]{\langle{#1}\rangle}
\newcommand{\jpn}{\langle\nabla\rangle}
\newcommand{\qtq}[1]{\quad\text{#1}\quad}
\newcommand{\mce}{\mathfrak{e}}
\newcommand{\mcd}{\mathfrak{d}}
\newcommand{\mcl}{\mathfrak{l}}
\newcommand{\mck}{\mathfrak{k}}
\newcommand{\mcq}{\mathfrak{q}}
\newcommand{\mcz}{\mathfrak{z}}
\newcommand{\nabslash}{{\raisebox{0.3ex}{\slash}\mkern-11mu\nabla}}
\numberwithin{equation}{section}
\begin{document}

\title{Blowup behaviour for the nonlinear Klein--Gordon equation}
\author{Rowan Killip}
\address{University of California, Los Angeles}
\author{Betsy Stovall}
\address{University of California, Los Angeles}
\author{Monica Visan}
\address{University of California, Los Angeles}

\begin{abstract}
We analyze the blowup behaviour of solutions to the focusing nonlinear Klein--Gordon equation in spatial dimensions $d\geq 2$.
We obtain upper bounds on the blowup rate, both globally in space and in light cones.  The results are sharp in the conformal and sub-conformal cases.  The argument relies on Lyapunov functionals derived from the dilation identity.  We also prove that the critical Sobolev norm diverges near the blowup time.
\end{abstract}

\maketitle

\tableofcontents
%
%
%
%

\section{Introduction.}

We consider the initial-value problem for the nonlinear Klein--Gordon equation
\begin{equation} \label{E:eqn}
\begin{cases}
u_{tt} - \Delta u + m^2 u = |u|^p u\\
u(0) = u_0,\ u_t(0) = u_1
\end{cases}
\end{equation}
in spatial dimensions $d\geq 2$ with $0<p<\frac4{d-2}$ and $m\in[0,1]$.  Note that when $m=0$, this reduces to the nonlinear wave equation.
We will only consider real-valued solutions to \eqref{E:eqn}; the methods adapt easily to the complex-valued case.

This equation is the natural Hamiltonian flow associated with the energy 
\begin{equation} \label{E:energy}
E_m(u) = \int_{\R^d} \tfrac12|\nabla_{t,x}u(t,x)|^2 + \tfrac{m^2}2 |u(t,x)|^2 - \tfrac1{p+2}|u(t,x)|^{p+2}\, dx.
\end{equation}

Both the linear and nonlinear Klein--Gordon equations enjoy finite speed of propagation (indeed, they are fully Poincar\'e invariant).
For this reason, many statements (including the definition of a solution) are most naturally formulated in light cones.

\begin{definition}[Light cones] \label{D:light cones}
Given $(t_0,x_0) \in \R_+ \times \R^d$, we write
$$
\Gamma(t_0,x_0) := \{(t,x) \in [0,t_0) \times \R^d : |x - x_0| \leq t_0-t\}
$$
to denote the backwards light cone emanating from this point.  Given $u : \Gamma(t_0,x_0) \to \R$, we write
$ u \in C_{t,\loc} L^2_x(\Gamma(t_0,x_0))$ if, when we extend $u$ to be zero outside of $\Gamma(t_0,x_0)$,
the function $t \mapsto u(t)$ is continuous in $L^2_x(\R^d)$ on compact subintervals of $[0,t_0)$.  We define
$$
C_{t,\loc}H^1_x(\Gamma(t_0,x_0)):=\{ u\in C_{t,\rm{loc}} L^2_x(\Gamma(t_0,x_0)) \text{ and } \nabla u \in C_{t,\rm{loc}} L^2_x(\Gamma(t_0,x_0))\}.
$$
Finally, we write $u \in L^{\frac{p(d+1)}2}_{\rm{loc}}(\Gamma(t_0,x_0))$ if $u \in L^{\frac{p(d+1)}2}_{t,x}[(K \times \R^d)\cap \Gamma(t_0,x_0)]$ for every compact time interval $K \subset [0,t_0)$.
\end{definition}

The dispersion relation for the linear Klein--Gordon equation is $\omega^2 = m^2+|\xi|^2$.  In view of this, we adopt the notation
\begin{equation}\label{E:jp def}
   \jp{\xi}_m := \sqrt{m^2+|\xi|^2},
\end{equation}
by analogy with the widely-used $\jp{\xi} := \sqrt{1+|\xi|^2}$.  With this notation, the solution of the linear Klein--Gordon equation
with $u(0)=u_0$ and $u_t(0)=u_1$ is given by
$$
\scriptS_m(t)(u_0,u_1) = \cos(t\jpn_m)u_0 + \jpn_m^{-1}\sin(t\jpn_m)u_1.
$$

\begin{definition}[Solution]  \label{D:solution}
Let $(t_0,x_0) \in \R_+ \times \R^d$. A function $u:\Gamma(t_0,x_0) \to \R$ is a \emph{(strong) solution} to \eqref{E:eqn} if
$(u,u_t) \in C_{t,\loc}[H^1_x \times L^2_x](\Gamma(t_0,x_0))$, $u \in L^{\frac{p(d+1)}2}_\loc(\Gamma(t_0,x_0))$, and $u$ satisfies the Duhamel formula
\begin{equation} \label{E:Duhamel}
u(t) = \scriptS_m(t)(u_0,u_1) + \int_0^t \jpn_m^{-1}\sin(\jpn_m(t-s))|u(s)|^pu(s)\, ds
\end{equation}
on $\Gamma(t_0,x_0)$.
\end{definition}

Strong solutions are known to be unique (cf. Proposition~\ref{P:uniqueness}) and so any initial data in $(u_0,u_1)\in H^1_x\times L^2_x$ leads to a unique \emph{maximally extended} solution defined on the union of all light cones upon which a strong solution exists.  This region of spacetime is called the \emph{domain of maximal (forward) extension}. When this is not $[0, \infty)\times\R^d$, it must take the form
$$
\{ (t,x) : x\in \R^d \text{ and } 0 \leq t < \sigma(x) \}
$$
where $\sigma:\R^d\to(0,\infty)$ is a 1-Lipschitz function.  The surface
$$
\Sigma = \{(\sigma(x),x): x \in \R^d\}
$$
is called the \emph{(forward) blowup surface}.  A point $t_0=\sigma(x_0)$ on the blowup surface is called \emph{non-characteristic} if
$\sigma(x)\geq \sigma(x_0) - (1-\eps)|x-x_0|$ for all $x$ and some $\eps>0$.  Otherwise the point is called \emph{characteristic}.

With these preliminaries out of the way, let us now describe both the principal results and the structure of the paper.

In Section~\ref{S:local theory}, we review the local well-posedness theory for our equation.  Almost nothing in this section is new.
However we include full proofs, both for the sake of completeness and because in several places the exact formulation we require is
not that which appears in the literature. Additionally, local well-posedness results are intrinsically lower bounds on the rate of blowup.
In this way, the results of Section~\ref{S:local theory} provide a counterpart to the upper bounds proved elsewhere in the paper.

Sections~\ref{S:non+blow},~\ref{S:CC}, and~\ref{S:critical blowup} culminate in a proof that the critical Sobolev norm diverges as the blowup time is approached, at least along a subsequence of times.  Here criticality is defined with respect to scaling.  The nonlinear Klein--Gordon equation does not have a scaling symmetry, except when $m=0$.  In the massless case the scaling symmetry takes the form $u(t,x)\mapsto u^\lambda(t,x) := \lambda^{2/p} u(\lambda t,\lambda x)$ and the corresponding scale-invariant spaces are $u\in \dot H^{s_c}(\R^d)$ and $u_t \in \dot H^{s_c-1}(\R^d)$ with $s_c := \frac{d}2 - \frac2p$.  As blowup is naturally associated with short length scales (i.e., $\lambda\to\infty$) and the coefficient of the mass term shrinks to zero under this scaling, it is natural to regard $s_c$ as the critical regularity for \eqref{E:eqn} even when $m > 0$.

\begin{theorem}\label{T:I:sc} Consider initial data $u_0\in H^1(\R^d)$ and $u_1\in L^2(\R^d)$ with $d \geq 2$ and suppose $p = \frac4{d-2s_c}$ with $\frac1{2d} < s_c < 1$. If the maximal-lifespan solution $u$ to \eqref{E:eqn} blows up forward in time at $0<T_* < \infty$, then
$$
\smash{\limsup_{t \uparrow T_*}} \bigl\{\|u(t)\|_{\dot H^{s_c}_x} + \|u_t(t)\|_{H^{s_c-1}_x}\bigr\} = \infty.
$$
When $s_c < \frac12$ we additionally assume that $u_0$ and $u_1$ are spherically symmetric.
\end{theorem}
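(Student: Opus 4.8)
The plan is to argue by contradiction. Suppose
\[
M:=\sup_{0\le t<T_*}\bigl(\|u(t)\|_{\dot H^{s_c}_x}+\|u_t(t)\|_{H^{s_c-1}_x}\bigr)<\infty,
\]
and show that $u$ extends beyond $T_*$, contradicting maximality. Fix a point $x_0$ on the blowup surface with $\sigma(x_0)=T_*$; after a spatial translation take $x_0=0$ (when $s_c<\tfrac12$ one invokes spherical symmetry, together with finite speed of propagation, to place the first blowup at the origin). By the blowup criterion from Section~\ref{S:local theory} it then suffices to control the scattering norm of $u$ in $L^{\frac{p(d+1)}2}$ on the cone $\Gamma(T_*,0)$, and by finite speed of propagation $\supp u(t)\subset\{|x|\le T_*-t\}$ for all $t<T_*$.

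First I would run the concentration-compactness argument of Sections~\ref{S:non+blow} and~\ref{S:CC}. The inputs are the $\dot H^{s_c}_x\times\dot H^{s_c-1}_x$ Strichartz estimates for the Klein--Gordon propagator -- valid precisely in the window $\tfrac1{2d}<s_c<1$, with the radial refinement required when $s_c<\tfrac12$ (hence the symmetry hypothesis) -- together with the associated linear profile decomposition. Applying this to the bounded sequence $(u(t_n),u_t(t_n))$, $t_n\uparrow T_*$, and combining with the stability/perturbation theory of Section~\ref{S:local theory} (a multi-bubble or scattering superposition of nonlinear profiles would keep $\|u\|_{L^{\frac{p(d+1)}2}}$ finite on the cone, contradicting the blowup criterion), one extracts -- along a subsequence -- a single concentrating bubble: scales $\lambda_n\to\infty$, centres $x_n$ (identically $0$ by radiality when $s_c<\tfrac12$, and converging to $0$ from the support constraint when $s_c\ge\tfrac12$), and a nonzero profile $(\phi,\psi)$ with
\[
\bigl(\lambda_n^{-2/p}\,u(t_n,x_n+\lambda_n^{-1}\,\cdot\,),\ \lambda_n^{-2/p-1}\,u_t(t_n,x_n+\lambda_n^{-1}\,\cdot\,)\bigr)\ \longrightarrow\ (\phi,\psi)\quad\text{in }\dot H^{s_c}_x\times\dot H^{s_c-1}_x.
\]
The support constraint forces $\lambda_n(T_*-t_n)$ to stay bounded away from $0$, so the bubble lives inside a shrinking backward light cone.

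The rigidity step (Section~\ref{S:critical blowup}) then closes the argument. Because the rescaled bubble is supported in a shrinking cone and the mass term scales away as $\lambda_n\to\infty$, its limit is a compactly supported solution of the energy-subcritical nonlinear wave equation carrying the self-similar concentration structure forced by finite speed of propagation. Feeding this object into the truncated dilation (virial) identity -- the Lyapunov functional exploited throughout the paper -- produces a monotone quantity whose uniform a priori bounds are incompatible with the bubble being nontrivial; hence $(\phi,\psi)=(0,0)$, contradicting the previous paragraph, and therefore $M=\infty$.

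I expect the two decisive difficulties to be these. First, the linear profile decomposition at the critical regularity $s_c$: obtaining, for $s_c<\tfrac12$, the radial Klein--Gordon Strichartz estimates that make $\tfrac1{2d}$ the natural lower threshold, and then dovetailing the decomposition with the nonlinear perturbation theory so that blowup of the scattering norm genuinely localizes to a single bubble. Second, the rigidity: making rigorous the extraction of the self-similar limiting profile inside the cone and verifying that the dilation identity truly excludes it. Here finite speed of propagation does much of the heavy lifting -- confining the concentrating bubble and forcing $x_n\to 0$ when $s_c\ge\tfrac12$ without any separate Morawetz-type input -- in contrast to the Schr\"odinger and whole-space wave settings.
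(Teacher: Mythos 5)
Your high-level skeleton (contradiction, rescale, extract a limiting object, invoke a rigidity theorem) is the right shape, but the specific mechanisms you propose diverge substantially from the paper's and several of them are incorrect.

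First, the rescaling is set up differently. The paper defines $u^{(n)}(t,x):=\lambda_n^{2/p}u(t_n-\lambda_n t,\lambda_n x)$ with $\lambda_n:=\|\nabla_{t,x}u(t_n)\|_{L^2}^{-1/(1-s_c)}\to 0$: the \emph{energy} norm at time $t_n$ is normalized to $1$, not the critical norm. Two things fall out at once: since $u$ exists on $[0,t_n]$, the rescaled (time-reversed) solution exists on $[0,T_n]$ with $T_n=t_n/\lambda_n\to\infty$, making the limit \emph{global forward in time}; and since the (conserved) energy of $u$ is fixed while it scales by $\lambda_n^{2(1-s_c)}\to 0$, the rescaled energy tends to zero, which, together with $\|\nabla_{t,x}u^{(n)}(0)\|_{L^2}=1$, forces $\int|u^{(n)}(0)|^{p+2}\gtrsim 1$ and $E\leq 0$ in the limit. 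Your sketch normalizes nothing and so misses the entire mechanism that produces a non-positive-energy semi-global solution; "mass term scales away" is noted, but the crucial fact that the whole energy scales away is not.

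Second, your picture of the limiting object is wrong. The claim that $\supp u(t)\subset\{|x|\le T_*-t\}$ does not follow from finite speed of propagation — the data are merely in $H^1\times L^2$ with no support restriction — and the limiting solution is not "compactly supported in a shrinking backward light cone." On the contrary, it is a \emph{forward-global} solution of massless NLW in $\dot H^1\cap\dot H^{s_c}\times L^2\cap\dot H^{s_c-1}$; globality, not compact support, is what the rigidity argument needs.

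Third, the rigidity tool is misidentified. The paper's rigidity is Theorem~\ref{T:nonpos blowup}: a space-truncated Glassey-type convexity argument on $M(t)=\int\phi|u|^2$ (the "virial" the Introduction refers to), showing nonzero $E\leq 0$ data cannot yield a forward-global NLW solution. The dilation identity and the Lyapunov functionals $L$ and $Z$ of Section~\ref{S:Lyapunov} are a \emph{different} tool, used only for the light-cone blowup-rate bounds of Sections~\ref{S:superconf blow}--\ref{S:conf blow}; they play no role in Theorem~\ref{T:I:sc}.

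Finally, the extraction step is different. You propose a linear profile decomposition at critical regularity plus nonlinear perturbation theory to localize the scattering-norm blowup to a single bubble (the Kenig--Merle route). The paper deliberately does not do this: after the energy-normalized rescaling, the sequence $u^{(n)}(0)$ is bounded in $\dot H^1\cap\dot H^{s_c}$, and one applies the translation-only bubble decomposition for the Gagliardo--Nirenberg inequality (Theorem~\ref{T:bubble gn}) in the case $s_c\geq\tfrac12$, or the compact radial embedding $\dot H^1_{\mathrm{rad}}\cap\dot H^{s_c}_{\mathrm{rad}}\hookrightarrow L^{p+2}_{\mathrm{rad}}$ when $s_c<\tfrac12$; there are no scale parameters in this decomposition because the two Sobolev norms already pin the scale. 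Multiple bubbles are allowed; the energy decoupling then shows at least one has $E\leq 0$, and that is enough. Building and validating a full critical-regularity profile decomposition and long-time perturbation theory for Klein--Gordon/NLW (including the low-regularity radial regime) is a significant undertaking that the paper's argument is designed to avoid.
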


By virtue of scale invariance, the blowup time can be adjusted arbitrarily without altering the size of the critical norm.  As this indicates, the link between blowup and the critical norm is subtle.  We note also the example of the mass-critical nonlinear Schr\"odinger equation for which blowup does occur despite the fact that the $L^2_x$-norm is a constant of motion!

We were prompted to investigate the behaviour of the critical norm by a recent paper of Merle and Raphael, \cite{MerleRaphaelAJM}, who considered the nonlinear Schr\"odinger equation with radial data and $0<s_c<1$.  They showed that the critical norm must blow up as a power of $|\log(T^*-t)|$.

In \cite{MerleRaphaelAJM} a rescaling argument is used to show that if a blowup solution were to exist for which the critical norm did not diverge, then one could
produce a second solution that is global in at least one time direction and has energy $E(u)\leq 0$.  The impossibility of this second type of solution is then deduced
via the virial argument.  Because the second solution has poor spatial decay, the virial argument needs to be space localized and the resulting error terms controlled;
this relies heavily on the radial hypothesis.  Note that the roles of the symmetry assumption in \cite{MerleRaphaelAJM} and here are of a completely different character.  As discussed in Section~\ref{S:local theory}, our equation is \emph{ill-posed} in $H_x^{s_c}\times H_x^{s_c-1}$ when $s_c<\frac12$, unless one imposes the restriction to radial data.  The additional restriction to $s_c>\frac1{2d}$ in Theorem~\ref{T:I:sc} stems from the fact that we do not know if the equation is well-posed in $ H_x^{s_c}\times H_x^{s_c-1}$ for spherically symmetric data; see Section~\ref{S:local theory} for more details.

To prove Theorem~\ref{T:I:sc} we argue in a broadly similar manner, showing that failure of the theorem would result in a semi-global solution to the limiting
(massless) equation with energy $E(u)\leq 0$ and then arguing that such a solution cannot exist.  In our setting we are able to handle arbitrary (nonradial) solutions when $1/2\leq s_c<1$ by employing a concentration-compactness principle for an inequality of Gagliardo--Nirenberg type.  The requisite concentration-compactness result is obtained in Section~\ref{S:CC}.  The impossibility of semi-global solutions with energy $E(u)\leq 0$ to the massless equation is proved in Section~\ref{S:non+blow}; this relies on a space-truncated virial argument.

In Section~\ref{S:other} we examine how the $L^2(\R^d)$ norms of $u$ and $\nabla_{t,x} u$ behave near the blowup time.  The arguments are comparatively straightforward applications of the virial argument; no spatial truncation is required.

In the remaining three sections of the paper, we study the behaviour of $u$ and $\nabla_{t,x} u$ near individual points on the blowup surface, rather than integrated
over all space as in Section~\ref{S:other}.  This is a much more delicate matter.  For the case of the nonlinear wave equation, this has been treated in a series
of papers by Antonini, Merle, and Zaag; see \cite{AntoniniMerle,MerleZaagAJM,MerleZaagMA,MerleZaagIMRN}.  All of these papers restrict attention only to cases where $s_c\leq \frac12$.  In this paper we will extend their results to the Klein--Gordon setting, considering also the regime $\frac12 < s_c < 1$.

The analysis of the nonlinear wave equation relies centrally on certain monotonicity formulae.  In the papers mentioned above, these appear via rather \emph{ad hoc}
manipulations mimicking earlier work of Giga and Kohn on the nonlinear heat equation \cite{GigaKohn:Indiana,GigaKohn:CPAM}.  In Section~\ref{S:Lyapunov} we uncover
the physical origins of these identities, finding that they are in fact close cousins of the dilation identity.  This in turn indicates the proper analogues in the
Klein--Gordon setting.  The identities are then used in Sections~\ref{S:superconf blow} and~\ref{S:conf blow} to control the behaviour of solutions inside light cones.
Section~\ref{S:superconf blow} treats the case $s_c>\frac12$ while Section~\ref{S:conf blow} covers $s_c\leq \frac12$.  The following theorem captures the flavour of
our results in the two cases:

\begin{theorem}\label{T:I:cone}
Let $d \geq 2$, $m \in [0,1]$, and $p = \frac4{d-2s_c}$ with $0< s_c < 1$.  If $u$ is a strong solution to \eqref{E:eqn} in the light cone $\{(t,x):0 < t \leq T, |x|<t\}$, then $u$ satisfies
\begin{equation}\label{E:I:u}
\int_{|x|< t/2} |u(t,x)|^2\, dx \lesssim  \begin{cases} t^{\frac{pd}{p+4}}  & :\text{ if } s_c > \tfrac 12 \\
t^{2s_c} &:\text{ if }  s_c \leq \tfrac 12 \end{cases}
\end{equation}
and
\begin{equation}\label{E:I:grad u}
\int_{t_0}^{2t_0} \!\!\! \int_{|x|<t/2} |\nabla_{\!t,x}u(t,x)|^2 \, dx\, dt \lesssim \begin{cases} 1  & :\text{ if } s_c > \tfrac 12 \\
t_0^{2s_c-1} &:\text{ if }  s_c \leq \tfrac 12. \end{cases}
\end{equation}
\end{theorem}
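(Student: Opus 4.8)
The plan is to derive both bounds from a single Lyapunov/virial-type functional adapted to the self-similar variables of the cone. Following the philosophy announced in the introduction (Section~\ref{S:Lyapunov}), I would pass to the self-similar coordinates $y = x/t$, $s = -\log t$, and introduce the weighted energy
\begin{equation*}
\mce(s) = \int_{|y|<1} \Bigl(\tfrac12 |\partial_s w|^2 + \tfrac12 |\nabla w|^2 - \tfrac12 (y\cdot\nabla w + \tfrac 2p w)^2 + \tfrac{m^2}{2} e^{-2s}|w|^2 - \tfrac1{p+2}|w|^{p+2}\Bigr)(1-|y|^2)^{a}\, dy
\end{equation*}
for a suitable weight exponent $a$ depending on $p$ and $d$, where $w(s,y)$ is the self-similar rescaling of $u$. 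The first step is to record the exact evolution law $\frac{d}{ds}\mce(s)$; the dilation identity from Section~\ref{S:Lyapunov} should produce, after integration by parts against the weight $(1-|y|^2)^a$, a coercive dissipation term of the form $-c\int_{|y|<1}|\partial_s w|^2 (1-|y|^2)^{a-1}\,dy$ plus lower-order terms (in the Klein--Gordon case, a term proportional to $m^2 e^{-2s}$ that is harmless as $s\to\infty$, i.e.\ $t\to 0$, and is also harmless on the finite range we need). Because we only work on a compact $t$-interval bounded away from the apex, the mass term contributes a bounded perturbation throughout.

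The second step distinguishes the two regimes. When $s_c \leq \tfrac12$ (equivalently $p \leq \frac4{d-1}$), the bound $\int_{|x|<t/2}|u(t)|^2\,dx \lesssim t^{2s_c}$ and the spacetime bound $\int_{t_0}^{2t_0}\int_{|x|<t/2}|\nabla_{t,x}u|^2 \lesssim t_0^{2s_c-1}$ are exactly of the shape proved for the nonlinear wave equation by Antonini--Merle and Merle--Zaag. Here the scaling accounting is: $u \in \dot H^{s_c}$ forces $\int_{|x|<t}|u|^2 \sim t^{d-2s_c}\cdot(\text{local }\dot H^{s_c})$ heuristically, but the truncated virial functional gives the sharp localized statement. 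In self-similar variables $t^{2s_c}$ corresponds to $e^{-2s_c s}$, and one shows $\int_{|y|<1/2}|w(s,y)|^2\,dy$ stays bounded by integrating the differential inequality for a truncated $L^2$ quantity against the dissipation coming from $\mce$. The gradient bound then follows by integrating $\mce(s)$ over a unit $s$-interval and using its monotonicity to absorb the flux terms on $|y|=1$ via the finite-speed-of-propagation structure of the cone (the outgoing flux has a favourable sign).

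The third step handles the genuinely new case $s_c > \tfrac12$ (i.e.\ $\frac4{d-1} < p < \frac4{d-2}$), where the target exponents $t^{pd/(p+4)}$ and $1$ appear. The exponent $\frac{pd}{p+4}$ is precisely what one gets by optimizing the weight exponent $a$ against the power of $(1-|y|^2)$ that makes the Hardy/Gagliardo--Nirenberg-type interpolation on the ball close: the nonlinear term $\int |w|^{p+2}(1-|y|^2)^a$ must be controlled by the gradient term with a weight one power lower, and the exponent at which this balances (using the one-dimensional-in-$|y|$ Hardy inequality near the boundary) produces $a = \frac{2}{p} - \text{something}$, ultimately yielding the rate $\frac{pd}{p+4}$. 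Concretely: one shows $\mce(s)$ is bounded above (for $s$ in the relevant range) and bounded below by $c\, e^{\frac{pd}{p+4}s}\int_{|y|<1/2}|w|^2 - C$; rearranging gives the $L^2$ bound, and the gradient bound over a unit $s$-window is then $O(1)$ because the monotone quantity controls $\int\int |\partial_s w|^2$ directly and an elliptic estimate on the ball upgrades this to the full gradient.

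The main obstacle, as in all of this circle of ideas, is the control of boundary terms at $|y|=1$ (the null boundary of the cone) when integrating by parts to produce the monotonicity formula: one must choose the weight exponent $a$ large enough that these terms either vanish or have a definite sign, yet small enough that the coercivity of the good terms (and the validity of the weighted Hardy inequality) is preserved. For $s_c > \tfrac12$ there is genuinely less room, which is why the sharp rate is $t^{pd/(p+4)}$ rather than $t^{2s_c}$; verifying that the admissible range of $a$ is nonempty precisely when $0 < s_c < 1$, and that it degenerates correctly at the conformal threshold $s_c = \tfrac12$, is the technical heart of the argument. A secondary nuisance is that in the Klein--Gordon case $m>0$ the dilation identity is not exactly conserved even for the linear flow, so one must track the $m^2 e^{-2s}$ error and check it is integrable in $s$ (it is, since $s\to+\infty$ as $t\to 0^+$, and on a compact $t$-interval it is simply bounded) — this is routine but must be done carefully to avoid polluting the rates.
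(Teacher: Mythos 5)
Your proposal and the paper diverge at the outset: the paper deliberately works in physical spacetime coordinates, not the Giga--Kohn/Merle--Zaag similarity variables $(s,y)$; the introduction flags this as a design choice precisely because it makes the light-cone geometry transparent and lets one build the averaged Lyapunov bound \eqref{E:L flux ineq 2} and the subsequent covering argument cleanly. That alone is a different route, but the more serious issue is the super-conformal case, where you propose to restore coercivity by ``optimizing the weight exponent $a$ in $(1-|y|^2)^a$.'' In self-similar variables the Antonini--Merle weight exponent is $\alpha = \tfrac12 - s_c$, which is negative for $s_c > \tfrac12$, so the weight is singular at the null boundary $|y|=1$. Lemma~\ref{L:dilat id2} in the paper only produces a monotone functional for $\alpha>0$, and Section~\ref{S:Lyapunov} explicitly argues that no conformal conservation law scales above $\dot H^{1/2}$ --- i.e.\ there is no admissible positive $a$ in the super-conformal regime. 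That is exactly why the paper falls back on the (conformal, $a=0$) functional $L$ and accepts a time-averaged gradient bound rather than a pointwise one. Your claim that ``the admissible range of $a$ is nonempty precisely when $0<s_c<1$'' is the crux of your argument for $s_c>\tfrac12$, and the paper's discussion strongly suggests it is false.

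The mechanism you propose for extracting the rate $t^{pd/(p+4)}$ also does not match what is actually needed. In the paper this rate is a trivial Hölder consequence of the genuine pointwise-in-time estimate $\int_{|x|<t}|u(t)|^{(p+4)/2}\,dx \lesssim 1$, which is itself proved by a fundamental-theorem-of-calculus averaging trick: one controls both $\int_{t_0}^{2t_0}\int |u|^{(p+4)/2}$ (from the $L^{p+2}_{t,x}$ bound \eqref{E:p+2 superc} and Hölder) and $\int_{t_0}^{2t_0}\int |x\cdot\nabla u + tu_t + \tfrac{d-1}{2}u|^2$ (from the monotonicity of $L$ and \eqref{E:frak l0}), then observes that $|f(\lambda)| \leq \int_1^2 (|f|+|f'|)$ upgrades the time average to a sup. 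No Hardy balance on $(1-|y|^2)$ is involved, and the exponent $(p+4)/2$ is chosen so the Cauchy--Schwarz step closes, not by a weight optimization. Finally, your gradient bound for $s_c>\tfrac12$ is also obtained differently in the paper: one uses the energy flux identity, Lemma~\ref{L:E flux}, paired with the pointwise potential-energy control, giving the weighted spacetime bound \eqref{E:cone bound superc}; the version over $|x|<t/2$ as stated in Theorem~\ref{T:I:cone} is then immediate from the weight $(1-|x|/t)^2$. You should be explicit about which directional derivatives are controlled and how the null-boundary weight is handled, rather than appealing to an unstated elliptic estimate.

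In short: for $s_c\leq\tfrac12$ your plan is plausible (it is essentially Antonini--Merle/Merle--Zaag), but for $s_c>\tfrac12$ the weight-optimization step is a genuine gap --- both because such a weighted monotone functional very likely does not exist, and because the actual proof of $t^{pd/(p+4)}$ does not go through any such optimization.
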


Note that the powers appearing in the two cases in RHS\eqref{E:I:u} and RHS\eqref{E:I:grad u} agree when $s_c = \frac12$.  Note also that this theorem
is best understood by considering the time-reversed evolution, that is, for initial data given at time $T$ and with $(0,0)$ being a point on the backwards blowup surface.

The local well-posedness results in Section~\ref{S:local theory} (cf. Corollary~\ref{C:lwp loc}) show that these upper bounds on the blowup rate are sharp when $0<s_c\leq\frac12$.

One peculiarity of the case $s_c=\frac12$ is that the massless equation is invariant under the full conformal group of Minkowski spacetime.  For this reason we term this the \emph{conformal} case.  Correspondingly, $s_c<\frac12$ and $s_c>\frac12$ will be referred to as the
\emph{sub-} and \emph{super-conformal} cases, respectively.  In the conformal case, the Lagrangian action is invariant under scaling and so the dilation identity
takes the form of a true conservation law (cf. \eqref{E:m dilation}), while at other regularities it does not.

The key dichotomy between $s_c \leq \frac12$ and $s_c>\frac12$ in the context of Theorem~\ref{T:I:cone} is not dictated directly by conformality, but rather
by the scaling of the basic monotonicity formulae we use.  The dilation identity scales as $s_c=\frac12$.  As a consequence, we are able to obtain stronger
results in the conformal and sub-conformal cases than in the super-conformal regime.  Indeed, in these cases \eqref{E:I:grad u} can be upgraded to a \emph{pointwise
in time} statement; see Theorem~\ref{T:cone bound subc}.  Systematic consideration of all conformal conservation laws (cf. Section~\ref{S:Lyapunov}) does not lead to any monotonicity formulae scaling at a higher regularity, thereby suggesting that the dilation is still the best tool for the job when $s_c>\frac12$.

The simplified version of our estimates given in Theorem~\ref{T:I:cone} only controls the size of the solution in the middle portion of the light cone, $\{|x| < t/2\}$.  In truth, the estimates we prove give weighted bounds in the whole light cone; however, the weight decays rather quickly near the boundary of the light cone.  If the point $(0,0)$ is not a characteristic point of the blowup surface, then simple covering arguments using nearby light cones show that the same estimates hold for the whole region $\{|x| <t\}$.  In fact, when $s_c\leq \frac12$ our results precisely coincide with those proved by Merle and Zaag for the corresponding nonlinear wave equation
in \cite{MerleZaagAJM,MerleZaagMA,MerleZaagIMRN}.  (As mentioned previously, their works do not consider the case $s_c >\frac12$.)

It turns out that it is possible to repeat the Merle--Zaag arguments virtually verbatim in the Klein--Gordon setting (with $s_c\leq \frac12$); however, this is not what
we have done. While we do follow their strategy rather closely, the implementation is quite different.  We use usual spacetime coordinates, as opposed to the similarity coordinates used by Giga and Kohn and again by Antonini, Merle, and Zaag.  This makes the geometry of light cones much more transparent, which we exploit to obtain stronger averaged Lyapunov functionals (cf. \eqref{E:L flux ineq 2}), as well as to simplify the key covering argument (cf. our passage from \eqref{almost} to \eqref{tada} with subsection~3.2 in \cite{MerleZaagIMRN}).

\noindent{\bf Acknowledgements}
The first author was supported by NSF grant DMS-1001531.  The second author was supported by an NSF Postdoctoral
Fellowship. The third author was supported by NSF grant DMS-0901166 and a Sloan Foundation Fellowship.

\subsection{Preliminaries}

We will be regularly referring to the spacetime norms
\begin{equation}\label{E:qr def}
\bigl\| u \bigr\|_{L^q_t L^{\vphantom{q}r}_{\vphantom{t}x}(\R\times\R^d)}
    := \biggl(\int_{\R} \bigg[ \int_{\R^d} |u(t,x)|^r\,dx \biggr]^{\frac qr} \;dt\biggr)^\frac1q,
\end{equation}
with obvious changes if $q$ or $r$ is infinity.

We write $X\lesssim Y$ to indicate that $X\leq C Y$ for some implicit constant $C$, which varies from place to place.

Let $\varphi(\xi)$ be a radial bump function supported in the ball $\{ \xi \in \R^d: |\xi| \leq \tfrac {11}{10} \}$ and equal to
$1$ on the ball $\{ \xi \in \R^d: |\xi| \leq 1 \}$.  For each number $N \in 2^{\Z}$, we define the Littlewood--Paley projections
\begin{align*}
\widehat{P_{\leq N} f}(\xi) &:= \varphi(\xi/N) \hat f(\xi)\\
\widehat{P_{> N} f}(\xi) &:= (1 - \varphi(\xi/N)) \hat f(\xi)\\
\widehat{P_N f}(\xi) &:= (\varphi(\xi/N) - \varphi(2\xi/N)) \hat f(\xi)
\end{align*}
and similarly $P_{<N}$ and $P_{\geq N}$.

We will use basic properties of these operators, including

\begin{lemma}[Bernstein estimates]\label{Bernstein}
For $1 \leq p \leq q \leq \infty$,
\begin{align*}
\bigl\| |\nabla|^{\pm s} P_N f\bigr\|_{L^p(\R^d)} &\sim N^{\pm s} \| P_N f \|_{L^p(\R^d)},\\
\|P_{\leq N} f\|_{L^q(\R^d)} &\lesssim N^{\frac{d}{p}-\frac{d}{q}} \|P_{\leq N} f\|_{L^p(\R^d)},\\
\|P_N f\|_{L^q(\R^d)} &\lesssim N^{\frac{d}{p}-\frac{d}{q}} \| P_N f\|_{L^p(\R^d)}.
\end{align*}
\end{lemma}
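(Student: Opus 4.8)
The plan is to represent each Littlewood--Paley operator as convolution against an $L^1$-rescaling of a fixed Schwartz function and then apply Young's convolution inequality together with the change of variables $\xi\mapsto\xi/N$. Set $\psi(\xi):=\varphi(\xi)-\varphi(2\xi)$, and for a function $g$ on $\R^d$ write $g_N(x):=N^d g(Nx)$. Then $\widehat{P_N f}=\psi(\xi/N)\hat f(\xi)$ gives $P_N f = f*(\check\psi)_N$, and likewise $P_{\leq N}f = f*(\check\varphi)_N$. Since $\check\varphi$ and $\check\psi$ are Schwartz, they lie in $L^r(\R^d)$ for every $r\in[1,\infty]$, and $\|(\check\psi)_N\|_{L^r}=N^{d-d/r}\|\check\psi\|_{L^r}$, with the analogous identity for $\varphi$.

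For the second and third displayed inequalities, choose $\tilde\varphi\in C_c^\infty$ supported in a ball and equal to $1$ on $\supp\varphi$, and $\tilde\psi\in C_c^\infty$ supported in an annulus bounded away from the origin and equal to $1$ on $\supp\psi$. Writing $\tilde P_N$ for the Fourier multiplier operator with symbol $\tilde\psi(\xi/N)$, we have $P_N = \tilde P_N P_N$, hence $P_N f = (\check{\tilde\psi})_N * P_N f$; similarly $P_{\leq N} = \widetilde{P_{\leq N}}\,P_{\leq N}$ with $\widetilde{P_{\leq N}}$ the multiplier operator of symbol $\tilde\varphi(\xi/N)$. Young's inequality with $1+\tfrac1q=\tfrac1p+\tfrac1r$ — which has $r\in[1,\infty]$ precisely when $p\leq q$, with $d-\tfrac dr=\tfrac dp-\tfrac dq$ — yields
\begin{equation*}
\|P_N f\|_{L^q} = \|(\check{\tilde\psi})_N * P_N f\|_{L^q} \leq \|(\check{\tilde\psi})_N\|_{L^r}\,\|P_N f\|_{L^p} \lesssim N^{\frac dp-\frac dq}\|P_N f\|_{L^p},
\end{equation*}
and the same computation with $\tilde\varphi$ in place of $\tilde\psi$ handles $P_{\leq N}$.

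For the first estimate, note that the Fourier multiplier of $|\nabla|^{s}P_N$ is $|\xi|^{s}\psi(\xi/N)=N^{s}\,\rho(\xi/N)\,\psi(\xi/N)$, where $\rho(\xi):=|\xi|^{s}\tilde\psi(\xi)$. Because $\tilde\psi$ vanishes near the origin, $\rho$ is smooth and compactly supported, hence Schwartz, so $\|(\check\rho)_N\|_{L^1}=\|\check\rho\|_{L^1}$ is finite and independent of $N$; consequently $|\nabla|^{s}P_N f = N^{s}(\check\rho)_N * P_N f$ and Young's inequality ($L^1*L^p\to L^p$) gives $\||\nabla|^{s}P_N f\|_{L^p}\lesssim N^{s}\|P_N f\|_{L^p}$. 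Running the same argument with $|\xi|^{-s}\tilde\psi(\xi)$ — Schwartz for the same reason — in place of $\rho$ produces both the reverse bound $\|P_N f\|_{L^p}\lesssim N^{-s}\||\nabla|^{s}P_N f\|_{L^p}$ and the estimate for $|\nabla|^{-s}P_N$, whence the equivalences $\sim$. The whole argument is Young's inequality plus rescaling, so there is no genuine obstacle; the only point worth spelling out is that $|\xi|^{\pm s}$ times a bump supported on a fixed annulus is Schwartz — and therefore has an inverse Fourier transform in $L^1$ with norm uniform in $N$ — which is exactly where the frequency localization away from the origin is used.
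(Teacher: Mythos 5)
The paper states Lemma~\ref{Bernstein} as a standard fact without proof, so there is no in-paper argument to compare against. Your proof is correct and is the standard one: representing each projection as convolution against a rescaled Schwartz kernel, then applying Young's inequality, with the fattened multipliers $\tilde\varphi,\tilde\psi$ (equal to $1$ on the supports of $\varphi,\psi$) used so that $P_N$, $P_{\le N}$ absorb an extra harmless convolution, and with the observation that $|\xi|^{\pm s}\tilde\psi(\xi)$ is Schwartz precisely because $\tilde\psi$ vanishes near the origin. All the scaling exponents ($\|(\check{\tilde\psi})_N\|_{L^r}=N^{d-d/r}\|\check{\tilde\psi}\|_{L^r}$ and $d-d/r=d/p-d/q$ under Young's relation) are computed correctly, and the reverse bound $\|P_Nf\|_{L^p}\lesssim N^{-s}\||\nabla|^sP_Nf\|_{L^p}$ follows exactly as you indicate by inserting $|\nabla|^{-s}\tilde P_N$.
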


Next, we recall some well-known elliptic estimates; see, for example, \cite[Ch.~7]{GilTru} or \cite[Ch.~8]{LiebLoss}.

\begin{lemma}(Sobolev inequality for domains) \label{L:Sob domain}
Let $d\geq 2$ and let $\Omega\subset \R^d$ be a domain with the cone property.  Then
$$
\|f\|_{L^q(\Omega)}\lesssim \|f\|_{H^1(\Omega)}
$$
provided that $2\leq q<\infty$ if $d=2$ and $2\leq q\leq \frac{2d}{d-2}$ if $d\geq 3$.  The implicit constant depends only on $d, q,$ and $\Omega$.
\end{lemma}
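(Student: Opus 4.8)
The plan is to reduce the statement to the classical Sobolev and Hardy--Littlewood--Sobolev inequalities on $\R^d$ by means of a pointwise representation of $f$ adapted to the cone. First I would pass to smooth functions: by the Meyers--Serrin theorem $C^\infty(\Omega)\cap H^1(\Omega)$ is dense in $H^1(\Omega)$ for every open set, so it suffices to prove the bound for $f\in C^\infty(\Omega)\cap H^1(\Omega)$, the general case following by approximation together with Fatou's lemma. Next I would extract from the cone property a single finite cone $C=\{r\omega:0<r<h,\ \omega\in A\}$, with $A$ a fixed spherical cap, such that for each $x\in\Omega$ a congruent copy $C_x$ with vertex $x$ lies in $\Omega$; passing to the sub-cone of height $H:=h/2$ (still written $C_x$) one has $C_x\subseteq B(x,H)\cap\Omega$ and $|C_x|=c_0$ for a constant $c_0=c_0(d,\Omega)$ that does not depend on $x$.

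For fixed $x\in\Omega$, $0<\rho<H$, and $\omega\in A$ the point $x+\rho\omega$ lies in the open cone $C_x\subseteq\Omega$, so the fundamental theorem of calculus gives $|f(x)|\le|f(x+\rho\omega)|+\int_0^{\rho}|\nabla f(x+r\omega)|\,dr$. Multiplying by $\rho^{d-1}$, integrating over $(\rho,\omega)\in(0,H)\times A$, dividing by $c_0$, and applying Fubini in the iterated $r,\rho$ integral (which costs only a dimensional constant), I would obtain
$$
|f(x)|\ \lesssim\ \frac1{c_0}\int_{C_x}|f(y)|\,dy\ +\ \int_{C_x}\frac{|\nabla f(y)|}{|x-y|^{d-1}}\,dy .
$$
Since $C_x\subseteq B(x,H)$, the first term is at most $c_0^{-1}(\mathbf{1}_{B(0,H)}*|f|)(x)$ and the second is at most $(K*|\nabla f|)(x)$, where $f$ and $\nabla f$ are extended by zero to $\R^d$ and $K(z):=|z|^{-(d-1)}\mathbf{1}_{\{|z|\le H\}}$. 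Taking $L^q(\R^d)$ norms, I would control the first term by Young's inequality, using that $\mathbf{1}_{B(0,H)}\in L^r(\R^d)$ for every $r\in[1,\infty]$; with $\tfrac1r=\tfrac12+\tfrac1q$ the only constraint is $q\ge2$, and the bound is $\lesssim\|f\|_{L^2}$. For the second term one checks that $K\in L^r(\R^d)$ exactly when $r<\tfrac{d}{d-1}$ (any $r<2$ when $d=2$); inserting $\tfrac1r=\tfrac12+\tfrac1q$, this is precisely the constraint $q<\tfrac{2d}{d-2}$ for $d\ge3$, respectively $q<\infty$ for $d=2$, and Young's inequality then bounds it by $\lesssim\|\nabla f\|_{L^2}$. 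This covers every admissible $q$ strictly below the Sobolev exponent; at the endpoint $q=\tfrac{2d}{d-2}$ (which arises only for $d\ge3$) Young's inequality just fails, so there I would instead use the pointwise bound $K*|\nabla f|\le I_1|\nabla f|$, with $I_1g(x):=\int_{\R^d}|x-y|^{-(d-1)}g(y)\,dy$, together with the Hardy--Littlewood--Sobolev inequality $\|I_1|\nabla f|\|_{L^{2d/(d-2)}(\R^d)}\lesssim\|\nabla f\|_{L^2}$. Adding the two contributions and restricting to $\Omega$ gives $\|f\|_{L^q(\Omega)}\lesssim\|f\|_{L^2(\Omega)}+\|\nabla f\|_{L^2(\Omega)}=\|f\|_{H^1(\Omega)}$, with implicit constant depending only on $d$, $q$, and (through $C$) on $\Omega$.

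The one step requiring genuine care is the extraction of a \emph{uniform} cone: it is the cone property — rather than each point merely being interior to $\Omega$ — that keeps the height $H$ and the volume $c_0=|C_x|$ bounded away from the degenerate values as $x$ ranges over $\Omega$, so that a single pair of convolution kernels $\mathbf{1}_{B(0,H)}$ and $K$ works at every point. Everything after that is bookkeeping with Young's and Hardy--Littlewood--Sobolev inequalities; it is worth noting that the \emph{global} $L^2$ norm of $f$ legitimately appears even though the representation integrates only over cones, because the first convolution spreads mass over all of $\Omega$. As an alternative, when $\Omega$ is Lipschitz one can extend $f$ to $\tilde f\in H^1(\R^d)$ of comparable norm and invoke the Sobolev embedding on $\R^d$ directly, interpolating between $L^2$ and $L^{2d/(d-2)}$; the cone argument above has the advantage of requiring only the stated hypothesis.
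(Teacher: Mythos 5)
The paper does not prove this lemma; it simply cites it as a standard fact, referring the reader to Gilbarg--Trudinger and Lieb--Loss.  Your proposed proof is correct, and it is in fact the classical argument one finds in those references (and in Adams' \emph{Sobolev Spaces}): use the cone property to write the pointwise bound
$$|f(x)|\ \lesssim\ \int_{C_x}|f(y)|\,dy\ +\ \int_{C_x}\frac{|\nabla f(y)|}{|x-y|^{d-1}}\,dy,$$
extend by zero, and finish with Young for the non-endpoint exponents and Hardy--Littlewood--Sobolev at the endpoint $q=\tfrac{2d}{d-2}$.  The one point worth stating more explicitly is that the spherical cap over which you integrate is a rotated copy $A_x$ of the fixed cap $A$ (the cone axis may vary with $x$); since $|A_x|=|A|$ this changes nothing, but a careful write-up should acknowledge it.  Your observation that the uniformity of the cone (fixed height $h$ and aperture) is what makes the constants independent of $x$ is exactly the right point to emphasize, and it is also what makes the argument work for the unbounded domains (exteriors of balls, all of $\R^d$) that the paper actually uses.
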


We will only be applying this lemma to balls, exteriors of balls, and in the whole of $\R^d$; thus, the cone property automatically holds.

\begin{lemma}(Poincar\'e inequality on bounded domains)\label{L:Poincare}
Let $d\geq 2$ and let $\Omega\subset \R^d$ be a bounded domain.  Then for any $f\in H^1_0(\Omega)$,
$$
\|f\|_{L^2(\Omega)}\lesssim |\Omega|^{1/d} \|\nabla f\|_{L^2(\Omega)}.
$$
\end{lemma}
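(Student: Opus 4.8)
The plan is to prove the inequality first for the unit ball and then to transfer it to an arbitrary bounded domain $\Omega$ by a scaling-plus-monotonicity argument, which is where the power $|\Omega|^{1/d}$ will emerge.

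First I would establish the estimate on a fixed reference domain, say the unit ball $B_1\subset\R^d$: for all $f\in H^1_0(B_1)$ one has $\|f\|_{L^2(B_1)}\le C_d\|\nabla f\|_{L^2(B_1)}$. For functions in $C_c^\infty(B_1)$ this follows from the fundamental theorem of calculus along a fixed coordinate direction: writing $f(x)=\int_{-1}^{x_1}\partial_1 f(s,x')\,ds$ and applying Cauchy--Schwarz gives $|f(x)|^2\le 2\int_{-1}^{1}|\partial_1 f|^2$, after which integrating in $x$ over $B_1\subset[-1,1]^d$ bounds $\|f\|_{L^2}^2$ by $4\|\partial_1 f\|_{L^2}^2\le 4\|\nabla f\|_{L^2}^2$; density of $C_c^\infty(B_1)$ in $H^1_0(B_1)$ then extends it. (Alternatively, one can invoke Lemma~\ref{L:Sob domain} with $q=2$ together with the fact that $H^1_0(B_1)$ functions have no boundary contribution, but the one-dimensional slicing argument is cleaner and self-contained.)

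Next I would reduce the general case to the ball case. Given a bounded domain $\Omega$, extend $f\in H^1_0(\Omega)$ by zero to all of $\R^d$; the extension lies in $H^1(\R^d)$ with $\nabla$ of the extension equal to the extension of $\nabla f$, and it is supported in $\overline\Omega$. Pick $R>0$ with $\Omega\subset B_R$, so the extension lies in $H^1_0(B_R)$. Rescaling via $f_R(y):=f(Ry)$ maps $H^1_0(B_R)$ to $H^1_0(B_1)$, and the reference estimate gives $\|f\|_{L^2(B_R)}\le C_d\,R\,\|\nabla f\|_{L^2(B_R)}$, i.e. $\|f\|_{L^2(\Omega)}\le C_d\,R\,\|\nabla f\|_{L^2(\Omega)}$. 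Taking $R=\operatorname{diam}(\Omega)$ already yields a diameter bound; to get the claimed $|\Omega|^{1/d}$, note that the same scaling argument run on a ball $B_r$ with $|B_r|=|\Omega|$ (so $r\sim|\Omega|^{1/d}$) works \emph{provided} $f$ is supported in a set of measure $|\Omega|$, and one exploits that the proof of the ball estimate only used the bound on the Lebesgue measure of the support, not that the support is a ball. Concretely: redo the one-dimensional slicing bound $|f(x)|^2\le (\text{length of the }x_1\text{-slice of }\operatorname{supp}f)\cdot\int|\partial_1 f|^2\,dx_1$, integrate, and optimize the choice of coordinate direction so that the relevant slice-length is controlled by $|\Omega|^{1/d}$ — this follows since the product of the $d$ coordinate widths of $\operatorname{supp} f$ is at least... wait, that inequality goes the wrong way, so instead one uses that $\min_j(\text{width in direction }j)\le|\Omega|^{1/d}$ is false in general; the correct route is simply to replace $\Omega$ by its bounding box, use the direction of smallest box-width $\ell$, and observe $\ell^d\le\prod_j(\text{box width})$, hence $\ell\le\bigl(\text{volume of box}\bigr)^{1/d}$, which is $\gtrsim|\Omega|^{1/d}$ only if the box volume is comparable to $|\Omega|$ — not always true. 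Therefore the honest statement to prove with constant $|\Omega|^{1/d}$ uses the sharp Faber--Krahn inequality: $\lambda_1(\Omega)\ge\lambda_1(B)$ where $B$ is the ball with $|B|=|\Omega|$, and $\lambda_1(B)^{-1/2}\sim|B|^{1/d}\sim|\Omega|^{1/d}$; this is exactly $\|f\|_{L^2(\Omega)}\le\lambda_1(\Omega)^{-1/2}\|\nabla f\|_{L^2(\Omega)}\lesssim|\Omega|^{1/d}\|\nabla f\|_{L^2(\Omega)}$.

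The main obstacle is thus the dependence on $|\Omega|$ rather than $\operatorname{diam}(\Omega)$: a naive extend-and-rescale argument only produces the diameter, which can be far larger than $|\Omega|^{1/d}$ for thin domains. The clean fix is to cite the Faber--Krahn inequality (symmetric decreasing rearrangement of $f$ decreases $\|\nabla f\|_{L^2}$ and preserves $\|f\|_{L^2}$ and $|\operatorname{supp}f|$), reducing everything to the sharp first-eigenvalue estimate on the ball, where scaling gives the exact power $|\Omega|^{1/d}$. Since the lemma only claims the estimate up to an implicit constant, even the non-sharp rearrangement bound suffices. I would present the proof in this order: (i) unit-ball estimate via one-dimensional slicing and density; (ii) scaling to a ball of arbitrary radius; (iii) rearrangement (Faber--Krahn) to pass from a general bounded $\Omega$ to the ball of equal volume, yielding the factor $|\Omega|^{1/d}$.
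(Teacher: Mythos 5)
The paper does not prove this lemma; it states it (together with Lemmas~\ref{L:Sob domain} and~\ref{L:GN}) with a citation to \cite[Ch.~7]{GilTru} and \cite[Ch.~8]{LiebLoss}, where the argument is a short combination of Sobolev embedding and H\"older rather than the route you take. Concretely: extend $f$ by zero to $H^1(\R^d)$, apply Gagliardo--Nirenberg $\|f\|_{L^q}\lesssim\|\nabla f\|_{L^2}^{\theta}\|f\|_{L^2}^{1-\theta}$ with $\tfrac1q=\tfrac12-\tfrac{\theta}{d}$ (any $0<\theta\le 1$, with $\theta<1$ when $d=2$), combine with H\"older $\|f\|_{L^2(\Omega)}\le|\Omega|^{\theta/d}\|f\|_{L^q(\Omega)}$, and absorb $\|f\|_{L^2}^{1-\theta}$ to obtain $\|f\|_{L^2}\lesssim|\Omega|^{1/d}\|\nabla f\|_{L^2}$; for $d\ge 3$ one may take $\theta=1$, and this is just $\dot H^1\hookrightarrow L^{2d/(d-2)}$ plus H\"older. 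Your eventual plan --- (i) ball Poincar\'e via one-dimensional slicing, (ii) rescaling, (iii) symmetric decreasing rearrangement (P\'olya--Szeg\H{o}, or sharp Faber--Krahn) to reduce a general $\Omega$ to the ball of equal measure --- is also correct, and it has the merit of producing the sharp constant; what it costs is invoking the rearrangement machinery, which is heavier than a statement ``up to a constant'' requires, especially when the paper already has Lemma~\ref{L:GN} at hand. The genuinely useful observation in your writeup is the correct diagnosis that naive slicing or bounding-box rescaling yields only $\diam(\Omega)$ rather than $|\Omega|^{1/d}$ (a thin cross defeats both). However, the interim assertion that the ball estimate ``only used the bound on the Lebesgue measure of the support'' is, as you yourself retract, false --- the slicing proof controls the $L^2$ norm by the largest slice length in a fixed direction, not by $|\supp f|$ --- and both that sentence and the abandoned bounding-box digression should be excised from a final writeup, leaving either the Sobolev-plus-H\"older one-liner or the clean rearrangement step $f\mapsto f^*$ with $\|f^*\|_{L^2}=\|f\|_{L^2}$, $\|\nabla f^*\|_{L^2}\le\|\nabla f\|_{L^2}$, and $\supp f^*\subseteq\overline{B_r}$ where $|B_r|=|\{f\neq 0\}|\le|\Omega|$.
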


\begin{lemma}(Gagliardo--Nirenberg)\label{L:GN}
Let $d\geq 2$ and let $0<p<\infty$ if $d=2$ and let $0<p\leq\frac4{d-2}$ if $d\geq 3$.  Then
$$
\|f\|_{L^{p+2}}^{p+2} \lesssim \|f\|_{L^{\frac{pd}2}}^p \|\nabla f\|_{L^2}^2 \quad\text{and}\quad \|f\|_{L^{\frac{pd}2}} \lesssim \|f\|_{L^2}^{1-s_c} \|\nabla f\|_{L^2}^{s_c}.
$$
Moreover, for any $R>0$,
$$
\|f\|_{L^{p+2}(|x|\geq R)}^{p+2} \lesssim \|f\|_{L^{\frac{pd}2}(|x|\geq R)}^p \|\nabla f\|_{L^2(|x|\geq R)}^2.
$$
\end{lemma}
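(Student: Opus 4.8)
The plan is to dispatch the three assertions in turn: the first two are classical, and the third reduces to making the first scale invariant.

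\emph{First inequality.} For $d\geq 3$ I would write $|f|^{p+2}=|f|^{p}\,|f|^{2}$ and apply H\"older in the conjugate exponents $\tfrac d2$ and $\tfrac d{d-2}$, which gives $\|f\|_{L^{p+2}}^{p+2}\leq\|f\|_{L^{pd/2}}^{p}\,\|f\|_{L^{2d/(d-2)}}^{2}$; the Sobolev embedding $\dot H^1(\R^d)\hookrightarrow L^{2d/(d-2)}(\R^d)$ then finishes the job. When $d=2$ this splitting degenerates (the exponent $\tfrac d2$ collapses to $1$), so instead I would set $g:=|f|^{(p+2)/2}$, observe the pointwise identity $|\nabla g|=\tfrac{p+2}2|f|^{p/2}|\nabla f|$, and invoke the endpoint Gagliardo--Nirenberg--Sobolev inequality $\|g\|_{L^2(\R^2)}\lesssim\|\nabla g\|_{L^1(\R^2)}$ together with Cauchy--Schwarz, $\int|f|^{p/2}|\nabla f|\leq\|f\|_{L^{p}}^{p/2}\|\nabla f\|_{L^2}$. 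Squaring the resulting bound on $\|g\|_{L^2}=\|f\|_{L^{p+2}}^{(p+2)/2}$ yields the claim (note $\tfrac{pd}2=p$ when $d=2$).

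\emph{Second inequality.} This is the classical Gagliardo--Nirenberg interpolation. The identity $\tfrac1{pd/2}=(1-s_c)\cdot\tfrac12+s_c\cdot\tfrac{d-2}{2d}$ is precisely the definition of $s_c$, so H\"older in exponents gives $\|f\|_{L^{pd/2}}\leq\|f\|_{L^2}^{1-s_c}\|f\|_{L^{2d/(d-2)}}^{s_c}$, and once more $\|f\|_{L^{2d/(d-2)}}\lesssim\|\nabla f\|_{L^2}$ (with the evident modification when $d=2$).

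\emph{Exterior inequality.} Here I would rerun the proof of the first inequality with every norm taken over $\{|x|\geq R\}$. The H\"older step (respectively, when $d=2$, the pointwise identity for $g$ and Cauchy--Schwarz) localizes with no loss, so the entire matter reduces to establishing the Sobolev-type inequality $\|g\|_{L^{2d/(d-2)}(|x|\geq R)}\lesssim\|\nabla g\|_{L^2(|x|\geq R)}$ for $d\geq 3$, respectively $\|g\|_{L^2(|x|\geq R)}\lesssim\|\nabla g\|_{L^1(|x|\geq R)}$ for $d=2$, \emph{with an implicit constant independent of $R$}. This is the only delicate point, and I expect it to be the main obstacle. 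Since the domains $\{|x|\geq R\}$ are dilates of one another and both inequalities are scale invariant, it suffices to treat $R=1$; and to avoid the inhomogeneous $\|g\|_{L^2}$ correction that a crude cutoff at $\{|x|=1\}$ would introduce (and which would destroy the scaling), I would extend $g$ from $\{|x|\geq 1\}$ to all of $\R^d$ across the unit sphere by the Kelvin transform. For $d\geq 3$ this extension is an isometry of $\dot H^1(\R^d)$ and also preserves the $L^{2d/(d-2)}$ norm, so applying the global Sobolev inequality to it gives the exterior inequality with a universal constant; for $d=2$ one checks directly that the inversion $x\mapsto x/|x|^2$ maps $\dot W^{1,1}(\{|x|>1\})$ boundedly into $\dot W^{1,1}(\R^2)$ (the relevant Jacobian weight being bounded by $1$ on the exterior region), after which the global bound $\|\cdot\|_{L^2(\R^2)}\lesssim\|\nabla\cdot\|_{L^1(\R^2)}$ applies. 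With a correction-free exterior Sobolev inequality in hand, the remaining steps are routine.
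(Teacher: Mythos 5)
The paper treats Lemma~\ref{L:GN} as standard background and simply cites Gilbarg--Trudinger and Lieb--Loss without a proof, so your proposal supplies one rather than reproducing the authors' argument. Your treatment of the first and third assertions is sound. For the first, the H\"older split $|f|^{p+2}=|f|^p\,|f|^2$ against the exponents $(\tfrac d2,\tfrac d{d-2})$ plus Sobolev is the standard route for $d\geq3$, and the chain-rule-plus-endpoint $W^{1,1}\hookrightarrow L^2$ argument you give for $d=2$ is also correct. For the exterior bound, reducing to $R=1$ by scaling and then extending across the unit sphere by inversion is a clean way to get an $R$-independent constant: the Kelvin transform is indeed a $\dot H^1$ isometry for $d\geq3$, and your Jacobian computation for $d=2$ is right. (In fact the unweighted inversion $x\mapsto x/|x|^2$ already contracts the $\dot H^1$ seminorm on the exterior region for every $d\geq2$ because the Jacobian weight $|y|^{4-2d}$ is $\leq1$ there, so the $|x|^{2-d}$ factor is not strictly necessary; but your version works.)

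The gap is in the second assertion when $d=2$. You interpolate $L^{pd/2}=L^p$ between $L^2$ and $L^{2d/(d-2)}=L^\infty$ and then appeal to ``$\|f\|_{L^{2d/(d-2)}}\lesssim\|\nabla f\|_{L^2}$''; in two dimensions that is the embedding $\dot H^1(\R^2)\hookrightarrow L^\infty(\R^2)$, which is \emph{false}, and the phrase ``evident modification'' conceals a genuine break rather than a cosmetic one. To close it with the tools already in hand: iterate your own first inequality (in $d=2$, $\|f\|_{L^{q+2}}^{q+2}\lesssim\|f\|_{L^q}^q\|\nabla f\|_{L^2}^2$) starting from $q=2$ to get $\|f\|_{L^{2k}}\lesssim\|f\|_{L^2}^{1/k}\|\nabla f\|_{L^2}^{1-1/k}$ for each integer $k\geq1$, whose exponents are precisely $1-s_c$ and $s_c$ when $p=2k$; then H\"older-interpolate in $L^q$ between consecutive even exponents, checking that the resulting powers again come out to $1-s_c$ and $s_c$ for general $p$. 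Alternatively, cite the general Gagliardo--Nirenberg interpolation theorem directly. As written, the parenthetical is not an argument.
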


%
%
%
%

\section{Local theory.} \label{S:local theory}

\subsection{Strichartz inequalities}

\begin{lemma}[Strichartz inequality]  \label{L:Strichartz}
Fix a value of $m \in [0,1]$.  Let $u$ be a solution to the inhomogeneous equation
\begin{equation} \label{E:ivp}
u_{tt} - \Delta u + m^2 u = F \quad \text{with} \quad u(0) = u_0 \quad \text{and}\quad u_t(0) = u_1
\end{equation}
on the time interval $[0,T]$.  Let  $0 \leq \gamma \leq 1$, $2 < q,\tilde{q} \leq \infty$, and $2 \leq r, \tilde r < \infty$ be exponents satisfying the scaling and admissibility conditions:
$$
\frac1q + \frac{d}r = \frac{d}2 - \gamma = \frac1{\tilde q'} + \frac{d}{\tilde r'} - 2 \qquad \text{and} \qquad
    \frac1q + \frac{d-1}{2r},\ \frac1{\tilde q} + \frac{d-1}{2\tilde r} \leq \frac{d-1}4.
$$
Then
\begin{equation} \label{E:wave strichartz}
\begin{aligned}
&\|\jpn_m^{\gamma} u\|_{C_t L^2_x([0,T] \times \R^d)} + \|\jpn_m^{\gamma-1}u_t\|_{C_t L^2_x([0,T] \times \R^d)} + \|u\|_{L^q_tL^r_x([0,T] \times \R^d)} \\
& \qquad \qquad \lesssim \|\jpn_m^{\gamma}u_0\|_{L^2_x} + \|\jpn_m^{\gamma-1} u_1\|_{L^2_x} + \|F\|_{L^{\tilde q'}_tL^{\tilde r'}_x([0,T] \times \R^d)}.
\end{aligned}
\end{equation}
Here the implicit constant is independent of $m$ and $T$, but may depend on $d$, $\gamma$, $q$, $\tilde q$, $r$, $\tilde r$.
\end{lemma}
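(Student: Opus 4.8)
The plan is to reduce everything to the known Strichartz estimates for the wave equation together with a careful treatment of the low frequencies where the mass term actually matters. The point of the statement is that the implicit constant is uniform in $m\in[0,1]$ and in $T$, so a naive subordination to the wave semigroup is not quite enough; one has to see that the mass only helps at low frequency and that it does so uniformly.

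First I would split the data and forcing term into Littlewood--Paley pieces $P_{\leq 1}$ and $P_{> 1}$. For the high-frequency part $P_{>1}$, the operator $\jpn_m = \sqrt{m^2 + |\xi|^2}$ is comparable to $|\nabla|$ uniformly in $m$ (since $|\xi|\geq 1\geq m$ there), and the propagator $\scriptS_m(t)$ restricted to $|\xi|>1$ is a standard Fourier integral operator whose phase $t\jp{\xi}_m$ and its derivatives are comparable to those of $t|\xi|$; hence the high-frequency estimate follows from the wave Strichartz inequalities (in the form due to Strichartz, Ginibre--Velo, Keel--Tao, and for the full admissible range including the non-sharp endpoints Lindblad--Sogge), combined with the Christ--Kiselev lemma to pass from the $F=0$ homogeneous estimate (plus the dual homogeneous estimate) to the retarded inhomogeneous estimate — here the hypothesis $q,\tilde q>2$ is exactly what is needed for Christ--Kiselev. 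For the low-frequency part $P_{\leq 1}$, I would argue by Bernstein: on $|\xi|\leq 1$ one has $\jpn_m^\gamma \sim 1$ uniformly (when $m\geq$ some fixed size) or one simply estimates $\|u\|_{L^q_tL^r_x}$ by Bernstein in space, $\|P_{\leq 1} u\|_{L^r_x}\lesssim \|P_{\leq 1}u\|_{L^2_x}$, followed by the conserved-energy-type bound for $\scriptS_m(t)$ at low frequency and Minkowski in the Duhamel term. In the delicate sub-case $m$ small, one uses instead that the wave Strichartz estimate already applies (the mass term is then a harmless perturbation at unit frequency, or one just absorbs $m^2 u$ into $F$ after noting $\|m^2 P_{\leq 1}u\|\lesssim \|u\|$ and closing a Gronwall/fixed-point argument on bounded time intervals — but to keep $T$-independence I would prefer the direct Bernstein route, which loses nothing because the time exponent $q$ is finite only through Hölder on a compact interval; in fact for the low frequencies one can use $L^q_t L^r_x \supset L^\infty_t L^2_x$ type embeddings on each unit time block and sum, since the admissibility inequality gives room).

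I would then combine the two regimes: the $C_tL^2_x$ norms of $\jpn_m^\gamma u$ and $\jpn_m^{\gamma-1}u_t$ are handled by the (uniform-in-$m$) energy estimate for $\scriptS_m$ — which is immediate from Plancherel since $\scriptS_m(t)$ is bounded on the relevant weighted $L^2$ — plus Minkowski's inequality applied to the Duhamel integral together with the dual Strichartz bound on $F$. The spacetime norm $\|u\|_{L^q_tL^r_x}$ is the sum of the high- and low-frequency contributions estimated above. Square-function/Littlewood--Paley considerations are not needed because we only split into two pieces.

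The main obstacle, and the place I would spend the most care, is the uniformity in $m$ and $T$ of the low-frequency estimate. At low frequency the phase $t\jp{\xi}_m$ degenerates (its Hessian in $\xi$ can be large as $m\to 0$, behaving like $t$ times the Hessian of $|\xi|$ near $\xi=0$, which is singular), so one cannot simply quote the wave estimate there; instead one must rely on the crude but uniform $L^2$-boundedness of the propagator combined with Bernstein, and then check that the resulting time-integrated bound is controlled by the $L^q_tL^r_x$ norm on the left without any factor of $T$. This works precisely because the admissibility conditions force $\frac1q+\frac dr \leq \frac d2$, giving $r\leq \infty$ and leaving enough integrability room that Bernstein plus the trivial dispersive-free bound suffice; one should double-check the endpoint behaviour $r\to\infty$ when $d=2$ and $\gamma$ near $0$, where a logarithmic loss could in principle appear, but the strict inequality $q>2$ again saves the day. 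A secondary technical point is verifying that the Lindblad--Sogge non-sharp admissible wave estimates (which is what the inequality $\frac1q+\frac{d-1}{2r}\leq\frac{d-1}4$ encodes) are stated in a form allowing both the direct and dual exponents to be chosen independently; this is standard but should be cited carefully.
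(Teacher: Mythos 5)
Your proposal takes a genuinely different route from the paper, and unfortunately it has a gap precisely at the place you flag as the ``main obstacle.''  The paper does not split into low and high frequencies at all.  Instead it proves a single frequency-localized dispersive estimate, uniform in $m\in[0,1]$ and in $N$,
\[
\|e^{it\jpn_m}P_N f\|_{L^{\infty}_x}
  \lesssim N^d\bigl(1+\tfrac{|t|N^2}{\jp{N}_m}\bigr)^{-\frac{d-1}{2}}\|P_Nf\|_{L^1_x}
  \lesssim |t|^{-\frac{d-1}{2}}\jp{N}_m^{\frac{d+1}{2}}\|P_Nf\|_{L^1_x},
\]
by stationary phase applied directly to the Klein--Gordon symbol $\sqrt{m^2+|\xi|^2}$, and then runs the standard $TT^*$/Keel--Tao machinery (plus Christ--Kiselev for the off-diagonal retarded estimate, as you correctly anticipate).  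The crucial point is that the Klein--Gordon phase still disperses at every frequency and every mass --- at low frequency it is Schr\"odinger-like with rate $|t|^{-d/2}$ (Hessian of full rank $d$), at high frequency it is wave-like with rate $|t|^{-(d-1)/2}$, and $|t|^{-(d-1)/2}$ is a uniform lower bound for the rate across the whole range.  So the low-frequency regime is \emph{not} a place where dispersion fails; it is a place where dispersion changes character.

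The gap in your argument is in the treatment of $P_{\leq 1}$.  You propose to abandon dispersion there and use only Bernstein plus the uniform $L^2_x$ bound on the propagator, asserting that the admissibility conditions leave ``enough integrability room.''  They do not.  Bernstein gives $\|P_{\leq 1}u(t)\|_{L^r_x}\lesssim\|P_{\leq 1}u(t)\|_{L^2_x}$ at each fixed $t$, so the best you can extract is
\[
\|P_{\leq 1}u\|_{L^q_tL^r_x([0,T])}\lesssim\|P_{\leq 1}u\|_{L^q_tL^2_x([0,T])}\lesssim T^{1/q}\,\|P_{\leq 1}u\|_{L^\infty_tL^2_x([0,T])},
\]
and decomposing $[0,T]$ into unit blocks and summing in $\ell^q$, as you suggest, produces exactly the same $T^{1/q}$ factor.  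There is no admissibility margin that removes it: a free low-frequency Klein--Gordon wave has constant $L^2_x$ norm, and one genuinely needs $L^\infty_x$ decay in time to put it in $L^q_tL^r_x$ globally with $q<\infty$.  Since the lemma claims a constant independent of $T$, this step does not close.  The remedy is precisely the paper's: prove the frequency-localized dispersive estimate at low frequencies too (it holds, and in fact with a better power of $|t|$), and then Keel--Tao delivers the $T$-uniform result without any frequency splitting at all.  Your high-frequency reasoning is morally sound but should also be presented as redoing stationary phase for the perturbed phase $\sqrt{m^2+|\xi|^2}$ rather than literally invoking the $m=0$ wave estimate, since the latter is for a different Fourier multiplier.
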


\begin{remark}
We will make particularly heavy use of the following special case:
\begin{equation} \label{E:H12 strichartz}
\begin{aligned}
&\|\jpn_m^{\frac12}u\|_{C_tL^2_x([0,T] \times \R^d)} + \|\jpn_m^{-\frac12}u_t\|_{C_tL^2_x([0,T] \times \R^d)} + \|u\|_{L^{\frac{2(d+1)}{d-1}}_{t,x}([0,T] \times \R^d)} \\
&\qquad\qquad  \lesssim \|\jpn_m^{\frac12} u_0\|_{L^2_x} + \|\jpn_m^{-\frac12}u_1\|_{L^2_x} + \|F\|_{L^{\frac{2(d+1)}{d+3}}_{t,x}([0,T] \times \R^d)}.
\end{aligned}
\end{equation}
\end{remark}

\begin{proof}  The principle of stationary phase may be used to show that the linear operator $e^{it\jpn_m}$ satisfies the dispersive estimate
\begin{equation} \label{E:dispersive}
\begin{aligned}
\|e^{it\jpn_m}P_N f\|_{L^{\infty}_x} &\lesssim N^d\bigl(1+\tfrac{|t|N^2}{\jp{N}_m}\bigr)^{-\frac{d-1}2}\|P_Nf\|_{L^1_x} \\ &
\lesssim |t|^{-\frac{d-1}2}\bigl\langle N \bigr\rangle_m^{\frac{d+1}2}\|P_Nf\|_{L^1_x} ,
\end{aligned}
\end{equation}
where the implicit constants are independent of $m$.  Combining this with the fact that $e^{it\jpn_m}$ is an isometry on $L^2_x$, standard arguments (cf.\ \cite{tao:keel} and the references therein) give the Strichartz estimates \eqref{E:wave strichartz}.
\end{proof}

Using the Strichartz estimate, one can easily derive the following standard result:

\begin{prop}[Uniqueness in light cones, \cite{Kapitanski94}] \label{P:uniqueness}
Let $u$ and $\tilde u$ be two strong solutions to \eqref{E:eqn} on the backwards light cone $\Gamma(T,x_0)$.  If $(u(0),u_t(0)) = (\tilde u(0),\tilde u_t(0))$ on $\{x:|x-x_0| \leq T\}$, then $u=\tilde u$ throughout $\Gamma(T,x_0)$.
\end{prop}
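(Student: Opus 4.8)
Throughout the argument, translate so that $x_0=0$, and write $w:=u-\tilde u$ and $G:=|u|^pu-|\tilde u|^p\tilde u$; recall the elementary pointwise bound $|G|\lesssim(|u|^p+|\tilde u|^p)|w|$. Subtracting the two instances of \eqref{E:Duhamel}, the function $w$ satisfies, on $\Gamma(T,0)$, the Duhamel formula for the linear Klein--Gordon equation with forcing $G$ and vanishing Cauchy data on $\{|x|\le T\}$. The plan is to propagate this vanishing forward in time by a Strichartz smallness argument run on thin slabs, using finite speed of propagation (for $\scriptS_m$ and for the Duhamel integral operator, both of speed $\le1$) to confine everything to the cone; this is the standard route and the role of the $L^{\frac{p(d+1)}2}_\loc$ condition in Definition~\ref{D:solution} is precisely to make it work.

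Fix $T'<T$. I claim there is $\delta=\delta(T',u,\tilde u)>0$ such that, whenever $w$ vanishes on $\{0\le t\le s,\ |x|\le T-t\}$ for some $s$ with $s+\delta\le T'$, it also vanishes on $\{0\le t\le s+\delta,\ |x|\le T-t\}$. (Note that vanishing on such a truncated cone forces $w_t(s)=0$ on $\{|x|\le T-s\}$, since $w_t\in C_tL^2_x$ and $w$ is constant in $t$ to the left of $s$ on the relevant balls.) Granting the claim, iterate $\lceil T'/\delta\rceil$ times starting from $s=0$, where the base case is exactly the hypothesis $w(0)=w_t(0)=0$ on $\{|x|\le T\}$; this gives $w\equiv0$ on $\{0\le t\le T',\ |x|\le T-t\}$, and letting $T'\uparrow T$ yields $w\equiv0$ on $\Gamma(T,0)$.

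To prove the claim, let $v$ solve the linear Klein--Gordon equation on $[s,s+\delta]\times\R^d$ with zero data at $t=s$ and forcing $\mathbf 1_{\{|x|\le T-t\}}G$ (with $G$ extended by $0$ off the cone). Finite speed of propagation shows $v=w$ throughout $\{s\le t\le s+\delta,\ |x|\le T-t\}$: the backward unit cone from any such $(t,x)$ lies in $\Gamma(T,0)$ and meets $\{t=s\}$ within $\{|x|\le T-s\}$, where $w(s)=w_t(s)=0$, so $v$ and $w$ satisfy the same Duhamel formula on this frustum. Applying \eqref{E:H12 strichartz} to $v$ on $[s,s+\delta]\times\R^d$, the data terms drop; and since $u,\tilde u\in L^{\frac{p(d+1)}2}_\loc(\Gamma(T,0))$ places $|u|^p,|\tilde u|^p$ in $L^{\frac{d+1}2}_{t,x}$ over the frustum, while the reciprocals $\tfrac2{d+1}$ and $\tfrac{d-1}{2(d+1)}$ sum to $\tfrac{d+3}{2(d+1)}$, Hölder together with the bound on $|G|$ gives
\[
\|v\|_{L^{\frac{2(d+1)}{d-1}}_{t,x}([s,s+\delta]\times\R^d)}+\bigl\|\jpn_m^{-\frac12}v_t\bigr\|_{C_tL^2_x}\ \lesssim\ \Bigl(\|u\|^p_{L^{\frac{p(d+1)}2}_{t,x}}+\|\tilde u\|^p_{L^{\frac{p(d+1)}2}_{t,x}}\Bigr)\,\|v\|_{L^{\frac{2(d+1)}{d-1}}_{t,x}([s,s+\delta]\times\R^d)},
\]
where the $L^{\frac{p(d+1)}2}$ norms are over $([s,s+\delta]\times\R^d)\cap\Gamma(T,0)$. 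By uniform absolute continuity of the finite integral of $|u|^{\frac{p(d+1)}2}+|\tilde u|^{\frac{p(d+1)}2}$ over the compact set $\Gamma(T,0)\cap\{t\le T'\}$ (the slabs having measure $O(\delta)$), the prefactor is, uniformly in $s$, small enough to absorb the last factor into the left side once $\delta$ is fixed small; as $w\in C_tH^1_x(\Gamma(T,0))\hookrightarrow C_tL^{\frac{2(d+1)}{d-1}}_x$ on cone slices of finite measure (using $\tfrac{2(d+1)}{d-1}\le\tfrac{2d}{d-2}$), the left side is finite and hence must vanish. Thus $w=v=0$ on the frustum, and combined with the hypothesis, $w\equiv0$ on $\{0\le t\le s+\delta,\ |x|\le T-t\}$, proving the claim.

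The main obstacle is the bookkeeping around finite speed of propagation — constructing the globally-defined comparison solution $v$, verifying it coincides with $w$ on the frustum, and making the Strichartz smallness uniform in the base time $s$ — all of which are routine but require some care. The one genuinely structural point is that the exponent $\tfrac{p(d+1)}2$ appearing in Definition~\ref{D:solution} is exactly the one that places $|u|^p$ in the Hölder conjugate of the conformal Strichartz pair $\bigl(\tfrac{2(d+1)}{d-1},\tfrac{2(d+1)}{d-1}\bigr)$, so the argument closes for every $0<p<\tfrac4{d-2}$ with no case distinction.
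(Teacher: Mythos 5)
Your proof is correct and follows essentially the same strategy as the paper: reduce to a smallness argument for the difference $w=u-\tilde u$ run on thin time slabs, using the $L^{\frac{p(d+1)}{2}}_\loc$ integrability from Definition~\ref{D:solution} to make the nonlinear prefactor in the Strichartz estimate small, with the Hölder split $\frac{d+3}{2(d+1)}=\frac{2}{d+1}+\frac{d-1}{2(d+1)}$ closing the loop. The only cosmetic difference is how the smallness is organized: the paper partitions $[0,T)$ into countably many intervals $I_j$ on which the $L^{\frac{p(d+1)}{2}}$ norms of $u,\tilde u$ are each at most $\eta$ (possible since the norm is finite on compact subintervals), then inducts over the $I_j$; you instead fix $T'<T$, extract a single $\delta>0$ valid for all base times $s\le T'-\delta$ via uniform absolute continuity of the integral, then let $T'\uparrow T$. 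Those two bookkeeping devices are interchangeable. You also make the finite-speed-of-propagation step more explicit by constructing the global comparison solution $v$ and matching it with $w$ on the frustum; the paper invokes finite speed of propagation in one line but means exactly this construction. Both proofs correctly use the a priori finiteness of the conformal Strichartz norm of $w$ on the cone, obtained from $C_tH^1_x$ plus Sobolev embedding on bounded domains (Lemma~\ref{L:Sob domain}), to justify the final absorption.
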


\begin{proof}
To keep formulae within margins, we introduce the following notation:  If $I \subset [0,\infty)$ is an interval, then we set $\Gamma_{I} := (I \times \R^d) \cap \Gamma(T,x_0)$.

Let $\eta>0$ be a small constant to be chosen shortly. By Definition~\ref{D:solution}, we may write $[0,T] = \bigcup_{j=1}^{\infty} I_j$ with
$$
\|u\|_{L^{\frac{p(d+1)}2}_{t,x}(\Gamma_{I_j})} + \|\tilde u\|_{L^{\frac{p(d+1)}2}_{t,x}(\Gamma_{I_j})} \leq \eta.
$$
Next, by Lemma~\ref{L:Sob domain}, for each $t\in I_j$ we have
\begin{align*}
\|u(t)\|_{L_x^{\frac{2(d+1)}{d-1}}(|x-x_0|<T-t)}& + \|\tilde u(t)\|_{L_x^{\frac{2(d+1)}{d-1}}(|x-x_0|<T-t)}\\
&\lesssim \|u(t)\|_{H^1_x(|x-x_0|<T-t)}+ \|\tilde u(t)\|_{H^1_x(|x-x_0|<T-t)}.
\end{align*}
Thus, by the definition of strong solution,
$$
\|u\|_{L^{\frac{2(d+1)}{d-1}}_{t,x}(\Gamma_{I_j})} + \|\tilde u\|_{L^{\frac{2(d+1)}{d-1}}_{t,x}(\Gamma_{I_j})}<\infty.
$$

We now consider the difference $w = u - \tilde u$.  By \eqref{E:wave strichartz}, finite speed of propagation, and H\"older's inequality,
\begin{align*}
\|w\|_{L^{\frac{2(d+1)}{d-1}}_{t,x}(\Gamma_{I_1})} &\lesssim \||u|^pu - |\tilde u|^p \tilde u\|_{L^{\frac{2(d+1)}{d+3}}_{t,x}(\Gamma_{I_1}) } \\
&\lesssim \|(|u|^p+ |\tilde u|^p)(u-\tilde u)\|_{L^{\frac{2(d+1)}{d+3}}_{t,x}(\Gamma_{I_1}) } \\
&\lesssim \eta^p \|w\|_{L^{\frac{2(d+1)}{d-1}}_{t,x}(\Gamma_{I_1})} .
\end{align*}
Choosing $\eta$ sufficiently small, we deduce that $w \equiv 0$ on $\Gamma_{I_1}$.  An inductive argument yields $w \equiv 0$ on $\Gamma(T,x_0)$.
\end{proof}

If $m=0$ and $u$ has zero initial data, then it was proved by Harmse in \cite{Harmse} that better estimates than those in Lemma~\ref{L:Strichartz} are possible.

\begin{lemma}[Strichartz estimates for inhomogeneous wave] \label{L:inhomog st}
Let $u$ be a solution to the initial-value problem
$$
u_{tt} - \Delta u = F \qquad \text{with} \qquad u(0) = u_t(0) = 0
$$
on the interval $[0,T]$.  Then
\begin{equation} \label{E:inhomog st}
\|u\|_{L^r_{t,x}([0,T]\times \R^d)} \lesssim \|F\|_{L^{\tilde r'}_{t,x}([0,T] \times \R^d)},
\end{equation}
whenever $r$ and $\tilde r$ satisfy the scaling and acceptability conditions $ \frac1r + \frac1{\tilde r} = \frac{d-1}{d+1}$ and $\frac1r, \frac1{\tilde r} < \frac{d-1}{2d}$.  In particular, \eqref{E:inhomog st} holds with $r = \frac{p(d+1)}2$ and $\tilde r' = \frac{p(d+1)}{2(p+1)}$, provided that $\frac1{2d} < s_c < \frac12$.
\end{lemma}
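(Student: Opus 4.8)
The general inequality \eqref{E:inhomog st}, valid for every pair $(r,\tilde r)$ obeying the scaling identity $\tfrac1r+\tfrac1{\tilde r}=\tfrac{d-1}{d+1}$ together with the acceptability conditions $\tfrac1r,\tfrac1{\tilde r}<\tfrac{d-1}{2d}$, is exactly the theorem of Harmse \cite{Harmse} referred to just above. The plan is therefore twofold: first, to verify that the concrete exponents $r=\tfrac{p(d+1)}2$ and $\tilde r'=\tfrac{p(d+1)}{2(p+1)}$ satisfy these constraints precisely when $\tfrac1{2d}<s_c<\tfrac12$; and second, to recall the structure of Harmse's argument so as to pinpoint its one genuinely hard ingredient.

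For the first part I would simply compute. Since $p=\tfrac4{d-2s_c}$, one has $\tfrac1r=\tfrac2{p(d+1)}$ and $\tfrac1{\tilde r}=1-\tfrac{2(p+1)}{p(d+1)}=\tfrac{pd-p-2}{p(d+1)}$, so that $\tfrac1r+\tfrac1{\tilde r}=\tfrac{p(d-1)}{p(d+1)}=\tfrac{d-1}{d+1}$ automatically; the scaling identity holds for free. Next, $\tfrac1r<\tfrac{d-1}{2d}$ rearranges to $p>\tfrac{4d}{d^2-1}$, which in terms of $s_c$ is precisely $s_c>\tfrac1{2d}$; while $\tfrac1{\tilde r}<\tfrac{d-1}{2d}$ rearranges to $p(d-1)^2<4d$, that is, to $s_c<1-\tfrac1{2d}$, a condition implied by $s_c<\tfrac12$ since $d\ge2$. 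Hence all hypotheses of the general estimate are met, and the asserted special case follows at once.

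For the second part I would recall that the homogeneous wave equation is scale invariant and that the Duhamel operator $F\mapsto\int_0^t|\nabla|^{-1}\sin((t-s)|\nabla|)F(s)\,ds$ is block-diagonal with respect to Littlewood--Paley frequency blocks (the $P_{N_1}$--$P_{N_2}$ interaction vanishes unless $N_1\sim N_2$). Using these facts together with the Littlewood--Paley square function and Minkowski's inequality --- available because $r\ge2\ge\tilde r'$ --- one reduces \eqref{E:inhomog st} to its single-frequency form, which by scaling may be taken at $|\xi|\sim1$. The fixed-frequency bound would then come from the $m=0$ case of the dispersive estimate \eqref{E:dispersive}, Bernstein's inequality (to move between the Lebesgue exponents $\tilde r'$, $r$ and an intermediate exponent $a$), and an analysis of the resulting convolution in time. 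The hard part is exactly that last step: naively pairing the $L^{a'}_x\to L^{a}_x$ dispersive estimate with one-dimensional Hardy--Littlewood--Sobolev only yields a proper subrange of the acceptable exponents, and recovering the \emph{full} range forces one to retain the complete oscillatory structure of the wave fundamental solution --- equivalently, to route the argument through a restriction-type (Stein--Tomas) estimate for the light cone. This endpoint analysis, which is what makes the lemma nontrivial, is carried out in \cite{Harmse}; I would invoke it as a black box rather than reproduce it.
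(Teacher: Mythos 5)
Your approach matches the paper's, which gives no proof of this lemma and simply cites Harmse's theorem from \cite{Harmse} for the general inequality. Your arithmetic verifying that $r=\tfrac{p(d+1)}2$, $\tilde r'=\tfrac{p(d+1)}{2(p+1)}$ satisfy the scaling identity automatically and that $\tfrac1r<\tfrac{d-1}{2d}\Leftrightarrow s_c>\tfrac1{2d}$ and $\tfrac1{\tilde r}<\tfrac{d-1}{2d}\Leftrightarrow s_c<1-\tfrac1{2d}$ (the latter implied by $s_c<\tfrac12$ for $d\ge2$) is correct, and invoking Harmse as a black box for the restriction-theoretic endpoint analysis is exactly what the paper intends.
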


Finally, in the radial case, lower regularity Strichartz estimates than those given in Lemma~\ref{L:Strichartz} are possible.

\begin{lemma}[Radial Strichartz estimates for homogeneous wave] \label{L:radial wave}
Let $\frac1{2d} < s_c < \frac12$ and let $p = \frac4{d-2s_c}$.  If $u_0 \in \dot{H}^{s_c}_x(\R^d)$ and $u_1 \in \dot{H}^{s_c-1}_x(\R^d)$ are radial, then the solution to the linear wave equation satisfies
\begin{equation} \label{E:radial wave}
\|\scriptS_0(t)(u_0,u_1)\|_{L^{\frac{p(d+1)}2}_{t,x}} \lesssim \|u_0\|_{\dot{H}^{s_c}_x} + \|u_1\|_{\dot{H}^{s_c-1}_x}.
\end{equation}
\end{lemma}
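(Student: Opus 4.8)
The plan is to reduce \eqref{E:radial wave} to a fixed-frequency radial estimate for the half-wave propagator $e^{it|\nabla|}$ and then exploit the Bessel representation of radial functions. First, since $\scriptS_0(t)(u_0,u_1)=\cos(t|\nabla|)u_0+|\nabla|^{-1}\sin(t|\nabla|)u_1$, since $\cos(t|\nabla|)=\tfrac12(e^{it|\nabla|}+e^{-it|\nabla|})$ and $|\nabla|^{-1}\sin(t|\nabla|)=\tfrac1{2i}(e^{it|\nabla|}-e^{-it|\nabla|})|\nabla|^{-1}$, since $e^{-it|\nabla|}g=\overline{e^{it|\nabla|}\bar g}$ has the same spacetime norm, and since $|\nabla|^{-1}u_1$ is radial with $\||\nabla|^{-1}u_1\|_{\dot H^{s_c}_x}=\|u_1\|_{\dot H^{s_c-1}_x}$, it suffices to prove
\[
\|e^{it|\nabla|}g\|_{L^q_{t,x}(\R\times\R^d)}\lesssim\|g\|_{\dot H^{s_c}_x}\qquad\text{for radial }g,\quad q:=\tfrac{p(d+1)}2 .
\]
An elementary computation gives $s_c=\tfrac d2-\tfrac{d+1}q$ (so the estimate is scale-consistent), $q>\tfrac{2d}{d-1}$ exactly because $s_c>\tfrac1{2d}$, and $q<\tfrac{2(d+1)}{d-1}$ because $s_c<\tfrac12$ (this last bound will not be used). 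Since $\tfrac{2d}{d-1}>2$ for all $d\ge2$, we have $q>2$ throughout.

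Second, I would remove the frequency localisation. By the Littlewood--Paley square-function inequality in $L^q_x$ followed by summing the dyadic pieces in $\ell^2$ (both licit in $L^q_{t,x}$ since $1<q<\infty$ and $q\ge2$), the estimate reduces to $\|e^{it|\nabla|}P_Ng\|_{L^q_{t,x}}\lesssim N^{s_c}\|P_Ng\|_{L^2_x}$ for each dyadic $N$, because $\bigl(\sum_N N^{2s_c}\|P_Ng\|_{L^2_x}^2\bigr)^{1/2}\sim\|g\|_{\dot H^{s_c}_x}$. Rescaling $(t,x)\mapsto(t/N,x/N)$ turns this into the unit-frequency claim: for radial $h$ with $\hat h$ supported in $\{|\xi|\sim1\}$,
\[
\|e^{it|\nabla|}h\|_{L^q_{t,x}(\R\times\R^d)}\lesssim\|h\|_{L^2_x} .
\]

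Third, to prove this I would use radiality. Writing $\hat h(\xi)=H(|\xi|)$, the radial profile $v(r,t)$ of $e^{it|\nabla|}h$ is given by
\[
v(r,t)=c_d\int_0^\infty e^{it\rho}\,H(\rho)\,\frac{J_{(d-2)/2}(\rho r)}{(\rho r)^{(d-2)/2}}\,\rho^{d-1}\,d\rho .
\]
For each fixed $r$ this presents $v(r,\cdot)$ as (a multiple of) the one-dimensional inverse Fourier transform of a function supported in $\{\rho\sim1\}$. Hence Plancherel in $t$, the support condition $\rho\sim1$, and the elementary bound $|J_\nu(s)|\,s^{-\nu}\lesssim(1+s)^{-(d-1)/2}$ for $\nu=\tfrac{d-2}2\ge0$ (which follows from the behaviour of $J_\nu$ near $0$ and near $\infty$) give $\|v(r,\cdot)\|_{L^2_t}\lesssim(1+r)^{-(d-1)/2}\|h\|_{L^2_x}$; the one-dimensional Bernstein inequality (again using $\rho\sim1$ and $q\ge2$) then upgrades this to $\|v(r,\cdot)\|_{L^q_t}\lesssim(1+r)^{-(d-1)/2}\|h\|_{L^2_x}$. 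Therefore
\[
\|e^{it|\nabla|}h\|_{L^q_{t,x}}^q=c_d\int_0^\infty\|v(r,\cdot)\|_{L^q_t}^q\,r^{d-1}\,dr\lesssim\|h\|_{L^2_x}^q\int_0^\infty(1+r)^{-\frac{q(d-1)}2}r^{d-1}\,dr ,
\]
and the last integral converges precisely when $q>\tfrac{2d}{d-1}$, i.e.\ when $s_c>\tfrac1{2d}$. Undoing the two reductions completes the argument.

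The heart of the matter, and the only place where radiality is essential, is this last step: for general (nonradial) $g$ the bound $\|e^{it|\nabla|}g\|_{L^q_{t,x}}\lesssim\|g\|_{\dot H^{s_c}_x}$ is false throughout the range $q<\tfrac{2(d+1)}{d-1}$ (Knapp examples), so no refinement of the dispersive estimate underlying Lemma~\ref{L:Strichartz} could produce it. A further delicate feature is that the conclusion is sharp: the $r$-integral above diverges at $q=\tfrac{2d}{d-1}$, which is exactly why the hypothesis $s_c>\tfrac1{2d}$ must be strict. Everything else --- the square-function and $\ell^2$-summation steps, the scaling, and the one-dimensional Bernstein estimate --- is routine, as long as one records the inequality $q\ge2$, which is guaranteed by $d\ge2$.
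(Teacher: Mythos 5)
Your proof is correct and complete. Note that the paper itself does not prove this lemma; it simply attributes the estimate to Klainerman--Machedon as discussed in Lindblad--Sogge, so you are supplying an argument where the paper supplies a citation.

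Your argument is the classical Fourier--Bessel route to radial wave Strichartz estimates, and each step checks out: the reduction to a single half-wave is fine (both $\cos(t|\nabla|)$ and $|\nabla|^{-1}\sin(t|\nabla|)$ are radial multipliers, and $\||\nabla|^{-1}u_1\|_{\dot H^{s_c}_x}=\|u_1\|_{\dot H^{s_c-1}_x}$); the Littlewood--Paley square function plus Minkowski reduction is licit since $q=\tfrac{p(d+1)}{2}>\tfrac{2d}{d-1}\geq 2$ for $d\geq2$, and $P_N$ preserves radiality; the rescaling to unit frequency works because $s_c=\tfrac d2-\tfrac{d+1}{q}$ is exactly the scaling relation, so the unit-frequency estimate $\|e^{it|\nabla|}h\|_{L^q_{t,x}}\lesssim\|h\|_{L^2_x}$ gives the dyadic bound with the correct factor $N^{s_c}$; and the unit-frequency estimate itself follows, as you say, from Plancherel in $t$ at fixed $r$, the Bessel decay $|J_\nu(s)|s^{-\nu}\lesssim(1+s)^{-(d-1)/2}$ with $\nu=\tfrac{d-2}{2}\geq0$, one-dimensional Bernstein (compact frequency support in $t$, $q\geq2$), and integration in $r$. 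The $r$-integral converges precisely when $q>\tfrac{2d}{d-1}$, which is exactly $s_c>\tfrac1{2d}$; your observation that the upper bound $s_c<\tfrac12$ plays no role in the argument (the estimate holds for all radial data once $s_c>\tfrac1{2d}$, though for $s_c\geq\tfrac12$ the nonradial Strichartz estimate of Lemma~\ref{L:Strichartz} already suffices) is a correct and useful clarification. This is a clean, self-contained proof that the reader would otherwise have to extract from the cited references.
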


This estimate is implicit in \cite{KlainermanMachedon93} as discussed in \cite{LindbladSogge95}.  We note that the bound \eqref{E:radial wave} is true for larger values of $s_c$
without the assumption of radiality (cf.~\eqref{E:wave strichartz}).


\subsection{Well-posedness results}

Global well-posedness and scattering for NLW with small initial data in critical Sobolev spaces is due to Lindblad and Sogge, \cite{LindbladSogge95}, in the super-conformal case ($\frac12 < s_c < 1$) and to Strauss, \cite{Strauss81}, in the conformal case ($s_c=\frac12$).

In the sub-conformal case, the Lorentz symmetry may be used to construct examples which show that such a small data theory is impossible (cf.\ \cite{LindbladSogge95}).  This
motivates the consideration of \emph{radial} initial data when $s_c<\frac12$.  However, even in order to construct global solutions in $L^{\frac{p(d+1)}2}_{t,x}$ from small radial
data, we need to impose the additional condition $s_c > \frac1{2d}$.  This originates in the fact that $\frac{p(d+1)}2$ with $p=\frac{4d}{d^2-1}$ (i.e. $s_c=\frac1{2d}$)
corresponds to an endpoint in the cone restriction conjecture.  Global well-posedness and scattering for NLW for $\frac1{2d}<s_c<\frac12$ with small radial data in critical
Sobolev spaces may again be found in \cite{LindbladSogge95}.

We summarize below the small data theory that we will use.

\begin{prop}[Critical small data theory for wave] \label{P:Hsc sdt}
Fix $d \geq 2$ and $\frac1{2d} < s_c < 1$ and let $p = \frac4{d-2s_c}$.  Let $(u_0, u_1)\in \dot H^{s_c}_x\times \dot H^{s_c-1}_x$ with $u_0$ and $u_1$ radial when
$\frac1{2d}<s_c<\frac12$.  There exists $\eta_0$ depending on $d$ and $p$ so that if $\eta \leq \eta_0$ and
$$
\|\scriptS_0(t)(u_0,u_1)\|_{L^{\frac{p(d+1)}2}_{t,x}([0,T] \times \R^d)} \leq \eta
$$
and additionally
$$
\||\nabla|^{s_c-\frac12} \scriptS_0(t)(u_0,u_1)\|_{L^{\frac{2(d+1)}{d-1}}_{t,x}([0,T] \times \R^d)} \leq \eta \qquad \text{if} \qquad \tfrac12\leq s_c<1,
$$
then there is a unique solution $u$ to \eqref{E:eqn} with $m=0$ on $[0,T] \times \R^d$.  Moreover, $u$ satisfies
\begin{align}
\|u\|_{L^{\frac{p(d+1)}2}_{t,x}([0,T] \times \R^d)} &\lesssim \eta \label{E:small data Lp}\\
\|\nabla_{t,x} u\|_{L^{\infty}_t \dot H^{s_c-1}_x([0,T] \times \R^d)} &\lesssim \|(u_0,u_1)\|_{\dot H^{s_c}_x \times \dot H^{s_c-1}_x}.\label{E:small data Hsc}
\end{align}
If in addition $(u_0,u_1) \in \dot H^1_x \times L^2_x$, then
\begin{equation} \label{E:small data H1}
\|\nabla_{t,x} u\|_{L^\infty_tL^2_x([0,T] \times \R^d)} \lesssim \|(u_0,u_1)\|_{\dot H^1_x \times L^2_x}.
\end{equation}
In particular, if $\|(u_0,u_1)\|_{\dot H^{s_c}_x \times \dot H^{s_c-1}_x} \leq \eta_1$ for some constant $\eta_1 = \eta_1(d,p) > 0$, then $u$ is global and obeys the estimates above on $\R \times \R^d$.
\end{prop}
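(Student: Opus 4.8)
The plan is to build $u$ as a fixed point of the Duhamel map
$$
\Phi(u)(t) := \scriptS_0(t)(u_0,u_1) + \int_0^t |\nabla|^{-1}\sin\bigl(|\nabla|(t-s)\bigr)\,\bigl(|u|^pu\bigr)(s)\,ds
$$
on a complete metric space of spacetime functions, estimating differences of the nonlinearity via $\bigl||u|^pu-|v|^pv\bigr|\lesssim(|u|^p+|v|^p)|u-v|$, and to carry this out in two regimes.  In the sub-conformal range $\tfrac1{2d}<s_c<\tfrac12$ I would run the contraction in the ball of radius $2\eta$ in $X:=L^{\frac{p(d+1)}2}_{t,x}([0,T]\times\R^d)$.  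The point of the hypothesis $s_c>\tfrac1{2d}$ is precisely that it is the acceptability condition making Lemma~\ref{L:inhomog st} applicable with $r=\tfrac{p(d+1)}2$ and $\tilde r'=\tfrac{p(d+1)}{2(p+1)}$; since $\tilde r'(p+1)=r$, Hölder gives $\bigl\||u|^pu\bigr\|_{L^{\tilde r'}_{t,x}}=\|u\|_X^{p+1}$, and likewise for the difference.  Combined with the assumed smallness $\|\scriptS_0(t)(u_0,u_1)\|_X\le\eta$, this shows that $\Phi$ preserves the ball and is a contraction once $\eta\le\eta_0$; its fixed point is the desired solution, re-inserting $\|u\|_X\le2\eta$ into the Duhamel formula yields \eqref{E:small data Lp}, and uniqueness follows from the argument in the proof of Proposition~\ref{P:uniqueness}.

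In the conformal and super-conformal range $\tfrac12\le s_c<1$, Lemma~\ref{L:inhomog st} is no longer available, so I would instead propagate the norm
$$
\|u\|_S := \|u\|_{L^{\frac{p(d+1)}2}_{t,x}} + \bigl\||\nabla|^{s_c-\frac12}u\bigr\|_{L^{\frac{2(d+1)}{d-1}}_{t,x}},
$$
whose two summands are exactly the quantities whose linear counterparts are assumed small in the statement.  Applying $|\nabla|^{s_c-\frac12}$ throughout the conformal Strichartz inequality \eqref{E:H12 strichartz} (with $m=0$) controls $\|\Phi(u)\|_S$, as well as the energies $\bigl\||\nabla|^{s_c}\Phi(u)\bigr\|_{C_tL^2_x}$ and $\bigl\||\nabla|^{s_c-1}\partial_t\Phi(u)\bigr\|_{C_tL^2_x}$, by the data plus $\bigl\||\nabla|^{s_c-\frac12}(|u|^pu)\bigr\|_{L^{\frac{2(d+1)}{d+3}}_{t,x}}$.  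A fractional chain rule for $z\mapsto|z|^pz$, valid since the derivative order $s_c-\tfrac12$ lies in $[0,\tfrac12)$, bounds this last term by $\|u\|_{L^{\frac{p(d+1)}2}_{t,x}}^{\,p}\,\bigl\||\nabla|^{s_c-\frac12}u\bigr\|_{L^{\frac{2(d+1)}{d-1}}_{t,x}}$ --- the exponents matching because $\tfrac{d+3}{2(d+1)}=\tfrac2{d+1}+\tfrac{d-1}{2(d+1)}$ --- so the nonlinear contribution to the conformal-Strichartz part of $\|u\|_S$ is absorbed.  The remaining scalar piece $\|\Phi(u)\|_{L^{p(d+1)/2}_{t,x}}$ I would recover by interpolating the two bounds just obtained for the Duhamel term, namely its $L^\infty_t\dot H^{s_c}_x$ norm and its $\bigl\||\nabla|^{s_c-\frac12}\cdot\bigr\|_{L^{2(d+1)/(d-1)}_{t,x}}$ norm (each followed by Sobolev embedding in $x$), at the scaling-consistent interpolation parameter $\theta=\tfrac{d-2s_c}{d-1}\in(0,1]$.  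This makes $\Phi$ a contraction on a small ball in $S$, again yielding the solution and \eqref{E:small data Lp}.

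With $u$ in hand and $\|u\|_{L^{p(d+1)/2}_{t,x}}\lesssim\eta$ small, the regularity bounds follow by persistence: for \eqref{E:small data Hsc}, re-insert $u$ into the Duhamel formula, apply the energy component of the Strichartz estimate \eqref{E:wave strichartz} at $\gamma=s_c$ (via the $|\nabla|^{s_c-\frac12}$-differentiated form of \eqref{E:H12 strichartz} when $s_c\ge\tfrac12$, and via Lemmas~\ref{L:Strichartz} and~\ref{L:inhomog st} when $s_c<\tfrac12$), and bound the forcing by Hölder, placing $p$ factors of $u$ in the small critical norm and one in a $\dot H^{s_c}$-level Strichartz norm appearing on the left; smallness of $\|u\|_{L^{p(d+1)/2}_{t,x}}^{\,p}$ lets this term be absorbed, giving the output $\lesssim\|(u_0,u_1)\|_{\dot H^{s_c}_x\times\dot H^{s_c-1}_x}$.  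Estimate \eqref{E:small data H1} is obtained the same way, and more cheaply, since $\nabla(|u|^pu)=(p+1)|u|^p\nabla u$ exactly (no fractional calculus), so that $\|\nabla_{t,x}u\|_{L^\infty_tL^2_x}\lesssim\|(u_0,u_1)\|_{\dot H^1_x\times L^2_x}+\|u\|_{L^{p(d+1)/2}_{t,x}}^{\,p}\,\|\nabla_{t,x}u\|_{L^\infty_tL^2_x}$ and the last term is absorbed.  Finally, for the global statement I would use that small critical data has small linear evolution --- via Lemma~\ref{L:radial wave} when $\tfrac1{2d}<s_c<\tfrac12$, and via Lemma~\ref{L:Strichartz} together with the $|\nabla|^{s_c-\frac12}$-differentiated conformal estimate when $\tfrac12\le s_c<1$ --- so that choosing $\eta_1$ with both linear norms below $\eta_0$ permits taking $T=\infty$, and all of the above estimates then hold on $\R\times\R^d$.

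I expect the main obstacle to lie in the super-conformal range $s_c>\tfrac12$, where Lemma~\ref{L:inhomog st} fails: one cannot close the argument on the single scaling-critical norm $L^{p(d+1)/2}_{t,x}$ and must instead carry the auxiliary Strichartz norm at $s_c-\tfrac12$ derivatives, linking it to the critical norm through the fractional chain rule and the interpolation step above (and it is for this reason that the statement carries the extra hypothesis on $\||\nabla|^{s_c-\frac12}\scriptS_0(t)(u_0,u_1)\|$).  A secondary nuisance, present in all regimes, is the exponent bookkeeping in \eqref{E:small data Hsc}: locating dual-admissible Strichartz pairs that split the nonlinearity as (small)$\times$(data-controlled) may, for the smallest admissible values of $s_c$, require a Littlewood--Paley decomposition of $|u|^pu$.
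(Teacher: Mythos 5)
Your overall strategy matches the paper's: Duhamel contraction mapping, with a case split at $s_c=\tfrac12$, using Lemma~\ref{L:inhomog st} in the sub-conformal range and the $\tfrac12$-derivative Strichartz inequality together with the fractional chain rule in the conformal/super-conformal range, followed by persistence-of-regularity arguments for \eqref{E:small data Hsc} and \eqref{E:small data H1}. Your identification of the role of the hypothesis $s_c>\tfrac1{2d}$ as the acceptability condition in Lemma~\ref{L:inhomog st} with $(r,\tilde r')=\bigl(\tfrac{p(d+1)}2,\tfrac{p(d+1)}{2(p+1)}\bigr)$ is correct.

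Two remarks on your treatment of the range $s_c\ge\tfrac12$. First, a cosmetic difference: you propose recovering $\|\Phi(u)\|_{L^{p(d+1)/2}_{t,x}}$ for the Duhamel term by interpolating its $L^\infty_t\dot H^{s_c}_x$ and $\||\nabla|^{s_c-\frac12}\cdot\|_{L^{2(d+1)/(d-1)}_{t,x}}$ bounds. The paper gets this directly from Lemma~\ref{L:Strichartz} with the admissible pair $q=r=\tfrac{p(d+1)}2$ at level $\gamma=s_c$ (admissible precisely when $s_c\ge\tfrac12$). Both are legitimate, and your interpolation avoids any concern about the flexibility of the dual Strichartz pair; the parameter $\theta=\tfrac{d-2s_c}{d-1}$ is scaling-consistent, and since the relevant $L^\infty_t\dot H^{s_c}_x$ bound is applied only to the Duhamel piece (which is $\lesssim\eta^{p+1}$), the smallness \eqref{E:small data Lp} is preserved.

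Second, and this is the genuine gap: you assert that $\Phi$ is ``a contraction on a small ball in $S$,'' where $S$ carries the norm $\|u\|_{L^{p(d+1)/2}_{t,x}}+\||\nabla|^{s_c-\frac12}u\|_{L^{2(d+1)/(d-1)}_{t,x}}$. But the $S$-metric involves a fractional derivative, so proving that $\Phi$ contracts in $S$ would require a Lipschitz estimate for $u\mapsto|\nabla|^{s_c-\frac12}\bigl(|u|^pu\bigr)$. The fractional chain rule gives the \emph{mapping} estimate, but the corresponding \emph{difference} estimate is not a consequence of the pointwise bound $\bigl||u|^pu-|v|^pv\bigr|\lesssim(|u|^p+|v|^p)|u-v|$, and in fact becomes genuinely delicate when $p<1$ (which occurs for large $d$), since then $z\mapsto|z|^pz$ is only H\"older. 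The paper sidesteps this by running the contraction not in $S$ but in the weaker metric $d(u,v)=\sup_{x_0\in\R^d}\|u-v\|_{L^{2(d+1)/(d-1)}_{t,x}(\Gamma(T,x_0))}$, on which the ball $B\subset S$ remains complete (by weak lower semicontinuity of the $S$-norms); the restriction to light cones is what makes the H\"older step $\|v\|_{L^{2(d+1)/(d-1)}_{t,x}(\Gamma)}\lesssim T^{s_c-\frac12}\|v\|_{L^{p(d+1)/2}_{t,x}(\Gamma)}$ available, so the metric is finite on $B$. This weaker metric needs only the pointwise difference bound you already wrote down, so the fix is entirely in line with your stated plan --- but as written, the contraction step in the super-conformal case is unsupported and is the one place where a nontrivial device is required.
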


\begin{proof}
We begin by reviewing the proof of \eqref{E:small data Lp} and \eqref{E:small data Hsc} from \cite{LindbladSogge95} and then give the additional arguments needed to establish \eqref{E:small data H1}.  Throughout the proof, all spacetime norms will be on $[0,T]\times\R^d$, unless we specify otherwise.

Using Lemma~\ref{L:Strichartz} or Lemma~\ref{L:radial wave} (depending on $s_c$), we have
\begin{align*}
\|\scriptS_0(t)(u_0,u_1)\|_{L^{\frac{p(d+1)}2}_{t,x}} +& \||\nabla|^{s_c-\frac12}\scriptS_0(t)(u_0,u_1)\|_{L^{\frac{2(d+1)}2}_{t,x}}
& \lesssim \|(u_0,u_1)\|_{\dot H^{s_c}_x \times \dot H^{s_c-1}_x}.
\end{align*}
Thus, without loss of generality we may assume that
\begin{equation} \label{E:eta lesssim Hsc}
\eta_0 \lesssim \min\{1, \|(u_0,u_1)\|_{\dot H^{s_c}_x \times \dot H^{s_c-1}_x}\}.
\end{equation}

Let $v \mapsto \Phi_0(v)$ be the mapping given by
$$
\Phi_0(v)(t): = \scriptS_0(t)(u_0,u_1) + \int_0^t |\nabla|^{-1}\sin(|\nabla|(t-s))(|v|^p v)(s)\, ds.
$$
We will use a contraction mapping argument to prove that $\Phi_0$ has a fixed point.

We start with the case when $\frac1{2d} < s_c < \frac12$.  We define
$$
B := \Bigl\{v \in L^{\frac{p(d+1)}2}_{t,x}([0,T] \times \R^d) :\,  \|v\|_{L^{\frac{p(d+1)}2}_{t,x}([0,T] \times \R^d)} \leq 2 \eta\Bigr\}.
$$
By Lemma~\ref{L:inhomog st} and our hypotheses, for $v\in B$ we have
\begin{align*}
\|\Phi_0(v)\|_{L^{\frac{p(d+1)}2}_{t,x}}
&\leq \|\scriptS_0(t)(u_0,u_1)\|_{L^{\frac{p(d+1)}2}_{t,x}} + C_{d,p}\||v|^pv\|_{L^{\frac{p(d+1)}{2(p+1)}}_{t,x}} \\
&\leq \eta + C_{d,p}\|v\|_{L^{\frac{p(d+1)}2}_{t,x}}^{p+1}\\
&\leq \eta + C_{d,p}\eta^{p+1}.
\end{align*}
Thus for $\eta$ sufficiently small, $\Phi_0$ maps $B$ into itself.

To see that $\Phi_0$ is a contraction on $B$ with respect to the metric given by $d(u,v) = \|u-v\|_{L^{\frac{p(d+1)}2}_{t,x}}$, we apply
Lemma~\ref{L:inhomog st} and use H\"older's inequality:
\begin{align*}
\|\Phi_0(u) - \Phi_0(v)\|_{L^{\frac{p(d+1)}2}_{t,x}}
&\leq C_{d,p} \||u|^pu - |v|^p v\|_{L^{\frac{p(d+1)}{2(p+1)}}_{t,x}} \\
&\leq C_{d,p} \||u|+|v|\|^p_{L^{\frac{p(d+1)}2}_{t,x}}\|u-v\|_{L^{\frac{p(d+1)}2}_{t,x}}\\
&\leq C_{d,p} \eta^p \|u-v\|_{L^{\frac{p(d+1)}2}_{t,x}}.
\end{align*}
Thus for $\eta$ sufficiently small, $\Phi_0$ is a contraction on $B$ and so it has a fixed point $u$ in $B$.  Moreover, by
Lemma~\ref{L:Strichartz}, the fixed point satisfies
\begin{align*}
\|\nabla_{t,x} u\|_{C_t \dot H^{s_c-1}_x}
&\lesssim \|(u_0,u_1)\|_{\dot H^{s_c}_x \times \dot H^{s_c-1}_x} + \||u|^pu\|_{L^{\frac{p(d+1)}{2(p+1)}}_{t,x}} \\
&\lesssim \|(u_0,u_1)\|_{\dot H^{s_c}_x \times \dot H^{s_c-1}_x} + \eta^{p+1}.
\end{align*}
The bound \eqref{E:small data Hsc} then follows from \eqref{E:eta lesssim Hsc}.

We now turn to the case when $\frac12 \leq s_c < 1$.  We define
\begin{align*}
B := \Bigl\{v \in L^{\frac{p(d+1)}2}_{t,x}([0,T] \times \R^d) :\, \|v\|_{L^{\frac{p(d+1)}2}_{t,x}} + \||\nabla|^{s_c-\frac12} v\|_{L^{\frac{2(d+1)}{d-1}}_{t,x}}\leq 3\eta\Bigr\}.
\end{align*}
By Lemma~\ref{L:Strichartz} and the fractional chain rule, for $v\in B$ we obtain
\begin{align*}
\|\Phi_0(v)\|_{L^{\frac{p(d+1)}2}_{t,x}} &+\||\nabla|^{s_c-\frac12}\Phi_0(v)\|_{L^{\frac{2(d+1)}{d-1}}_{t,x}} \\
&\leq \|\scriptS_0(t)(u_0,u_1)\|_{L^{\frac{p(d+1)}2}_{t,x}} +\||\nabla|^{s_c-\frac12}\scriptS_0(t)(u_0,u_1)\|_{L^{\frac{2(d+1)}{d-1}}_{t,x}} \\
&\quad+ C_{d,p}\||\nabla|^{s_c-\frac12}(|v|^pv)\|_{L^{\frac{2(d+1)}{d+3}}_{t,x}} \\
&\leq 2\eta + C_{d,p}\||\nabla|^{s_c-\frac12}v\|_{L^{\frac{2(d+1)}{d-1}}_{t,x}}\|v\|_{L^{\frac{p(d+1)}2}_{t,x}}^p  \\
&\leq 2\eta + C_{d,p}\eta^{p+1}.
\end{align*}
Thus if $\eta$ is sufficiently small, $\Phi_0$ maps $B$ into itself.

Since in this case we are considering $\frac4{d-1}\leq p<\frac4{d-2}$, by H\"older's inequality we have
$$
\sup_{x_0 \in \R^d} \|v\|_{L^{\frac{2(d+1)}{d-1}}_{t,x}(\Gamma(T,x_0))}
\lesssim T^{s_c-\frac12}\sup_{x_0 \in \R^d}\|v\|_{L^{\frac{p(d+1)}2}_{t,x}(\Gamma(T,x_0))}
\lesssim  T^{s_c-\frac12}\eta
$$
for any $v \in B$.  Thus we may consider the metric on $B$ given by
$$
d(u,v) = \sup_{x_0 \in \R^d} \|u-v\|_{L^{\frac{2(d+1)}{d-1}}_{t,x}(\Gamma(T,x_0))}.
$$
By \eqref{E:H12 strichartz}, finite speed of propagation, and H\"older's inequality,
\begin{align*}
\|\Phi_0(u)-\Phi_0(v)\|_{L^{\frac{2(d+1)}{d-1}}_{t,x}(\Gamma(T,x_0))}
&\lesssim \||u|^pu -|v|^pv\|_{L^{\frac{2(d+1)}{d+3}}_{t,x}(\Gamma(T,x_0))}\\
&\lesssim \|(|u|+|v|)^p\|_{L^{\frac{d+1}2}_{t,x}(\Gamma(T,x_0))} \|u-v\|_{L^{\frac{2(d+1)}{d-1}}_{t,x}(\Gamma(T,x_0))} \\
&\lesssim \eta^p\|u-v\|_{L^{\frac{2(d+1)}{d-1}}_{t,x}(\Gamma(T,x_0))},
\end{align*}
for each $x_0 \in \R^d$.  Therefore if $\eta$ is sufficiently small, $\Phi_0$ is a contraction on $B$ with respect to the metric $d$ and so $\Phi_0$ has a fixed point $u$ in $B$.  By Lemma~\ref{L:Strichartz} and the fractional chain rule, $u$ satisfies
\begin{align*}
\|\nabla_{t,x}u\|_{C_t\dot H^{s_c-1}_x}
&\lesssim \|(u_0,u_1)\|_{\dot H^{s_c}_x \times \dot H^{s_c-1}_x} + \||\nabla|^{s_c-\frac12}(|u|^pu)\|_{L^{\frac{2(d+1)}{d+3}}_{t,x}} \\
&\lesssim \|(u_0,u_1)\|_{\dot H^{s_c}_x \times \dot H^{s_c-1}_x} + \eta^{p+1}.
\end{align*}
The bound \eqref{E:small data Hsc} then follows from \eqref{E:eta lesssim Hsc}.

Finally, we turn to the persistence of regularity statement \eqref{E:small data H1}.  For $\frac1{2d} < s_c < 1$, by \eqref{E:H12 strichartz}, the fractional chain rule, and \eqref{E:small data Lp},
\begin{align*}
\|\nabla_{t,x} u\|_{C_tL^2_x} + \| |\nabla|^{\frac12}u \|_{L^{\frac{2(d+1)}{d-1}}_{t,x}}
&\lesssim \|(u_0,u_1)\|_{\dot H^1_x \times L^2_x} + \| |\nabla|^{\frac12} (|u|^pu) \|_{L^{\frac{2(d+1)}{d+3}}_{t,x}} \\
&\lesssim \|(u_0,u_1)\|_{\dot H^1_x \times L^2_x} + \| |\nabla|^{\frac12}u \|_{L^{\frac{2(d+1)}{d-1}}_{t,x}}\|u\|_{L^{\frac{p(d+1)}2}_{t,x}}^p \\
&\lesssim \|(u_0,u_1)\|_{\dot H^1_x \times L^2_x} + \eta^p \| |\nabla|^{\frac12}u \|_{L^{\frac{2(d+1)}{d-1}}_{t,x}}.
\end{align*}
Therefore \eqref{E:small data H1} holds if $\eta$ is chosen sufficiently small.  This completes the proof.
\end{proof}

The local existence theory of \eqref{E:eqn} in $H^1_x\times L_x^2$ is well-known (cf.\ \cite{GV:IHP89}, \cite{GV:MathZ85}); however, we need a result which is uniform in $m$.  This is the topic of the following proposition:

\begin{prop}[$H^1_x\times L_x^2$ local well-posedness for \eqref{E:eqn}] \label{P:lwp}
Let $d\geq 2$, $m \in [0,1]$, $0 < s_c < 1$, and take $p = \frac4{d-2s_c}$.  Let $u_0,u_1$ be initial data satisfying
$$
\|u_0\|_{H^1_x} + \|u_1\|_{L^2_x} \leq M < \infty.
$$
Then there exist $T\gtrsim_{p,d} \min\{ M^{-1/(1-s_c)}, M^{-p(d+1)/2}\}$, independent of $m$, and a unique solution $u$ to \eqref{E:eqn} on $[0,T]$.  Furthermore, this solution satisfies
\begin{align}
\|\nabla_{t,x}u\|_{C_tL_x^2([0,T] \times \R^d)} &\lesssim M \label{E:H1lwp u in H1}\\
\|u\|_{C_tL^2_x([0,T] \times \R^d)} &\lesssim (1+T)M \label{E:H1lwp mass}\\
\|u\|_{L^{\frac{p(d+1)}2}_{t,x}([0,T] \times \R^d)} &\lesssim \max\{T^{1-s_c},T^{\frac2{p(d+1)}}\}M, \label{E:H1lwp u in Lq}
\end{align}
with the implicit constants depending only on $d,p$.
\end{prop}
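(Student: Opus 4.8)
The plan is to construct the solution by a fixed-point argument, the governing principle being that all the estimates of this section have constants independent of $m$ and $T$, and that acting on data by $\jpn_m^{\gamma}$ with $0\le\gamma\le1$ costs nothing uniformly in $m\le1$, since $\|\jpn_m^{\gamma}f\|_{L^2_x}\le\|f\|_{H^1_x}$. With $T>0$ provisional, I would run the iteration for
\[
\Phi(v)(t):=\scriptS_m(t)(u_0,u_1)+\int_0^t\jpn_m^{-1}\sin\!\bigl(\jpn_m(t-s)\bigr)\,|v|^pv(s)\,ds
\]
on the ball $B=\{v:\|v\|_{L^{p(d+1)/2}_{t,x}([0,T]\times\R^d)}\le2\eta\}$, with $\eta$ of size $\max\{T^{1-s_c},T^{2/(p(d+1))}\}M$; for $\tfrac12\le s_c<1$ one adds, exactly as in Proposition~\ref{P:Hsc sdt}, a control of $\||\nabla|^{s_c-1/2}v\|_{L^{2(d+1)/(d-1)}_{t,x}}$ to the definition of $B$ and closes the contraction in the metric $\sup_{x_0\in\R^d}\|u-v\|_{L^{2(d+1)/(d-1)}_{t,x}(\Gamma(T,x_0))}$ rather than in $L^{p(d+1)/2}_{t,x}$.

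The first substantive step is to estimate the free evolution in the three norms appearing in the conclusion. By Lemma~\ref{L:Strichartz} at $\gamma=1$ one has $\|\nabla_{t,x}\scriptS_m(t)(u_0,u_1)\|_{C_tL^2_x}\lesssim\|\jpn_mu_0\|_{L^2_x}+\|u_1\|_{L^2_x}\lesssim M$, uniformly in $m$ precisely because only nonnegative powers of $\jpn_m$ fall on the data. I would deliberately \emph{not} estimate $\scriptS_m(t)(u_0,u_1)$ via the scaling-critical Strichartz estimate, since that requires $\jpn_m^{s_c-1}u_1\in L^2_x$, a quantity which is \emph{not} controlled by $\|u_1\|_{L^2_x}$ as $m\downarrow0$ because of low frequencies; for the $L^2_x$-norm I would instead use the elementary inequality $|\sin(t\omega)/\omega|\le|t|$ for $\omega\ge0$, which gives $\|\scriptS_m(t)(u_0,u_1)\|_{L^2_x}\le\|u_0\|_{L^2_x}+t\|u_1\|_{L^2_x}\le(1+t)M$ and accounts for the factor $(1+T)$ in \eqref{E:H1lwp mass}. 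Combining these $\dot H^1_x$ and $L^2_x$ bounds (and, for $d\ge3$, the diagonal $\gamma=1$ Strichartz norm $L^{2(d+1)/(d-2)}_{t,x}$; for $d=2$ the embedding $H^1(\R^2)\hookrightarrow L^q$, $q<\infty$) with the Sobolev and Gagliardo--Nirenberg inequalities of Lemmas~\ref{L:Sob domain}--\ref{L:GN}, and then applying H\"older in time on the interval of length $T$, bounds $\|\scriptS_m(t)(u_0,u_1)\|_{L^{p(d+1)/2}_{t,x}}$ by $\max\{T^{1-s_c},T^{2/(p(d+1))}\}M$; the two powers of $T$ correspond to the two relevant integrability exponents according as $s_c\le\tfrac12$ or $s_c>\tfrac12$.

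For the Duhamel term no data appears, so the scaling-critical Strichartz inequality may be used freely: writing $|u|^pu-|v|^pv\lesssim(|u|^p+|v|^p)|u-v|$ and distributing the $p$ low-order powers into $L^{p(d+1)/2}_{t,x}$, the dual Strichartz estimate at $\gamma=s_c$ when $s_c\le\tfrac12$ and \eqref{E:H12 strichartz} when $s_c>\tfrac12$ give $\|\Phi(v)\|_{L^{p(d+1)/2}_{t,x}}\le\eta+C\eta^{p+1}$ together with a Lipschitz constant $\le C\eta^p$ (in the appropriate metric); a further application of Lemma~\ref{L:Strichartz} at $\gamma=1$, placing the single surviving derivative on one factor of $u$ so that no fractional chain rule is needed, gives $\|\nabla_{t,x}\Phi(v)\|_{C_tL^2_x}\lesssim M+C\eta^p\|\nabla_{t,x}v\|_{C_tL^2_x}$. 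Taking $T$ small — equivalently $T\gtrsim_{p,d}\min\{M^{-1/(1-s_c)},M^{-p(d+1)/2}\}$, so that $C\eta^p\le\tfrac12$ — makes $\Phi$ a contraction on $B$, and its unique fixed point $u$ is the desired strong solution on $[0,T]$; uniqueness among all strong solutions is then immediate from Proposition~\ref{P:uniqueness}. The bounds \eqref{E:H1lwp u in H1} and \eqref{E:H1lwp u in Lq} for $u$ follow from the Duhamel identity and the estimates above, the nonlinear contributions being reabsorbed since $C\eta^p\le\tfrac12$, while \eqref{E:H1lwp mass} follows by integrating the differential inequality $\tfrac{d}{dt}\|u(t)\|_{L^2_x}\le\|u_t(t)\|_{L^2_x}$ and invoking \eqref{E:H1lwp u in H1}: $\|u(t)\|_{L^2_x}\le\|u_0\|_{L^2_x}+T\|u_t\|_{C_tL^2_x}\lesssim(1+T)M$.

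The heart of the matter — and the only point demanding care beyond the standard energy-subcritical local theory for the wave equation (cf.\ \cite{LindbladSogge95}) — is the uniformity in $m\in[0,1]$: the equation's natural function spaces are the scaling-critical $\dot H^{s_c}\times\dot H^{s_c-1}$, whose Strichartz theory carries negative powers of $\jpn_m$ that degenerate on low frequencies as $m\downarrow0$, so one must route every estimate touching the initial data through the regularity $\gamma=1$, where $\jpn_m$ enters only with a nonnegative power, and dispatch the handful of genuinely $m$-sensitive (low-frequency, slowly decaying) contributions by hand via $|\sin(t\omega)/\omega|\le|t|$ — all while still extracting from the finite time interval the precise powers $T^{1-s_c}$ and $T^{2/(p(d+1))}$ that dictate the lifespan.
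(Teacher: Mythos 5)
Your overall strategy — contraction for $\Phi_m$, routing data estimates through $\gamma=1$ to avoid negative powers of $\jpn_m$, the elementary $|\sin(t\omega)/\omega|\le|t|$ trick for the $L^2_x$ bound, H\"older in time for the powers of $T$ — matches the paper's in spirit.  However, there is a genuine gap in the way you set up the contraction, which makes the proposal fail precisely in the sub-conformal regime $0<s_c<\tfrac12$.

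You place only the smallness $\|v\|_{L^{p(d+1)/2}_{t,x}}\le2\eta$ in the ball $B$ (for $s_c<\tfrac12$) and plan to bound the Duhamel part of $\Phi(v)$ in this norm by ``the dual Strichartz estimate at $\gamma=s_c$.''  But the pair $q=r=\tfrac{p(d+1)}{2}$ violates the admissibility condition of Lemma~\ref{L:Strichartz}: one has $\tfrac1q+\tfrac{d-1}{2r}=\tfrac1p$, and $\tfrac1p\le\tfrac{d-1}{4}$ forces $p\ge\tfrac4{d-1}$, i.e.\ $s_c\ge\tfrac12$.  So in the sub-conformal case there is no admissible exponent pair at any $\gamma$ that places $L^{p(d+1)/2}_{t,x}$ on the left-hand side of the Strichartz inequality; this is not a matter of choosing the dual exponent cleverly, since admissibility is a constraint on the left-hand pair $(q,r)$ alone.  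The estimate that does deliver this norm is Harmse's inhomogeneous bound (Lemma~\ref{L:inhomog st}), but that is proved only for $m=0$ (and only for $\tfrac1{2d}<s_c<\tfrac12$) — and the whole point of Proposition~\ref{P:lwp} is uniformity in $m\in[0,1]$.  Consequently you cannot close the map $\Phi\colon B\to B$, nor the Lipschitz estimate in the $L^{p(d+1)/2}_{t,x}$ metric, for $s_c<\tfrac12$.  The paper avoids this by inverting the logic: the ball controls the energy-level quantities $\|\nabla_{t,x}v\|_{C_tL^2_x}$ and $\|\jpn_m^{1/2}v\|_{L^{2(d+1)/(d-1)}_{t,x}}$ of size $M$ (and, in the super-conformal sub-case, an anisotropic $\gamma=1$ Strichartz norm), and the $L^{p(d+1)/2}_{t,x}$ smallness is then \emph{deduced} from these via Sobolev embedding and H\"older in time (giving $T^{2/(p(d+1))}$) or from the anisotropic norm and H\"older in time (giving $T^{1-s_c}$).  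No Strichartz estimate with $L^{p(d+1)/2}_{t,x}$ on the left is ever used, which is what makes the argument go through uniformly in $m$.  To repair your proof you would have to enlarge $B$ to contain these energy-level norms; at that point it essentially becomes the paper's argument.

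A secondary imprecision in the same vein: you claim that combining the diagonal $\gamma=1$ Strichartz norm $L^{2(d+1)/(d-2)}_{t,x}$ with ``H\"older in time'' yields the $L^{p(d+1)/2}_{t,x}$ bound on the free evolution.  H\"older in time alone cannot change the spatial exponent, and $\tfrac{2(d+1)}{d-2}\neq\tfrac{p(d+1)}2$ when $s_c<1$; one needs either interpolation or, as the paper does, an anisotropic admissible pair $L^{q}_tL^{p(d+1)/2}_x$ at $\gamma=1$ followed by H\"older in $t$.
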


\begin{remark}
Well-posedness in $H^1_x\times L_x^2$ ensures that conservation of energy, which follows from an elementary computation for smooth, decaying solutions, continues to hold for solutions in $C_t(H^1_x\times L_x^2)$.
\end{remark}

\begin{proof}[Proof of Proposition~\ref{P:lwp}]
Throughout the proof, all spacetime norms will be over the set $[0,T]\times\R^d$.  We use the contraction mapping argument for the map
$v \mapsto \Phi_m(v)$ given by
$$
\Phi_m(v)(t): = \scriptS_m(t)(u_0,u_1) + \int_0^t \jpn_m^{-1}\sin\bigl((t-s)\jpn_m\bigr)(|v|^pv)(s)\, ds.
$$
Our analysis breaks into two cases.

If $\frac{d+2}{2d} < s_c < 1$ (that is, $\frac{4d}{(d-2)(d+1)}<p<\frac4{d-2}$), we define
\begin{align*}
B := \Bigl\{v \in C_tH^1_x([0,T] \times \R^d) : \, &\|\nabla_{t,x} v\|_{C_tL^2_x} + \|\jpn_m^{\frac12}v\|_{L^{\frac{2(d+1)}{d-1}}_{t,x}} \\ &+ \|v\|_{L^{\frac{2p(d+1)}{p(d+1)(d-2) - 4d}}_tL^{\frac{p(d+1)}2}_x} \leq C_{d,p}M \Bigr\}.
\end{align*}
By H\"older's inequality, for $v \in B$ we have
\begin{equation} \label{E:B subset Lq}
\|v\|_{L^{\frac{p(d+1)}2}_{t,x}} \leq T^{1-s_c}\|v\|_{L^{\frac{2p(d+1)}{p(d+1)(d-2) - 4d}}_tL^{\frac{p(d+1)}2}_x}\lesssim_{d,p} T^{1-s_c} M.
\end{equation}
Using this together with Lemma~\ref{L:Strichartz} and the fractional chain rule, we obtain
\begin{align*}
&\|\nabla_{t,x}\Phi_m(v)\|_{C_tL^2_x} + \|\jpn_m^{\frac12} \Phi_m(v)\|_{L^{\frac{2(d+1)}{d-1}}_{t,x}}
    + \|\Phi_m(v)\|_{L^{\frac{2p(d+1)}{p(d+1)(d-2) - 4d}}_tL^{\frac{p(d+1)}2}_x}\\
&\qquad \lesssim_{d,p} \|(u_0,u_1)\|_{H^1_x \times L^2_x} + \|\jpn_m^{\frac12}(|v|^pv)\|_{L^{\frac{2(d+1)}{d+3}}_{t,x}} \\
&\qquad \lesssim_{d,p} M +\|\jpn_m^{\frac12}v\|_{L^{\frac{2(d+1)}{d-1}}_{t,x}}\|v\|_{L^{\frac{p(d+1)}2}_{t,x}}^p\\
&\qquad \lesssim_{d,p} M + T^{(1-s_c)p}M^{p+1}.
\end{align*}
Thus for $T$ sufficiently small depending on $d$, $p$, and $M$, $\Phi_m$ maps $B$ into itself.

Next, we will show that $\Phi_m$ is a contraction with respect to the metric given by
\begin{equation}\label{ssdist}
d(u,v) =\|u-v\|_{L^{\frac{2(d+1)}{d-1}}_{t,x}}.
\end{equation}
We start by noting that, by the fundamental theorem of calculus,
\begin{equation} \label{E:B subset L2}
\|v\|_{C_tL^2_x} \leq \|u_0\|_{L^2_x} + T\|v_t\|_{C_tL^2_x} \lesssim_{d,p} (1+T)M
\end{equation}
for any $v\in B$.  Thus, by H\"older and Sobolev embedding,
\begin{align*}
\|v\|_{ L^{\frac{2(d+1)}{d-1}}_{t,x}} \lesssim T^{\frac{d-1}{2(d+1)}} \|v\|_{C_tH^1_x} \lesssim_{d,p} T^{\frac{d-1}{2(d+1)}} (1+T)  M.
\end{align*}
To continue, we use \eqref{E:H12 strichartz}, \eqref{E:B subset Lq}, and H\"older's inequality to estimate
\begin{align*}
\|\Phi_m(u) - \Phi_m(v)\|_{L^{\frac{2(d+1)}{d-1}}_{t,x}} &\lesssim \||u|^p u - |v|^p v\|_{L^{\frac{2(d+1)}{d+3}}_{t,x}}\\
&\lesssim \|(|u|+|v|)^p\|_{L^{\frac{d+1}2}_{t,x}} \|u-v\|_{L^{\frac{2(d+1)}{d-1}}_{t,x}}\\
&\lesssim_{d,p} T^{(1-s_c)p} M^p\|u-v\|_{L^{\frac{2(d+1)}{d-1}}_{t,x}}
\end{align*}
for any $u,v \in B$.   Therefore for $T$ sufficiently small, $\Phi_m$ is a contraction and consequently has a fixed point $u \in B$.  Claims
\eqref{E:H1lwp mass} and \eqref{E:H1lwp u in Lq} follow from \eqref{E:B subset Lq} and \eqref{E:B subset L2}.

It remains to treat the case $0 <s_c \leq\frac{d+2}{2d}$.  This time, we define
$$
B := \Bigl\{v \in C_t H^1_x([0,T] \times \R^d) : \|\nabla_{t,x} v\|_{C_tL^2_x} +\|\jpn_m^{\frac12}v\|_{L^{\frac{2(d+1)}{d-1}}_{t,x}} \leq C_{d,p}M\Bigr\}.
$$
In this case $2 < \frac{p(d+1)}2 \leq \frac{2d}{d-2}$ and so, using H\"older, Sobolev embedding, and \eqref{E:B subset L2}, we obtain
\begin{align} \label{E:B subset Lq small p}
\|v\|_{L^{\frac{p(d+1)}2}_{t,x}}
&\lesssim T^{\frac2{p(d+1)}}\|v\|_{C_t H^1_x} \lesssim_{d,p}  T^{\frac2{p(d+1)}} (1+T) M
\end{align}
for any $v\in B$.  Arguing as in the previous case, and substituting \eqref{E:B subset Lq small p} for \eqref{E:B subset Lq}, we derive
\begin{align*}
\|\nabla_{t,x}\Phi_m(v)\|_{C_tL^2_x} &+ \|\jpn_m^{\frac12}\Phi_m(v)\|_{L^{\frac{2(d+1)}{d-1}}_{t,x}} \\
&\lesssim \|(u_0,u_1)\|_{H^1_x \times L^2_x} + \|\jpn_m^{\frac12}v\|_{L^{\frac{2(d+1)}{d-1}}_{t,x}}\|v\|^p_{L^{\frac{p(d+1)}2}_{t,x}} \\
& \lesssim _{d,p} M + T^{\frac2{d+1}}(1+T)^{p}M^{p+1}.
\end{align*}
Thus if $T$ is sufficiently small, we again obtain that $\Phi_m$ maps $B$ into itself.  The proof that $\Phi_m$ is a contraction on $B$ with respect to the metric \eqref{ssdist} is exactly the same as in the previous case.  This completes the proof.
\end{proof}

Because \eqref{E:eqn} obeys finite speed of propagation, we may localize in space in Proposition~\ref{P:lwp}.  In this way we obtain the
following scale-invariant lower bound on the blowup rate.

\begin{corollary}[$H^1_\loc\times L^2_\loc$ local well-posedness and blowup criterion] \label{C:lwp loc}
Let $d\geq 2$, $m \in [0,1]$, $0 < s_c < 1$, and take $p=\frac{4}{d-2s_c}$.  Let $(u_0,u_1)$ be initial data satisfying $\|u_0\|_{H^1_\loc} + \|u_1\|_{L^2_\loc} \leq M < \infty$, where we define
$$
\|f\|_{L^2_\loc}^2 := \sup_{x_0 \in \R^d} \int_{|x-x_0|\leq 1} |f(x)|^2\, dx \qtq{and} \|f\|_{H^1_\loc}^2 := \|f\|_{L^2_\loc}^2 + \|\nabla f\|_{L^2_\loc}^2.
$$
Then there exist $T_0 > 0$, depending only on $d,p,M$ and a unique strong solution $u:[0,T_0] \times \R^d \to \R$ to \eqref{E:eqn} satisfying
$$
\|u\|_{C_tH^1_\loc([0,T_0] \times \R^d)} + \|u_t\|_{C_tL^2_\loc([0,T_0] \times \R^d)} \lesssim M.
$$
Furthermore, if $u$ blows up at time $0<T_* < \infty$ and $(T_*,x_0)$ lies on the forward-in-time blowup surface of $u$, then
\begin{align} \label{E:H1loc lb}
1 \lesssim (T_*-t)^{-2s_c} \int_{|x-x_0| \leq T_*-t} |u(t,x)|^2 + (T_*-t)^2|\nabla_{t,x} u(t,x)|^2 \, dx
\end{align}
for all $t>0$ such that $T_*-1 \leq t < T_*$.  The implicit constant depends only on $d,p$.
\end{corollary}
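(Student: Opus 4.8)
The plan is to deduce the corollary directly from Proposition~\ref{P:lwp} via finite speed of propagation, together with a scaling (self-similar dilation) argument to convert the fixed-time local estimate into a lower bound on the blowup rate. First I would establish the local well-posedness claim: given data $(u_0,u_1)$ with $\|u_0\|_{H^1_\loc}+\|u_1\|_{L^2_\loc}\le M$, cover $\R^d$ by unit balls $B(x_0,1)$ and, for each $x_0$, truncate the data smoothly to a slightly larger ball (say $B(x_0,2)$) to obtain honest $H^1_x\times L^2_x$ data of norm $\lesssim M$. Apply Proposition~\ref{P:lwp} to each piece to get a solution on $[0,T]\times\R^d$ with $T\gtrsim_{p,d}\min\{M^{-1/(1-s_c)},M^{-p(d+1)/2}\}$ and uniform bounds. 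By finite speed of propagation (Definition~\ref{D:light cones} and the Poincar\'e-invariance of \eqref{E:eqn}), the restriction of this solution to the light cone $\Gamma(T,x_0)$ depends only on the data inside $B(x_0,T)\subseteq B(x_0,2)$ (shrinking $T\le 1$ if needed, and noting $T$ depends only on $d,p,M$), so the local pieces agree on overlaps; uniqueness follows from Proposition~\ref{P:uniqueness}. Patching gives a strong solution $u$ on $[0,T_0]\times\R^d$ with $T_0$ depending only on $d,p,M$ and satisfying the asserted $C_tH^1_\loc\times C_tL^2_\loc$ bound.

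Next, for the blowup criterion, suppose $(T_*,x_0)$ lies on the forward blowup surface and fix $t$ with $T_*-1\le t<T_*$. Write $\tau:=T_*-t\in(0,1]$ and consider the rescaled solution adapted to the light cone $\Gamma(T_*,x_0)$: set
$$
v(s,y) := \tau^{2/p}\, u\!\left(t+\tau s,\ x_0+\tau y\right),
$$
which (using the massless scaling of the equation, with the mass term only improving matters since $m\le 1$ and $\tau\le 1$) is again a strong solution of a Klein--Gordon-type equation with mass parameter $m\tau\in[0,1]$, defined on the unit light cone $\{(s,y):0\le s<1,\ |y|\le 1-s\}$. By finite speed of propagation, its future behaviour on this unit cone is determined by the data $(v(0),v_s(0))$ on $\{|y|\le 1\}$, i.e.\ by the data for $u$ on $\{|x-x_0|\le\tau\}=\{|x-x_0|\le T_*-t\}$. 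Compute the local norm: by the change of variables one finds
$$
\|v(0)\|_{H^1_\loc}^2 + \|v_s(0)\|_{L^2_\loc}^2 \lesssim \tau^{-2s_c}\!\!\int_{|x-x_0|\le T_*-t}\!\! |u(t,x)|^2 + (T_*-t)^2|\nabla_{t,x}u(t,x)|^2\,dx =: \tau^{-2s_c} J,
$$
where I have used $\frac{4}{p}-d = -2s_c$ for the $L^2$ term and matched powers for the gradient terms (the $(T_*-t)^2$ weight is exactly the Jacobian-corrected weight that the rescaling produces).

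Now the dichotomy: if $J$ were too small, then $\tau^{-2s_c}J$ would be below the well-posedness threshold, and applying the $H^1_\loc\times L^2_\loc$ local well-posedness part of the corollary to the rescaled data (with parameter $M'\sim(\tau^{-2s_c}J)^{1/2}$) would produce a strong solution on a time interval $[0,T_0(M')]$ inside the unit cone. If $T_0(M')>1$ — which holds once $M'$ is smaller than an absolute constant depending only on $d,p$, by the form $T_0\gtrsim\min\{M'^{-1/(1-s_c)},M'^{-p(d+1)/2}\}$ — then by uniqueness $v$, and hence $u$, extends as a strong solution past $(T_*,x_0)$, contradicting the assumption that this point is on the blowup surface. (More carefully: the extension is along the cone tip, so one also uses that the domain of maximal extension has the Lipschitz-graph form stated after Definition~\ref{D:solution}, to conclude that $\sigma(x_0)>T_*$.) Therefore $\tau^{-2s_c}J\gtrsim_{d,p}1$, which is precisely \eqref{E:H1loc lb}.

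The main obstacle is the scaling/finite-speed-of-propagation bookkeeping in the middle step: one must verify that the rescaled function $v$ is genuinely a \emph{strong} solution in the sense of Definition~\ref{D:solution} (the $L^{p(d+1)/2}_\loc$ membership and the Duhamel formula are preserved under this scaling — here it is important that the scaling is the \emph{exact} symmetry of the $m=0$ equation and that the mass only shifts from $m$ to $m\tau$, staying in $[0,1]$), and that the local well-posedness time $T_0$ coming out of the already-proved first half of the corollary genuinely exceeds $1$ for small data — i.e.\ tracking that the implicit constants in $T_0\gtrsim\min\{M'^{-1/(1-s_c)},M'^{-p(d+1)/2}\}$ depend only on $d,p$. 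Everything else is a routine change of variables and an appeal to Proposition~\ref{P:lwp}, Proposition~\ref{P:uniqueness}, and the finite-speed-of-propagation structure already set up in the excerpt.
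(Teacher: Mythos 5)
Your proposal takes essentially the same route as the paper: cover by unit balls, truncate, apply Proposition~\ref{P:lwp}, glue via finite speed of propagation and Proposition~\ref{P:uniqueness} for the local theory; and then rescale to a unit light cone and run a small-data contradiction argument for the lower bound \eqref{E:H1loc lb}. The scaling bookkeeping (new mass $\tilde m = \tau m \in [0,1]$, exponent $4/p - d = -2s_c$, etc.) is correct.

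There is, however, one step that is not quite right as written and needs genuine attention. When you say you would apply the $H^1_\loc\times L^2_\loc$ well-posedness to ``the rescaled data with parameter $M'\sim(\tau^{-2s_c}J)^{1/2}$,'' you are treating $M'$ as though it bounds $\|v(0)\|_{H^1_\loc}+\|v_s(0)\|_{L^2_\loc}$. It does not: by construction $M'$ only controls the $H^1\times L^2$ norm of $(v(0),v_s(0))$ on the single ball $\{|y|\le 1\}$, while the $\|\cdot\|_{H^1_\loc}$ norm is a supremum over all unit balls, and outside $\{|y|\le1\}$ the rescaled data are completely uncontrolled (indeed, they come from the original solution near its blowup surface and may be enormous there). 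So one cannot apply the local theory to $(v(0),v_s(0))$ directly; one must first replace them by auxiliary data that agree with $(v(0),v_s(0))$ on $\{|y|\le1\}$, vanish outside a compact set, and have small $H^1_x\times L^2_x$ norm. Constructing such an extension is where the real work lies: the natural cutoff would cost derivatives of the cutoff times $v(0)$ on an annulus $\{1\le|y|\le 2\}$, which is uncontrolled. The paper addresses this by first using dominated convergence to show the data are still small on a slightly larger ball $\{|y|\le 1+\delta\}$, and then extending (e.g.\ harmonically) from the sphere $|y|=1+\delta$ to vanish for $|y|\ge 2$, which produces globally defined $H^1_x\times L^2_x$ data of size $\lesssim\eta$ to which Proposition~\ref{P:lwp} applies. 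After that, finite speed of propagation and uniqueness yield an extension past the cone apex, contradicting that $(1,0)$ lies on the blowup surface. Your argument is correct in spirit, but without the dominated-convergence step and the explicit small extension of the data beyond $\{|y|\le1\}$, the appeal to the local theory is not yet justified.
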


We note that in Theorem~$1'$ in \cite{MerleZaagAJM} and Theorem~1(ii) of \cite{MerleZaagMA}, Merle and Zaag claim that an alternative blowup criterion holds, namely,
\begin{equation} \label{E:MerleZaag H1 loc blowup}
1 \lesssim \sup_{x_0 \in \R^d} (T_*-t)^{d-2s_c}\int_{|x-x_0|\leq 1}|u(t,x)|^2 + (T_*-t)^2|\nabla_{t,x}u(t,x)|^2\, dx.
\end{equation}
This lower bound is also repeated as equation (1.8) in \cite{MerleZaagIMRN}.  It seems that in all three instances this is essentially
a typo, since \eqref{E:H1loc lb} is equivalent to the lower bound in self-similar variables given in Theorem~1 of \cite{MerleZaagAJM} and
Theorem~1(i) of \cite{MerleZaagMA}, while \eqref{E:MerleZaag H1 loc blowup} is not.  Moreover, the scaling argument that Merle and Zaag
suggest to prove \eqref{E:MerleZaag H1 loc blowup} seems only to establish \eqref{E:H1loc lb}.

It is not difficult to construct a counterexample to \eqref{E:MerleZaag H1 loc blowup}.  For a general subluminal blowup surface $t=\sigma(x)$,
Kichenassamy \cite{Kich} (see also \cite{KVblowup}) has constructed solutions with $u(t,x)\sim (\sigma(x)-t)^{-2/p}$.  Whenever the blowup surface
is smooth with non-zero curvature at the first blowup point, this is inconsistent with \eqref{E:MerleZaag H1 loc blowup}.

We turn now to the proof of Corollary~\ref{C:lwp loc}.

\begin{proof}
Both conclusions may be proved by applying Proposition~\ref{P:lwp} to spatially truncated initial data and then invoking finite speed of propagation.  Since the proof of the first conclusion is a little simpler, we give the details only for the second.  We argue by contradiction.

To this end, let $T_*-1 < t_0 < T_*$ and let $x_0\in \R^d$ be such that $(T_*,x_0)$ lies on the forward-in-time blowup surface of $u$.  By space-translation invariance, we may assume $x_0=0$.  Suppose that $u$ satisfies
\begin{align}\label{uT*}
(T_*-t_0)^{-2s_c} \int_{|x| \leq T_*-t_0} |u(t_0,x)|^2 + (T_*-t_0)^2|\nabla_{t,x} u(t_0,x)|^2 \, dx < \eta,
\end{align}
for some small constant $\eta$ to be determined in a moment.  Now set
$$
\tilde u(t,x):=(T_*-t_0)^{\frac2p}u(t_0+ (T_*-t_0) t, (T_*-t_0) x).
$$
A simple computation shows that $\tilde u$ satisfies \eqref{E:eqn} with $m$ replaced by $\tilde m:=(T_*-t_0)m$.  Moreover, as $(T_*,0)$ belongs to the forward-in-time blowup surface of $u$, we see that $(1,0)$ lies on the blowup surface of $\tilde u$.  Changing variables, \eqref{uT*} becomes
$$
\int_{|x| \leq 1} |\tilde u(0)|^2 + |\nabla_{t,x} \tilde u(0)|^2 \, dx < \eta.
$$
Thus, by the dominated convergence theorem, there exists $0<\delta<\frac12$ such that
\begin{equation} \label{E:u small 1}
\int_{|x| \leq 1+\delta} |\tilde u(0,x)|^2 + |\nabla_{t,x} \tilde u(0,x)|^2 \, dx < 2\eta.
\end{equation}

To continue, we define $v_0$ and $v_1$ such that $v_0 = \tilde u(0)$ and $v_1 = \tilde u_t(0)$ on $|x| \leq 1+\delta$, $v_0 = v_1 = 0$ on $|x| \geq 2$, and
\begin{equation}\label{E:tilde u small}
\|v_0\|_{H^1_x}^2 + \|v_1\|_{L^2_x}^2 \lesssim \eta.
\end{equation}
(For example, one can take $v_0$ to be the harmonic function on the annulus $1+\delta<|x|<2$ that matches these boundary values.)
For $\eta$ sufficiently small depending on $d,p, \delta$ (but not on $\tilde m\in[0,1]$), Proposition~\ref{P:lwp} yields a solution to the initial-value problem
$$
v_{tt} - \Delta v + \tilde m^2 v = |v|^p v \qtq{with} v(0) = v_0 \qtq{and} v_t(0) = v_1
$$
on $[0,1+\delta]\times \R^d$.  Thus by finite speed of propagation, $\tilde u$ may be extended to a strong solution on the backward light cone $\Gamma(1+\delta,0)$, which contradicts the fact that $(1,0)$ lies on the blowup surface of $\tilde u$.
\end{proof}

\begin{corollary}[$\dot H^1_x\times L_x^2$ blowup criterion] \label{C:lwp dot}
Let $d\geq 3$, $m \in [0,1]$, $0 < s_c < 1$, and take $p=\frac4{d-2s_c}$.  Given initial data $(u_0,u_1)\in \dot H^1_x\times L_x^2$, if the solution $u$ to \eqref{E:eqn} blows up at time $0<T_* < \infty$, then
\begin{align} \label{E:H1dot lb}
1 \lesssim (T_*-t)^{2-2s_c}\int_{\R^d}|\nabla_{t,x}u(t,x)|^2\, dx
\end{align}
for all $t>0$ such that $T_*-1 \leq t < T_*$. The implicit constant depends only on $d,p$.
\end{corollary}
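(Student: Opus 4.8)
The proof will follow the template of Corollary~\ref{C:lwp loc}; the single new ingredient is the Sobolev embedding $\dot H^1_x(\R^d)\hookrightarrow L^{\frac{2d}{d-2}}_x(\R^d)$ (valid precisely because $d\geq 3$), which lets us trade the \emph{local} $L^2_x$ mass appearing in the local theory for the \emph{global} $\dot H^1_x$ norm. I argue by contradiction: suppose \eqref{E:H1dot lb} fails, i.e.\ there is $t_0$ with $T_*-1\leq t_0<T_*$ and
$
(T_*-t_0)^{2-2s_c}\int_{\R^d}|\nabla_{t,x}u(t_0,x)|^2\,dx<\eta
$
for a small constant $\eta=\eta(d,p)$ to be fixed. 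Writing $r:=T_*-t_0\in(0,1]$, I rescale, setting $\tilde u(t,x):=r^{2/p}u(t_0+rt,rx)$; then $\tilde u$ solves \eqref{E:eqn} with $m$ replaced by $\tilde m:=rm\in[0,1]$, it exists on $[0,1)\times\R^d$, and extending it to the slab $[0,\tfrac32)\times\R^d$ would (upon undoing the rescaling) extend $u$ past $T_*$, which is impossible. Since $4/p+2-d=2-2s_c$, the change of variables gives $\|\nabla_{t,x}\tilde u(0)\|_{L^2_x(\R^d)}^2=r^{2-2s_c}\|\nabla_{t,x}u(t_0)\|_{L^2_x(\R^d)}^2<\eta$, and then by Sobolev embedding $\|\tilde u(0)\|_{L^{2d/(d-2)}_x(\R^d)}^2\lesssim_d\eta$ as well.

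Next I localize. Fix an arbitrary $x_0\in\R^d$, pick a smooth cutoff $\chi$ with $\chi\equiv 1$ on $\{|x-x_0|\leq\tfrac32\}$ and $\supp\chi\subset\{|x-x_0|\leq 2\}$, and set $v_0:=\chi\,\tilde u(0)$, $v_1:=\chi\,\tilde u_t(0)$. Using the product rule together with H\"older's inequality on the bounded ball $\{|x-x_0|\leq 2\}$ to pass from $L^{2d/(d-2)}$ to $L^2$, one obtains $\|v_0\|_{H^1_x}^2+\|v_1\|_{L^2_x}^2\lesssim_d\eta$, with a constant independent of $x_0$ and of $\tilde m$. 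Proposition~\ref{P:lwp} then produces a solution to \eqref{E:eqn} with mass $\tilde m$ and data $(v_0,v_1)$ on an interval $[0,T]$ with $T\gtrsim_{d,p}\min\{\eta^{-1/(2(1-s_c))},\eta^{-p(d+1)/4}\}$, so $T>\tfrac32$ once $\eta$ is small enough depending only on $d$ and $p$. Since $(v_0,v_1)$ agrees with $(\tilde u(0),\tilde u_t(0))$ on the base $\{|x-x_0|\leq\tfrac32\}$ of the light cone $\Gamma(\tfrac32,x_0)$, finite speed of propagation together with Proposition~\ref{P:uniqueness} shows that $\tilde u$ extends to a strong solution on all of $\Gamma(\tfrac32,x_0)$. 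As $x_0$ ranges over $\R^d$ these cones cover $[0,\tfrac32)\times\R^d$, so patching (again via uniqueness) extends $\tilde u$ to a strong solution on $[0,\tfrac32)\times\R^d$. This is the desired contradiction, and completes the proof; since no bad $t_0$ can exist, $(T_*-t)^{2-2s_c}\int_{\R^d}|\nabla_{t,x}u(t,x)|^2\,dx\geq\eta$ for all $t\in[T_*-1,T_*)$, which is \eqref{E:H1dot lb} with implicit constant $\eta^{-1}(d,p)$.

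The only real content beyond Corollary~\ref{C:lwp loc} is the Sobolev step (hence the hypothesis $d\geq 3$; in $d=2$, $\dot H^1_x$ functions need not decay and the argument breaks down). The technical point to watch is uniformity: the smallness of $\|v_0\|_{H^1_x}^2+\|v_1\|_{L^2_x}^2$, and hence the lifespan bound $T>\tfrac32$, must hold uniformly in both the center $x_0$ and the rescaled mass $\tilde m$, so that $\eta$ can be chosen depending only on $d$ and $p$. Alternatively, and essentially equivalently, one may observe that for $\dot H^1_x\times L^2_x$ data in $d\geq 3$ the tails of $u_0$ and $u_1$ vanish at spatial infinity, so by the (localized) argument proving Corollary~\ref{C:lwp loc} the Lipschitz function $\sigma$ attains its infimum and the blowup surface contains a point $(T_*,x_0)$; then \eqref{E:H1loc lb} applied at that point, combined with the same Sobolev estimate (using $(T_*-t)^2\leq(T_*-t)^{2-2s_c}$ since $T_*-t\leq 1$ and $s_c>0$), yields \eqref{E:H1dot lb} directly.
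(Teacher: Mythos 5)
Your proof is correct and rests on the same idea as the paper's, namely that the Sobolev embedding $\dot H^1_x(\R^d)\hookrightarrow L^{2d/(d-2)}_x(\R^d)$ (valid precisely because $d\geq 3$) combined with H\"older on bounded balls gives $\dot H^1_x\subset H^1_\loc$, after which the $H^1_\loc$ local theory applies. The paper's own proof is a one-liner --- it cites this embedding and then invokes Corollary~\ref{C:lwp loc} --- whereas you re-run the rescale/truncate/contraction argument directly. This has the small advantage of sidestepping the issue that the blowup criterion \eqref{E:H1loc lb} is stated for a point $(T_*,x_0)$ \emph{lying on} the blowup surface; your argument, by using the uniformity in $x_0$ of the global $\dot H^1_x$ bound, never needs to locate such a point, while your alternative route handles it via the vanishing of $H^1_\loc\times L^2_\loc$ tails, which is what the paper implicitly has in mind. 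One minor quibble: the parenthetical appeal to $(T_*-t)^2\leq(T_*-t)^{2-2s_c}$ in your alternative argument is superfluous --- the H\"older step $\int_{|x-x_0|\leq T_*-t}|u(t)|^2\,dx\lesssim(T_*-t)^2\|\nabla u(t)\|_{L^2_x}^2$ already combines with the prefactor $(T_*-t)^{-2s_c}$ in \eqref{E:H1loc lb} to give the exponent $2-2s_c$ exactly.
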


\begin{proof}
Note that by H\"older's inequality and Sobolev embedding, $\dot H^1_x \subset H^1_\loc$.  The claim now follows from Corollary~\ref{C:lwp loc}.
\end{proof}

\begin{remark}
Note that in two dimensions, $\dot H^1_x$ cannot be realized as a space of distributions.  Moreover, it is not difficult to construct concrete initial data
that show that \eqref{E:H1dot lb} does not hold:  Given $R>1$, let
$$
u_1:= 0 \qtq{and} u_0(x) := \begin{cases} 0 & : |x| > R \\[1mm] -\frac{\log(|x|/R)}{\sqrt{\log(R)}} & : 1 \leq |x| \leq R \\[2mm]
\sqrt{\log(R)} & : |x| < 1.\end{cases}
$$
Note that $\int |u_1|^2 + |\nabla u_0|^2\,dx \sim 1$.  However, as $R\to\infty$ the corresponding solution blows up more and more quickly; indeed,
by solving the ODE and using finite speed of propagation, we see that the lifespan cannot exceed
$$
\int_{A}^\infty \bigl[ \tfrac{2}{p+2} \bigl(u^{p+2} - A^{p+2}\bigr)\bigr]^{-1/2} du \sim A^{-\frac{p+4}{2}}  \quad\text{where $A:=\sqrt{\log(R)}$.}
$$
\end{remark}

The following result shows that blowup must be accompanied by the blowup of the $\dot H^1_x$ norm of $u$.  In this sense, while non-quantitative,
it is a strengthening of Corollary~\ref{C:lwp dot}, which provides a lower bound on the full spacetime gradient of $u$.

\begin{corollary}\label{C:ee blowup}
Let $d \geq 2$, $m \in [0,1]$, and $0 < s_c < 1$.  Set $p = \frac4{d-2s_c}$.  Let $(u_0,u_1) \in H^1_x \times L^2_x$ and assume
that the maximal-lifespan solution $u$ to \eqref{E:eqn} cannot be extended past time $0<T_* < \infty$.  Then
$$
\lim_{t \uparrow T_*} \|\nabla u(t)\|_{L^2_x} = \infty.
$$
The same conclusion holds if the initial displacement $u_0$ merely belongs to $\dot H^1_x\cap \dot H^{s_c}_x$.
\end{corollary}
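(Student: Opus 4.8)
The plan is to argue by contradiction. If the conclusion fails then $\liminf_{t\uparrow T_*}\|\nabla u(t)\|_{L^2_x}<\infty$, so we may fix a constant $G$ and a sequence $t_n\uparrow T_*$ with $\|\nabla u(t_n)\|_{L^2_x}\leq G$ for every $n$. The goal is to deduce a uniform bound $\|u(t_n)\|_{H^1_x}+\|u_t(t_n)\|_{L^2_x}\leq M$; once this is available, restarting the evolution at time $t_n$ via Proposition~\ref{P:lwp} (and invoking uniqueness of strong solutions to identify the restarted solution with $u$) produces a strong solution on $[t_n,t_n+\tau]$ with $\tau=\tau(M,d,p)>0$ independent of $n$. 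Taking $n$ large enough that $t_n+\tau>T_*$ then extends $u$ past $T_*$, contradicting maximality.

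The only genuinely delicate point is bounding $\|u(t_n)\|_{L^2_x}$: the gradient bound carries no information about the slowly varying part of $u$, and no elliptic or Sobolev inequality controls an $L^2_x$ norm by the Dirichlet energy alone. To get around this I would use finite speed of propagation to peel off the part of the solution responsible for blowup. First fix $\eps_0>0$ small enough---depending on $d$, $p$ and the fixed finite number $T_*$---that the local existence time $\min\{\eps_0^{-1/(1-s_c)},\eps_0^{-p(d+1)/2}\}$ furnished by Proposition~\ref{P:lwp} for data of size $\eps_0$ exceeds $T_*$. Since $(u_0,u_1)\in H^1_x\times L^2_x$, choose $R$ large and a cutoff $\chi\in C^\infty(\R^d)$ with $\chi\equiv 1$ on $\{|x|>R\}$, $\chi\equiv 0$ on $\{|x|<R-1\}$, $|\nabla\chi|\lesssim 1$, and $\|\chi u_0\|_{H^1_x}+\|\chi u_1\|_{L^2_x}\leq\eps_0$. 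Let $u^{\mathrm{out}}$ be the solution of \eqref{E:eqn} with data $(\chi u_0,\chi u_1)$; by the choice of $\eps_0$ it is defined on $[0,T_*]\times\R^d$ and, by the quantitative estimates in Proposition~\ref{P:lwp}, satisfies $\|\nabla_{t,x}u^{\mathrm{out}}(t)\|_{L^2_x}+\|u^{\mathrm{out}}(t)\|_{L^2_x}\lesssim_{T_*}\eps_0$ on that interval. As the two sets of data agree on $\{|x|>R\}$, finite speed of propagation together with Proposition~\ref{P:uniqueness} gives $u(t,x)=u^{\mathrm{out}}(t,x)$ whenever $|x|>R+t$. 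Thus, setting $R_0:=R+T_*$, the difference $v_n:=u(t_n)-u^{\mathrm{out}}(t_n)$ is supported in $\{|x|\leq R_0\}$, and the Poincar\'e inequality (Lemma~\ref{L:Poincare}) yields $\|v_n\|_{L^2_x}\lesssim R_0\|\nabla v_n\|_{L^2_x}\lesssim_{T_*}R_0(G+\eps_0)$. Hence $\|u(t_n)\|_{L^2_x}\leq\|v_n\|_{L^2_x}+\|u^{\mathrm{out}}(t_n)\|_{L^2_x}$ is bounded uniformly in $n$.

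With $\|u(t_n)\|_{L^2_x}$ and $\|\nabla u(t_n)\|_{L^2_x}$ controlled, the bound on $\|u_t(t_n)\|_{L^2_x}$ follows from conservation of energy: discarding the nonnegative mass term, $\|u_t(t_n)\|_{L^2_x}^2\leq 2E_m(u)+\tfrac{2}{p+2}\|u(t_n)\|_{L^{p+2}_x}^{p+2}$, and Lemma~\ref{L:GN} bounds $\|u(t_n)\|_{L^{p+2}_x}^{p+2}\lesssim\|u(t_n)\|_{L^{pd/2}_x}^{p}\|\nabla u(t_n)\|_{L^2_x}^{2}\lesssim\|u(t_n)\|_{L^2_x}^{p(1-s_c)}\|\nabla u(t_n)\|_{L^2_x}^{ps_c+2}$, which is uniformly bounded. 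This produces the constant $M$ above and completes the contradiction, proving $\lim_{t\uparrow T_*}\|\nabla u(t)\|_{L^2_x}=\infty$.

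When $u_0$ is only assumed to lie in $\dot H^1_x\cap\dot H^{s_c}_x$---so that $d\geq 3$, and (for $E_m(u)$ to be finite) $m=0$ unless $u_0\in L^2_x$ as well---the same scheme applies. One no longer speaks of $\|u(t_n)\|_{L^2_x}$, and it is not needed: with $u(t_n)=v_n+u^{\mathrm{out}}(t_n)$ as before, Sobolev embedding together with the Poincar\'e inequality controls the compactly supported remainder through $\|v_n\|_{L^{p+2}_x}\lesssim_{R_0}\|\nabla v_n\|_{L^2_x}$ (using $p+2<\tfrac{2d}{d-2}$), while the small exterior term is handled by the embedding $\dot H^1_x\cap\dot H^{s_c}_x\hookrightarrow L^{p+2}_x$; inserting the resulting bound on $\|u(t_n)\|_{L^{p+2}_x}$ into the energy identity bounds $\|u_t(t_n)\|_{L^2_x}$, and the restart is carried out via Corollary~\ref{C:lwp loc}, using $\dot H^1_x\subset H^1_{\loc}$. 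In both settings the step I expect to require the most care is the control of $\|u(t_n)\|_{L^2_x}$ (respectively $\|u(t_n)\|_{L^{p+2}_x}$) via the finite-speed decomposition; the remainder is routine use of the local theory and of conservation of energy.
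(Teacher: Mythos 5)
Your proof of the $H^1_x\times L^2_x$ case is correct and uses the same core ingredients as the paper's own argument: finite speed of propagation to split $u$ into a small exterior solution (via the small-data local theory, Proposition~\ref{P:lwp}) plus a compactly supported remainder, the Poincar\'e inequality to control the remainder's $L^2_x$ norm by its gradient, Gagliardo--Nirenberg to bound the $L^{p+2}_x$ norm, and conservation of energy to control $u_t$. The paper runs this in the forward direction --- first showing $\|\chi_{6R}u(t)\|_{L^{p+2}_x}\to\infty$ and then bounding that quantity by $\|\nabla u(t)\|_{L^2_x}$ via Poincar\'e and Gagliardo--Nirenberg --- whereas you argue by contraposition, but the substance is identical.

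The treatment of the $\dot H^1_x\cap\dot H^{s_c}_x$ case has a gap. You propose to reuse the exterior solution $u^{\mathrm{out}}$ and control $\|u^{\mathrm{out}}(t_n)\|_{L^{p+2}_x}$ via the embedding $\dot H^1_x\cap\dot H^{s_c}_x\hookrightarrow L^{p+2}_x$, but you have not constructed $u^{\mathrm{out}}$ on $[0,T_*]$ with the bounds this requires. Its data $\chi u_0$ need not lie in $L^2_x$, so Proposition~\ref{P:lwp} does not apply directly; and the local theory of Corollary~\ref{C:lwp loc} gives only $H^1_\loc\times L^2_\loc$ control, not the global $\dot H^1_x\cap\dot H^{s_c}_x$ control at time $t_n$ that your invocation of Gagliardo--Nirenberg presupposes. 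Propagating smallness in $\dot H^{s_c}_x$ would require the critical small-data theory (Proposition~\ref{P:Hsc sdt}), whose hypotheses demand $\chi u_1$ small in $\dot H^{s_c-1}_x$ --- and that does not follow from $u_1\in L^2_x$. The paper sidesteps all this by truncating $u_0$ on the \emph{outside}: set $v_0:=\chi_{10R}u_0$ and $v_1:=\chi_{10R}u_1$; since these are compactly supported, $(v_0,v_1)\in H^1_x\times L^2_x$, so the already-proved part of the corollary applies to the solution $v$; the separately established boundedness of $u$ (hence of $v$, by finite speed of propagation) in the region $|x|\geq 3R$ then forces the divergence of $\|\nabla v(t)\|_{L^2_x}$ to occur where $v\equiv u$. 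You should replace your second paragraph with that reduction rather than attempting to rerun the exterior-solution argument directly.
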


\begin{proof}
By Proposition~\ref{P:lwp}, the solution $u$ can be extended as long as $(u,u_t)$ remains bounded in $H^1_x\times L^2_x$.  Thus, as $u$ cannot be extended past time $0<T_*<\infty$, we must have
\begin{align}\label{E:all to infty}
\lim_{t \uparrow T_*} \bigl\{\|u(t)\|_{H^1_x}+ \|u_t(t)\|_{L_x^2} \bigr\}= \infty.
\end{align}

Let $\chi_R:=\phi(x/R)$ be a smooth cutoff to the ball of radius $R$.  Combining dominated convergence with Proposition~\ref{P:lwp}, we can find $R>10 T_*$ large enough
so that initial data $\tilde u_0:=(1-\chi_R)u_0$ and $\tilde u_1:=(1-\chi_R)u_1$ lead to a solution $\tilde u$ up to time $2T_*$.  Moreover, $\tilde u$ remains uniformly
bounded in $H^1_x\times L_x^2$ on $[0,T_*]$ and so, by conservation of energy, the potential energy of $\tilde u$ is also bounded on $[0,T_*]$.  By finite speed of propagation, the original solution $u$ agrees with $\tilde u$ on $[0, T_*]\times\{|x|\geq 3R\}$, and so inherits these bounds; in particular,
\begin{align}\label{inherited bounds}
\|u\|_{L_t^\infty L_x^2([0, T_*]\times\{|x|\geq 3R\})} + \|u\|_{L_t^\infty L_x^{p+2}([0, T_*]\times\{|x|\geq 3R\})}<\infty.
\end{align}

When $m>0$, conservation of energy and \eqref{E:all to infty} dictate
\begin{align}\label{pe blowup}
\lim_{t \uparrow T_*} \|u(t)\|_{L^{p+2}_x}^{p+2} =\infty.
\end{align}
Combining this with \eqref{E:all to infty}, we conclude
\begin{align}\label{pe blowup'}
\lim_{t \uparrow T_*} \|\chi_{6R}u(t)\|_{L^{p+2}_x}^{p+2} =\infty.
\end{align}

This conclusion also holds when $m=0$.  Indeed, the argument above is applicable to all sequences $t_n \uparrow T_*$ for which $\|\nabla_{t,x} u(t_n)\|_{L^2_x}\to \infty$.  On sequences where $\|\nabla_{t,x} u(t_n)\|_{L^2_x}$ is bounded, \eqref{E:all to infty} guarantees $\| u(t_n)\|_{L^2_x} \to \infty$.
However, in this case \eqref{pe blowup'} follows by using the $L_t^\infty L_x^2$ estimate in \eqref{inherited bounds} and H\"older's inequality.

Using the Gagliardo--Nirenberg inequality followed by Lemma~\ref{L:Poincare} (on the ball $\{|x|\leq 24R\}$), we obtain
\begin{align*}
\|\chi_{6R}u(t)\|_{L^{p+2}_x}^{p+2}&\lesssim \|\chi_{6R}u(t)\|_{L^2_x}^{p(1-s_c)} \|\nabla [\chi_{6R}u(t)]\|_{L^2_x}^{\frac{pd}2}\\
&\lesssim R^{p(1-s_c)}\|\nabla [\chi_{6R}u(t)]\|_{L_x^2}^{p+2}\\
&\lesssim R^{p(1-s_c)} \bigl[\|\nabla u(t)\|_{L_x^2}+ R^{-1}\|u(t)\|_{L^2(|x|\geq 3R)} \bigr]^{p+2}.
\end{align*}
Combining this with \eqref{inherited bounds} and \eqref{pe blowup'}, we derive the claim.

This completes the proof of the corollary for data $(u_0, u_1)\in H^1_x\times L^2_x$.  For initial data $u_0\in \dot H^1_x\cap \dot H^{s_c}_x$ we observe
that for $R>10T_*$ sufficiently large, the restriction of $u_0$ to the region $|x|\geq R$ is small in $H^1_{\loc}$.  Thus by Proposition~\ref{P:lwp},
the solution extends to the region $[0, 2T_*]\times\{|x|\geq 3R\}$ in the class $H^1_{\loc}\times L^2_{\loc}$.  Now consider the solution $v$ with initial
data $\chi_{10R} u_0$ and $\chi_{10R} u_1$.  By applying the first version of this corollary, we see that $\|\nabla v(t)\|_{L^2_x}$ diverges as $t\to T_*$.
Moreover, by the bounds on $v$ where $|x|\geq 3R$, this divergence must occur in the region $|x|\leq 6R$ where finite speed of propagation guarantees $v\equiv u$.
\end{proof}

We will also need a stability result for the nonlinear wave equation in the weak topology.

\begin{lemma} \label{L:weak stability}
Let $d\geq2$, $0 < s_c < 1$, and set $p = \frac4{d-2s_c}$.  Let $\{m_n\}_{n\geq 1},\{\lambda_n\}_{n\geq 1} \subset [0,1]$ be sequences with $\lim m_n = \lim \lambda_n = 0$ and let $\{(u_0^{(n)}, u_1^{(n)})\}_{n\geq 1}$ be a sequence of initial data such that
\begin{equation} \label{E:un to u at 0}
\nabla u^{(n)}_0 \rightharpoonup \nabla u_0  \qtq{and} u^{(n)}_1 \rightharpoonup u_1 \qtq{weakly in $L^2_x$.}
\end{equation}
Assume also that the sequence $\{u^{(n)}\}_{n\geq 1}$ of solutions to
$$
\partial_{tt} u^{(n)} - \Delta u^{(n)} + m_n^2 u^{(n)} = |u^{(n)}|^p u^{(n)}  \qtq{on} [0,T) \times \R^d
$$
with initial data $(u_0^{(n)}, u_1^{(n)})$ at time $t=0$ satisfy
\begin{equation} \label{E:bounded sequences}
\begin{aligned}
\|\nabla_{t,x}u^{(n)}\|_{C_tL^2_x([0,T) \times \R^d)} &+ \||\nabla|^{s_c} u^{(n)}\|_{C_tL^2_x([0,T) \times \R^d)} \\
&+ \|\jpn_{\lambda_n}^{s_c-1}u^{(n)}_t\|_{C_tL^2_x([0,T) \times \R^d)} \leq M<\infty.
\end{aligned}
\end{equation}
Then the initial-value problem
\begin{equation} \label{E:nlw}
u_{tt} - \Delta u = |u|^p u \qtq{with} u(0) = u_0 \qtq{and} \partial_t u(0) = u_1
\end{equation}
has a strong solution on $[0,T) \times \R^d$ with $(u,u_t) \in C_t[\dot H^1_x \times L^2_x] \cap C_t[\dot H^{s_c}_x \times \dot H^{s_c-1}_x]$.  Furthermore, for each $t \in [0,T)$, we have
\begin{equation} \label{E:un to u at t}
(u^{(n)}(t),\partial_t u^{(n)}(t)) \rightharpoonup (u(t),\partial_t u(t)) \qtq{weakly in $\dot H^1_x \times L^2_x$.}
\end{equation}
Consequently, the limiting solution $u$ obeys the bounds
\begin{equation} \label{E:stability bounds}
\|\nabla_{t,x} u\|_{C_tL^2_x([0,T)\times \R^d)} + \|\nabla_{t,x} u\|_{C_t \dot H^{s_c-1}_x([0,T) \times \R^d)} \leq M.
\end{equation}
\end{lemma}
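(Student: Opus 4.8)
The plan is to pass to the limit in the Duhamel formula using the uniform bounds~\eqref{E:bounded sequences} together with local-in-time compactness coming from Strichartz estimates. First I would fix a compact subinterval $[0,\tau]\subset[0,T)$ and a ball $B_R$, and work on the truncated region; finite speed of propagation will let me globalize afterward. From~\eqref{E:bounded sequences} and the Strichartz inequality (Lemma~\ref{L:Strichartz}, say in the form~\eqref{E:H12 strichartz} applied to $|u^{(n)}|^p u^{(n)}$, whose $L^{\frac{2(d+1)}{d+3}}_{t,x}$-norm is controlled via H\"older and Sobolev embedding on balls, Lemma~\ref{L:Sob domain}), the sequence $u^{(n)}$ is bounded in $L^{\frac{p(d+1)}2}_{t,x}$ on bounded spacetime regions; combined with the $C_tH^1_x$ bound and the Rellich--Kondrashov theorem, after passing to a subsequence $u^{(n)}\to u$ strongly in $L^{p+1}_{t,x,\loc}$ and a.e., while $\nabla_{t,x}u^{(n)}\rightharpoonup \nabla_{t,x}u$ weak-$\ast$ in $L^\infty_tL^2_x$ and $|\nabla|^{s_c}u^{(n)}\rightharpoonup|\nabla|^{s_c}u$ likewise. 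A.e.\ convergence upgrades to $|u^{(n)}|^pu^{(n)}\to|u|^pu$ in $L^{\frac{2(d+1)}{d+3}}_{t,x,\loc}$ (dominated convergence, using the uniform $L^{\frac{p(d+1)}2}_{t,x}$ bound and $\frac{p(d+1)}2>\frac{2(d+1)(p+1)}{d+3}$ in the relevant range, i.e.\ $p<\frac4{d-2}$).

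Next I would identify the limit of the linear flows. Since $m_n\to0$, the propagators $\scriptS_{m_n}(t)$ converge to $\scriptS_0(t)$ strongly on $\dot H^{s_c}_x\times\dot H^{s_c-1}_x$ and on $\dot H^1_x\times L^2_x$ (the symbols $\cos(t\jp\xi_{m_n})$ etc.\ converge pointwise and are uniformly bounded; quantitatively $|\cos(t\jp\xi_m)-\cos(t|\xi|)|\lesssim t m^2/|\xi|$ on the relevant frequency ranges after interpolation). Together with~\eqref{E:un to u at 0} this gives $\scriptS_{m_n}(t)(u_0^{(n)},u_1^{(n)})\rightharpoonup\scriptS_0(t)(u_0,u_1)$. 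The Duhamel integral term passes to the limit by the strong convergence of the nonlinearity just established and continuity of the (massless) Duhamel operator from $L^{\frac{2(d+1)}{d+3}}_{t,x,\loc}$ into $C_tL^2_x$ via Strichartz; one must also check $\jpn_{m_n}^{-1}\sin(\jpn_{m_n}(t-s))\to|\nabla|^{-1}\sin(|\nabla|(t-s))$, again by dominated convergence at the symbol level. Hence $u$ satisfies the massless Duhamel formula~\eqref{E:nlw}, and $u\in L^{\frac{p(d+1)}2}_{t,x,\loc}$ by weak lower semicontinuity of the norm, so $u$ is a strong solution on $[0,T)\times\R^d$ (the bounds being uniform in $\tau<T$ and $R$, finite speed of propagation assembles the global statement).

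The bounds~\eqref{E:stability bounds} follow from weak (weak-$\ast$) lower semicontinuity: $\|\nabla_{t,x}u(t)\|_{L^2_x}\le\liminf\|\nabla_{t,x}u^{(n)}(t)\|_{L^2_x}\le M$, and similarly for the $\dot H^{s_c-1}_x$-norm of $u_t$, using that $\jpn_{\lambda_n}^{s_c-1}\to|\nabla|^{s_c-1}$ in symbol and $s_c-1<0$. The weak convergence~\eqref{E:un to u at t} at each fixed time, rather than merely along a subsequence, follows once uniqueness of the limiting strong solution (Proposition~\ref{P:uniqueness}) is invoked: every subsequence has a further subsequence converging weakly to a strong solution of~\eqref{E:nlw} with the same data, hence to $u$, so the full sequence converges. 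The main obstacle I anticipate is the bookkeeping needed to make the passage to the limit in~\eqref{E:Duhamel} genuinely local in space: the nonlinearity $|u^{(n)}|^pu^{(n)}$ is only controlled on bounded regions, so one must combine the truncation with finite speed of propagation carefully, checking that the exponent constraint $\frac1{2d}<s_c<1$ never actually enters (it does not, since here we only use the $H^1$-subcritical Strichartz estimate~\eqref{E:H12 strichartz}, valid for all $0<p<\frac4{d-2}$) and that the weak convergence at positive times is not lost when restricting to balls. A secondary subtlety is that~\eqref{E:bounded sequences} controls $\jpn_{\lambda_n}^{s_c-1}u^{(n)}_t$ with $\lambda_n\to0$ rather than $|\nabla|^{s_c-1}u^{(n)}_t$ directly; for $s_c\le 1$ one has $|\nabla|^{s_c-1}\le\jpn_{\lambda_n}^{s_c-1}$ as Fourier multipliers only after care near $\xi=0$, so the clean inequality to use is $\jpn_{\lambda_n}^{s_c-1}\ge c\,|\nabla|^{s_c-1}$ on $|\xi|\ge\lambda_n$ combined with the $L^2_x$-bound on $u^{(n)}_t$ itself to handle low frequencies, and then $\lambda_n\to0$.
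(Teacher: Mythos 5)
Your overall strategy — extract compactness of the solution sequence, pass to the limit in the Duhamel formula, then use uniqueness to get convergence of the whole sequence — is a legitimate alternative to what the paper does. The paper instead truncates the \emph{initial data} in space, applies Rellich--Kondrashov to the (compactly supported) truncated data to get strong convergence in $\dot H^{1/2}_x\times\dot H^{-1/2}_x$, uses Proposition~\ref{P:lwp} to get a uniform short time of existence, and then runs a $\dot H^{1/2}$-level Strichartz contraction on the difference $u^{(n)}_{x_0}-u_{x_0}$; this avoids having to touch the nonlinearity's convergence directly and handles every $s_c\in(0,1)$ in one stroke. Your route is workable, but as written it has a real gap.

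The gap is in the exponent arithmetic underlying the Vitali/dominated-convergence step. You claim $\tfrac{p(d+1)}2>\tfrac{2(d+1)(p+1)}{d+3}$ ``in the relevant range $p<\tfrac4{d-2}$.'' Cancelling $(d+1)$ and cross-multiplying, that inequality is $p(d+3)>4(p+1)$, i.e., $p(d-1)>4$, i.e., $s_c>\tfrac12$. So the uniform $L^{\frac{p(d+1)}2}_{t,x,\loc}$ bound on $u^{(n)}$ gives uniform integrability of $|u^{(n)}|^{p+1}$ in $L^{\frac{2(d+1)}{d+3}}_{t,x,\loc}$ \emph{only} in the super-conformal range, not for all $p<\tfrac4{d-2}$. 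For $s_c\le\tfrac12$ the bound falls on the wrong side, and your a.e.~convergence does not upgrade as stated. A clean fix in your framework is to abandon Vitali and instead split by H\"older: $\bigl\||u^{(n)}|^p u^{(n)}-|u|^pu\bigr\|_{L^{\frac{2(d+1)}{d+3}}_{t,x}}\lesssim\bigl(\|u^{(n)}\|^p_{L^{\frac{p(d+1)}2}_{t,x}}+\|u\|^p_{L^{\frac{p(d+1)}2}_{t,x}}\bigr)\,\|u^{(n)}-u\|_{L^{\frac{2(d+1)}{d-1}}_{t,x}}$, and observe that $\tfrac{2(d+1)}{d-1}<\tfrac{2d}{d-2}$, so that on bounded regions the Arzel\`a--Ascoli/Rellich compactness (equicontinuity in $C_tL^2_{\loc}$ from the $u^{(n)}_t$ bound, plus the $C_tH^1_x$ bound and interpolation) gives strong convergence $u^{(n)}\to u$ in $L^{\frac{2(d+1)}{d-1}}_{t,x,\loc}$. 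That closes the step for every $s_c\in(0,1)$.

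A second, related soft spot: the one-line claim that $u^{(n)}$ is bounded in $L^{\frac{p(d+1)}2}_{t,x}$ on bounded spacetime regions needs the same care the paper puts into Proposition~\ref{P:lwp}. The exponent $\tfrac{p(d+1)}2$ with $q=r$ is Strichartz-admissible only for $s_c\ge\tfrac12$ (the admissibility inequality $\tfrac1q+\tfrac{d-1}{2r}\le\tfrac{d-1}4$ becomes $\tfrac1p\le\tfrac{d-1}4$), while the pure Sobolev-on-balls bound $u^{(n)}\in C_tL^{2d/(d-2)}_x$ controls $L^{\frac{p(d+1)}2}_{t,x}$ on bounded regions only when $\tfrac{p(d+1)}2\le\tfrac{2d}{d-2}$, i.e.\ $s_c\le\tfrac{d+2}{2d}$, which excludes $s_c$ near $1$. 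You therefore need both routes, patched according to the range of $s_c$ (and in the super-conformal case you also need the $\dot H^{s_c}\times\dot H^{s_c-1}$ bound in \eqref{E:bounded sequences}, not just $\dot H^1\times L^2$). This is exactly what the paper's invocation of Proposition~\ref{P:lwp} packages for you, and it is worth doing the same here rather than leaving it implicit.

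Your treatment of the propagator convergence, of the $\jpn_{\lambda_n}^{s_c-1}$ subtlety, and of the subsequence/uniqueness argument for \eqref{E:un to u at t} is sound.
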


\begin{proof}
We will prove that there exists a time $0<t_0 < \min\{1,T\}$, depending only on $M$, such that $u$ exists up to time $t_0$ and satisfies \eqref{E:un to u at t} for each $t \in [0,t_0]$.  The lemma follows from this and a simple iterative argument.

We will construct the solution $u$ on $[0, t_0]\times\R^d$ by gluing together solutions defined in light cones.  To this end, let $x_0 \in \R^d$ and let $\phi$ be a smooth cutoff such that $\phi(x) = 1$ for $|x| \leq 1$ and $\phi(x) = 0$ for $|x| \geq 2$.  For $j=0,1$ we define the initial data
\begin{gather*}
u_{x_0,j}(x) := \phi(x-x_0)u_j(x) \qtq{and} u^{(n)}_{x_0,j}(x) := \phi(x-x_0)u^{(n)}_j(x).
\end{gather*}
By \eqref{E:un to u at 0},
\begin{equation} \label{E:wk conv after loc}
(u^{(n)}_{x_0,0},u^{(n)}_{x_0,1}) \rightharpoonup (u_{x_0,0},u_{x_0,1}) \qtq{weakly in $\dot H^1_x \times L^2_x$}
\end{equation}
and so, by Rellich--Kondrashov,
\begin{equation} \label{E:H12 conv after loc}
(u^{(n)}_{x_0,0},u^{(n)}_{x_0,1}) \to (u_{x_0,0},u_{x_0,1}) \qtq{in $\dot H^{\frac12}_x \times \dot H^{-\frac12}_x$.}
\end{equation}
Furthermore, by \eqref{E:bounded sequences},
\begin{equation} \label{E:H1 after loc}
\|u_{x_0,0}\|_{H^1_x} + \|u_{x_0,1}\|_{L^2_x} + \|u^{(n)}_{x_0,0}\|_{H^1_x} + \|u^{(n)}_{x_0,1}\|_{L^2_x} \lesssim M.
\end{equation}
Thus, by Proposition~\ref{P:lwp} there exists a time $0<t_0< 1$, depending only on $M$, such that the solutions $u_{x_0}$ and $u_{x_0}^{(n)}$ to
$$
\begin{cases} \partial_{tt}u_{x_0} - \Delta u_{x_0} = |u_{x_0}|^pu_{x_0} \\ u_{x_0}(0) = u_{x_0,0}, \quad \partial_t u_{x_0}(0) = u_{x_0,1} \end{cases}
\quad
\begin{cases} \partial_{tt}u^{(n)}_{x_0} - \Delta u^{(n)}_{x_0} + m_n^2 u^{(n)} = |u^{(n)}_{x_0}|^pu_{x_0}^{(n)} \\
        u^{(n)}_{x_0}(0) = u^{(n)}_{x_0,0}, \quad \partial_t u^{(n)}_{x_0}(0) = u^{(n)}_{x_0,1} \end{cases}
$$
exist on $[0,t_0]\times \R^d$ and satisfy the bounds
\begin{align}
\|\nabla_{t,x} u_{x_0}\|_{C_tL^2_x([0,t_0] \times \R^d)} + \|\nabla_{t,x} u_{x_0}^{(n)}\|_{C_tL^2_x([0,t_0] \times \R^d)} &\lesssim M \label{E:ux0 H1}\\
\|u_{x_0}\|_{L^{\frac{p(d+1)}2}_{t,x}([0,t_0] \times \R^d)} + \|u^{(n)}_{x_0}\|_{L^{\frac{p(d+1)}2}_{t,x}([0,t_0] \times \R^d)} &< \eta, \label{E:uunx0 Lq}
\end{align}
for a small constant $\eta>0$ to be determined in a moment.  Throughout the remainder of the proof, all spacetime norms will be on $[0,t_0]\times \R^d$.

By H\"older and Sobolev embedding,
\begin{align*}
\|u_{x_0}\|_{L^{\frac{2(d+1)}{d-1}}_{t,x}} + \|u^{(n)}_{x_0}\|_{L^{\frac{2(d+1)}{d-1}}_{t,x}}
&\lesssim t_0^{\frac{d-1}{2(d+1)}}\Bigl(\|u_{x_0}\|_{C_tH^1_x} + \|u^{(n)}_{x_0}\|_{C_tH^1_x}\Bigr)\lesssim M.
\end{align*}
By Lemma~\ref{L:Strichartz} (applied with $m = 0$), \eqref{E:H12 conv after loc}, \eqref{E:ux0 H1}, \eqref{E:uunx0 Lq}, and H\"older's inequality,
\begin{align*}
\|\nabla_{t,x}(u^{(n)}_{x_0} - & u_{x_0})\|_{C_t\dot H^{-\frac12}_x} + \|u^{(n)}_{x_0} - u_{x_0}\|_{L^{\frac{2(d+1)}{d-1}}_{t,x}}\\
&\lesssim \|(u^{(n)}_{x_0,0}-u_{x_0,0}, u^{(n)}_{x_0,1} - u_{x_0,1})\|_{\dot H^{\frac12}_x \times \dot H^{-\frac12}_x}
 + \|m_n^2u^{(n)}_{x_0}\|_{L^1_t\dot H^{-\frac12}_x} \\ &\qquad \qquad + \||u^{(n)}_{x_0}|^pu^{(n)}_{x_0} - |u_{x_0}|^pu_{x_0}\|_{L^{\frac{2(d+1)}{d+3}}_{t,x}}\\
& \lesssim \eps_n + m_n^2 t_0 \|u^{(n)}_{x_0}\|_{C_t\dot H^1_x} + \eta^p\|u^{(n)}_{x_0} - u_{x_0}\|_{L^{\frac{2(d+1)}{d-1}}_{t,x}}\\
&\lesssim \eps_n + m_n^2M + \eta^p\|u^{(n)}_{x_0} - u_{x_0}\|_{L^{\frac{2(d+1)}{d-1}}_{t,x}},
\end{align*}
for some sequence $\eps_n \to 0$.  Thus, for $\eta$ sufficiently small,
\begin{equation} \label{E:un to u locally}
\|\nabla_{t,x}(u^{(n)}_{x_0} - u_{x_0})\|_{C_t\dot H^{-\frac12}_x} + \|u^{(n)}_{x_0} - u_{x_0}\|_{L^{\frac{2(d+1)}{d-1}}_{t,x}} \to 0 \qtq{as} n\to \infty.
\end{equation}

To conclude, by finite speed of propagation, the solution $u$ to \eqref{E:eqn} with $m=0$ exists and equals $u_{x_0}$ on
$\Gamma(1,x_0)\cap([0,t_0] \times \R^d)$ for each $x_0 \in \R^d$.  In particular, $u$ is a strong solution on $[0,t_0] \times \R^d$.
Additionally, $u^{(n)} = u^{(n)}_{x_0}$ on $\Gamma(T,x_0) \cap ([0,t_0] \times \R^d)$ for each $x_0 \in \R^d$.  Thus by \eqref{E:bounded sequences} and \eqref{E:un to u locally}, we obtain \eqref{E:un to u at t} for all $0\leq t\leq t_0$.  This completes the proof.
\end{proof}

%
%
%
%

\section{Blowup of non-positive energy solutions of NLW}\label{S:non+blow}

In this section we prove that non-positive energy solutions to the nonlinear wave equation blow up in finite time.  More precisely, we have

\begin{theorem}[Non-positive energy implies blowup] \label{T:nonpos blowup}
Let $\frac1{2d} < s_c < 1$ and set $p = \frac4{d-2s_c}$.  Let $(u_0,u_1) \in (\dot{H}^1_x \times L^2_x) \cap (\dot{H}^{s_c}_x \times \dot{H}^{s_c-1}_x)$ be initial data,
with $u_0$ and $u_1$ radial if $s_c < \frac12$.  Assume that $(u_0,u_1)$ is not identically zero and satisfies
$$
E(u_0,u_1) = \int_{\R^d} \tfrac12|\nabla u_0(x)|^2 + \tfrac12|u_1(x)|^2 - \tfrac1{p+2}|u_0(x)|^{p+2}\, dx \leq 0.
$$
Then the maximal-lifespan solution to the initial-value problem
$$
u_{tt} - \Delta u = |u|^p u \qtq{with} u(0) = u_0 \qtq{and} u_t(0) = u_1
$$
blows up both forward and backward in finite time.
\end{theorem}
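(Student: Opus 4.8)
The plan is to run a space-truncated virial (dilation) argument. Introduce the localized quantities
\[
y(t) := \int_{\R^d} \chi_R(x)^2 |u(t,x)|^2\, dx \qtq{and} F(t) := \int_{\R^d}\chi_R(x)^2\, u\, u_t\, dx,
\]
where $\chi_R$ is a smooth cutoff equal to $1$ on $|x|\leq R$ and vanishing for $|x|\geq 2R$; note $y'(t) = 2F(t)$. Differentiating $F$ using the equation gives
\[
F'(t) = \int \chi_R^2\bigl(u_t^2 - |\nabla u|^2 + |u|^{p+2}\bigr)\,dx + (\text{boundary terms supported on } R\leq|x|\leq 2R).
\]
Using $E(u)\leq 0$ (conserved, by the Remark after Proposition~\ref{P:lwp}) to replace $\int|\nabla u|^2$, we get $\int(u_t^2 - |\nabla u|^2 + |u|^{p+2}) \geq \int u_t^2 + \tfrac{p}{2}|\nabla u|^2 + \bigl(1-\tfrac{2}{p+2}\bigr)\int|\nabla u|^2$ type lower bounds, so that modulo the error terms $F'(t)\geq c\bigl(\|u_t\|_2^2 + \|\nabla u\|_2^2\bigr)\geq c\, y''$-type control; in particular $F'(t)$ is bounded below by a positive multiple of $F(t)^2/y(t)$ by Cauchy--Schwarz ($F^2\leq y\int\chi_R^2 u_t^2$). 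This is the standard mechanism forcing $y$ to blow up in finite time, hence (by finite speed of propagation and the $H^1_\loc\times L^2_\loc$ theory, Corollary~\ref{C:lwp loc}) forcing the solution itself to cease to exist.

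The implementation I would follow is the one the authors flag in the introduction: assume for contradiction that the maximal-lifespan solution is global forward in time (the backward case is identical by time reversal). Pick $R$ large; the boundary error terms in $F'(t)$ are controlled by $\int_{R\leq|x|\leq 2R}(u_t^2 + |\nabla u|^2 + |u|^{p+2})\,dx$, and the worst term $\int_{R\le|x|\le 2R}|u|^{p+2}$ is handled by the exterior Gagliardo--Nirenberg inequality in Lemma~\ref{L:GN}, bounding it by $\|u\|_{L^{pd/2}(|x|\geq R)}^p\|\nabla u\|_{L^2(|x|\geq R)}^2$. The first factor is where the critical-regularity hypothesis enters: by Lemma~\ref{L:weak stability}-type bounds or directly by the a priori control $\||\nabla|^{s_c}u\|_{C_tL^2_x}$ together with $\|\nabla u\|_{C_tL^2_x}$, interpolation and Bernstein give $\|u\|_{L^{pd/2}(|x|\geq R)}\to 0$ as $R\to\infty$, uniformly on the time interval in question — this is exactly why we need $(u_0,u_1)\in \dot H^{s_c}_x\times\dot H^{s_c-1}_x$ as well as $\dot H^1_x\times L^2_x$, and why the radial hypothesis for $s_c<\frac12$ appears (it is needed for the relevant a priori bounds/well-posedness at critical regularity). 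So for $R$ large the error is a small fraction of the main term and can be absorbed.

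There is the usual subtlety that on a short time interval $F(t)$ might start out negative or zero, so a differential-inequality blowup argument needs a two-step structure: first show $F'(t)\geq c_1 > 0$ as long as the main terms dominate, which forces $F(t)\to+\infty$ and $y$ to grow; once $F$ is large and positive one upgrades to the superlinear inequality $y'' \gtrsim F^2/y$ (or directly $\tfrac{d}{dt}(y^{-\alpha})\leq -c$ for suitable $\alpha>0$), which yields blowup of $y$ in finite time. One must also rule out the degenerate possibility that $\nabla_{t,x}u\equiv 0$ on the support of $\chi_R$ for all time, but $E\leq 0$ with $(u_0,u_1)\not\equiv 0$ forces $\int|u_0|^{p+2}>0$, so $u$ is genuinely nontrivial and, after possibly enlarging $R$, the main term is strictly positive at $t=0$; finite speed of propagation keeps it so. The main obstacle is the uniform-in-time control of the exterior tail $\|u\|_{L^{pd/2}(|x|\geq R)}$ and of the exterior $\dot H^1$ norm needed to make the boundary errors negligible — this is where the critical-norm a priori bound (and, for $s_c<\tfrac12$, radiality) is doing the real work, and it is the step I would expect to require the most care.
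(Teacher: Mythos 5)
Your overall strategy (space-truncated virial/mass argument, $|M'|^2\leq c\,M\,M''$, contradiction) is the right one and matches the paper, but there is a genuine gap precisely at the step you yourself flag as "the most care": the uniform-in-time control of the exterior tail.

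You claim that the a priori control $\||\nabla|^{s_c}u\|_{C_tL^2_x}$, combined with $\|\nabla u\|_{C_tL^2_x}$, interpolation, and Bernstein, gives $\|u\|_{L^{pd/2}(|x|\geq R)}\to 0$ as $R\to\infty$ uniformly on the (potentially infinite) time interval. There is no such a priori bound. The hypothesis puts $(u_0,u_1)$ in $\dot H^{s_c}_x\times\dot H^{s_c-1}_x$ only at time $t=0$; nothing in the local theory propagates a uniform-in-time bound on $\|u(t)\|_{\dot H^{s_c}_x}$ over the full lifespan, and Section~\ref{S:critical blowup} is specifically about showing that this norm diverges at blowup. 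Neither is $\|\nabla u(t)\|_{L^2_x}$ bounded in time (it is expected to blow up). So the interpolation argument you propose has no raw material to work with. The paper's fix is different: it applies the small-data critical theory (Proposition~\ref{P:Hsc sdt}) to the exterior portion $\phi^c(\cdot/(R/4))(u_0,u_1)$, which can be made arbitrarily small in $\dot H^{s_c}_x\times\dot H^{s_c-1}_x$ for $R$ large. This produces a \emph{global} solution $v$ that stays uniformly small in $C_t L^{pd/2}_x$ and $C_t\dot H^1_x$, and then finite speed of propagation gives $u=v$ in the expanding region $\{|x|\geq R/2+|t|\}$. That is where the smallness of the exterior tail really comes from. (And it is exactly why radiality is needed when $s_c<\tfrac12$: Proposition~\ref{P:Hsc sdt} requires it.)

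A second, related issue is your choice of a static cutoff $\chi_R$ supported in $\{|x|\leq 2R\}$. The finite-speed control of $u$ lives in $\{|x|\geq R/2+|t|\}$, a region that \emph{recedes} as $t$ grows; a fixed ball of radius $2R$ does not track it. The paper uses the time-dependent cutoff $\phi(x/(R+|t|))$, which expands along with the light-cone exterior, so that the annular error terms in $M'$ and $M''$ always land in the region where $u$ is known to be small. With a static cutoff the boundary terms are not controllable on long time scales, and the differential inequality $|M'|^2\leq cMM''$ cannot be closed. So the key ideas you are missing are: use the small-data theory on the exterior data plus finite speed of propagation (rather than a nonexistent a priori bound), and let the cutoff expand with the light cone.
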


We note that for solutions to \eqref{E:eqn} with $m>0$, finiteness of the energy dictates that $\|u_0\|_{L^2_x}$ also be finite.  Indeed, because of the estimate (cf. Lemma~\ref{L:GN})
\begin{align}\label{E:GN}
\|f\|_{p+2}^{p+2}\leq C_{\opt} \|f\|_{\frac{pd}2}^p \|\nabla f\|_2^2\lesssim \|f\|_{\dot H^{s_c}_x}^p \|f\|_{\dot H^1_x}^2,
\end{align}
the natural energy space for initial data is $(u_0,u_1) \in H^1_x \times L^2_x$.  The constant $C_{\opt}$ depends only on $d,p$ and denotes the optimal constant in the first inequality in \eqref{E:GN}.  In this case (that is, $u_0\in H^1_x$), the theorem is well-known and may be obtained by
taking two time derivatives of $\|u(t)\|_{L_x^2}$ (cf. \cite{Glassey73}, \cite{PayneSattinger}, and the proof of Proposition~\ref{P:finite mass}).  To handle data for which $\|u_0\|_{L_x^2}$ need not be finite, we adapt this argument by introducing a spatial truncation and then dealing with the resulting error terms.  The larger class of initial data considered here is dictated by the needs of Section~\ref{S:critical blowup}.

\begin{proof}  We define
$$
\phi(x) := \begin{cases}
1, &\qtq{if} |x| \leq 1;\\
1-2(|x|-1)^2, &\qtq{if} 1 \leq |x| \leq \tfrac32; \\
2(2-|x|)^2, &\qtq{if} \tfrac32 \leq |x| \leq 2; \\
0,&\qtq{if} |x| \geq 2,
\end{cases}
$$
and $\phi^c := 1-\phi$.

As $(u_0,u_1) \in (\dot{H}^1_x \times L^2_x) \cap (\dot H^{s_c}_x \times \dot H^{s_c-1}_x)$, there exists a radius $R > 0$ such that
$$
\|\phi^c\bigl(\tfrac{\cdot}{R/4}\bigr) u_0\|_{\dot H^{s_c}_x} + \|\phi^c\bigl(\tfrac{\cdot}{R/4}\bigr)u_1\|_{\dot H^{s_c-1}_x} \leq \eta_1,
$$
where $\eta_1$ is the small data threshold from Proposition~\ref{P:Hsc sdt}.  Let $v$ denote the global solution to
\begin{equation*}
v_{tt} - \Delta v = |v|^p v \qtq{with} v(0,x) = \phi^c\bigl(\tfrac{x}{R/4}\bigr)u_0(x) \qtq{and} v_t(0,x) = \phi^c\bigl(\tfrac{x}{R/4}\bigr) u_1(x).
\end{equation*}
By Proposition~\ref{P:Hsc sdt}, we may take $R$ sufficiently large that
\begin{equation} \label{E:v small}
\|\nabla_{t,x}v\|_{C_tL^2_x(\R \times \R^d)} + \|v\|_{C_t L^{\frac{pd}2}_x(\R \times \R^d)} < \eta
\end{equation}
for a small constant $\eta>0$ to be determined later.  By finite speed of propagation, $u = v$ where $|x| \geq R/2+|t|$, and so
\begin{equation} \label{E:u small}
\|\nabla_{t,x}u\|_{C_tL^2_x(\{|x| \geq R/2+|t|\})} + \|u\|_{C_t L^{\frac{pd}2}_x(\{|x| \geq R/2+|t|\})} < \eta.
\end{equation}

Next, since $E(u) \leq 0$ and $u$ is not identically zero, by \eqref{E:GN} we must have
\begin{align*}
0\geq E(u) &= \int_{\R^d}  \tfrac12|u_t(t,x)|^2 + \tfrac12|\nabla u(t,x)|^2 - \tfrac1{p+2}|u(t,x)|^{p+2}\, dx \\
&\geq \int_{\R^d} \tfrac12|u_t(t,x)|^2 + \tfrac12\bigl(1-\tfrac2{p+2}C_{\opt}\|u(t)\|_{L^{\frac{pd}2}_x}^p\bigr)|\nabla u(t,x)|^2\, dx
\end{align*}
and so,
$$
\|u(t)\|_{L^{\frac{pd}2}_x}^p \geq \tfrac{p+2}2 C_{\opt}^{-1}.
$$
Thus, using \eqref{E:u small} and taking $\eta$ small enough so that $\eta^p<\frac{p+2}4 C_{\opt}^{-1}$, we obtain
\begin{equation} \label{E:u pd/2 big}
\|u(t)\|_{L^{\frac{pd}2}_x(|x| \leq R/2+|t|)}^p \geq \tfrac{p+2}4 C_{\opt}^{-1}
\end{equation}
for each $t$ in the lifespan of $u$.

Finally, letting $\chi$ denote a smooth cutoff that is equal to one on the ball $\{|x|\leq R/2+|t|\}$ and vanishes when $|x|\geq R+2|t|$, and using Gagliardo--Nirenberg followed by Lemma~\ref{L:Poincare}, H\"older, and \eqref{E:u small}, we obtain
\begin{align*}
\|u(t)\|_{L^{\frac{pd}2}_x(|x|\leq \frac R2 + |t|)}& \lesssim \|\chi u(t)\|_{L_x^2}^{1-s_c} \|\nabla [\chi u(t)]\|_{L_x^2}^{s_c} \\
&\lesssim (R+|t|)^{1-s_c}\|\nabla [\chi u(t)]\|_{L_x^2}\\
&\lesssim (R+|t|)^{1-s_c} \Bigl[ \|\nabla u(t)\|_{L^2_x} + \|u(t)\|_{L^{\frac{pd}2}_x(|x|\geq \frac R2 + |t|)}\|\nabla \chi\|_{L_x^{\frac{2pd}{pd-4}}} \Bigr]\\
&\lesssim (R+|t|)^{1-s_c}\|\nabla u(t)\|_{L^2_x} +\eta.
\end{align*}
Thus, invoking \eqref{E:u pd/2 big} and taking $\eta$ sufficiently small, we obtain
\begin{equation} \label{E:u H1 big}
\|\nabla u(t)\|_{L^2_x}\gtrsim  (R+|t|)^{-1+s_c},
\end{equation}
throughout the lifetime of $u$.

Now we are ready to define our truncated `mass'.  We set
$$
M(t) := \int_{\R^d} \phi\bigl(\tfrac{x}{R+|t|}\bigr) |u(t,x)|^2\, dx.
$$
It is easy to see that this quantity is finite throughout the lifetime of $u$.  By \eqref{E:u pd/2 big}, it never vanishes, that is, $M(t)>0$ for all $t$ in the lifespan of $u$.

We differentiate.  Routine computations reveal that for $t \geq 0$, we have
\begin{align} \notag
&M'(t) = \int_{\R^d} -\tfrac{x}{(R+|t|)^2}\cdot \nabla \phi\bigl(\tfrac{x}{R+|t|} \bigr) |u(t)|^2\, dx + \int_{\R^d} 2\phi\bigl(\tfrac{x}{R+|t|}\bigr)u(t)u_t(t)\, dx, \\
 \label{E:M''}
&\begin{aligned}
M''(t) &= -2(p+2)E(u) + \int_{\R^d} 4 \phi\bigl(\tfrac{x}{R+|t|} \bigr) |u_t(t)|^2 + p|\nabla_{t,x}u(t)|^2\, dx \\
&\qquad + \int_{\R^d} 2\phi^c\bigl(\tfrac{x}{R+|t|} \bigr)\Bigl[|\nabla_{t,x}u(t)|^2- |u(t)|^{p+2}\Bigr]\, dx \\
&\qquad + \int_{\R^d} \left[\tfrac{2x}{(R+|t|)^3} \cdot  \nabla \phi\bigl(\tfrac{x}{R+|t|} \bigr) + \tfrac{x_ix_j}{(R+|t|)^4}\partial_i \partial_j \phi\bigl(\tfrac{x}{R+|t|} \bigr)\right] |u(t)|^2\, dx \\
&\qquad - \int_{\R^d} \tfrac2{R+|t|} \nabla\phi\bigl(\tfrac{x}{R+|t|} \bigr) \cdot \bigl[\tfrac{2x}{R+|t|} u_t(t) + \nabla u(t)\bigr]u(t) \, dx.
\end{aligned}
\end{align}
We will seek an upper bound for $|M'(t)|$ and a lower bound for $M''(t)$.  We will make repeated use of the following bound, which is a simple consequence of H\"older's inequality followed by \eqref{E:u small} and \eqref{E:u H1 big}:
\begin{align}\label{E:mass annulus}
\int_{R+|t| \leq |x| \leq 2(R+|t|)}\frac{|u(t,x)|^2}{(R+|t|)^2}\, dx
&\lesssim (R+|t|)^{-2(1-s_c)}\|u\|_{L^{\frac{pd}2}_x(|x|\geq R+|t|)}^2\notag\\
&\lesssim\eta^2 \|\nabla u(t)\|_{L^2_x}^2.
\end{align}

Using Cauchy--Schwarz and the inequality $|ab|^{\frac12} + |cd|^{\frac12} \leq (|a|+|c|)^{\frac12} (|b|+|d|)^{\frac12}$, we estimate
\begin{align*}
|M'(t)|
&\leq \biggl(\int_{\R^d} \tfrac{\eps}8 |\nabla \phi\bigl(\tfrac{x}{R+|t|}\bigr)|^2|u(t)|^2\, dx \biggr)^{\!\frac12}
    \biggl(\int_{R+|t| \leq |x| \leq 2(R+|t|)} \tfrac{8|x|^2}{\eps (R+|t|)^4} |u(t)|^2\, dx \biggr)^{\!\frac12} \\
&\quad + \biggl(\int_{\R^d}(1-\eps)\phi\bigl(\tfrac{x}{R+|t|}\bigr) |u(t)|^2\, dx\biggr)^{\!\frac12}
  \biggl( \int_{\R^d} \tfrac4{1-\eps}\phi\bigl(\tfrac{x}{R+|t|}\bigr)|u_t(t)|^2\, dx \biggr)^{\!\frac12}\\
&\leq \biggl(\int_{\R^d}\left[\tfrac\eps8 |\nabla \phi\bigl(\tfrac{x}{R+|t|}\bigr)|^2 + (1-\eps) \phi\bigl(\tfrac{x}{R+|t|}\bigr) \right] |u(t)|^2\, dx \biggr)^{\!\frac12} \\
&\quad \times \biggl( \int_{R+|t| \leq |x| \leq 2(R+|t|)} \tfrac{32}{\eps(R+|t|)^2} |u(t)|^2\, dx + \int_{\R^d} \tfrac4{1-\eps}\phi\bigl(\tfrac{x}{R+|t|}\bigr)|u_t(t)|^2\, dx \biggr)^{\!\frac12}
\end{align*}
for any $0 < \eps < 1$.  Since $\tfrac18 |\nabla \phi|^2 \leq \phi$, the first factor in the product above is bounded by $M(t)$.  Using
\eqref{E:mass annulus} to estimate the first term in the second factor gives
\begin{equation} \label{E:bound M'}
|M'(t)|^2 \leq M(t) \left( C_{\eps} \eta^2 \|\nabla u(t)\|_{L^2_x}^2 + \int_{\R^d} \tfrac4{1-\eps}\phi\bigl(\tfrac{x}{R+|t|}\bigr)|u_t(t,x)|^2\, dx \right).
\end{equation}

We now turn to $M''(t)$.  By \eqref{E:mass annulus},
\begin{align} \label{E:mass annulus'}
\Biggl| \int_{\R^d}\Bigl[  &  \tfrac{2x}{(R+|t|)^3}\cdot\nabla \phi\bigl(\tfrac{x}{R+|t|}\bigr)+\tfrac{x_ix_j}{(R+|t|)^4}\partial_i\partial_j\phi\bigl(\tfrac{x}{R+|t|} \bigr)\Bigr] |u(t,x)|^2\, dx \Biggr|\notag\\
&\qquad\lesssim \int_{R+|t| \leq |x| \leq 2(R+|t|)}\tfrac1{(R+|t|)^2}|u(t,x)|^2\, dx \lesssim \eta^2 \|\nabla u(t)\|_{L^2_x}^2.
\end{align}
Next, by Lemma~\ref{L:GN} and \eqref{E:u small},
\begin{align} \label{E:p+2 annulus}
\int_{\R^d} \phi^c\bigl(\tfrac{x}{R+|t|}\bigr)|u(t,x)|^{p+2}\, dx
&\leq \|u(t)\|_{L_x^{p+2}(|x|\geq R+|t|)}^{p+2}\notag\\
&\lesssim\|u(t)\|_{L^{\frac{pd}2}_x(|x|\geq R+|t|)}^p\|\nabla u(t)\|_{L^2_x(|x|\geq R+|t|)}^2\notag\\
&\lesssim \eta^p \|\nabla u(t)\|_{L^2_x}^2.
\end{align}
Finally, by Young's inequality and \eqref{E:mass annulus},
\begin{align} \label{E:u nabla u annulus}
\Biggl|\int_{\R^d} \tfrac2{R+|t|} & \nabla\phi\bigl(\tfrac{x}{R+|t|}\bigr) \cdot \Bigl[\tfrac{2x}{R+|t|} u_t(t,x) + \nabla u(t,x)\Bigr]u(t,x) \, dx\Biggr|\notag\\
&\leq \int_{R+|t| \leq |x| \leq 2(R+|t|)} \tfrac{C_{\eps}}{(R+|t|)^2} |u(t,x)|^2\, dx + \int_{\R^d} \eps |\nabla_{t,x} u(t,x)|^2\, dx \notag\\
& \leq C_{\eps} \eta^2 \|\nabla u(t)\|_{L^2_x}^2 + \eps \|\nabla_{t,x} u(t)\|_{L^2_x}^2,
\end{align}
for any $\eps > 0$.

Now let $\delta > 0$.  Combining  $E(u)\leq 0$, \eqref{E:mass annulus'}, \eqref{E:p+2 annulus}, and \eqref{E:u nabla u annulus} with the identity \eqref{E:M''}, and choosing $\eps=\eps(\delta)$ and then $\eta=\eta(\eps)$ sufficiently small, we obtain
\begin{equation}\label{E:M'' lb 1}
\begin{aligned}
M''(t) &\geq \int_{\R^d} 4 \phi\bigl(\tfrac{x}{R+|t|}\bigr) |u_t(t,x)|^2 \, dx + (p-\delta) \int_{\R^d} |\nabla_{t,x} u(t,x)|^2\, dx \\
& \geq \int_{\R^d} \tfrac4{1-2\eps} \phi\bigl(\tfrac{x}{R+|t|}\bigr) |u_t(t,x)|^2\, dx + (p-\delta) \int_{\R^d} |\nabla u(t,x)|^2\, dx.
\end{aligned}
\end{equation}

Combining \eqref{E:bound M'} and \eqref{E:M'' lb 1}, we get
\begin{equation} \label{E:M'MM''}
|M'(t)|^2 \leq cM(t) M''(t),
\end{equation}
for some constant $0 < c < 1$.  Using this we will prove that $u$ blows up in finite time, forward in time; finite-time blowup backward in time follows from time-reversal symmetry.  We argue by contradiction.  Suppose that the solution $u$ may be continued forward in time indefinitely.

First, we consider the case when $M'(0) > 0$.  By \eqref{E:u H1 big} and \eqref{E:M'' lb 1}, $M''(t) > 0$ for all $t$ in the lifespan of $u$, and so $M'(t)>0$ for all $t>0$.  Thus by \eqref{E:bound M'},
$$
\frac{M'(t)}{M(t)} \leq c\frac{M''(t)}{M'(t)}.
$$
Integrating both sides, we see that for $t \geq 0$ we have
$$
\log \left( \frac{M(t)}{M(0)} \right) \leq c \log \left(\frac{M'(t)}{M'(0)} \right),
$$
that is,
$$
M'(0)M(0)^{-1/c} \leq M'(t)M(t)^{-1/c}.
$$
Integrating a second time and recalling that $M(t)> 0$, we obtain
$$
t M'(0) M(0)^{-1/c} \leq \frac{c}{1-c}(M(0)^{1-1/c} - M(t)^{1-1/c}) \leq \frac{c}{1-c}M(0)^{1-\frac1c}.
$$
But this is impossible since the left-hand side grows linearly as $t \to \infty$, while the right-hand side is bounded.  Thus we must have $M'(0) \leq 0$.

More generally, if we suppose that $M'(t_0) \geq 0$ for any $t_0$ in the lifespan of $u$, then $M'(t) > 0$ for all $t > t_0$, since $M''(t)>0$ for all $t$ in the lifespan of $u$ (by \eqref{E:u H1 big} and \eqref{E:M'' lb 1}). Arguing as above, we again obtain a contradiction to indefinite forward in time existence of $u$.

Thus we may assume that $M'(t) < 0$ for as long as $u$ exists, and therefore
\begin{equation} \label{E:mass bounded}
0 < M(t) < M(0) \qtq{for all} t > 0.
\end{equation}
Furthermore, as $M'(t)$ stays negative, we must have by \eqref{E:M'' lb 1} that
$$
|M'(0)| \geq \int_0^{\infty} M''(t) \,dt \gtrsim \int_0^{\infty} \|\nabla_{t,x} u(t)\|_{L^2_x}^2\, dt.
$$
From this, we see that along some sequence $t_n \to \infty$, we have
\begin{equation} \label{E:H1 to 0}
 \|\nabla u(t_n)\|_{L^2_x}^2 \to 0.
\end{equation}
Next, using Gagliardo--Nirenberg followed by H\"older, \eqref{E:u small}, \eqref{E:u H1 big}, and \eqref{E:mass bounded}, we obtain
\begin{align*}
\|&u(t_n)\|_{L^{\frac{pd}2}_x(|x| \leq R+t_n)}\\
&\lesssim \bigl\|\phi(\tfrac{\cdot}{R+t_n}) u(t_n)\bigr\|_{L^2_x}^{1-s_c}\bigl\|\nabla\bigl[\phi(\tfrac{\cdot}{R+t_n}) u(t_n)\bigr]\bigr\|_{L^2_x}^{s_c}\\
&\lesssim M(t_n)^{\frac{1-s_c}2} \Bigl[\|\nabla u(t_n)\|_{L^2_x} + \|u(t_n)\|_{L^{\frac{pd}2}_x(|x| \geq R+t_n)} \|\nabla \phi(\tfrac{\cdot}{R+t_n})\|_{L_x^{\frac{2pd}{pd-4}}}\Bigr]^{s_c}\\
&\lesssim M(0)^{\frac{1-s_c}2} \Bigl[\|\nabla u(t_n)\|_{L^2_x} + \eta (R+t_n)^{-1+s_c}\Bigr]^{s_c}\\
&\lesssim M(0)^{\frac{1-s_c}2} (1+\eta)^{s_c}\|\nabla u(t_n)\|_{L^2_x}^{s_c} \to 0 \quad \text{as}\quad n\to \infty,
\end{align*}
which contradicts \eqref{E:u pd/2 big}.

This completes the proof of the theorem.
\end{proof}

%
%
%
%

\section{Concentration compactness for a Gagliardo--Nirenberg inequality}\label{S:CC}

In this section we develop a concentration compactness principle associated to the following Gagliardo--Nirenberg inequality (cf. Lemma~\ref{L:GN})
\begin{align}\label{E:GN1}
\|f\|_{L_x^{p+2}}^{p+2} \lesssim \|f\|_{\dot H^{s_c}_x}^p \|f\|_{\dot H^1_x}^2.
\end{align}
More precisely, we prove

\begin{theorem}[Bubble decomposition for \eqref{E:GN1}] \label{T:bubble gn}  Fix a dimension $d \geq 2$ and an exponent $\frac4d \leq p < \frac4{d-2}$.  Let $s_c = \frac{d}2 - \frac2p$.  Let $\{f_n\}_{n\geq 1}$ be a bounded sequence in $\dot H^1_x \cap \dot
H^{s_c}_x$.  Then there exist $J^* \in \{0,1,\ldots\} \cup \{\infty\}$, nonzero functions $\{\phi^j\}_{j=1}^{J^*} \subset \dot H^1_x \cap \dot H^{s_c}_x$, $\{x_n^j\}_{j=1}^{J^*} \subset \R^d$, and a subsequence of $\{f_n\}_{n\geq 1}$ such that along this subsequence
\begin{equation} \label{E:bubble gn}
f_n(x) = \sum_{j=1}^J \phi^j(x-x_n^j) + r_n^J(x) \qtq{for each}  0 \leq J < J^*+1 ,
\end{equation}
with
\begin{equation}\label{E:rnJ wkly to 0}
r_n^J(\cdot + x_n^J) \rightharpoonup 0 \qtq{weakly in} \dot H^1_x \cap \dot H^{s_c}_x \qtq{for each} 1 \leq J < J^*+1.
\end{equation}
  Furthermore, along this subsequence, the $r_n^J$ satisfy
\begin{align}
\label{E:rnJ to 0}
\lim_{J \to J^*} \limsup_{n \to \infty} \|r_n^J\|_{L^{p+2}_x} = 0,
\end{align}
and for each $0 \leq J < J^*+1$, we have the following:
\begin{align}
\label{E:H1 decoup}
&\lim_{n \to \infty} \Bigl\{\|f_n\|_{\dot{H}_x^1}^2 - \Bigl[\sum_{j=1}^J \|\phi^j\|_{\dot{H}_x^1}^2 + \|r_n^J\|_{\dot{H}_x^1}^2 \Bigr]\Bigr\} = 0 \\
\label{E:Hsc decoup}
&\lim_{n \to \infty} \Bigl\{\|f_n\|_{\dot{H}_x^{s_c}}^2 - \Bigl[\sum_{j=1}^J \|\phi^j\|_{\dot{H}_x^{s_c}}^2 + \|r_n^J\|_{\dot{H}_x^{s_c}}^2 \Bigr]\Bigr\}  = 0 \\
\label{E:p+2 decoup}
&\lim_{n \to \infty} \Bigl\{\|f_n\|_{L^{p+2}_x}^{p+2} - \Bigl[\sum_{j=1}^J \|\phi^j\|_{L^{p+2}_x}^{p+2} + \|r_n^J\|_{L^{p+2}_x}^{p+2}\Bigr]\Bigr\} = 0.
\end{align}
Finally, for each $j' \neq j$, we have
\begin{equation} \label{E:GN orthog}
\lim_{n \to \infty} |x_n^j - x_n^{j'}| = \infty.
\end{equation}
\end{theorem}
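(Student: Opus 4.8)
The plan is to run the standard inductive profile-extraction scheme; the simplification available here is that controlling $\{f_n\}$ simultaneously in $\dot H^1_x$ and $\dot H^{s_c}_x$ with $s_c<1$ confines the concentration scales to a compact range, so that only translation parameters survive and no rescalings are needed.

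\emph{The extraction step.} The crux is the following claim: if $\{g_n\}_{n\geq1}$ is bounded by $M$ in $\dot H^1_x\cap\dot H^{s_c}_x$ and $A:=\limsup_n\|g_n\|_{L^{p+2}_x}>0$, then, after passing to a subsequence, there exist $\{x_n\}\subset\R^d$ and a nonzero $\phi\in\dot H^1_x\cap\dot H^{s_c}_x$ with $g_n(\cdot+x_n)\rightharpoonup\phi$ weakly in $\dot H^1_x\cap\dot H^{s_c}_x$ and, moreover, $\|\phi\|_{\dot H^1_x}^2+\|\phi\|_{\dot H^{s_c}_x}^2\geq c(A,M)>0$. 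To see this, set $s:=\frac{dp}{2(p+2)}$; the hypothesis $\frac4d\leq p<\frac4{d-2}$ gives $s_c<s<1$ and $\dot H^s_x\hookrightarrow L^{p+2}_x$, so that $\dot H^1_x\cap\dot H^{s_c}_x\hookrightarrow\dot H^s_x\hookrightarrow L^{p+2}_x$. Bernstein's inequalities (Lemma~\ref{Bernstein}) give $\|P_{\leq N}g_n\|_{L^{p+2}_x}\lesssim N^{s-s_c}M$ and $\|P_{\geq N}g_n\|_{L^{p+2}_x}\lesssim N^{s-1}M$, with $s-s_c>0>s-1$; hence there are dyadic thresholds $N_*=N_*(A,M)<N^*=N^*(A,M)$ so that the frequencies outside $(N_*,N^*)$ contribute at most $A/4$ in $L^{p+2}_x$, and so, along the subsequence on which $\|g_n\|_{L^{p+2}_x}\geq\tfrac34A$, the pieces $P_N g_n$ with $N\in(N_*,N^*)$ sum to at least $A/2$ in $L^{p+2}_x$. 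Pigeonholing among these finitely many frequencies and passing to a further subsequence produces a single $\bar N\in(N_*,N^*)$ with $\|P_{\bar N}g_n\|_{L^{p+2}_x}\gtrsim_{A,M}1$; since $\|P_{\bar N}g_n\|_{L^2_x}\lesssim\bar N^{-1}M$ and $\|P_{\bar N}g_n\|_{L^{p+2}_x}^{p+2}\leq\|P_{\bar N}g_n\|_{L^\infty_x}^p\|P_{\bar N}g_n\|_{L^2_x}^2$, we deduce $\|P_{\bar N}g_n\|_{L^\infty_x}\gtrsim_{A,M}1$. Choosing $x_n$ with $|(P_{\bar N}g_n)(x_n)|\gtrsim_{A,M}1$, the translates $g_n(\cdot+x_n)$ are bounded in $\dot H^1_x\cap\dot H^{s_c}_x$, so a subsequence converges weakly to some $\phi$; pairing against the Schwartz kernel of $P_{\bar N}$ (which, being frequency-localized to an annulus, lies in $\dot H^{-1}_x+\dot H^{-s_c}_x$, the dual space) shows $|(P_{\bar N}\phi)(0)|\gtrsim_{A,M}1$, so $\phi\neq0$, and the quantitative lower bound then follows from one more application of Bernstein.

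\emph{The induction.} Put $r_n^0:=f_n$. Given $r_n^{J-1}$, stop and set $J^*:=J-1$ if $\limsup_n\|r_n^{J-1}\|_{L^{p+2}_x}=0$; otherwise apply the extraction step to $g_n=r_n^{J-1}$ to obtain $x_n^J$ and a nonzero $\phi^J$, and define $r_n^J(x):=r_n^{J-1}(x)-\phi^J(x-x_n^J)$. Then \eqref{E:bubble gn} holds by construction, and $r_n^J(\cdot+x_n^J)=r_n^{J-1}(\cdot+x_n^J)-\phi^J\rightharpoonup0$, which is \eqref{E:rnJ wkly to 0}. Passing repeatedly to subsequences (and diagonalizing if $J^*=\infty$), the asymptotic orthogonality \eqref{E:GN orthog} is propagated by the usual argument: if $|x_n^{j}-x_n^{j'}|$ remained bounded for some $j<j'$, then translating the weak convergence defining $\phi^{j'}$ by that bounded amount, together with the weak-null property of the intervening remainders, would force $\phi^{j'}$ to be a spatial translate of $0$, a contradiction.

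\emph{Decoupling and termination.} For \eqref{E:H1 decoup} and \eqref{E:Hsc decoup}, expand $\|f_n\|^2$ using \eqref{E:bubble gn}: every cross term pairing two distinct translated profiles vanishes in the limit by \eqref{E:GN orthog} (a sequence of translates escaping to infinity tends weakly to zero), and the cross term pairing $\phi^J(\cdot-x_n^J)$ against $r_n^J$ vanishes by \eqref{E:rnJ wkly to 0}; this is cleanest run inductively in $J$. For \eqref{E:p+2 decoup} it suffices (inductively) to establish $\|r_n^{J-1}\|_{L^{p+2}_x}^{p+2}=\|\phi^J\|_{L^{p+2}_x}^{p+2}+\|r_n^J\|_{L^{p+2}_x}^{p+2}+o(1)$; after translating by $x_n^J$, this is exactly the Brezis--Lieb lemma for the sequence $r_n^{J-1}(\cdot+x_n^J)\rightharpoonup\phi^J$, whose a.e.-convergence hypothesis holds since this sequence is bounded in $\dot H^1_x$ and hence, by Rellich--Kondrashov and a diagonal argument, converges to $\phi^J$ almost everywhere along a further subsequence. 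Finally, \eqref{E:rnJ to 0} is automatic when $J^*<\infty$; when $J^*=\infty$, if $\limsup_n\|r_n^J\|_{L^{p+2}_x}\geq\delta>0$ for all $J$, then (using that the $r_n^J$ are uniformly bounded in $\dot H^1_x\cap\dot H^{s_c}_x$ by the decouplings just proved) the extraction step gives $\|\phi^j\|_{\dot H^1_x}^2+\|\phi^j\|_{\dot H^{s_c}_x}^2\geq c(\delta,M)>0$ for every $j$, whereas \eqref{E:H1 decoup}--\eqref{E:Hsc decoup} force $\sum_j\bigl(\|\phi^j\|_{\dot H^1_x}^2+\|\phi^j\|_{\dot H^{s_c}_x}^2\bigr)\leq\limsup_n\bigl(\|f_n\|_{\dot H^1_x}^2+\|f_n\|_{\dot H^{s_c}_x}^2\bigr)<\infty$, a contradiction. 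The main obstacle is the extraction step: converting a lower bound on the $L^{p+2}_x$ norm into a single dyadic frequency at which the sequence does not vanish uniformly (this is where the two-sided scaling control is used), and then checking that the resulting weak limit is genuinely nonzero as an element of the homogeneous space $\dot H^1_x\cap\dot H^{s_c}_x$.
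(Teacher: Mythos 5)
Your proposal is correct and follows essentially the same route as the paper: the extraction step mirrors Proposition~\ref{P:inverse gn} (Littlewood--Paley truncation to a compact dyadic range via the two-sided $\dot H^{s_c}$/$\dot H^1$ control, pigeonhole to a single frequency $\bar N$, Bernstein/H\"older to locate a point of concentration, translate and extract a weak limit, then test against $P_{\bar N}\delta_0$ for the quantitative lower bound), and the induction, Br\'ezis--Lieb step, asymptotic-orthogonality argument, and termination via summability of profile norms all match the paper's treatment. The only differences are cosmetic (e.g.\ summing $\|P_N g_n\|_{L^{p+2}_x}$ linearly instead of working with $\|P_N g_n\|_{L^{p+2}_x}^{p+2}$, and interpolating $L^{p+2}_x$ between $L^2_x$ and $L^\infty_x$ rather than $L^{pd/2}_x$ and $L^\infty_x$).
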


There are many results of this type, beginning with the work \cite{Solimini} of Solimini on Sobolev embedding.
The argument below is modeled on the treatment in \cite{ClayNotes}, with the main step being the following inverse inequality.

\begin{prop}[Inverse Gagliardo--Nirenberg inequality]  \label{P:inverse gn}
Fix a dimension $d \geq 2$ and an exponent $\frac4d \leq p < \frac4{d-2}$.  Let $s_c = \frac{d}2 - \frac2p$.  Let $\{f_n\}_{n\geq 1} \subset \dot{H}^1_x(\R^d) \cap \dot{H}_x^{s_c}(\R^d)$ and assume that
$$
\limsup_{n \to \infty} \|f_n\|_{\dot H_x^{s_c}}^2 + \|f_n\|_{\dot H_x^1}^2 = M^2 \qtq{and} \liminf_{n \to \infty} \|f_n\|_{L^{p+2}_x} = \eps > 0.
$$
Then there exist $\phi \in \dot H_x^1(\R^d) \cap \dot H_x^{s_c}(\R^d)$ and $\{x_n\}_{n\geq 1} \subset \R^d$ such that after passing to a subsequence, we have the following:
\begin{align}
\label{E:fn wkly to phi}
f_n(\cdot + x_n) \rightharpoonup \phi \qtq{weakly in} &\dot H^1_x \cap \dot H^{s_c}_x\\
\label{E:inv st H1}
\lim_{n \to \infty} \Bigl\{ \|f_n\|_{\dot H^1_x}^2 - \|f_n - \phi(\cdot - x_n)\|_{\dot H^1_x}^2 \Bigr\} &= \|\phi\|_{\dot H^1_x}^2 \gtrsim \eps^2\bigl(\frac{\eps}{M}\bigr)^{\alpha_1} \\
\label{E:inv st Hsc}
\lim_{n \to \infty}  \Bigl\{ \|f_n\|_{\dot H^{s_c}_x}^2 - \|f_n - \phi(\cdot - x_n)\|_{\dot H^{s_c}_x}^2 \Bigr\} &= \|\phi\|_{\dot H^{s_c}_x}^2 \gtrsim \eps^2\bigl(\frac{\eps}{M}\bigr)^{\alpha_2} \\
\label{E:inv st p+2}
\lim_{n \to \infty} \Bigl\{\|f_n\|_{L^{p+2}_x}^{p+2} - \|f_n - \phi(\cdot-x_n)\|_{L^{p+2}_x}^{p+2} \Bigr\} &= \|\phi\|_{L^{p+2}_x}^{p+2} \gtrsim \eps^{p+2}\bigl(\frac{\eps}{M}\bigr)^{\alpha_3},
\end{align}
for certain positive constants $\alpha_1,\alpha_2,\alpha_3$ depending on $d,p$.
\end{prop}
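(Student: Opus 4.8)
The plan is to reduce the Proposition to a single \emph{refined} Gagliardo--Nirenberg inequality, in which the defect of compactness is measured by a frequency-localized $L^\infty_x$ quantity, and then to run the standard concentration extraction (in the spirit of \cite{ClayNotes}). Write $q:=p+2$ and $\tilde s:=\tfrac d2-\tfrac dq$; an elementary computation using $p<\tfrac4{d-2}$ shows $s_c<\tilde s<1$, so $L^{p+2}_x$ lies strictly between $\dot H^{s_c}_x$ and $\dot H^1_x$ on the Sobolev scale. Setting $\sigma:=\tfrac dq=\tfrac d{p+2}$ and $\|f\|_{\dot B^{-\sigma}_{\infty,\infty}}:=\sup_{N\in 2^{\Z}}N^{-\sigma}\|P_Nf\|_{L^\infty_x}$, the inequality I will establish---by a Littlewood--Paley argument combining the refined Sobolev embedding $\dot H^{\tilde s}_x\hookrightarrow L^{p+2}_x$ with the interpolation bound $\|f\|_{\dot H^{\tilde s}_x}\lesssim\|f\|_{\dot H^{s_c}_x}^{1-\mu}\|f\|_{\dot H^1_x}^{\mu}$, $\tilde s=(1-\mu)s_c+\mu$---is
\begin{equation*}
\|f\|_{L^{p+2}_x}^{p+2}\lesssim\|f\|_{\dot H^{s_c}_x}^{2(1-\mu)}\,\|f\|_{\dot H^1_x}^{2\mu}\,\|f\|_{\dot B^{-\sigma}_{\infty,\infty}}^{p}.
\end{equation*}
Both sides have the same scaling under spatial dilation, which serves as a sanity check.

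Granting this, here is the extraction. Applying the inequality to $f_n$ together with $\|f_n\|_{\dot H^{s_c}_x},\|f_n\|_{\dot H^1_x}\leq M+o(1)$ and $\|f_n\|_{L^{p+2}_x}\geq\eps-o(1)$ gives $\|f_n\|_{\dot B^{-\sigma}_{\infty,\infty}}\gtrsim\eps(\eps/M)^{2/p}=:\delta$ for all large $n$. Choose a dyadic $N_n$ and a point $x_n\in\R^d$ with $N_n^{-\sigma}\,|P_{N_n}f_n(x_n)|\gtrsim\delta$. The crucial point---this is where having \emph{both} a $\dot H^{s_c}_x$ and a $\dot H^1_x$ bound matters---is that $N_n$ is then confined to a bounded range: Bernstein gives $|P_{N_n}f_n(x_n)|\lesssim N_n^{d/2}\|P_{N_n}f_n\|_{L^2_x}$, while $\|P_{N_n}f_n\|_{L^2_x}\lesssim N_n^{-1}\|f_n\|_{\dot H^1_x}$ and $\|P_{N_n}f_n\|_{L^2_x}\lesssim N_n^{-s_c}\|f_n\|_{\dot H^{s_c}_x}$; hence $\delta\lesssim N_n^{\tilde s-1}M$ and $\delta\lesssim N_n^{\tilde s-s_c}M$. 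Because $\tilde s-1<0<\tilde s-s_c$, the first bounds $N_n$ from above and the second from below, so after passing to a subsequence $N_n\to N_\infty\in(0,\infty)$.

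Next, let $g_n:=f_n(\cdot+x_n)$. This sequence is bounded in $\dot H^1_x\cap\dot H^{s_c}_x$, so along a further subsequence $g_n\rightharpoonup\phi$ weakly in $\dot H^1_x\cap\dot H^{s_c}_x$; by Rellich--Kondrashov and a diagonal argument I may also arrange $g_n\to\phi$ in $L^2_\loc$, hence a.e. These $\phi$ and $\{x_n\}$ are the objects claimed, and \eqref{E:fn wkly to phi} holds by construction. For the quantitative non-triviality of $\phi$, observe that $P_{N_n}g_n(0)=P_{N_n}f_n(x_n)$ is the $L^2_x$-pairing of $g_n$ with the Littlewood--Paley kernel at frequency $N_n$; since $N_n\to N_\infty$ that kernel converges strongly in $\dot H^{-s_c}_x$, so $P_{N_n}g_n(0)\to P_{N_\infty}\phi(0)$ and thus $|P_{N_\infty}\phi(0)|\gtrsim\delta$. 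Dualizing against the $N_\infty$-kernel (whose $\dot H^{-1}_x$, $\dot H^{-s_c}_x$ and $L^{(p+2)'}_x$ norms are all powers of $N_\infty$, hence bounded using the two-sided bound on $N_\infty$) yields $\|\phi\|_{\dot H^1_x},\|\phi\|_{\dot H^{s_c}_x}\gtrsim\delta(\delta/M)^{c}$ and, after one more use of Bernstein to pass from the $L^\infty_x$ estimate on the $N_\infty$-band to an $L^{p+2}_x$ one, $\|\phi\|_{L^{p+2}_x}\gtrsim\delta(\delta/M)^{c}$; substituting $\delta=\eps(\eps/M)^{2/p}$ gives \eqref{E:inv st H1}--\eqref{E:inv st p+2} with suitable positive $\alpha_1,\alpha_2,\alpha_3$.

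Finally, the decoupling identities. For the Hilbert norms $\dot H^1_x$ and $\dot H^{s_c}_x$ one has $\|f_n\|^2-\|f_n-\phi(\cdot-x_n)\|^2=2\langle g_n,\phi\rangle-\|\phi\|^2\to\|\phi\|^2$, since $\langle g_n,\phi\rangle\to\|\phi\|^2$ by weak convergence. For $L^{p+2}_x$, the a.e.\ convergence $g_n\to\phi$ and the uniform $L^{p+2}_x$ bound (from Gagliardo--Nirenberg and the hypotheses) let us invoke the Brezis--Lieb lemma, giving $\|f_n\|_{L^{p+2}_x}^{p+2}-\|f_n-\phi(\cdot-x_n)\|_{L^{p+2}_x}^{p+2}\to\|\phi\|_{L^{p+2}_x}^{p+2}$. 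I expect the main obstacle to be the refined Gagliardo--Nirenberg inequality itself, and specifically the verification that the weak-norm exponent $\sigma=\tfrac d{p+2}$ lies strictly in the window $\bigl(\tfrac d2-1,\tfrac d2-s_c\bigr)$: this is exactly what pins the concentration scale in the step above, and it is where the strict inequalities $s_c<1$ and $p<\tfrac4{d-2}$ enter.
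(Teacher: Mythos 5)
Your proof is correct, but it takes a genuinely different route from the paper's. The paper applies the elementary Gagliardo--Nirenberg bound $\|P_N f_n\|_{L^{p+2}}^{p+2} \lesssim \|P_N f_n\|_{\dot H^{s_c}}^p\|P_N f_n\|_{\dot H^1}^2$ combined with Bernstein to show that the $L^{p+2}$ mass concentrates in a bounded frequency window $[K^{-p},K^2]$ with $K\sim (M/\eps)^{(p+2)/(2p(1-s_c))}$, then pigeonholes over the $\log K$ many dyadic bands to find a single band $N$ with $\|P_Nf_n\|_{L^{p+2}} \gtrsim \eps/\log K$, and finally uses H\"older interpolating $L^{p+2}$ between $L^\infty$ and $L^{pd/2}$ to extract the concentration point $x_n$. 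You instead invoke (and sketch a derivation of) the Gérard--Meyer--Oru-type refined embedding, which hands you the concentration scale and point directly via the $\dot B^{-\sigma}_{\infty,\infty}$ norm, at the cost of proving a stronger inequality up front. The two approaches pin the frequency $N_n$ to a bounded range in the same way (upper bound from the $\dot H^1$ control, lower bound from the $\dot H^{s_c}$ control); you allow $N_n\to N_\infty$ along a subsequence, while the paper, working in a finite set of dyadic bands, takes $N_n$ eventually constant -- for dyadic $N_n$ these amount to the same thing. The decoupling via Hilbert-space parallelogram identity in $\dot H^1,\dot H^{s_c}$ and Br\'ezis--Lieb in $L^{p+2}$ is identical in both proofs, as is the strategy of dualizing against the frequency-localized delta kernel to obtain the quantitative lower bounds on $\phi$. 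The paper's version is more self-contained and picks up a harmless $\log K$ loss; yours trades a heavier black box for a cleaner extraction with no logarithm. Both yield positive exponents $\alpha_1,\alpha_2,\alpha_3$ depending only on $d,p$, though not numerically the same ones.
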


\begin{proof}[Proof of Proposition \ref{P:inverse gn}]
By passing to a subsequence, we may assume that
\begin{equation} \label{E:control norms fn}
\|f_n\|_{\dot H^1_x}^2 + \|f_n\|_{\dot H^{s_c}_x}^2 \leq 2M^2 \qtq{and} \|f_n\|_{L^{p+2}_x} \geq \tfrac\eps2 \qtq{for all $n$.}
\end{equation}
Note that by \eqref{E:GN1} we must have $\eps \lesssim M$.

Now by \eqref{E:GN1}, \eqref{E:control norms fn}, and Bernstein's inequality, for all dyadic frequencies $N$ we have
\begin{align*}
\|P_N f_n\|_{L^{p+2}_x}^{p+2} &\lesssim \|P_Nf_n\|_{\dot H^{s_c}_x}^p \|P_N f_n\|_{\dot H^1_x}^2 \lesssim \min\{N^{-p(1-s_c)}, N^{2(1-s_c)}\}M^{p+2}.
\end{align*}
Thus, if we define
$$
K = C\bigl(M\eps^{-1}\bigr)^{\frac{p+2}{2p(1-s_c)}}
$$
for a suitably large constant $C$, we obtain
$$
\|P_{\leq K^{-p}} f_n\|_{L^{p+2}_x}^{p+2} + \|P_{\geq K^2} f_n\|_{L^{p+2}_x}^{p+2} \ll \eps^{p+2}.
$$
Hence, by the pigeonhole principle there exist dyadic frequencies $N_n$ satisfying $K^{-p} \leq N_n \leq K^2$ such that
\begin{equation} \label{E:lb fN p+2}
 (\log K)^{-1} \eps \lesssim \|P_{N_n} f_n\|_{L^{p+2}_x}.
\end{equation}
By passing to a subsequence, we may assume that $N_n = N$ for all $n$.

By H\"older's inequality, the Sobolev embedding $\dot H^{s_c}_x \hookrightarrow L^{\frac{pd}2}_x$, and \eqref{E:control norms fn},
$$
\|P_N f_n\|_{L^{p+2}_x} \leq \|P_N f_n\|_{L^{\infty}_x}^{1-\frac{pd}{2(p+2)}}\|P_Nf_n\|_{L^{\frac{pd}2}_x}^{\frac{pd}{2(p+2)}} \lesssim M^{\frac{pd}{2(p+2)}} \|P_N f_n\|_{L^{\infty}_x}^{1-\frac{pd}{2(p+2)}},
$$
and so by \eqref{E:lb fN p+2}, there exists a sequence $\{x_n\} \subset \R^d$ such that
$$
\bigl(\eps^2 M^{-1-\frac{pd}{2(p+2)}}\bigr)^{\frac{p+2}{p(1-s_c)}}     \lesssim \left(\frac{\eps M^{-\frac{pd}{2(p+2)}}}{\log K}\right)^{\frac{2(p+2)}{4-p(d-2)}} \lesssim |P_N f_n(x_n)|.
$$

We consider the sequence $f_n(\cdot+x_n)$.  This sequence is bounded in $\dot H^1_x(\R^d) \cap \dot H^{s_c}_x(\R^d)$ by \eqref{E:control norms fn}, and so, after passing to a subsequence, there exists a weak limit $\phi \in \dot H^1_x(\R^d) \cap \dot H^{s_c}_x(\R^d)$ as in \eqref{E:fn wkly to phi}.  The equalities in \eqref{E:inv st H1} and \eqref{E:inv st Hsc} are immediate.  We now turn to the $L^{p+2}_x$ decoupling, \eqref{E:inv st p+2}.  By \eqref{E:fn wkly to phi} and the Rellich--Kondrashov theorem, $f_n(\cdot + x_n) \to \phi$ in $L^2_\loc$ and hence, after passing to a subsequence, almost everywhere.  The equality in \eqref{E:inv st p+2} is then an immediate consequence of the Fatou lemma of Br\'ezis and Lieb; see \cite{BrezisLieb} or \cite{LiebLoss}.

Finally, to obtain the lower bounds in \eqref{E:inv st H1}, \eqref{E:inv st Hsc}, and \eqref{E:inv st p+2}, we test $\phi$ against the function $k = P_N \delta_0$ ($\delta_0$ being the Dirac delta).  We have
$$
|\langle \phi , k \rangle| = \lim_{n \to \infty} \Bigl|\int_{\R^d} f_n(x +x_n) \overline{k(x)}\, dx\Bigr| = \lim_{n \to \infty} | P_N f_n(x_n)| \gtrsim \bigl(\eps^2 M^{-1-\frac{pd}{2(p+2)}}\bigr)^{\frac{p+2}{p(1-s_c)}}.
$$
Routine computations reveal that
$$
\|k\|_{\dot H^{-s_c}_x} \sim N^{\frac d2-s_c}, \qquad \|k\|_{\dot H^{-1}_x} \sim N^{\frac d2-1}, \qquad \|k\|_{L^{\frac{p+2}{p+1}}_x} \sim N^{\frac d{p+2}},
$$
and since $K^{-p} \leq N \leq K^2$, the lower bounds follow.

This completes the proof.
\end{proof}

We are now ready to prove Theorem~\ref{T:bubble gn}.

\begin{proof}[Proof of Theorem~\ref{T:bubble gn}]  To begin, we set $r_n^0 := f_n$.  The identities \eqref{E:H1 decoup}, \eqref{E:Hsc decoup}, and \eqref{E:p+2 decoup} with $J=0$ are thus trivial.  After passing to a subsequence, we may assume that
$$
\lim_{n \to \infty} \|r_n^0\|_{\dot H^1_x}^2 + \|r_n^0\|_{\dot H^{s_c}_x}^2 = M_0^2 \qtq{and} \lim_{n \to \infty} \|r_n^0\|_{L^{p+2}_x} = \eps_0.
$$
By hypothesis $M_0 < \infty$, while by \eqref{E:GN} we have $\eps_0 \lesssim M_0$.

We now proceed inductively, assuming that a decomposition satisfying \eqref{E:bubble gn}, \eqref{E:rnJ wkly to 0}, \eqref{E:H1 decoup}, \eqref{E:Hsc decoup}, and \eqref{E:p+2 decoup} has been carried out up to some integer $J \geq 0$ and that the remainder satisfies
$$
\lim_{n \to \infty} \|r_n^J\|_{\dot H^1_x}^2 + \|r_n^J\|_{\dot H^{s_c}_x}^2 = M_J^2 \qtq{and} \lim_{n \to \infty}\|r_n^J\|_{L^{p+2}_x} = \eps_J,
$$
with $\eps_J \lesssim M_J$ (which follows from \eqref{E:GN}) and $M_J < M_0$ (which will be established below).

If $\eps_J  = 0$, we stop, setting $J^* = J$.  The relations \eqref{E:bubble gn} through \eqref{E:p+2 decoup} have thus been established; we will come to \eqref{E:GN orthog} in a moment.

If $\eps_J > 0$, we apply Proposition~\ref{P:inverse gn} to $\{r_n^J\}$, producing a sequence $\{x_n^{J+1}\}$ of points and a (subsequential) weak limit
$$
r_n^J(\cdot + x_n^{J+1}) \rightharpoonup \phi^{J+1} \qtq{in} \dot H^1_x \cap \dot H^{s_c}_x.
$$
Setting $r_n^{J+1} := r_n^J - \phi^{J+1}(\cdot - x_n^{J+1})$, we obtain \eqref{E:bubble gn} and \eqref{E:rnJ wkly to 0} with $J$ replaced by $J+1$.  The identity in \eqref{E:inv st H1} is just
$$
\lim_{n \to \infty} \Bigl\{\|r_n^J\|_{\dot H^1_x}^2 - \Bigl[\|\phi^{J+1}\|_{\dot H^1_x}^2 + \|r_n^{J+1}\|_{\dot H^1_x}^2\Bigr]\Bigr\} = 0.
$$
Adding this to \eqref{E:H1 decoup} shows that this continues to hold at $J+1$:
$$
\lim_{n \to \infty} \Bigl\{\|f_n\|_{\dot H^1_x}^2 - \Bigl[\sum_{j=1}^{J+1} \|\phi^j\|_{\dot H^1_x}^2 + \|r_n^{J+1}\|_{\dot H^1_x}^2\Bigr]\Bigr\} = 0.
$$
To derive \eqref{E:Hsc decoup} and \eqref{E:p+2 decoup} with $J$ replaced by $J+1$, one argues similarly.

Passing to a subsequence and applying \eqref{E:inv st H1}, \eqref{E:inv st Hsc}, and \eqref{E:inv st p+2}, we obtain
\begin{align}\label{E:Sobolev decrement}
M_{J+1}^2 &= \lim_{n \to \infty} \Bigl[\|r_n^{J+1}\|_{\dot H^1_x}^2 + \|r_n^{J+1}\|_{\dot H^{s_c}_x}^2\Bigr] \notag\\
&= \lim_{n \to \infty} \Bigl[\|r_n^J\|_{\dot H^1_x}^2 - \|\phi^{J+1}\|_{\dot H^1_x}^2 +  \|r_n^J\|_{\dot H^{s_c}_x}^2 - \|\phi^{J+1}\|_{\dot H^{s_c}_x}^2\Bigr] \notag\\
&\leq M_J^2 - C\eps_J^2\Bigl[\bigl(\tfrac{\eps_J}{M_J}\bigr)^{\alpha_1} + \bigl(\tfrac{\eps_J}{M_J}\bigr)^{\alpha_2}\Bigr]
\end{align}
and
\begin{align}\label{E:p+2 decrement}
\eps_{J+1}^{p+2} =  \lim_{n \to \infty} \|r_n^{J+1}\|_{L^{p+2}_x}^{p+2}
= \lim_{n \to \infty} \Bigl[ \|r_n^J\|_{L^{p+2}_x}^{p+2} - \|\phi^J\|_{L^{p+2}_x}^{p+2} \Bigr]
\leq \eps_J^{p+2} - C\eps_J^{p+2}\bigl(\tfrac{\eps_J}{M_J}\bigr)^{\alpha_3}.
\end{align}

Either this process eventually stops and we obtain some finite $J^*$ or we set $J^* = \infty$.  If we do have $J^* = \infty$, then \eqref{E:rnJ to 0} follows from
\eqref{E:Sobolev decrement} and \eqref{E:p+2 decrement}.

Finally, we prove the asymptotic orthogonality \eqref{E:GN orthog}.  Let us suppose that this fails for some $j \neq j'$.  We may assume that $j'>j$ and that
$\lim_{n \to\infty}|x_n^j - x_n^{k}| = \infty$ for $j < k < j'$.  Passing to a subsequence, we may assume that $\lim_{n \to \infty} (x_n^j - x_n^{j'}) = y$.  We recall that
$$
\phi^{j'} = \wklim_{n \to \infty} \, r_n^{j'-1}(\cdot + x_n^{j'}),
$$
while
$$
r_n^{j'-1} = r_n^j - \sum_{k=j+1}^{j'-1}\phi^{k}(\cdot - x_n^{k}).
$$
Therefore,
\begin{align*}
\phi^{j'} = \wklim_{n \to \infty} \Bigl\{  r_n^j(\cdot + x_n^{j'}) - \sum_{k=j+1}^{j'-1} \phi^{k}(\cdot + x_n^{j'} - x_n^{k}) \Bigr\}
&= \wklim_{n \to \infty} \, r_n^j(\cdot + x_n^j + y) = 0,
\end{align*}
where we used $\lim_{n \to \infty} |x_n^{j'} - x_n^{k}| = \infty$ for all $j < k < j'$ in order to derive the second equality and \eqref{E:rnJ wkly to 0} to derive the third equality.  But $\phi^{j'}$ cannot be $0$ in view of \eqref{E:inv st H1} and the fact that our inductive procedure stops once we obtain $\eps_J = 0$.

This completes the proof of Theorem~\ref{T:bubble gn}.
\end{proof}

%
%
%
%

\section{Blowup of the critical norm}\label{S:critical blowup}

The goal of this section is to show that finite-time blowup of solutions to \eqref{E:eqn} is accompanied by subsequential blowup of their critical Sobolev norm.
More precisely, we prove the following result which is slightly more general than Theorem~\ref{T:I:sc} given in the Introduction.

\begin{theorem} \label{T:liminf v2}  Let $d \geq 2$, $m \in [0,1]$, and $\frac1{2d} < s_c < 1$.  Set $p = \frac4{d-2s_c}$.  Let $(u_0,u_1)$ be initial data for \eqref{E:eqn} satisfying
$$
\|\jpn_m u_0\|_{L^2_x} + \|u_1\|_{L^2_x} + \||\nabla|^{s_c}u_0\|_{L^2_x} \leq M<\infty,
$$
with $u_0$ and $u_1$ radial if $s_c < \frac12$.  Assume that the maximal-lifespan solution $u$ to \eqref{E:eqn} blows up forward in time at $0<T_* < \infty$.  Then
$$
\limsup_{t \uparrow T_*} \bigl\{\|u(t)\|_{\dot H^{s_c}_x} + \|u_t(t)\|_{H^{s_c-1}_x}\bigr\} = \infty.
$$
\end{theorem}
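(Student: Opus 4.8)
The plan is to argue by contradiction, following the strategy outlined in the Introduction: supposing the critical norm stays bounded near $T_*$, one rescales $u$ around the blowup to produce, in the limit, a solution of the \emph{massless} equation $v_{tt}-\Delta v=|v|^pv$ that is global in one time direction and has energy $E(v)\le 0$; such a solution is impossible by Theorem~\ref{T:nonpos blowup}. So assume for contradiction that $A:=\limsup_{t\uparrow T_*}\bigl\{\|u(t)\|_{\dot H^{s_c}_x}+\|u_t(t)\|_{H^{s_c-1}_x}\bigr\}<\infty$.

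\emph{The rescaling.} By Corollary~\ref{C:ee blowup}, $\|\nabla_{t,x}u(t)\|_{L^2_x}\to\infty$ as $t\uparrow T_*$; since this quantity is continuous on $[0,T_*)$, one may choose a \emph{near-maximizing} sequence $t_n\uparrow T_*$ with $\|\nabla_{t,x}u(s)\|_{L^2_x}\le 2\|\nabla_{t,x}u(t_n)\|_{L^2_x}$ for all $0\le s\le t_n$. Put $\lambda_n:=\|\nabla_{t,x}u(t_n)\|_{L^2_x}^{-1/(1-s_c)}\to 0$ and, using time reversal together with the scaling of the massless equation, set $\tilde u^{(n)}(t,x):=\lambda_n^{2/p}u(t_n-\lambda_n t,\lambda_n x)$, which solves \eqref{E:eqn} with mass $m_n:=\lambda_n m\to 0$ and is defined for $t\in[0,t_n/\lambda_n)$ with $t_n/\lambda_n\to\infty$ (as $t_n\to T_*>0$). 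Using scale invariance of the homogeneous norms, the near-maximizing property, and the definition of $A$, one checks that on every fixed $[0,T]$ and for $n$ large: $\|\nabla_{t,x}\tilde u^{(n)}\|_{C_tL^2_x}\le 2$, $\||\nabla|^{s_c}\tilde u^{(n)}\|_{C_tL^2_x}\le A+1$, and $\|\jpn_{\lambda_n}^{s_c-1}\partial_t\tilde u^{(n)}\|_{C_tL^2_x}=\|u_t\|_{C_tH^{s_c-1}_x}$ at the corresponding times $\le A+1$ — the reason for choosing the auxiliary sequence in Lemma~\ref{L:weak stability} equal to $\lambda_n$, which handles $m>0$ and $m=0$ on the same footing. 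Passing to a subsequence, the data $(\tilde u^{(n)}(0),\partial_t\tilde u^{(n)}(0))$ converge weakly in $\dot H^1_x\times L^2_x$; Lemma~\ref{L:weak stability}, applied on each $[0,T]$ and glued via uniqueness (Proposition~\ref{P:uniqueness}), then produces a strong solution of the massless equation on all of $[0,\infty)\times\R^d$, provided the limiting data are not already degenerate — which is addressed next.

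\emph{The energy and concentration-compactness.} Since $s_c>\tfrac1{2d}>0$ one has $\tfrac4d<p<\tfrac4{d-2}$, so Theorem~\ref{T:bubble gn} applies to $\{\tilde u^{(n)}(0)\}$, giving profiles $\phi^j$, centers $x_n^j$ with $|x_n^j-x_n^{j'}|\to\infty$, and remainders $r_n^J$ obeying the decouplings \eqref{E:H1 decoup}--\eqref{E:p+2 decoup}; extract also the weak $L^2_x$ limits $\psi^j:=\wklim_n\partial_t\tilde u^{(n)}(0)(\cdot+x_n^j)$. By conservation of energy for $u$ and the scaling identity $E_{m_n}(\tilde u^{(n)}(0))=\lambda_n^{2(1-s_c)}E_m(u_0,u_1)\to 0$, together with $\tfrac{m_n^2}2\|\tilde u^{(n)}(0)\|_{L^2_x}^2\ge0$, we get $\limsup_n E_0(\tilde u^{(n)}(0),\partial_t\tilde u^{(n)}(0))\le 0$ for the massless energy $E_0$; since $\|\nabla_{t,x}\tilde u^{(n)}(0)\|_{L^2_x}=1$, this forces $\liminf_n\|\tilde u^{(n)}(0)\|_{L^{p+2}_x}>0$, hence the decomposition is nontrivial, $J^*\ge1$. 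Now combine the $\dot H^1_x$ and $L^{p+2}_x$ decouplings with the asymptotic $L^2_x$-orthogonality of $\{\psi^j(\cdot-x_n^j)\}$ (again from $|x_n^j-x_n^{j'}|\to\infty$) to obtain, for each $J$, $\limsup_n E_0(\tilde u^{(n)}(0),\partial_t\tilde u^{(n)}(0))\ge\sum_{j\le J}E_0(\phi^j,\psi^j)-\tfrac1{p+2}\limsup_n\|r_n^J\|_{L^{p+2}_x}^{p+2}$; letting $J\to J^*$ and invoking \eqref{E:rnJ to 0} yields $\sum_{j=1}^{J^*}E_0(\phi^j,\psi^j)\le0$. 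As $J^*\ge1$, some index $j_0$ has $E_0(\phi^{j_0},\psi^{j_0})\le0$ with $\phi^{j_0}\ne0$. Translating the sequence by $x_n^{j_0}$ (which preserves the equation and all the Step-2 bounds) and reapplying Lemma~\ref{L:weak stability} as above gives a strong solution $w$ of $w_{tt}-\Delta w=|w|^pw$ on $[0,\infty)\times\R^d$ with $(w,w_t)\in C_t[(\dot H^1_x\cap\dot H^{s_c}_x)\times(L^2_x\cap\dot H^{s_c-1}_x)]$, $(w(0),w_t(0))=(\phi^{j_0},\psi^{j_0})\ne0$, and $E(w)=E_0(\phi^{j_0},\psi^{j_0})\le0$. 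By Theorem~\ref{T:nonpos blowup}, $w$ must blow up forward in finite time, contradicting its global forward existence. When $s_c<\tfrac12$ all data are spherically symmetric (the original $u_0,u_1$ are, hence so are the $\tilde u^{(n)}$); radial Sobolev decay forces each center $x_n^j$ to remain bounded, so $J^*=1$ and (after a subsequence) the single bubble is centered at the origin, keeping $(\phi^1,\psi^1)$ radial as Theorem~\ref{T:nonpos blowup} requires. This yields the contradiction, proving the theorem.

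\emph{The main obstacle.} The crux is the energy bookkeeping in the limit. Because the potential term $-\tfrac1{p+2}\|\cdot\|_{L^{p+2}_x}^{p+2}$ enters $E_0$ with a negative sign, a bare weak limit of $\{\tilde u^{(n)}(0)\}$ need not retain enough $L^{p+2}_x$-mass to guarantee $E_0\le0$: mass can both concentrate and, more seriously, split off into distinct asymptotic bubbles. One therefore genuinely needs the bubble decomposition of Theorem~\ref{T:bubble gn} to locate \emph{some} profile of non-positive energy, and must supplement that scalar decomposition with an asymptotically-orthogonal splitting of the velocities $\partial_t\tilde u^{(n)}(0)$ along the same centers, so that the full energy functional — kinetic plus gradient minus potential — decouples across bubbles in the limit. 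By comparison, the verification of the uniform bounds in Step~1 (in particular the choice of auxiliary sequence $\lambda_n$ making $\jpn_{\lambda_n}^{s_c-1}$ reproduce exactly the $H^{s_c-1}_x$ norm controlled by hypothesis) and the handling of the radial case are technical but routine.
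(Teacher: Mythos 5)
Your proof follows the paper's strategy essentially verbatim: rescale about the blowup time to produce a sequence solving a vanishing-mass Klein--Gordon equation with $H^1_x\times L^2_x$-normalized data, pass to a massless limit via Lemma~\ref{L:weak stability}, extract a nonzero profile of non-positive energy via Theorem~\ref{T:bubble gn}, and contradict Theorem~\ref{T:nonpos blowup}. The only deviation is in the radial case $s_c<\tfrac12$, where you run the bubble decomposition and argue via Strauss decay that there is a single, origin-centered bubble, whereas the paper shortcuts this by invoking the compact embedding $\dot H^1_{\mathrm{rad}}\cap \dot H^{s_c}_{\mathrm{rad}}\hookrightarrow L^{p+2}_{\mathrm{rad}}$ directly on the untranslated sequence to obtain a nonzero strong $L^{p+2}_x$-limit.
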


The remainder of the section is dedicated to the proof of the theorem.  We assume by way of contradiction that
\begin{equation} \label{E:lie}
\|u\|_{L^{\infty}_t \dot H^{s_c}_x([0,T_*)\times \R^d)} + \|u_t\|_{L^{\infty}_t H^{s_c-1}_x([0,T_*) \times \R^d)} = K < \infty.
\end{equation}
By Corollary~\ref{C:ee blowup},
$$
\|\nabla_{t,x}u(t)\|_{L^2_x} \to \infty \qtq{as} t\to T_*.
$$
Thus we may choose a sequence of times $\{t_n\}_{n\geq 1}$ increasing to $T_*$ and satisfying
$$
\|\nabla_{t,x}u(t_n)\|_{L^2_x} = \|\nabla_{t,x} u\|_{C_tL^2_x([0,t_n] \times \R^d)} \to \infty \qtq{as} n \to \infty.
$$
Let
$$
\lambda_n := (\|\nabla_{t,x} u(t_n)\|_{L^2_x})^{-\frac1{1-s_c}} \to 0
$$
and define
$$
u^{(n)}(t,x) := \lambda_n^{\frac2p}u(t_n - \lambda_n t, \lambda_n x) \qtq{for all} (t,x) \in [0,T_n] \times \R^d,
$$
where $T_n := \frac{t_n}{\lambda_n} \to \infty$.  Then $u^{(n)}$ solves
\begin{equation} \label{E:un soln}
u^{(n)}_{tt} - \Delta u^{(n)} + m_n^2 u^{(n)} = |u^{(n)}|^p u^{(n)}
\end{equation}
on $[0,T_n] \times \R^d$ with $m_n := \lambda_n m \to 0$.  Furthermore, by our choice of $t_n$ and $\lambda_n$, $u^{(n)}$ satisfies
\begin{align} \label{E:un H1}
\|\nabla_{t,x} u^{(n)}\|_{C_tL^2_x([0,T_n] \times \R^d)} = \|\nabla_{t,x} u^{(n)}(0)\|_{L^2_x} = 1
\end{align}
and
\begin{equation} \label{E:un Hsc}
\begin{aligned}
&\|u^{(n)}\|_{C_t \dot H^{s_c}_x([0,T_n] \times \R^d)} + \|\langle \nabla\rangle_{\lambda_n}^{s_c-1}\partial_t u^{(n)}\|_{C_t L_x^2([0,T_n] \times \R^d)}\\
&\qquad \qquad = \|u\|_{C_t\dot H^{s_c}_x([0,t_n] \times \R^d)} + \|u_t\|_{C_t H^{s_c-1}_x([0,t_n] \times \R^d)} \leq K.
\end{aligned}
\end{equation}
(We note that the subscript $\lambda_n$ is essential in \eqref{E:un Hsc} because we need the weak limit of $\partial_t u^{(n)}(0)$ to belong to $\dot H^{s_c-1}_x$; cf. Lemma~\ref{L:weak stability}.)  Finally, by conservation of energy,
\begin{align} \label{E:un nrg}
\int_{\R^d} \tfrac12|&\nabla_{t,x} u^{(n)}(0,x)|^2 - \tfrac1{p+2} |u^{(n)}(0,x)|^{p+2}\, dx \notag\\
&= \lambda_n^{2-2s_c} \int_{\R^d} \tfrac12|\nabla_{t,x}u(t_n,x)|^2 - \tfrac1{p+2}|u(t_n,x)|^{p+2}\, dx \notag\\
&\leq \lambda_n^{2-2s_c}  \int_{\R^d} \tfrac12|\nabla_{t,x} u(0,x)|^2 + \tfrac{m^2}2 |u(0,x)|^2 - \tfrac1{p+2} |u(0,x)|^{p+2}\, dx \to 0.
\end{align}
Using this and \eqref{E:un H1}, for sufficiently large $n$ we obtain
\begin{equation} \label{E:p+2 lb}
\int_{\R^d} |u^{(n)}(0,x)|^{p+2}\, dx \geq 1.
\end{equation}

To continue, we will use Lemma~\ref{L:weak stability} and Theorem~\ref{T:bubble gn} to prove that under the assumption \eqref{E:lie}, we have the following:

\begin{lemma} \label{L:bad limit}
The sequence $\{u^{(n)}\}_{n\geq 1}$ gives rise to a nonzero solution $w$ to the nonlinear wave equation \eqref{E:eqn} with $m=0$ which is global forward in time, satisfies $(w,w_t) \in C_t ([0, \infty);\dot H^1_x \times L^2_x\cap \dot H^{s_c}_x \times \dot H^{s_c-1}_x)$, and has $E(w) \leq 0$.
\end{lemma}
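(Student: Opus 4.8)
The plan is to extract from the initial data of the rescaled sequence $\{u^{(n)}\}$ a nonzero concentration profile, recenter about it, and pass to the limit using Lemma~\ref{L:weak stability} to produce $w$; the profile has to be chosen with care so that it carries non-positive energy. The only genuinely delicate point is an energy decoupling: Theorem~\ref{T:bubble gn} decomposes only the sequence $\{u^{(n)}(0)\}$, so one must also run an $L^2_x$ profile decomposition for the time derivatives $\{\partial_t u^{(n)}(0)\}$ along the \emph{same} translation parameters, and then keep track of the nested subsequences.

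Concretely, set $f_n := u^{(n)}(0)$ and $g_n := \partial_t u^{(n)}(0)$. By \eqref{E:un H1} and \eqref{E:un Hsc}, the sequence $\{f_n\}$ is bounded in $\dot H^1_x \cap \dot H^{s_c}_x$ and $\{g_n\}$ is bounded in $L^2_x$, while \eqref{E:p+2 lb} gives $\liminf_n \|f_n\|_{L^{p+2}_x} \ge 1$. Since $\tfrac1{2d} < s_c < 1$ forces $\tfrac4d < p < \tfrac4{d-2}$, Theorem~\ref{T:bubble gn} applies to $\{f_n\}$: along a subsequence we obtain $J^* \ge 1$, nonzero bubbles $\{\phi^j\} \subset \dot H^1_x \cap \dot H^{s_c}_x$, positions $\{x_n^j\}$ obeying \eqref{E:GN orthog}, and remainders $r_n^J$ satisfying \eqref{E:bubble gn}--\eqref{E:p+2 decoup} and \eqref{E:rnJ to 0}. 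Passing to a further, diagonal, subsequence, I would also arrange that $g_n(\cdot + x_n^j) \rightharpoonup \psi^j$ weakly in $L^2_x$ for each $j$; since the $x_n^j$ are asymptotically orthogonal, the usual Hilbert-space Bessel inequality gives $\liminf_n \|g_n\|_{L^2_x}^2 \ge \sum_{j=1}^J \|\psi^j\|_{L^2_x}^2$ for every $0 \le J < J^*+1$.

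Next comes the energy bookkeeping. Write $E(\phi^j, \psi^j) := \tfrac12\|\nabla\phi^j\|_{L^2_x}^2 + \tfrac12\|\psi^j\|_{L^2_x}^2 - \tfrac1{p+2}\|\phi^j\|_{L^{p+2}_x}^{p+2}$, which is finite by Lemma~\ref{L:GN}. Combining the $\dot H^1_x$ and $L^{p+2}_x$ decoupling identities \eqref{E:H1 decoup}, \eqref{E:p+2 decoup} with the Bessel bound for $\{g_n\}$, discarding the nonnegative term $\tfrac12\|\nabla r_n^J\|_{L^2_x}^2$, and using that by \eqref{E:un nrg} the quantity $\tfrac12\|\nabla_{t,x}u^{(n)}(0)\|_{L^2_x}^2 - \tfrac1{p+2}\|u^{(n)}(0)\|_{L^{p+2}_x}^{p+2}$ has non-positive $\limsup$, one lets $n \to \infty$ and then $J \to J^*$ (using \eqref{E:rnJ to 0} to annihilate the remainder's $L^{p+2}_x$ contribution) to obtain $\sum_{j=1}^{J^*} E(\phi^j, \psi^j) \le 0$. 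In particular, some index $j_0$ satisfies $E(\phi^{j_0}, \psi^{j_0}) \le 0$.

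Finally, set $v^{(n)}(t,x) := u^{(n)}(t, x + x_n^{j_0})$; by translation invariance $v^{(n)}$ still solves \eqref{E:un soln} on $[0,T_n]$ with the bounds \eqref{E:un H1}, \eqref{E:un Hsc}, and $\nabla v^{(n)}(0) \rightharpoonup \nabla\phi^{j_0}$, $\partial_t v^{(n)}(0) \rightharpoonup \psi^{j_0}$ weakly in $L^2_x$. Fixing any $T < \infty$, discarding the finitely many $n$ with $T_n \le T$, and applying Lemma~\ref{L:weak stability} (with $m_n = \lambda_n m \to 0$ and weight parameter $\lambda_n \to 0$), I obtain a strong solution to \eqref{E:eqn} with $m=0$ on $[0,T) \times \R^d$, lying in $C_t([\dot H^1_x \times L^2_x] \cap [\dot H^{s_c}_x \times \dot H^{s_c-1}_x])$, with initial data $(\phi^{j_0}, \psi^{j_0})$. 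By uniqueness of strong solutions (Proposition~\ref{P:uniqueness}) these agree on overlapping time intervals and patch into a single solution $w$ on $[0,\infty) \times \R^d$, which is therefore global forward in time and in the required class. Since $\phi^{j_0}$ is a nonzero bubble, $\nabla w(0) = \nabla\phi^{j_0} \ne 0$, so the initial data $(\phi^{j_0},\psi^{j_0})$ of $w$ is not identically zero; and $E(w) = E(\phi^{j_0}, \psi^{j_0}) \le 0$ by the previous step. This is the claimed solution, and as noted the main difficulty is assembling the energy decoupling of the third paragraph.
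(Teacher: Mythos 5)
For $s_c \geq \tfrac12$, your argument coincides with the paper's: apply Theorem~\ref{T:bubble gn} to $\{u^{(n)}(0)\}$, extract $L^2_x$ weak limits $\psi^j$ of the time derivatives along the same centers $x_n^j$, run the decoupling bookkeeping to find some $j_0$ with $E(\phi^{j_0},\psi^{j_0}) \leq 0$, and invoke Lemma~\ref{L:weak stability} plus uniqueness to produce the global solution. The energy decoupling step you single out as delicate is handled exactly as in the paper, and the rest of your reasoning there is sound.

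Your unified treatment does, however, leave a gap in the sub-conformal range $s_c < \tfrac12$. The only mechanism by which Lemma~\ref{L:bad limit} produces a contradiction in the proof of Theorem~\ref{T:liminf v2} is Theorem~\ref{T:nonpos blowup}, which requires \emph{radial} data when $s_c < \tfrac12$. Your $\phi^{j_0}$ is a weak limit of $u^{(n)}(0,\cdot + x_n^{j_0})$, where the centers $x_n^{j_0}$ are produced by the pigeonhole argument in Proposition~\ref{P:inverse gn} as arbitrary points where $|P_N f_n|$ is large; even when the $f_n$ are radial, these centers can escape to infinity (e.g.\ when the mass concentrates on an expanding spherical shell), in which case $\phi^{j_0}$ is merely a translate and is not radial, so Theorem~\ref{T:nonpos blowup} cannot be applied. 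The paper avoids this by treating $s_c < \tfrac12$ separately: it exploits the compactness of the embedding $\dot H^1_{\mathrm{rad}} \cap \dot H^{s_c}_{\mathrm{rad}} \hookrightarrow L^{p+2}_x$ to extract a weak limit of the \emph{untranslated} sequence $u^{(n)}(0)$, which is automatically radial, and that compactness also supplies the strong $L^{p+2}_x$ convergence needed to verify $w \neq 0$ and $E(w) \leq 0$ without any profile decomposition. To rescue your approach in that range you would have to add an argument that the bubble centers may be taken to be identically zero for radial input sequences; Theorem~\ref{T:bubble gn} as stated does not assert this.
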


By Theorem~\ref{T:nonpos blowup}, a solution $w$ as described in Lemma~\ref{L:bad limit} cannot exist.  The restriction to radial data for
$s_c<\frac12$ arises only from the use of this theorem.  Thus, in order to conclude the proof of Theorem~\ref{T:liminf v2}, it remains to prove
Lemma~\ref{L:bad limit}.

To prove the lemma, we treat the sub- and super-conformal cases separately.  We start with the sub-conformal case, where the radial assumption on the initial data allows for a simpler treatment.

\begin{proof}[Proof of Lemma~\ref{L:bad limit} when $s_c < \frac12$]
By hypothesis, in this case we have that $u_0$ and $u_1$ are radial, and so the $u^{(n)}$ are radial also.  Using \eqref{E:un H1} and \eqref{E:un Hsc} and passing to a subsequence, we obtain a weak limit
\begin{equation} \label{E:un wkly to w}
(u^{(n)}(0),u^{(n)}_t(0)) \rightharpoonup (w_0,w_1) \qtq{in $[\dot H^1_x \cap \dot H^{s_c}_x] \times L^2_x$.}
\end{equation}
Additionally, by \eqref{E:un Hsc}, $w_1 \in \dot H^{s_c-1}_x$.  As the embedding $\dot H^1_{\rm{rad}} \cap \dot H^{s_c}_{\rm{rad}} \hookrightarrow L^{p+2}_{\rm{rad}}$ is compact, \eqref{E:un wkly to w} dictates
\begin{equation} \label{E:un to w p+2}
u^{(n)}(0) \to w_0 \qtq{strongly in} L^{p+2}_x.
\end{equation}
By \eqref{E:p+2 lb}, $w_0$ is not identically 0.  Finally, by \eqref{E:un nrg}, \eqref{E:un wkly to w}, and \eqref{E:un to w p+2}, we have
\begin{align*}
E(w_0,w_1) &= \int_{\R^d} \tfrac12 |\nabla w_0(x)|^2 + \tfrac12|w_1(x)|^2 - \tfrac1{p+2}|w_0(x)|^{p+2}\, dx \\
&\leq \lim_{n \to \infty} \int_{\R^d} \tfrac12|\nabla_{t,x}u^{(n)}(0,x)|^2 - \tfrac1{p+2}|u^{(n)}(0,x)|^{p+2}\, dx \leq 0.
\end{align*}

Now let $w$ be the solution to \eqref{E:eqn} with $m=0$ and initial data $(w_0,w_1)$ at time $t=0$.  By Lemma~\ref{L:weak stability} and the fact that $T_n \to \infty$, we obtain
that $w$ is global forward in time.  Moreover it satisfies $(w,w_t) \in C_t ([0, \infty);\dot H^1_x \times L^2_x\cap \dot H^{s_c}_x \times \dot H^{s_c-1}_x)$ and $E(w) \leq 0$.
This completes the proof of the lemma in the sub-conformal case.
\end{proof}

It remains to prove Lemma~\ref{L:bad limit} in the conformal and super-conformal cases; in this setting, we will substitute Theorem~\ref{T:bubble gn} for the compact radial embedding used in the sub-conformal case.

\begin{proof}[Proof of Lemma~\ref{L:bad limit} when $s_c \geq \frac12$]  Applying Theorem~\ref{T:bubble gn} to $\{u^{(n)}(0)\}_{n\geq 1}$ and passing to a subsequence, we obtain the decomposition
$$
u^{(n)}(0) = \sum_{j=1}^J \phi^j_0(\cdot - x_n^j) + r_n^J \qtq{for all} 0 \leq J < J^*+1,
$$
satisfying the conclusions of that theorem.  By \eqref{E:p+2 lb} and \eqref{E:rnJ to 0} we must have $J^* \geq 1$.  Using \eqref{E:rnJ wkly to 0} followed by
\eqref{E:GN orthog}, for each $j$ we have
\begin{equation} \label{E:phiJ0}
\phi^j_0 = \wklim_{n \to \infty} \Bigl\{u^{(n)}(0,\cdot + x_n^j) - \sum_{k=1}^{j-1} \phi_0^k(\cdot - x_n^k + x_n^j)\Bigr\} = \wklim_{n \to \infty} u^{(n)}(0,\cdot + x_n^j) ,
\end{equation}
where the weak limit is taken in $\dot H^1_x \cap \dot H^{s_c}_x$.  Using \eqref{E:un H1} and passing to a subsequence, we may define
\begin{equation} \label{E:phiJ1}
\begin{aligned}
\phi^j_1 &= \wklim_{n \to \infty} u^{(n)}_t(0,\cdot + x_n^j)
= \wklim_{n \to \infty} \Bigl\{u^{(n)}_t(0,\cdot + x_n^j) - \sum_{k=1}^{j-1} \phi^k_1(\cdot - x_n^k + x_n^j)\Bigr\},
\end{aligned}
\end{equation}
where now the weak limits are taken in $L^2_x$.  By \eqref{E:un Hsc}, we have $\phi^j_1 \in L^2_x \cap \dot H^{s_c-1}_x$ for all $1 \leq j < J^*+1$.  By Lemma~\ref{L:weak stability} and the fact that $T_n \to \infty$, the solutions $w^j$ to
$$
w_{tt}^j - \Delta w^j = |w^j|^pw^j \qtq{with} w^j(0) = \phi^j_0 \qtq{and} w_t^j(0) = \phi^j_1
$$
are global forward in time and satisfy $(w^j,w^j_t) \in C_t ([0, \infty);\dot H^1_x \times L^2_x\cap \dot H^{s_c}_x \times \dot H^{s_c-1}_x)$.

Since the $\phi^j_0$ are all nonzero, the lemma will follow if we can prove that there exists $j_0$ such that
$$
E(w^{j_0})=E(\phi^{j_0}_0,\phi^{j_0}_1) = \int_{\R^d} \tfrac12|\nabla\phi^{j_0}_0(x) |^2 +\tfrac12 |\phi^{j_0}_1(x)|^2 - \tfrac{1}{p+2}|\phi^{j_0}_0(x)|^{p+2}\, dx \leq 0.
$$
Indeed, $w^{j_0}$ would then be the solution described in Lemma~\ref{L:bad limit}.  Now, by \eqref{E:H1 decoup},
$$
\sum_{j=1}^{J^*} \|\nabla \phi^j_0\|_{L^2_x}^2 \leq \lim_{n \to \infty} \|\nabla u^{(n)}(0)\|_{L^2_x}^2,
$$
and by the definition of $\phi^j_1$ (cf.\ the proof of \eqref{E:H1 decoup}), we have
$$
\sum_{j=1}^{J^*} \|\phi^j_1\|_{L^2_x}^2 \leq \lim_{n \to \infty} \|u^{(n)}_t(0)\|_{L^2_x}^2.
$$
Moreover, by \eqref{E:p+2 decoup} and \eqref{E:rnJ to 0},
$$
\sum_{j=1}^{J^*} \|\phi^j_0\|_{L^{p+2}_x}^{p+2} = \lim_{n \to \infty} \|u^{(n)}(0)\|_{L^{p+2}_x}^{p+2}.
$$
Therefore, using \eqref{E:un nrg},
$$
\sum_{j=1}^{J^*} E(w^j) \leq \lim_{n \to \infty} \int_{\R^d} \tfrac12|\nabla_{t,x}u^{(n)}(0,x)|^2 - \tfrac1{p+2}|u^{(n)}(0,x)|^{p+2}\, dx \leq 0,
$$
and so at least one $w^j$ must have non-positive energy.  This completes the proof of the lemma.
\end{proof}

%
%
%
%

\section{Growth of other global norms}\label{S:other}

\begin{proposition} \label{P:finite mass}
Let $d \geq 2$, $m \in [0,1]$, and $0 < s_c < 1$.  Set $p = \frac4{d-2s_c}$.  Let $(u_0,u_1) \in H^1_x \times L^2_x$ and assume
that the maximal-lifespan solution $u$ to \eqref{E:eqn} blows up forward in time at $0<T_* < \infty$.  Then we have the pointwise in time bound
\begin{equation} \label{E:ptwise L2 bound}
\int_{\R^d} |u(t,x)|^2\, dx \lesssim (T_*-t)^{-\frac4p} \qtq{for all} 0 \leq t < T_*.
\end{equation}
Furthermore, if $I \subset [0,T_*)$ is an interval with $|I| \sim \dist(I,T_*)$, then we have the time-averaged bound
\begin{equation} \label{E:avg H1 bound}
\frac1{|I|} \int_I \int_{\R^d} |\nabla_{t,x} u(t,x)|^2\, dx\, dt \lesssim \dist(I,T_*)^{-\frac4p - 2}.
\end{equation}
The implicit constants in \eqref{E:ptwise L2 bound} and \eqref{E:avg H1 bound} may depend on $u$ but are independent of~$t$.
\end{proposition}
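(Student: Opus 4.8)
The plan is to run the classical virial (dilation) argument on the finite quantity
\[
M(t):=\int_{\R^d}|u(t,x)|^2\,dx.
\]
Because $(u_0,u_1)\in H^1_x\times L^2_x$, Proposition~\ref{P:lwp} keeps $(u,u_t)$ bounded in $C_t(H^1_x\times L^2_x)$ on every $[0,t]$ with $t<T_*$, so $M(t)<\infty$ throughout the lifespan, $M\in C^1$ with $M'(t)=2\int_{\R^d}u\,u_t\,dx$, and --- reading $u_{tt}$ off the equation and eliminating $\|u(t)\|_{L^{p+2}_x}^{p+2}$ via conservation of the energy $E:=E_m(u_0,u_1)$ (legitimate by the same approximation that validates energy conservation in $C_t(H^1_x\times L^2_x)$ and is already used in the proof of Theorem~\ref{T:nonpos blowup}) --- $M'$ is again $C^1$ with
\[
M''(t)=(p+4)\|u_t(t)\|_{L^2_x}^2+p\|\nabla u(t)\|_{L^2_x}^2+p\,m^2\|u(t)\|_{L^2_x}^2-2(p+2)E.
\]

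For \eqref{E:ptwise L2 bound} I would argue as follows. By Corollary~\ref{C:ee blowup}, $\|\nabla u(t)\|_{L^2_x}\to\infty$ as $t\uparrow T_*$, so there is $t_0<T_*$ with $p\|\nabla u(t)\|_{L^2_x}^2\ge 2(p+2)E$ and $M(t)>0$ for all $t\in[t_0,T_*)$ (the latter because a zero of $M$ near $T_*$ would force $\nabla u$ to vanish there). Discarding the nonnegative quantities $p\|\nabla u\|_{L^2_x}^2-2(p+2)E$ and $p\,m^2\|u\|_{L^2_x}^2$ and then applying Cauchy--Schwarz in the form $(M'(t))^2=4\bigl(\int u\,u_t\bigr)^2\le 4M(t)\|u_t(t)\|_{L^2_x}^2$ gives
\[
M''(t)\ \ge\ (p+4)\|u_t(t)\|_{L^2_x}^2\ \ge\ \tfrac{p+4}{4}\cdot\frac{(M'(t))^2}{M(t)},\qquad t\in[t_0,T_*),
\]
which is precisely the assertion that $g:=M^{-p/4}$ is concave on $[t_0,T_*)$. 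A positive concave function on a bounded half-open interval has a finite, nonnegative limit at the right endpoint; comparing $g$ with the chord joining $(t_1,g(t_1))$, for a fixed $t_1\in(t_0,T_*)$, to that limiting value yields $g(t)\ge c(T_*-t)$ with $c:=g(t_1)/(T_*-t_1)>0$, i.e.\ $M(t)\lesssim(T_*-t)^{-4/p}$ on $[t_1,T_*)$. Since $M$ is bounded on $[0,t_1]$, \eqref{E:ptwise L2 bound} follows throughout $[0,T_*)$ after adjusting the implicit constant.

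For \eqref{E:avg H1 bound}, using $(p+4)\|u_t\|_{L^2_x}^2+p\|\nabla u\|_{L^2_x}^2\ge p\|\nabla_{t,x}u\|_{L^2_x}^2$, discarding $p\,m^2\|u\|_{L^2_x}^2\ge0$, and bounding $-2(p+2)E\ge-2(p+2)|E|$, the virial identity gives $M''(t)\ge p\|\nabla_{t,x}u(t)\|_{L^2_x}^2-C$ with $C:=2(p+2)|E|$. Integrating over $I=[a,b]\subset[0,T_*)$,
\[
\int_I\|\nabla_{t,x}u(t)\|_{L^2_x}^2\,dt\ \le\ \tfrac1p\bigl(M'(b)-M'(a)\bigr)+\tfrac{C}{p}\,|I|.
\]
It then suffices to establish $M'(b)\lesssim\dist(I,T_*)^{-4/p-1}$ and $-M'(a)\lesssim\dist(I,T_*)^{-4/p-1}$; dividing by $|I|\sim\dist(I,T_*)$ (the $|I|\le\dist(I,T_*)$ term being lower order) then gives \eqref{E:avg H1 bound}. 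Since $M''\ge-C$, we have $M'(s_1)\le M'(s_2)+C(s_2-s_1)$ for $s_1\le s_2$; averaging this in $s_2$ over $[b,b+\ell]$ with $\ell:=\tfrac12\dist(I,T_*)$ (which stays in $[0,T_*)$), using $M\ge0$ and the pointwise bound just proved, yields $M'(b)\lesssim \ell^{-1}M(b+\ell)+\ell\lesssim\dist(I,T_*)^{-4/p-1}$; the bound for $-M'(a)$ follows symmetrically by averaging to the left of $a$ (using $M(a)-M(a-\ell')\ge-M(a-\ell')$). When $\dist(I,T_*)$ is bounded below, $I$ stays away from $T_*$ and all quantities in \eqref{E:avg H1 bound} are bounded by constants depending only on $u$, so the estimate is trivial there.

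The one place where the blowup hypothesis genuinely enters --- and the only real obstacle --- is the energy term $-2(p+2)E$ when $E>0$: it is absorbed into the gradient term near $T_*$ precisely because $\|\nabla u(t)\|_{L^2_x}\to\infty$ there (Corollary~\ref{C:ee blowup}). Everything else is either the elementary concavity lemma for $M^{-p/4}$ or routine book-keeping (the $C^2$-regularity of $M$ and the endpoint control of $M'$ through the near-monotonicity $M''\ge-C$).
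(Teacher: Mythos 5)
Your proof of \eqref{E:ptwise L2 bound} is essentially identical to the paper's: compute $M''$, use Corollary~\ref{C:ee blowup} to absorb the energy term into the (now large) gradient term, derive $(M')^2\le\tfrac{4}{p+4}MM''$ by Cauchy--Schwarz, and read off concavity of $M^{-p/4}$. (The paper keeps $\tfrac p2\|\nabla u\|^2$ in the lower bound for $M''$ rather than discarding it entirely, but that plays no role in the concavity argument.)

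Your proof of \eqref{E:avg H1 bound} is genuinely different. The paper tests $M''\gtrsim\|\nabla_{t,x}u\|_{L^2_x}^2$ against a smooth compactly supported cutoff $\phi_I$ adapted to $I$ with $|\phi_I''|\lesssim|I|^{-2}$, integrates by parts twice to transfer both derivatives onto $\phi_I$, and then invokes the pointwise mass bound. This avoids any reference to $M'$ at specific times. You instead integrate $M''\ge p\|\nabla_{t,x}u\|_{L^2_x}^2-C$ directly over $I=[a,b]$, producing the boundary terms $M'(b)-M'(a)$, and then control each endpoint value by exploiting the near-monotonicity $M''\ge -C$: averaging the one-sided comparison $M'(b)\le M'(s)+C(s-b)$ over $s\in[b,b+\ell]$ with $\ell\sim\dist(I,T_*)$ converts $M'(b)$ into a difference quotient of $M$, which \eqref{E:ptwise L2 bound} controls, and similarly for $-M'(a)$. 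Your route is more elementary and makes the mechanism visible (the endpoint derivatives are the only genuine obstruction and are tamed by the mass bound); the paper's cutoff trick is shorter and sidesteps endpoint control altogether. Both arguments are correct, and both ultimately rest on exactly the same inputs: the virial identity for $M''$, the pointwise mass bound, and the lower bound $M''\ge -C$ (implicit in the paper via $\phi_I\ge 0$ and $E$ constant). One minor bookkeeping point worth making explicit in your write-up: the averaging window to the left of $a$ requires $a-\ell'\ge 0$, which holds once $\dist(I,T_*)$ is small enough, and you correctly dispose of the complementary regime by observing that the estimate is trivial when $\dist(I,T_*)$ is bounded below.
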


\begin{remark}
The blowup rate of solutions to the ODE $v''+m^2v -|v|^p=0$, namely, $v(t)\sim (T_*-t)^{-2/p}$, shows that the blowup rate in the proposition is sharp.
On the other hand, solutions such as those constructed by Kichenassamy, whose blowup surface $t=\sigma(x)$ has a non-degenerate minimum at $T_*$, show that one cannot expect lower bounds of comparable size to the upper bounds given above.
\end{remark}

\begin{proof}
We let
$$
M(t) := \int_{\R^d} |u(t,x)|^2\, dx
$$
denote the `mass'.  We differentiate twice with respect to time and use the equation and integration by parts to see that
\begin{align*}
M'(t) &= \int_{\R^d} 2 u(t) u_t(t)\, dx\\
M''(t) &= \int_{\R^d} 2 |u_t(t)|^2 - 2|\nabla u(t)|^2 - 2m^2 |u(t)|^2 + 2|u(t)|^{p+2}\, dx\\
&= -2(p+2)E(u) + \int_{\R^d} (p+4) |u_t(t)|^2 + p|\nabla u(t)|^2 + p m^2 |u(t)|^2\, dx.
\end{align*}
By Proposition~\ref{P:lwp}, $M(t),M'(t),M''(t)$ are all finite for $0 \leq t < T_*$.

As the solution $u$ blows up at $T_*$, by Corollary~\ref{C:ee blowup} we must have
$$
\lim_{t \uparrow T_*} \int_{\R^d} |\nabla u(t,x)|^2 \, dx = \infty.
$$
Using this and the conservation of energy, we deduce that there exists $0<t_0<T_*$ such that
$$
2(p+2)E(u) \leq \tfrac{p}2\|\nabla u(t)\|_{L^2_x}^2 \qtq{for all} t_0<t<T_*.
$$
Thus,
\begin{equation} \label{E:M'' lb}
M''(t) \geq \int_{\R^d} (p+4) |u_t(t,x)|^2 + \tfrac{p}2|\nabla u(t,x)|^2\, dx \qtq{for all} t_0 < t < T_*
\end{equation}
and so, by Cauchy--Schwarz,
\begin{equation} \label{E:M'<MM''}
|M'(t)|^2 \leq \frac4{p+4} M(t)M''(t) \qtq{for all} t_0 < t < T_*.
\end{equation}
From \eqref{E:M'' lb} we see that $M(t)\geq 0$ is strictly convex on $(t_0, T_*)$ and so vanishes at most once on this interval.  Altering $t_0$ if necessary, we may thus assume $M(t)>0$ on $(t_0, T_*)$.  This and \eqref{E:M'<MM''} show that $M(t)^{-\frac{p}4}$ is concave on $(t_0,T_*)$; indeed,
$$
\partial_t^2 M(t)^{-\frac{p}4} = -\tfrac p4\bigl[M''(t)M(t) - \tfrac{p+4}4(M'(t))^2 \bigr]M(t)^{-\frac{p+8}4} \leq 0.
$$
Therefore for all $t,T$ satisfying $t_0 < t \leq T < T_*$, we have
$$
M(t)^{-\frac{p}4} \geq \frac{t-t_0}{T-t_0}M(T)^{-\frac{p}4} + \frac{T-t}{T-t_0}M(t_0)^{-\frac{p}4}\geq  \frac{T-t}{T-t_0}M(t_0)^{-\frac{p}4}.
$$
Letting $T\uparrow T_*$ and rearranging yields
$$
M(t) \leq M(t_0)(T_*-t_0)^{\frac4p}(T_*-t)^{-\frac4p} \qtq{for all} t_0 < t < T_*.
$$
This proves \eqref{E:ptwise L2 bound}, at least for $t_0 < t < T_*$.  For $0\leq t\leq t_0$ this is trivial by the local-in-time continuity of $M$.

We now turn to \eqref{E:avg H1 bound}.  It suffices to consider intervals $I \subset (t_0,T_*)$ with $|I| \sim \dist(I,T_*)$.  Let $\phi_I$ be a smooth cutoff
with $\phi \equiv 1$ on $I$, $\supp \phi_I \subset [0,T_*)$, $|\supp \phi_I| \sim\dist(\supp \phi_I, T_*) \sim |I|$, and $|\phi''| \lesssim |I|^{-2}$.  Then by \eqref{E:M'' lb}, integration by parts, and \eqref{E:ptwise L2 bound}, we have
\begin{align*}
\int_I \int_{\R^d} |\nabla_{t,x} u(t,x)|^2\, dx\, dt &\lesssim \int_0^{T_*} \phi_I(t) M''(t)\, dt = \int_0^{T_*} \phi_I''(t)M(t)\, dt \\
&\lesssim |I||I|^{-2}\dist(I,T_*)^{-\frac4{p}} \sim |I|^{-\frac4{p} -1}.
\end{align*}

This completes the proof of the proposition.
\end{proof}

When $0 < s_c \leq \frac12$, the estimate \eqref{E:avg H1 bound} can be improved to a pointwise in time estimate, namely,
\begin{equation} \label{E:temp cone bound}
\int_{|x-x_0|<T_*-t}  (T_*-t)^{2(1-s_c)}|\nabla_{t,x}u(t,x)|^2\, dx \lesssim 1.
\end{equation}
Inequality \eqref{E:temp cone bound} is proved for $m=0$ in \cite{MerleZaagAJM, MerleZaagMA}.  For $0<m\leq1$, this is the content of Theorem~\ref{T:cone bound subc}.

By the local well-posedness in Proposition~\ref{P:lwp} and finite speed of propagation, there exists $R>0$ such that
\begin{equation} \label{E:global H1 outer space}
\int_{|x|\geq R+T_*} |\nabla_{t,x} u(t,x)|^2\, dx \lesssim 1.
\end{equation}
Since $\{|x|\leq R+T_*\}$ is contained in the union of $C_{d,T_*,R}(T_*-t)^{-d}$ balls of radius $(T_*-t)$, \eqref{E:temp cone bound} implies that
$$
\int_{|x|\leq R+T_*}|\nabla_{t,x}u(t,x)|^2\, dx \lesssim (T_*-t)^{-2(1-s_c)-d} = (T_*-t)^{-\frac4p-2}.
$$
The estimate \eqref{E:avg H1 bound} follows by combining the inequality above with \eqref{E:global H1 outer space}.

In Section~\ref{S:superconf blow} we prove averaged-in-time estimates inside light cones in the super-conformal case.  However, when these are used to derive global in space bounds (in the manner just shown), the result is weaker than that given in \eqref{E:avg H1 bound}.

%
%
%
%

\section{Lyapunov functionals}\label{S:Lyapunov}

The most flexible way to describe conservation laws is in their microscopic form, that is, as the fact that a certain
vector field is divergence-free in spacetime.  Myriad consequences can then be derived by applying the divergence
theorem, or, more generally, by pairing the vector field with the gradient of a function and integrating by parts.  One
of our goals in this section is to identify the underlying microscopic identities that yield the key monotonicity formulae
in the analyses of Merle and Zaag.  This points the way to the appropriate analogues for the results in the remaining sections.

To simplify various expressions, for the remainder of the article, we will work in light cones
$$
\{(t,x):0 < t \leq T, |x-x_0| < t\} \qtq{with} (T,x_0) \in \R_+\times\R^d,
$$
rather than the backwards light cones discussed earlier.  It is clear how to adapt Definition~\ref{D:solution} to this case. All the local theory results from Section~\ref{S:local theory} carry over by applying the time translation/reversal symmetry $u(t,x) \mapsto u(T-t,x)$.

We begin with energy conservation: If $u$ is a solution to \eqref{E:eqn} and
\begin{equation}\label{E:m E defn}
\mce^0 := \tfrac12 u_t^2 + \tfrac12 |\nabla u|^2 + \tfrac{m^2}2 u^2 - \tfrac1{p+2} |u|^{p+2}
    \qtq{and} \vec\mce := - u_t \nabla u,
\end{equation}
then
\begin{equation}\label{E:m E cons}
\partial_t \mce^0 + \nabla \cdot \vec\mce = 0.
\end{equation}

The closest thing to a general procedure for discovering conservation laws is via Noether's theorem which makes the connection to (continuous) symmetries.
The general nonlinear Klein--Gordon equation \eqref{E:eqn} has only the $\binom{d+2}{2}$-dimensional Poincar\'e group as symmetries; however, in the special
case of $m=0$ and $p=4/(d-1)$ the symmetry group becomes the full $\binom{d+3}{2}$-dimensional conformal group (of $(d+1)$-dimensional \emph{spacetime}).
Note that $p=4/(d-1)$ corresponds to $s_c=\frac12$, which explains the sub-/super-conformal nomenclature used in this paper.

While some elements of the conformal group fail to be true symmetries of the equation, the vestigial `conservation laws' that arise have proven to be very useful.
The requisite computations are rather lengthy; nevertheless, the results are very neatly catalogued in the paper \cite{Strauss77} by Strauss.  This paper also
contains a proof that the only continuous symmetries are those described above, which is to say, they generate the full Lie algebra of Killing fields.
Let us quickly review the list.

\emph{Translations:} Time translation symmetry is responsible for the energy conservation \eqref{E:m E cons}, above.  Spatial translation symmetry implies
the conservation of momentum.  Note that momentum conservation is of limited use, since it is not coercive.  While energy is not coercive in the strictest
sense in the focusing case, it is at least a scalar.

\emph{Rotations:}  Here we include the full group of spacetime rotations $SO(1,d)$, which includes both spatial rotations and Lorentz boosts.  This produces
a tensor of conserved quantities, of which the usual angular momentum is a part.  Again their utility is limited because they are not coercive.

\emph{Dilation:} By dilation, we mean rescaling both space and time.  This gives rise to a very important conservation law: if
\begin{equation}\label{E:frak d}
\begin{aligned}
  \mcd^0 &{}:= \hbox to 0.5em{\hss$t$\hss} \bigl[ \tfrac12 |\nabla u|^2 - \tfrac12 u_t^2 + \tfrac{m^2}2 u^2 - \tfrac1{p+2} |u|^{p+2} \bigr]
    + \bigl[ x\cdot\!\nabla u + t u_t + \tfrac{d-1}{2} u \bigr]  u_t \\
\vec\mcd &{}:= \hbox to 0.5em{\hss$x$\hss} \bigl[ \tfrac12 |\nabla u|^2 - \tfrac12 u_t^2 + \tfrac{m^2}2 u^2 - \tfrac1{p+2} |u|^{p+2} \bigr]
    - \bigl[ x\cdot\!\nabla u + t u_t + \tfrac{d-1}{2} u \bigr]\nabla u
\end{aligned}
\end{equation}
then
\begin{align}\label{E:m dilation}
  \partial_t \mcd^0 + \nabla\! \cdot\!\;\! \vec\mcd =  \tfrac{p(d-1)-4}{2(p+2)} |u|^{p+2} + m^2 |u|^2.
\end{align}
This is an honest conservation law only in the conformally invariant case ($m=0$ and $p=\frac4{d-1}$).  However, in the super-conformal case (i.e., $p > \frac4{d-1}$),
both terms have the same sign; thus we obtain a monotonicity formula --- a Lyapunov functional!

\emph{Conformal translations:}  Recall that inversion in a cone, that is,
$$
(t,x) \mapsto \bigl(\tfrac{t}{t^2 - |x|^2},\tfrac{x}{t^2 - |x|^2}),
$$
is a conformal map of spacetime.  This involution does not commute with translations; by forming commutators, we obtain a $(d+1)$-dimensional family of continuous symmetries (at
least in the conformally invariant case).  The resulting conservation laws are called conformal energy (relating to time translation) and conformal momentum
(resulting from spatial translations).  The conformal momentum lacks coercivity.  The conservation of conformal energy reads as follows: if
\begin{align}\label{E:frak q0}
\mck^0 &:= (t^2+|x|^2) \mce^0 + 2tu_t(x\cdot\!\nabla u) + (d-1)tuu_t - \tfrac{d-1}2 u^2 \\
\vec\mck &:= -\bigl[ (t^2+|x|^2) u_t + 2t(x\cdot\!\nabla u)+(d-1)tu\bigr] \nabla u  \notag \\
& \qquad\qquad\qquad - 2xt\bigl[ \tfrac12 u_t^2 - \tfrac12 |\nabla u|^2 - \tfrac{m^2}2 u^2 + \tfrac1{p+2} |u|^{p+2} \bigr] \label{E:frak qj}
\end{align}
then
\begin{align}\label{E:m conf E}
  \partial_t \mck^0 + \nabla\! \cdot\!\;\! \vec\mck = t \tfrac{p(d-1)-4}{(p+2)} |u|^{p+2} + 2tm^2 |u|^2 .
\end{align}

This completes the list.  We found $d+1$ translations, $\tbinom{d+1}{2}$ rotations, $1$ dilation, and $d+1$ conformal translations.  These generate the promised
$\binom{d+3}{2}$-dimensional group of conformal symmetries.

We now turn to converting these microscopic conservation laws into integrated form.  The key identities we need originate from energy conservation
\eqref{E:m E cons} and the dilation identity \eqref{E:m dilation}.  The conformal energy identity \eqref{E:m conf E} has a very similar structure to the dilation identity \eqref{E:m dilation};
however, the extra factor of $2t$ on the right-hand side of \eqref{E:m conf E} makes it inferior for our purposes.

Integrating the energy identity \eqref{E:m E cons} yields the family of well-known energy flux identities.  The particular cases we need are the following:

\begin{lemma}[Energy flux identity] \label{L:E flux}
Let $u$ be a strong solution to \eqref{E:eqn} in the lightcone
\begin{equation}\label{E:basic cone}
\Gamma := \bigl\{ (t,x): 0<t\leq T \text{ and } |x|<t\bigr\}.
\end{equation}
Then for all $0<t_0<t_1<T$,
\begin{align}
& \int_{|x| < t_1} \tfrac{t_1^2-|x|^2}{t_1} \mce^0(t_1,x) \,dx -  \int_{|x| < t_0} \tfrac{t_0^2-|x|^2}{t_0} \mce^0(t_0,x) \,dx  \notag \\
& \qquad = \int_{t_0}^{t_1} \!\!\! \int_{|x|<t} \tfrac14(1+\tfrac{|x|}t)^2\bigl[u_t(t,x)+u_r(t,x)\bigr]^2 + \tfrac14(1-\tfrac{|x|}t)^2\bigl[u_t(t,x)-u_r(t,x)\bigr]^2\notag \\
&\qquad \qquad  + (1+\tfrac{|x|^2}{t^2})\bigl[ \tfrac12|\nabslash u(t,x)|^2 + \tfrac{m^2}2 |u(t,x)|^2 - \tfrac1{p+2}|u(t,x)|^{p+2} \bigr]\, dx\, dt. \label{E:E flux}
\end{align}
Here $u_r := \frac{x}{|x|}\cdot\nabla u$ and $\nabslash u := \nabla u - \frac{x}{|x|} u_r$ denote the radial and angular derivatives, respectively, and
$$
\mce^0(t,x) = \tfrac12 |u_t(t,x)|^2 + \tfrac12 |\nabla u(t,x)|^2 + \tfrac{m^2}2 |u(t,x)|^2 - \tfrac1{p+2} |u(t,x)|^{p+2},
$$
as in \eqref{E:m E defn}.
\end{lemma}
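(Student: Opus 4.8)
The plan is to derive \eqref{E:E flux} by multiplying the microscopic energy conservation law \eqref{E:m E cons} by the weight $w(t,x):=\tfrac{t^2-|x|^2}{t}=t-\tfrac{|x|^2}{t}$, which is nonnegative on $\Gamma$ and, decisively, vanishes on the mantle $\{|x|=t\}$. For $t>0$ one has $\partial_t w=1+\tfrac{|x|^2}{t^2}$ and $\nabla w=-\tfrac{2x}{t}$, so, using $\vec\mce=-u_t\nabla u$ and $x\cdot\nabla u=|x|\,u_r$, the product rule together with \eqref{E:m E cons} yields the spacetime divergence identity
\[
\partial_t\!\bigl(w\,\mce^0\bigr)+\nabla\!\cdot\!\bigl(w\,\vec\mce\bigr)
 =\bigl(\partial_t w\bigr)\mce^0+\nabla w\cdot\vec\mce
 =\Bigl(1+\tfrac{|x|^2}{t^2}\Bigr)\mce^0+\tfrac{2|x|}{t}\,u_t u_r .
\]
Integrating this over the truncated cone $\Omega:=\{(t,x):t_0<t<t_1,\ |x|<t\}$ and applying the divergence theorem in $\R_t\times\R^d_x$, the flux of $(w\mce^0,w\vec\mce)$ through the two horizontal faces $\{t=t_i,\ |x|<t_i\}$ produces exactly $\int_{|x|<t_1}w\,\mce^0(t_1,x)\,dx-\int_{|x|<t_0}w\,\mce^0(t_0,x)\,dx$, i.e.\ the left-hand side of \eqref{E:E flux}, while the flux through the lateral mantle vanishes because $w\equiv0$ there.

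It then remains to identify the resulting interior integrand $\bigl(1+\tfrac{|x|^2}{t^2}\bigr)\mce^0+\tfrac{2|x|}{t}u_t u_r$ with the one in RHS\eqref{E:E flux}. Writing $s:=|x|/t\in[0,1)$ and decomposing $|\nabla u|^2=u_r^2+|\nabslash u|^2$, the part of this integrand involving $u_t$ and $u_r$ equals $\tfrac{1+s^2}{2}(u_t^2+u_r^2)+2s\,u_t u_r$, and completing the square gives
\[
\tfrac{1+s^2}{2}\bigl(u_t^2+u_r^2\bigr)+2s\,u_t u_r=\tfrac14(1+s)^2\bigl(u_t+u_r\bigr)^2+\tfrac14(1-s)^2\bigl(u_t-u_r\bigr)^2 ,
\]
while the remaining terms are exactly $(1+s^2)\bigl[\tfrac12|\nabslash u|^2+\tfrac{m^2}2|u|^2-\tfrac1{p+2}|u|^{p+2}\bigr]$. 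Since $s^2=|x|^2/t^2$, this is precisely RHS\eqref{E:E flux}, so modulo regularity the lemma follows.

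The one genuine obstacle is that a strong solution in the sense of Definition~\ref{D:solution} has only $(u,u_t)\in C_{t,\loc}[H^1_x\times L^2_x]$ and $u\in L^{\frac{p(d+1)}2}_\loc$, which is too rough to make the pointwise manipulations and the divergence theorem above literally meaningful; the computation must be supplemented by an approximation argument. For smooth, spatially compactly supported Cauchy data the law \eqref{E:m E cons} holds classically and everything above is rigorous, so one argues by approximating the Cauchy data of $u$ (posed, say, on the slice $\{t=t_1\}$, on a ball slightly larger than $\{|x|<t_1\}$) by smooth data, using the local well-posedness and uniqueness theory of Section~\ref{S:local theory} together with finite speed of propagation to obtain approximating solutions that converge to $u$ on compact subsets of $\Gamma$ in $C_{t,\loc}[H^1_x\times L^2_x]$ and in $L^{\frac{p(d+1)}2}_\loc$, and then passing to the limit in the integrated identity. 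This passage is legitimate because, locally, $H^1_x\hookrightarrow L^{p+2}_x$ (Sobolev embedding, since $0<p<\tfrac4{d-2}$), whence $t\mapsto\mce^0(t,\cdot)$ is continuous in $L^1_\loc$ and the two horizontal-face terms converge, $w$ is bounded on $\{t_0\le t\le t_1\}\cap\Gamma$, and the interior integrand is dominated by $|\nabla_{t,x}u|^2+|u|^2+|u|^{p+2}\in L^1_\loc(\Gamma)$. Equivalently, one may avoid the divergence theorem altogether by noting that \eqref{E:m E cons} holds in $\mathcal D'(\Gamma)$ for strong solutions and testing it against the smooth function $w\,\psi_\varepsilon$, with $\psi_\varepsilon$ a cutoff approximating $\mathbf{1}_\Omega$ that is mollified sharply in $t$ near $t_0$ and $t_1$ and cut off just inside the mantle; the only term not immediately controlled is $\int w\,\vec\mce\cdot\nabla\psi_\varepsilon$, and it vanishes in the limit precisely because $w\lesssim t-|x|$ near the mantle kills the $O(\varepsilon^{-1})$ size of $\nabla\psi_\varepsilon$ there, while $\vec\mce\in L^1_\loc$ and the region of integration shrinks.
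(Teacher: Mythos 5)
Your proof is correct and follows essentially the same route as the paper: you multiply the microscopic energy law \eqref{E:m E cons} by the weight $\phi(t,x)=\tfrac{t^2-|x|^2}{t}$ (which vanishes on the mantle), integrate by parts over the frustrum, and then verify the algebraic identity for the interior integrand, handling the low-regularity issue by approximation or by testing the divergence-form identity against a mollified cutoff. The paper records precisely this integration-by-parts argument (with the same $\phi$), leaving the algebra and the regularity discussion implicit, so your added detail is a faithful expansion rather than a different approach.
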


\begin{proof}
The identity follows easily from \eqref{E:m E cons} and integration by parts.
First we define a function $\phi$ and a frustrum $F$ as follows:
$$
F := \bigl\{ (t,x): t_0<t<t_1 \text{ and } |x|<t\bigr\}
    \quad\text{by}\quad
\phi(t,x) = \begin{cases} \tfrac{t^2-|x|^2}{t} &:|x|<t \\ 0 &:|x|\geq t. \end{cases}
$$
Then integration by parts and \eqref{E:m E cons} show that
\begin{align*}
\iint_F \mce^0(t,x)\partial_t\phi(t,x) & +  \vec\mce(t,x)\cdot \nabla\phi(t,x) \,dx\,dt \\
&=\int_{\R^d}\mce^0(t_1,x)\phi(t_1,x) -\mce^0(t_0,x)\phi(t_0,x)\, dx,
\end{align*}
which is equivalent to \eqref{E:E flux}.

An alternate proof of \eqref{E:E flux} can be based on applying the divergence theorem to a family of concentric frustra inside $F$ with varying opening angle and then
averaging over this family.  This proof is more intuitive: on each frustrum we obtain the usual energy flux identity, namely, the energy at the top is equal to the
energy at the bottom plus the energy flux out through the side of the frustrum.  However, at the low regularity we are considering, this intermediate step is
ill-defined: $\mce^0$ and $\vec\mce$ are merely $L^1_\loc$.
\end{proof}

It is tempting (and not difficult) to run the same argument using the dilation identity; however, the result takes a more satisfactorily coercive form
if we make a trivial modification.  Here we mean trivial in a cohomological sense: observe that for any vector-valued function $\vec f:\R\times\R^d\to\R^d$ on spacetime,
$( \nabla\cdot \vec f ,\ - \partial_t \vec f)$ is divergence free, by equality of mixed partial derivatives.  This is quite different from \eqref{E:m E cons}
or \eqref{E:m dilation}, which rely on the fact that $u(t,x)$ solves a PDE, namely, \eqref{E:eqn}.

Specifically, defining
\begin{align}\label{E:frak l}
\mcl^0 := \mcd^0 + \tfrac{d-1}{4} \nabla\!\cdot\bigl(\tfrac{x}t u^2\bigr) \qtq{and} \vec\mcl := \vec\mcd - \tfrac{d-1}{4} \tfrac{\partial\ }{\partial t} \bigl(\tfrac{x}t u^2\bigr)
\end{align}
we deduce that
\begin{align}\label{E:m l}
  \partial_t \mcl^0 + \nabla\! \cdot\!\;\! \vec\mcl = \partial_t \mcd^0 + \nabla\! \cdot\!\;\! \vec\mcd = \tfrac{p(d-1)-4}{2(p+2)} |u|^{p+2} + m^2 |u|^2.
\end{align}
To see the improvement of coercivity over the original dilation identity \eqref{E:m dilation}, we need to expand out the definition of $\mcl^0$ and collect terms.
This yields
\begin{equation}\label{E:frak l0}
\begin{aligned}
\mcl^0 &= \tfrac1{2t}\bigl|x\!\;\!\cdot\!\nabla u + t u_t + \tfrac{d-1}{2} u\bigr|^2 + \tfrac{t}2\bigl(|\nabla u|^2 - |\tfrac{x}t \cdot \nabla u|^2\bigr)
    - \tfrac{t}{p+2}|u|^{p+2} \\
& \qquad {} + \tfrac{d^2-1}{8t} u^2  + t \tfrac{m^2}{2} u^2;
\end{aligned}
\end{equation}
indeed, the modification \eqref{E:frak l} was chosen precisely to complete the squares here and in \eqref{E:L bndry}.

\begin{lemma}[Two dilation inequalities]\label{L:dilat id}
Let $u$ be a strong solution to \eqref{E:eqn} in the light cone \eqref{E:basic cone}.  Then
\begin{equation}\label{E:L flux ineq}
\begin{aligned}
\int_{t_0}^{t_1} \!\!\! \int_{|x|<t} \! \tfrac{p(d-1)-4}{2(p+2)} & |u(t,x)|^{p+2} + m^2 |u(t,x)|^2 \, dx\,dt + \int_{|x| < t_0} \mcl^0(t_0,x) \,dx \\
&\leq \int_{|x| < t_1} \mcl^0(t_1,x) \,dx
\end{aligned}
\end{equation}
for all $0<t_0<t_1\leq T$.  Moreover, in the conformal case $p(d-1)=4$ we have
\begin{equation}\label{E:L flux ineq 2}
\begin{aligned}
\!\!\! \int_{t_0}^{(1+\alpha)t_0} \!\!\! \int_{|x|<\alpha t} \! (t-|x|)^{d+1}|\nabla_{\!t,x} u(t,x)|^2 + (t-|x|)^{d-1} |u(t,x)|^2 dx\!\:dt
    \lesssim \alpha t_0^{d+1}\!,
\end{aligned}
\end{equation}
uniformly for $0<\alpha\leq 1$ and $[t_0, (1+\alpha)t_0]\subset (0, T]$.  The implicit constant depends on $d$, $T$, and the $H^1_x\times L^2_x$ norm of $(u(T),u_t(T))$ on the ball $\{|x|<T\}$.
\end{lemma}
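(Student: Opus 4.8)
The plan is to extract \eqref{E:L flux ineq} directly from the microscopic identity \eqref{E:m l}, and then to bootstrap \eqref{E:L flux ineq 2} out of \eqref{E:L flux ineq} together with the completed-square form \eqref{E:frak l0} of the density $\mcl^0$. For \eqref{E:L flux ineq} I would integrate the conservation law \eqref{E:m l} over the frustrum $F=\{(t,x):t_0<t<t_1,\ |x|<t\}$ and apply the divergence theorem in spacetime; since $\mcl^0$ and $\vec\mcl$ are only $L^1_{\loc}$, this must be done on mollifications of $u$ with the commutator errors shown to vanish in the limit, exactly as in the proof of Lemma~\ref{L:E flux}. The boundary of $F$ contributes the slice integrals $\int_{|x|<t_1}\mcl^0(t_1,x)\,dx$ and $-\int_{|x|<t_0}\mcl^0(t_0,x)\,dx$ together with the flux through the cone mantle $\{|x|=t\}$, which is a positive multiple of $\int\bigl(\mcl^0-\tfrac{x}{|x|}\cdot\vec\mcl\bigr)\,dS$, while the spacetime integral of the right-hand side of \eqref{E:m l} is nonnegative precisely because $p(d-1)\ge4$. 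The one point needing a computation is that, restricted to $\{|x|=t\}$, the quantity $\mcl^0-\tfrac{x}{|x|}\cdot\vec\mcl$ is a nonnegative sum of squares --- this is \eqref{E:L bndry}, and it is exactly to arrange this (and the form \eqref{E:frak l0}) that the cohomologically trivial modification \eqref{E:frak l} was made. Discarding the two nonnegative contributions then gives \eqref{E:L flux ineq}.

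For \eqref{E:L flux ineq 2} I would specialise to the conformal case, where the bulk term in \eqref{E:L flux ineq} is the nonnegative $m^2|u|^2$, so that $t\mapsto\ell(t):=\int_{|x|<t}\mcl^0(t,x)\,dx$ is nondecreasing. Bounding $\mcl^0(T,x)$ above by its manifestly nonnegative terms in \eqref{E:frak l0} one checks $\ell(T)\lesssim_{d,T}\|(u(T),u_t(T))\|_{H^1_x\times L^2_x(\{|x|<T\})}^2$, hence $\ell(t)\le\ell(T)\lesssim_{d,T}1$ for all $t\le T$. Next, discard from \eqref{E:frak l0} only the single indefinite term $-\tfrac{t}{p+2}|u|^{p+2}$ and retain the squares $\tfrac1{2t}\bigl|x\cdot\nabla u+tu_t+\tfrac{d-1}2u\bigr|^2$, $\tfrac t2|\nabslash u|^2$, $\tfrac12\bigl(\tfrac{t^2-|x|^2}{t}\bigr)u_r^2$ and $\tfrac{d^2-1}{8t}u^2$; integrating in $x$ and inserting $\ell(t)\le\ell(T)$ yields, for each $t\le T$,
$$\int_{|x|<t}\Bigl[t\bigl(u_t+\tfrac{|x|}{t}u_r+\tfrac{d-1}{2t}u\bigr)^2+t|\nabslash u|^2+(t-|x|)\,u_r^2+\tfrac1t u^2\Bigr]dx\ \lesssim_{d,T}\ 1+\tfrac{t}{p+2}\!\int_{|x|<t}\!|u(t)|^{p+2}dx,$$
and the last integral is $\lesssim_{d,T}1$ because in the conformal case $\tfrac{p(d+1)}2=p+2$, so $u\in L^{p+2}_{t,x}(\Gamma)$ is finite and, by finite speed of propagation together with the local theory of Section~\ref{S:local theory} (and Sobolev embedding, as $p+2<\tfrac{2d}{d-2}$), controlled in terms of $T$ and the data at time $T$. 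Multiplying the weights up via $0\le t-|x|\le t$ on $\{|x|<t\}$ and decomposing $u_t^2\lesssim_d(u_t+\tfrac{|x|}{t}u_r+\tfrac{d-1}{2t}u)^2+u_r^2+\tfrac1{t^2}u^2$, this gives $\int_{|x|<t}\bigl[(t-|x|)^{d+1}|\nabla_{t,x}u|^2+(t-|x|)^{d-1}u^2\bigr]dx\lesssim_{d,T}t^d$; integrating over $t\in[t_0,(1+\alpha)t_0]$ and using $(1+\alpha)^{d+1}-1\lesssim_d\alpha$ then produces \eqref{E:L flux ineq 2}.

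The main obstacle I anticipate is the completed-square algebra itself: verifying \eqref{E:frak l0} and, above all, the boundary identity \eqref{E:L bndry} --- equivalently, checking that the modification \eqref{E:frak l} is precisely the cocycle turning the dilation current into a coercive one --- is the computational heart of the argument, after which everything above reduces to elementary bookkeeping with the inequalities $0\le t-|x|\le t$ and $\ell(t)\le\ell(T)$ (and the observation that the only indefinite term in $\mcl^0$ is the potential energy, which is controlled in the conformal case by the spacetime integrability built into the notion of a strong solution). The only real technical wrinkle is the low regularity, $\mcl^0,\vec\mcl\in L^1_{\loc}$, so that every integration by parts --- in particular the divergence theorem underlying \eqref{E:L flux ineq} --- must be performed on mollifications following Lemma~\ref{L:E flux}; and, when $m>0$, one uses $m\le1$ and that the cone has height $\le T$, so the mass terms affect only the $T$-dependence of the implicit constants.
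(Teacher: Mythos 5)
Your proof of \eqref{E:L flux ineq} matches the paper's: integrate \eqref{E:m l} over the frustrum, apply the divergence theorem with the usual low-regularity care, observe that the cone-mantle flux \eqref{E:L bndry} equals $-\tfrac1t\bigl[x\cdot\nabla u + tu_t + \tfrac{d-1}2u\bigr]^2$ and is thus sign-definite, and discard the two nonnegative contributions.

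The proof of \eqref{E:L flux ineq 2}, however, has a genuine gap at the step where you assert
$\int_{|x|<t}|u(t,x)|^{p+2}\,dx \lesssim_{d,T} 1$ uniformly in $t\in(0,T]$, claiming it follows from the local theory, finite speed of propagation, and Sobolev embedding. It does not. In the cone $\{0<t\leq T,\ |x|<t\}$ the apex $(0,0)$ is a potential blowup point: the definition of a strong solution only gives $(u,u_t)\in C_{t,\loc}[H^1_x\times L^2_x]$ on compact subintervals of $(0,T]$, so $\|u(t)\|_{H^1_x(|x|<t)}$ --- and with it $\|u(t)\|_{L^{p+2}_x(|x|<t)}$ --- may diverge as $t\searrow 0$. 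Likewise $u\in L^{p+2}_{t,x}(\Gamma)$ is a \emph{spacetime} bound and gives nothing pointwise in $t$. Nor can the Lyapunov functional supply the missing bound: from $L(t)\geq 0$ (Lemma~\ref{L:L>0}) the potential energy is dominated by the kinetic terms of $\mcl^0$, and from $L(t)\leq L(T)$ the kinetic terms are dominated by $L(T)$ plus the potential energy; combining these is circular. Indeed, were your argument valid, it would deliver the pointwise-in-time estimate of Theorem~\ref{T:cone bound subc} in a few lines, short-circuiting most of Section~\ref{S:conf blow}, which should itself be a warning sign.

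The paper's route is genuinely different and is designed to avoid exactly this circularity. In the conformal case the bulk term of \eqref{E:L div thm} vanishes, so the only available coercivity is in the neglected mantle flux \eqref{E:L bndry}: for each apex $(s,y)$ with $|y|<s$, boundedness of $L$ gives
$\int_s^T\!\int_{|x-y|=t-s}\bigl[(x-y)\cdot\nabla u+(t-s)u_t+\tfrac{d-1}2u\bigr]^2\,\tfrac{dS\,dt}{t-s}\lesssim 1$.
This controls only a single directional derivative per cone, so one then \emph{averages} over apices $(s,y)$ in a shell $R(\alpha)$ of thickness $\sim\alpha t_0$, changes variables so the mantle integral becomes a bulk spacetime integral, and exploits that $\omega=(x-y)/|x-y|$ sweeps the full sphere as $(s,y)$ varies: the $\omega$-averaged quadratic form in $(\nabla u,u_t,u)$ is positive definite, producing the weight $(t-|x|)^{d+1}$ and the right-hand side $\alpha t_0^{d+1}$. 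That averaging over apices is the missing idea, and it is only averaged-in-time information --- which is all \eqref{E:L flux ineq 2} claims.
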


\begin{proof}
We begin with \eqref{E:L flux ineq}.  If $u$ where $C^2$, then we could apply the divergence theorem on the frustrum $F=\{ (t,x): t_0<t<t_1 \text{ and } |x|<t\}$ to obtain
\begin{align}
&\int_{t_0}^{t_1} \!\!\! \int_{|x|<t} \! \tfrac{p(d-1)-4}{2(p+2)} |u(t,x)|^{p+2} + m^2 |u(t,x)|^2 \, dx\,dt  \label{E:L div thm} \\
{}={}& \int_{|x| < t_1} \mcl^0(t_1,x) \,dx - \int_{|x| < t_0} \mcl^0(t_0,x) \,dx + \int_{t_0}^{t_1} \!\!\! \int_{|x|=t} \vec\mcl\cdot\tfrac{x}{|x|} - \mcl^0 \ dS(x)\,dt. \notag
\end{align}
Here, $dS$ denotes surface measure on the sphere $\{|x|=t\}$, or, equivalently, $(d-1)$-dimensional Hausdorff measure.  Note that although $(-1,x/|x|)$ is not
a unit vector, this is compensated for by the fact that $dS(x)\,dt$ is $2^{-1/2}$ times $d$-dimensional surface measure on the cone.

Thus, for $u\in C^2$ the inequality \eqref{E:L flux ineq} follows directly from \eqref{E:L div thm} by neglecting the manifestly sign-definite term
\begin{equation}\label{E:L bndry}
\int_{t_0}^{t_1} \!\!\! \int_{|x|=t} \vec\mcl\;\!\cdot\tfrac{x}{|x|} - \mcl^0 \ dS(x)\,dt =
    - \int_{t_0}^{t_1} \!\!\! \int_{|x|=t} \tfrac1t\bigl[x\cdot\!\nabla u + t u_t + \tfrac{d-1}{2} u\bigr]^2  \, dS(x)\,dt.
\end{equation}

To make this argument rigorous when $(u,u_t)$ is merely in $H^1_x\times L^2_x$, one can use the integration by parts technique of the previous lemma.  This time,
one chooses $\phi$ to be a mollified version of the characteristic function of the frustrum $F$.

We turn now to \eqref{E:L flux ineq 2}.  Again we simplify the presentation by assuming that the solution is $C^2$.  In the conformal case, the coefficient
of the potential energy term in \eqref{E:L div thm} is zero, so we rely instead on the positivity of \eqref{E:L bndry}.  Applying
the dilation identity \eqref{E:L div thm} to $u(t+s,x+y)$ with fixed $0<s<T$ and $|y|<s$, and using the fact that
\begin{align*}
\lim_{t\searrow s} \int_{|x-y|<t-s} \tfrac{t-s}{p+2} |u(t,x)|^{p+2}\,dx = 0 \quad \text{for all $0<s<T$},
\end{align*}
by the definition of a strong solution, we obtain
\begin{align*}
\int_s^T \!\!\! \int_{|x-y|=t-s} \bigl[(x-y)\cdot\!\nabla u(t,x) + (t-s) u_t(t,x) + \tfrac{d-1}{2} u(t,x)\bigr]^2  \frac{dS(x)\,dt}{t-s}
   \lesssim 1,
\end{align*}
where the implicit constant depends on $T$ and the $H^1_x\times L^2_x$ norm of $(u(T),u_t(T))$ on the ball $\{|x|<T\}$.
We will deduce \eqref{E:L flux ineq 2} by integrating this over all choices of $(s,y)$ in the region
\begin{equation}\label{sy restr}
 R(\alpha):= \bigl\{ (s,y) : (1-\alpha)t_0 < s+|y| < (1+\alpha)^2 t_0 \ \text{and} \ |y| < s \bigr\}.
\end{equation}
As $R(\alpha)$ has volume $O(\alpha t_0^{d+1})$, we deduce
\begin{align*}
\iint_{R(\alpha)}\! \int_s^T \!\!\! \int_{|x-y|=t-s} \bigl[(x-y)\cdot\!\nabla u + (t-s) u_t + \tfrac{d-1}{2} u\bigr]^2  \frac{dS(x)\,dt}{t-s} \,dy\,ds
  \lesssim \alpha t_0^{d+1}.
\end{align*}
Next we replace the variable $x$ by $\omega\in S^{d-1}$ via $x=y+(t-s)\omega$ and then change variables a second time from $y$ to $x$ via $y=x-(t-s)\omega$.
This yields
\begin{align*}
\int\!\!\!\int\!\!\!\int\!\!\!\int_{\Omega(\alpha)} \bigl[(t-s) \bigl(\omega\cdot\!\nabla u(t,x) &+ u_t(t,x)\bigr) + \tfrac{d-1}{2} u(t,x)\bigr]^2 (t-s)^{d-2} \,ds\,dS(\omega)\,dx\,dt\\
&\lesssim \alpha t_0^{d+1},
\end{align*}
where the region of integration is
$$
\Omega(\alpha) := \bigl\{ (s,\omega,x,t) : \omega\in S^{d-1}, \ 0< s < t < T, \ \text{and} \ \bigl(s,x-(t-s)\omega\bigr)\in R(\alpha) \bigr\}.
$$
To find a lower bound for this integral, we replace $\Omega(\alpha)$ by a smaller region, namely,
$$
\tilde\Omega(\alpha):= \{ (s,\omega,x,t) : \omega\in S^{d-1}, \ |x|<\alpha t, \ t_0<t<(1+\alpha)t_0, \ 0 < t-s < \tfrac{t-|x|}2  \}.
$$

Verifying $\tilde\Omega(\alpha)\subseteq \Omega(\alpha)$ rests on two simple observations.  First,
$$
|x-(t-s)\omega| < s \iff 2\bigl(t-x\cdot\omega\bigr)(t-s) < t^2 -|x|^2
$$
and so $0<t-s< \tfrac{t-|x|}{2}$ implies $|x-(t-s)\omega| < s$ whenever $|x|< t$. The second observation is that for $|x|< \alpha t$ and $t_0<t<(1+\alpha)t_0$,
$$
s + |x - (t-s)\omega| \in \bigl[t-|x|, t+|x|\bigr] \subseteq \bigl((1-\alpha)t_0, (1+\alpha)^2t_0\bigr).
$$

The decoupling of variables that is built into the structure of $\tilde\Omega(\alpha)$ makes the integral very easy to evaluate.  Indeed, freezing $t$ and $x$
for a moment we have
\begin{align*}
\frac1{|S^{d-1}|} \int_{S^{d-1}} &\int_{(t+|x|)/2}^t \bigl[(t-s) \bigl(\omega\cdot\!\nabla u + u_t\bigr) + \tfrac{d-1}{2} u\bigr]^2 (t-s)^{d-2} \,ds\,dS(\omega) \\
&= \tfrac{1}{d+1} \bigl(\tfrac{t-|x|}{2}\bigr)^{d+1} \bigl[ u_t^2 + \tfrac1d |\nabla u|^2\bigr]
    + \tfrac{d-1}{d} \bigl(\tfrac{t-|x|}{2}\bigr)^{d} u u_t
    + \tfrac{d-1}{4} \bigl(\tfrac{t-|x|}{2}\bigr)^{d-1} u^2.
\end{align*}
Thus the desired estimate \eqref{E:L flux ineq 2} follows easily by restoring the integrals over $t$ and $x$ and by noting that the
quadratic form
$$
\tfrac{1}{d+1} X^2 + \tfrac{d-1}{d} XY + \tfrac{d-1}{4} Y^2
$$
has negative discriminant (and so is positive definite).
\end{proof}

Notice that the LHS\eqref{E:L div thm} is sign-definite for any $p\geq \frac{4}{d-1}$, thus providing a Lyapunov functional in that case.  More precisely,
Lemma~\ref{L:dilat id} shows that
\begin{equation} \label{E:L(t)}
L(t) := \int_{|x| < t} \mcl^0(t,x) \,dx
\end{equation}
is an increasing function of time.  Moreover, the expansion \eqref{E:frak l0} shows that this Lyapunov functional has good coercivity properties.  Specializing to
the conformally invariant case $m=0$ and $p=\frac{4}{d-1}$ we find precisely the Lyapunov functional used by Merle and Zaag in \cite{MerleZaagMA} in their treatment
of this case (cf. Lemma~21 in \cite{MerleZaagMA}).  This is not immediately apparent because Merle and Zaag use similarity variables
\begin{equation}\label{E:ss vars}
w(s,y) := t^{-2/p} u(t,x) \qtq{with} y := x/t \qtq{and} s := \log(t),
\end{equation}
as advocated in earlier work \cite{GigaKohn:Indiana} of Giga and Kohn on the semilinear heat equation.  In particular, the connection to the dilation identity
does not seem to have been noted before.

In Section~\ref{S:conf blow}, we will revisit the work of Merle and Zaag on the conformally invariant wave equation in the course of discussing analogous
results for Klein--Gordon.  The principal deviation from their argument is the use of the estimate \eqref{E:L flux ineq 2} appearing in Lemma~\ref{L:dilat id},
which should be compared with Proposition~2.4 in \cite{MerleZaagMA} and Proposition~4.2 in \cite{MerleZaagIMRN}.  Like their estimates, \eqref{E:L flux ineq 2}
was proved by averaging the identity \eqref{E:L div thm}.  Here we see one advantage to working in usual coordinates, namely, it makes it clear how to perform
this averaging so as to obtain control over all directional derivatives.  More specifically, the region $R(\alpha)$ appearing in \eqref{sy restr} was
chosen to contain all spacetime points whose future light cones intersect the region of $(t,x)$ integration appearing in LHS\eqref{E:L flux ineq 2}.

\begin{lemma}\label{L:L>0}
Let $d \geq 2$, $m \in [0,1]$, and $p = \frac4{d-2s_c}$ with $\tfrac12 \leq s_c < 1$.  If $u$ is a strong solution to \eqref{E:eqn} in the light cone
$\{(t,x):0 < t \leq T, |x|<t\}$, then $L(t)\geq 0$ for all $0 < t \leq T$.
\end{lemma}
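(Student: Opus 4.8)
The claim is that $L(t)\geq 0$ in the (super-)conformal regime $\tfrac12\le s_c<1$, i.e. $p\geq \tfrac4{d-1}$. My strategy is to use the monotonicity of $L(t)$ from Lemma~\ref{L:dilat id} (specifically the inequality \eqref{E:L flux ineq}, which requires exactly $p\geq\tfrac4{d-1}$) to reduce the claim to showing $L(t)\geq 0$ for $t$ small, and then to show $\liminf_{t\downarrow 0} L(t)\geq 0$. Since $L$ is increasing on $(0,T]$, it suffices to prove that $L(t)$ cannot be bounded away from $0$ from below as $t\downarrow 0$; in fact I will show $L(t)\to$ a nonnegative limit, or at least $\liminf_{t\downarrow 0}L(t)\ge 0$.

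The main work is therefore the small-time analysis of $L(t)=\int_{|x|<t}\mcl^0(t,x)\,dx$. I would use the explicit expansion \eqref{E:frak l0}: the terms $\tfrac1{2t}|x\cdot\nabla u+tu_t+\tfrac{d-1}2 u|^2$, $\tfrac{d^2-1}{8t}u^2$, and $t\tfrac{m^2}2 u^2$ are manifestly nonnegative, and $\tfrac t2(|\nabla u|^2-|\tfrac xt\cdot\nabla u|^2)=\tfrac t2|\nabslash u|^2\ge 0$ as well, so the only term of a bad sign is $-\tfrac t{p+2}|u|^{p+2}$. Hence
$$
L(t)\ge -\frac{t}{p+2}\int_{|x|<t}|u(t,x)|^{p+2}\,dx.
$$
So it is enough to show that $t\int_{|x|<t}|u(t,x)|^{p+2}\,dx\to 0$ as $t\downarrow 0$. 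I would get this from the definition of a strong solution (Definition~\ref{D:solution}): the hypothesis $u\in C_{t,\loc}H^1_x\cap L^{\frac{p(d+1)}2}_\loc$ on the cone, together with the Sobolev embedding on balls (Lemma~\ref{L:Sob domain}) and Gagliardo--Nirenberg (Lemma~\ref{L:GN}), controls $\int_{|x|<t}|u|^{p+2}$ by $\|u(t)\|_{H^1_x(|x|<t)}^{p+2}$ uniformly for $t$ near $0$ (using $p+2\le \tfrac{2d}{d-2}$ when $d\ge 3$ and $p+2<\infty$ when $d=2$), which is bounded as $t\downarrow 0$; the extra factor of $t$ then forces the product to vanish. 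Actually the cleanest route uses the strong-solution hypothesis directly: as in Lemma~\ref{L:dilat id}, $\lim_{t\downarrow t_0}\int_{|x-y|<t-t_0}\tfrac{t-t_0}{p+2}|u(t,x)|^{p+2}\,dx=0$, and the special case $t_0=0$, $y=0$ is precisely the statement needed — but here $0$ is a limit point of the cone rather than an interior point, so I would instead argue directly: for $0<t<T/2$, H\"older on the ball $\{|x|<t\}$ gives
$$
\int_{|x|<t}|u(t,x)|^{p+2}\,dx\le |B_t|^{1-\frac{p+2}{q}}\|u(t)\|_{L^q_x(|x|<t)}^{p+2}
$$
for a suitable $q$, and combined with $\|u(t)\|_{L^q_x(|x|<t)}\lesssim\|u(t)\|_{H^1_x(|x|<t)}\lesssim 1$ this gives $\int_{|x|<t}|u|^{p+2}\lesssim t^{d(1-\frac{p+2}q)}$, so $t\cdot\int_{|x|<t}|u|^{p+2}\to 0$.

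Combining: $\liminf_{t\downarrow 0}L(t)\ge 0$, and since $L$ is nondecreasing on $(0,T]$ by \eqref{E:L flux ineq} (valid since $p\geq\tfrac4{d-1}$, so that the spacetime integrand $\tfrac{p(d-1)-4}{2(p+2)}|u|^{p+2}+m^2|u|^2$ is nonnegative), we conclude $L(t)\ge 0$ for all $0<t\le T$. The one genuinely delicate point is justifying the limit $\liminf_{t\downarrow 0}L(t)\ge 0$ at the low regularity $(u,u_t)\in H^1_x\times L^2_x$: one must make sure each nonnegative term in \eqref{E:frak l0} is a bona fide (possibly $+\infty$) integral and that the lower bound $L(t)\ge -\tfrac t{p+2}\int|u|^{p+2}$ holds in this generality. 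This follows because $\mcl^0$ differs from $\mcd^0$ by a spatial divergence (cf. \eqref{E:frak l}), $\mcd^0\in L^1_\loc$ on the cone as noted in the proof of Lemma~\ref{L:E flux}, and the boundary contributions from the divergence term vanish since $u^2\,|x|/t$ is continuous in $L^1$ on compact time subintervals and the weight $(t^2-|x|^2)/t$ vanishes on $|x|=t$ — so the expansion \eqref{E:frak l0} is valid as an identity of $L^1_\loc$ functions, and the pointwise sign of each of its terms can be read off directly.
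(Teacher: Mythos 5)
There is a genuine gap. Your strategy is to establish $L(t)\geq 0$ by taking $t\downarrow 0$: you claim $L(t)\gtrsim -\tfrac{t}{p+2}\int_{|x|<t}|u|^{p+2}\,dx$ and that the right-hand side tends to zero as $t\downarrow 0$ because ``$\|u(t)\|_{H^1_x(|x|<t)}\lesssim 1$ as $t\downarrow 0$''. But this bound is not a consequence of the hypotheses. Being a strong solution on the cone $\{0<t\leq T,\ |x|<t\}$ means, after the time reversal $v(s,x)=u(T-s,x)$, that $v$ lies in $C_{t,\loc}[H^1_x\times L^2_x]$ on $\Gamma(T,0)$, where ``$\loc$'' is with respect to compact subintervals of $[0,T)$; translated back, $(u,u_t)$ is continuous in $H^1_x\times L^2_x$ only on compact subintervals of $(0,T]$. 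There is therefore no a priori control as $t\downarrow 0$, and indeed the vertex $(0,0)$ is precisely the point at which the solution is expected to blow up. For a Kichenassamy-type blowup profile $u(t,x)\sim t^{-2/p}$ near the vertex one has $t\int_{|x|<t}|u|^{p+2}\,dx\sim t^{2s_c-1}$, which does not even tend to zero when $s_c=\tfrac12$ (the conformal case included in the lemma). So the crucial small-time step of your argument fails.

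The paper avoids taking $t\downarrow 0$ entirely. It argues by contradiction: suppose $L(t_0)<0$. Consider the translated functional $L_\delta(t):=\int_{|x|<t-\delta}\mcl^0_{u^\delta}(t-\delta,x)\,dx$ attached to the sub-cone with apex $(\delta,0)$, where $0<\delta<t_0$. Dominated convergence (using \eqref{E:frak l0} and the strong-solution hypothesis, which is perfectly good on $[\delta',T]$ for any $\delta'>0$) shows $L_\delta(t_0)\to L(t_0)$ as $\delta\downarrow 0$, so $L_\delta(t_0)<0$ for some fixed small $\delta>0$. The functional $L_\delta$ is increasing in $t$ by Lemma~\ref{L:dilat id} (applied to the shifted solution), so $\lim_{t\searrow\delta}L_\delta(t)\leq L_\delta(t_0)<0$. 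But now the limit $t\searrow\delta$ is to an interior point of the cone where $u$ is continuous in $H^1_x$ (hence uniformly bounded in $L^{p+2}_x(|x|<t)$ for $t\in[\delta,T]$ by Lemma~\ref{L:Sob domain}), and the lower bound $L_\delta(t)\geq -\int_{|x|<t-\delta}\tfrac{t-\delta}{p+2}|u(t)|^{p+2}\,dx$ then forces $\lim_{t\searrow\delta}L_\delta(t)\geq 0$, a contradiction. In short: replace your limit at the singular vertex by a limit at a shifted, regular apex. This is the step your proof is missing.
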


\begin{proof}  The following argument appears also in \cite{AntoniniMerle}.  Suppose by way of contradiction that $L(t_0)<0$ for some $t_0 \in (0,T]$. By the dominated convergence theorem, \eqref{E:frak l0}, and our assumption that $u$ is a strong solution, there exists $0<\delta< t_0$ such that
\begin{equation} \label{E:L delta}
\begin{aligned}
L_\delta(t) := \int_{|x| < t-\delta}& \tfrac1{2(t-\delta)}\bigl|x\!\;\!\cdot\!\nabla u + (t-\delta) u_t + \tfrac{d-1}{2} u\bigr|^2 + \tfrac{t-\delta}2\bigl(|\nabla u|^2 - |\tfrac{x}{t-\delta} \cdot \nabla u|^2\bigr)\\
&\qquad {}     - \tfrac{t-\delta}{p+2}|u|^{p+2} + \tfrac{d^2-1}{8(t-\delta)} u^2  + (t-\delta) \tfrac{m^2}{2} u^2\, dx
\end{aligned}
\end{equation}
is negative at time $t=t_0$.

Letting $\mcl_u^0$ denote the quantity in \eqref{E:frak l0} and $u^\delta(t) := u(t+\delta)$, we may write
$$
L_\delta(t) = \int_{|x| < t-\delta} \mcl^0_{u^\delta}(t-\delta,x)\, dx.
$$
As $u^\delta$ is a strong solution in the light cone $\{(t,x):-\delta < t \leq T-\delta, |x|<t+\delta\}$, Lemma~\ref{L:dilat id} implies that $L_\delta$ is an increasing function of time.  As $L_\delta(t_0)<0$ and $0<\delta <t_0$, we deduce that
\begin{equation} \label{E:lim Ldelta neg}
\lim_{t \searrow \delta} L_\delta(t) < 0.
\end{equation}

On the other hand, \eqref{E:L delta} gives
\begin{equation} \label{E:Ldelta lb}
L_\delta(t) \geq -\int_{|x| < t} \tfrac{t-\delta}{p+2} |u(t,x)|^{p+2}\, dx.
\end{equation}
As $u$ is a strong solution, Lemma~\ref{L:Sob domain} implies that $\|u(t)\|_{L^{p+2}(|x|<t)}$ is uniformly bounded for $t \in [\delta,T]$; the bound may depend on $\delta$, but this is irrelevant.  Thus, by \eqref{E:Ldelta lb},
$$
\lim_{t \searrow \delta} L_\delta(t) \geq 0,
$$
contradicting \eqref{E:lim Ldelta neg}.  This completes the proof of the lemma.
\end{proof}

In the next section, we will see that our results in the super-conformal case (i.e., when $p > \frac{4}{d-1}$) are less complete than for the conformal and sub-conformal cases.
Without going into any details of the analysis, we can already explain why this happens: scaling.  Recall from the introduction that when $m=0$, the set of solutions to
\eqref{E:eqn} is invariant under the scaling $u(t,x)\mapsto u^\lambda(t,x) := \lambda^{2/p} u(\lambda t,\lambda x)$.  Recall also that the critical regularity $s_c$ is determined
by invariance under this scaling, which leads to $s_c = \frac d2 - \frac2p$.  Note that it is reasonable to neglect the mass term when computing the scaling of the equation, on
account of it being subcritical when compared with the other terms.

We can apply the same reasoning to the conservation of energy to see that it has $\dot H^1_x$ scaling, at least in the cases when $p\leq4/(d-2)$, which includes those discussed in
this paper.  When $p>4/(d-2)$ the derivative terms in the energy are subcritical relative to the potential energy, which means that it is more reasonable to assert that the energy
has the scaling of $\dot H^s_x$ with $s=pd/[2(p+2)]$.  (For such $p$, this $s$ is less than $s_c$, so the equation is supercritical relative to both terms in the energy.)

The dilation identity has $\dot H_x^{1/2}$ scaling; notice, for example, that the components of $\mcd$ resemble those of $\mce$ multiplied by length (or time, which has the same
dimensionality).  By comparison, the conformal energy scales as $L^2_x$ and this is why it is inferior for our purposes.  Indeed, experience has shown that after coercivity, the
utility of a conservation/monotonicity law is dictated by its proximity to critical scaling.  Thus we obtain optimal results when $s_c=1/2$, but only weaker results at higher
critical regularity.

When $p<4/(d-1)$, that is, when $s_c < 1/2$, the equation is subcritical relative to the dilation identity.  As we are working on a finite time-interval, this is a favourable
situation.  However, in this case the identity is no longer coercive, which is very bad news.

As the dilation identity is local in space, we can produce a whole family of identities (with lower scaling regularity) by averaging translates.  While the previously neglected
coercivity \eqref{E:L bndry} will now produce an additional positive volume integral term, it is far from obvious that coercivity can be restored.  Nevertheless, Antonini and
Merle \cite{AntoniniMerle} demonstrated that there is a Lyapunov functional when $p<4/(d-1)$.  In the limit $p\nearrow 4/(d-1)$, one recovers the functional used in the subsequent
paper \cite{MerleZaagMA}.  Given the connection of this limiting case to the dilation identity (the topic of Lemma~\ref{L:dilat id}), one would expect to find a connection for all
$p<4/(d-1)$.  Our next task is to explicate this connection.  To do so, we need to begin with a slight detour.

Let us consider \emph{complex}-valued solutions to \eqref{E:eqn} for a moment.  In this case we pick up an additional symmetry, namely, phase rotation invariance; the class of solutions is invariant under $u(t,x)\mapsto e^{i\theta}u(t,x)$.  This begets the law of charge conservation:  If  $\mcq^0 = \bar u u_t$ and $\vec\mcq = - \bar u \nabla u$, then
\begin{gather}\label{E:m charge id}
 \partial_t \mcq^0 + \nabla\! \cdot\!\;\! \vec\mcq = |u_t|^2 - |\nabla u|^2 - m^2 |u|^2 + |u|^{p+2}.
\end{gather}
Strictly speaking, charge conservation corresponds to the imaginary part of this identity, for which the right-hand side vanishes.  (For real-valued solutions, this is just $0=0$.)  The real part of \eqref{E:m charge id} is a non-trivial identity, even in the case of real-valued solutions to \eqref{E:eqn}, although it is no longer a true conservation law.  Like the dilation identity, this law has $\dot H_x^{1/2}$ scaling.

Although it is incidental to the main themes of this section, let us pause to observe that the charge identity underpins the virial theorem for this system.  Recall that the virial theorem (cf. \cite{Clausius} or \cite[\S 10]{LandauLif1}) shows the following: For a mechanical system whose potential energy is a homogenous function of the coordinates (of $u$ in our case), the time-averaged potential and kinetic energies are in the proportion dictated by the homogeneity.  The proof extends immediately to the case of potential energies that are a sum of terms with different homogeneities, as is the case for our equation:

\begin{lemma}[Virial identity]\label{L:EnEqPart}
Let $u:[0,\infty)\times\R^d\to\R$ be a global strong solution to \eqref{E:eqn} with $u \in L^\infty_t H^1_x$ and  $u_t\in L^\infty_t L^2_x$.  Then
\begin{align*}
\lim_{T\to\infty}\frac1T \int_0^T \!\!\! \int_{\R^d} \! & |\nabla u(t,x)|^2 + m^2 |u(t,x)|^2  -  |u(t,x)|^{p+2} \,dx\,dt \\
 &= \lim_{T\to\infty} \frac1T \int_0^T \!\!\! \int_{\R^d} \! |u_t(t,x)|^2 \,dx\,dt.
\end{align*}
\end{lemma}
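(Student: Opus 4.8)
The plan is to run the classical virial argument, realizing it as the spatial integral of the (real part of the) charge identity~\eqref{E:m charge id}. First I would set
\[
V(t) := \int_{\R^d} |u(t,x)|^2 \, dx = \|u(t)\|_{L^2_x}^2 ,
\]
which is finite for each $t\ge 0$ and bounded uniformly in $t$ because $u\in L^\infty_t H^1_x$. Since a strong solution has $(u,u_t)\in C_{t,\loc}[H^1_x\times L^2_x]$, the map $t\mapsto V(t)$ is $C^1$ with
\[
V'(t) = 2\int_{\R^d} u(t,x)\, u_t(t,x)\, dx ,
\]
and the Cauchy--Schwarz inequality gives the uniform bound $|V'(t)| \le 2\|u\|_{L^\infty_t L^2_x}\|u_t\|_{L^\infty_t L^2_x} =: C_0 < \infty$.

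Next I would differentiate a second time: formally $\tfrac12 V''(t) = \int u_t^2\,dx + \int u\,u_{tt}\,dx$, and substituting $u_{tt} = \Delta u - m^2 u + |u|^pu$ from~\eqref{E:eqn} and integrating by parts in $x$ (equivalently, integrating the real part of~\eqref{E:m charge id} over $\R^d$, the divergence term $\int \nabla\!\cdot\!\vec\mcq\,dx$ having zero integral), one obtains
\[
\tfrac12 V''(t) = \int_{\R^d} |u_t(t,x)|^2 - |\nabla u(t,x)|^2 - m^2 |u(t,x)|^2 + |u(t,x)|^{p+2}\, dx .
\]
Each term on the right is finite and bounded uniformly in $t$: the first three directly from the hypotheses, and the last by the Gagliardo--Nirenberg inequality of Lemma~\ref{L:GN} (applicable since $p\le\tfrac4{d-2}$), which bounds $\|u(t)\|_{L^{p+2}_x}^{p+2}$ by a power of $\|u(t)\|_{L^2_x}$ times a power of $\|\nabla u(t)\|_{L^2_x}$. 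Hence $V''\in L^\infty_t$, so $V'$ is Lipschitz and the fundamental theorem of calculus applies.

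Combining the two identities, for every $T>0$ we have
\[
\frac1T\int_0^T\!\!\int_{\R^d} |u_t|^2 - |\nabla u|^2 - m^2|u|^2 + |u|^{p+2}\, dx\, dt = \frac{V'(T) - V'(0)}{2T} ,
\]
and the right-hand side tends to $0$ as $T\to\infty$ since $|V'(T)-V'(0)|\le 2C_0$. Rearranging gives the claimed identity, to be read as the statement that the difference of the two Ces\`aro averages tends to zero; in particular one side converges if and only if the other does, and then the limits agree. The main obstacle is not the computation but its rigorous justification at the regularity of a strong solution: $u_{tt}(t)$ is a priori only a distribution, so one cannot pair~\eqref{E:eqn} against $u(t)$ naively to obtain the formula for $V''$. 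The hard part will therefore be the usual regularization-in-time argument (or an argument based on differentiating the Duhamel formula~\eqref{E:Duhamel} and using the strong-solution bounds) needed to make the displayed expression for $V''$ legitimate; once this is in place, every other step above is elementary.
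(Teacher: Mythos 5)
Your proposal is correct and is essentially the same argument as the paper's: your $V'(t)$ and $V''(t)$ are, respectively, $2\int_{\R^d}\mcq^0(t,x)\,dx$ and its time derivative, so integrating $\tfrac12 V''$ over $[0,T]$ is exactly the paper's step of integrating the charge identity \eqref{E:m charge id} over the spacetime slab $[0,T]\times\R^d$ (the spatial divergence $\nabla\cdot\vec\mcq$ contributing nothing), after which one divides by $T$ and uses that the boundary term $\tfrac12(V'(T)-V'(0))=\int_{\R^d}\bigl[\mcq^0(T,x)-\mcq^0(0,x)\bigr]\,dx$ is $O(1)$. The paper likewise treats the conclusion as the statement that the difference of Ces\`aro means tends to zero, and, like you, does not spell out the regularization needed to make the second differentiation rigorous at the regularity of a strong solution.
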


\begin{remark}
In the lemma we assume that $\|(u(t),u_t(t))\|_{H^1 \times L^2}$ is bounded; it suffices that this norm is merely $o(t)$, as will become immediately apparent in the proof.
\end{remark}

\begin{proof}
Integrating \eqref{E:m charge id} over the space-time slab $[0,T]\times\R^d$ gives
\begin{align*}
\int_0^T \!\!\! \int_{\R^d} \!  |u_t(t,x)|^2 - |\nabla u(t,x)|^2 - m^2 |u(t,x)|^2 &+ |u(t,x)|^{p+2} \,dx\,dt \\
 &= \int_{\R^d} \! \mcq^0(T,x) - \mcq^0(0,x)  \,dx.
\end{align*}
Observing that the right-hand side is $O(1)$ by hypothesis, the result follows by dividing by $T$ and rearranging a little.
\end{proof}

We now return to the question of determining the link between the Lyapunov functional introduced in \cite{AntoniniMerle} and the dilation identity.
It is natural to try averaging the dilation identity against a Lorentz-invariant function.  For a generic tensor $\mcz$, integration by parts formally yields the following:
\begin{equation}\label{E:general z id}
\begin{aligned}
 \int_{\R^d} & \mcz^0(t_1,x)\psi(t_1^2-|x|^2)\,dx - \int_{\R^d} \mcz^0(t_0,x) \psi(t_0^2-|x|^2)\,dx \\
={}& \int_{t_0}^{t_1} \! \int_{\R^d} [\partial_t \;\! \mcz^0 + \nabla \! \cdot \!\;\! \vec \mcz \,] \psi(t^2-|x|^2) + 2 [ t \;\! \mcz^0 - x \cdot \vec \mcz \,] \psi'(t^2-|x|^2) \,dx\,dt.
\end{aligned}
\end{equation}

We consider for a moment the case when $\mcz=\mcd$.  Starting with \eqref{E:frak d}, a few elementary manipulations reveal
\begin{equation}\label{E:td0-xd}
\begin{aligned}
t \;\! \mcd^0 - x \cdot \vec \mcd &= (t^2 - |x|^2) \bigl[ \tfrac12 |\nabla u|^2 - \tfrac12 {u_t}^2 + \tfrac{m^2}2 u^2 - \tfrac1{p+2} |u|^{p+2} \bigr] \\
    &\qquad  {} + \bigl[ x\cdot\!\nabla u + t u_t + \tfrac{d-1}{2} u \bigr] \bigl[ t u_t + x\cdot\!\nabla u\bigr].
\end{aligned}
\end{equation}
Notice the similarity of the first term in square brackets to the right-hand side of the charge identity \eqref{E:m charge id}.

More generally, if $\mcz$ has $\dot H^{1/2}_x$ scaling (like $\mcd$), then to obtain a formula with critical scaling, the function $\psi(t^2-|x|^2)$ should have dimensions of length to the power $2\alpha$ with $\alpha=\frac12-s_c$.
That is, $\psi$ should be homogenous of degree $\alpha$.  By Euler's formula, this implies
\begin{equation}\label{E:psi alpha-homo}
 \psi'(t^2-|x|^2) = \alpha (t^2-|x|^2)^{-1} \psi(t^2-|x|^2).
\end{equation}

\begin{lemma}[Combined dilation + charge identity]\label{L:dilat id2}
Assume $p< 4/(d-1)$ so that $\alpha:=\frac12-s_c > 0$ and let $u$ be a strong solution to \eqref{E:eqn} in the light cone \eqref{E:basic cone}.
Let
\begin{equation*}
\mcz^0 := \tfrac1{2t}\bigl|x\!\;\!\cdot\!\nabla u + t u_t + \tfrac2p u\bigr|^2 + \tfrac{t}2\bigl(|\nabla u|^2 - |\tfrac{x}t \!\cdot\! \nabla u|^2\bigr)
    - \tfrac{t}{p+2}|u|^{p+2}  + \bigl(\tfrac{m^2t}{2} + \tfrac{p+2}{p^2 t}\bigr) u^2.
\end{equation*}
Then
\begin{align}
\int_{|x|<t_1} &  \mcz^0(t_1,x) (t_1^2-|x|^2)^\alpha \,dx - \int_{|x|<t_0} \mcz^0(t_0,x) (t_0^2-|x|^2)^\alpha\,dx \label{E:AM mono}  \\
= & \int_{t_0}^{t_1} \!\!\!\int_{|x|<t} \!\! 2 \bigl| x\cdot\!\nabla u + t u_t + \tfrac2p u \bigr|^2 \alpha (t^2-|x|^2)^{\alpha-1}
    +  m^2 u^2 (t^2-|x|^2)^\alpha dx\,dt \notag
\end{align}
for all $0<t_0<t_1\leq T$.
\end{lemma}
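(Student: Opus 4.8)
The plan is to deduce \eqref{E:AM mono} from the general pairing identity \eqref{E:general z id} — itself nothing but the divergence theorem applied to $\partial_t\bigl(\mcz^0\,\psi(t^2-|x|^2)\bigr)+\nabla\!\cdot\!\bigl(\vec\mcz\,\psi(t^2-|x|^2)\bigr)$ over a spacetime slab, using $\partial_t\psi=2t\psi'$ and $\nabla\psi=-2x\psi'$ — specialized to the Lorentz-invariant weight $\psi(\rho):=\rho^\alpha$ and to an appropriately chosen current $\mcz$. The first observation is that, since $\alpha>0$, the weight $\psi(t^2-|x|^2)=(t^2-|x|^2)^\alpha$ is supported in the solid cone $\{|x|\le t\}$ and vanishes on its lateral boundary; hence at each fixed time the fields $\mcz^0\psi$ and $\vec\mcz\,\psi$ are compactly supported in $x$, \eqref{E:general z id} carries no boundary term, and every spatial integral in it is effectively an integral over $\{|x|<t\}$. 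This is exactly what lets \eqref{E:AM mono} be phrased over the truncated cone.

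The heart of the matter is the choice of $\mcz$. One takes $\mcd$ from \eqref{E:frak d} and adds to it a multiple of the charge current $(\mcq^0,\vec\mcq)=(uu_t,\,-u\nabla u)$ coming from the real part of \eqref{E:m charge id}, a cohomologically trivial total-derivative term $(\nabla\!\cdot\!\vec f,\,-\partial_t\vec f)$ with $\vec f$ a multiple of $\tfrac xt u^2$ — the same manoeuvre that turned $\mcd$ into $\mcl$ in \eqref{E:frak l} — and one further $u^2$-term in the time-component. Collecting terms as in \eqref{E:frak l0}, one fixes the constants so that the time-component of the modified current is exactly the $\mcz^0$ in the statement; in the completed square this replaces the interior weight $\tfrac{d-1}2u$ appearing in \eqref{E:frak l0} by $\tfrac2p u$, which is meaningful precisely because $\tfrac2p-\tfrac{d-1}2=\alpha$, i.e.\ $\alpha=\tfrac12-s_c$. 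With $\mcz$ so chosen, a direct computation using the dilation identity \eqref{E:m dilation}, the real part of the charge identity \eqref{E:m charge id}, and the equation \eqref{E:eqn} establishes the single algebraic identity
\begin{align*}
(t^2-|x|^2)\bigl(\partial_t\mcz^0+\nabla\!\cdot\!\vec\mcz\bigr)&+2\alpha\bigl(t\,\mcz^0-x\cdot\vec\mcz\bigr)\\
&=(t^2-|x|^2)\,m^2u^2+2\alpha\bigl|x\cdot\nabla u+tu_t+\tfrac2p u\bigr|^2
\end{align*}
on the cone; in it the $|u|^{p+2}$ terms — coming from the coefficient $\tfrac{p(d-1)-4}{2(p+2)}$ in \eqref{E:m dilation}, from the charge term, and from the potential-energy pieces in $t\,\mcz^0-x\cdot\vec\mcz$ — cancel, as do the remaining $u_t^2$, $|\nabla u|^2$, and $u^2/t$ terms, leaving only the two displayed. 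The square $\bigl|x\cdot\nabla u+tu_t+\tfrac2p u\bigr|^2$ is the analogue of the boundary square in \eqref{E:td0-xd}--\eqref{E:L bndry} with $\tfrac{d-1}2$ replaced by $\tfrac2p$, and the total-derivative correction is engineered precisely to produce it. I expect this step — pinning down the combination of $\mcd$, the charge current, and total-derivative/lower-order terms that simultaneously reproduces the stated $\mcz^0$ and makes the displayed identity hold — to be the main obstacle; the whole thing rests on the numerology $\tfrac2p=\tfrac{d-1}2+\alpha$.

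Finally I would substitute this into \eqref{E:general z id} with $\psi(\rho)=\rho^\alpha$: since $\psi'(\rho)=\alpha\rho^{\alpha-1}=\alpha\rho^{-1}\psi(\rho)$, the right-hand integrand of \eqref{E:general z id} equals $(t^2-|x|^2)^{\alpha-1}$ times the left-hand side of the displayed algebraic identity, hence equals $m^2u^2(t^2-|x|^2)^\alpha+2\alpha(t^2-|x|^2)^{\alpha-1}\bigl|x\cdot\nabla u+tu_t+\tfrac2p u\bigr|^2$, which is RHS\eqref{E:AM mono}; the left-hand side of \eqref{E:general z id} is then LHS\eqref{E:AM mono}. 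Because a strong solution lies only in $C_{t,\loc}(H^1_x\times L^2_x)$, the integrations by parts must be justified as in the proofs of Lemmas \ref{L:E flux} and \ref{L:dilat id}: first run the argument with $\psi$ replaced by a smooth, nondecreasing approximant $\psi_\eta$ that vanishes near $\rho=0$ and increases to $\rho^\alpha$, and then let $\eta\downarrow0$ — dominated convergence handles the $\mcz^0\psi$ and $m^2u^2\psi$ terms (the quadratic densities lie in $L^1_\loc$ and $\psi_\eta$ is bounded on the frustum), while the remaining term equals $2\bigl|x\cdot\nabla u+tu_t+\tfrac2p u\bigr|^2\psi_\eta'\ge0$ and is treated by monotone convergence, its limit — a priori in $(-\infty,+\infty]$ — being forced to be finite by equality with the finite left-hand side. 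This limiting step is the only place where the superficially singular weight $(t^2-|x|^2)^{\alpha-1}$ needs any care.
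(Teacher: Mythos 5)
Your proposal follows the paper's proof almost exactly: both specialize the pairing identity \eqref{E:general z id} to a Lorentz-invariant weight homogeneous of degree $\alpha$ and take $\mcz$ to be the dilation current $\mcd$ augmented by $\alpha$ times the real charge current, a cohomologically trivial $\bigl(\nabla\!\cdot\!\vec f,\,-\partial_t\vec f\bigr)$ correction with $\vec f \propto \tfrac{x}{t}u^2$, and an extra lower-order $u^2$ term in the time component, all pivoting on the arithmetic $\tfrac2p-\tfrac{d-1}2=\alpha$. The only thing the paper supplies that you leave implicit is the precise multiplier data, namely $\mcz^0 = \mcd^0 + \alpha\,\mcq^0 + \tfrac1p\,\nabla\!\cdot\!\vec f + \tfrac{2\alpha}{p}\,g$ with $\vec f=\tfrac{x}{t}u^2$ and $g=\tfrac1t u^2$ (and correspondingly for $\vec\mcz$); your closing paragraph on the mollified-weight justification is a sound extra step that the paper leaves tacit, as in Lemma~\ref{L:dilat id}.
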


\begin{proof}
We will prove the identity in slightly greater generality, by employing \eqref{E:general z id} with a general $\psi(t^2-|x|^2)$ with $\psi$ homogeneous of degree $\alpha$ and with
\begin{equation} \label{E:mcz defn}
\begin{aligned}
\mcz^0 = \mcd^0 + \alpha \mcq^0 + \tfrac1p\nabla \! \cdot \! \vec f + \tfrac{2\alpha}{p} g \qtq{and} \vec\mcz = \vec\mcd + \alpha \vec\mcq - \tfrac1p\partial_t \vec f .
\end{aligned}
\end{equation}
Here $\mcd$ and $\mcq$ represent the tensors associated with the dilation and charge identities (as in \eqref{E:m dilation} and \eqref{E:m charge id}, respectively), while
$$
\vec f(t,x) = \frac{x}t |u(t,x)|^2 \qtq{and} g(t,x) = t^{-1} |u(t,x)|^2.
$$

A little patience is all that is required to show that this definition of $\mcz^0$ agrees with that stated in the lemma.

Notice that the $\vec f$ terms in \eqref{E:mcz defn} differ only in the prefactor from those appearing in \eqref{E:frak l};  equality of mixed partial derivatives shows that this term does not affect the divergence.  It was chosen to complete squares in a formula below.

The additional summand $g$ appearing in \eqref{E:mcz defn} has no analogue in our previous computations.  It is not clear how one might intuit the introduction of this
term; however, if one proceeds without it, then the left-over terms can be recognized as a complete derivative and so explain \emph{a posteriori} its inclusion.

With these preliminaries out of the way, applying \eqref{E:general z id} we obtain
\begin{equation*}
\begin{aligned}
\int_{\R^d} \mcz^0(t_1,x)  & \psi(t_1^2-|x|^2)\,dx   - \int_{\R^d} \mcz^0(t_0,x) \psi(t_0^2-|x|^2)\,dx  \\
&= \int_{t_0}^{t_1} \!\!\!\int_{\R^d}  2 \bigl| x\cdot\!\nabla u(t,x) + t u_t(t,x) + \tfrac2p u(t,x) \bigr|^2  \psi'(t^2-|x|^2) \,dx\,dt \\
& \qquad\qquad  + \int_{t_0}^{t_1} \!\!\! \int_{\R^d}  m^2 |u(t,x)|^2 \psi(t^2-|x|^2) \,dx\,dt.
\end{aligned}
\end{equation*}
Specializing to
\begin{equation*}
\psi(t^2- |x|^2) = \begin{cases} (t^2 - |x|^2)^\alpha &: |x| < t \\ 0 &:  |x|\geq t \end{cases}
\end{equation*}
we obtain the identity stated in the lemma.
\end{proof}

Since the integrand on the right-hand side of \eqref{E:AM mono} is positive, this identity shows the monotonicity in time of the function
\begin{equation} \label{E:Z(t)}
Z(t) := \int_{|x|<t} \mcz^0(t,x) (t^2-|x|^2)^\alpha \,dx.
\end{equation}
After switching to self-similar variables (cf. \eqref{E:ss vars}), this agrees with the Lyapunov functional introduced by Antonini and Merle in \cite{AntoniniMerle}
and used subsequently in \cite{MerleZaagAJM}.  As we have seen, this functional is less directly deducible from the dilation identity than that discussed in Lemma~\ref{L:dilat id}, which appeared in the later paper \cite{MerleZaagMA} of Merle and Zaag.  Note that formally taking the limit $p\nearrow 4/(d-1)$ in Lemma~\ref{L:dilat id2} yields Lemma~\ref{L:dilat id}.

While we find the physical meaning of these Lyapunov functionals difficult to properly understand when written in self-similar variables, the example discussed in
Lemma~\ref{L:dilat id2} makes a good case for their utility as a technique for finding such laws.  In the two key examples discussed in this section, the correct
multiplier appears after switching to similarity variables and converting the resulting equation to divergence form; compare \cite[Eqn (4)]{MerleZaagAJM} and \cite[Eqn (1.22)]{GigaKohn:CPAM}.

Next we give analogues of Lemma~\ref{L:L>0} and \eqref{E:L flux ineq 2} for the functional $Z$.

\begin{corollary}\label{C:Z id}
Let $d \geq 2$, $m \in [0,1]$, and $p = \frac4{d-2s_c}$ with $0< s_c < \tfrac12$.  If $u$ is a strong solution to \eqref{E:eqn} in the light cone
$\{(t,x):0 < t \leq T, |x|<t\}$, then $Z(t)\geq 0$ for all $0 < t \leq T$.  Moreover,
\begin{align}\label{E:smudge Z}
\!\!\int_{t_0}^{2t_0} \!\!\!\!\int_{|x|<t} (t-|x|)^{d+2-2s_c} |\nabla_{t,x} u(t,x)|^2 + (t-|x|)^{d-2s_c} |u(t,x)|^2  \,dx\,dt \lesssim t_0^{d+1}, \!
\end{align}
uniformly for $0<t_0\leq\frac12T$.
\end{corollary}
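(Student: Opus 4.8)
Both assertions of Corollary~\ref{C:Z id} are the sub-conformal counterparts of results already in hand: $Z(t)\geq 0$ plays the role of Lemma~\ref{L:L>0}, while \eqref{E:smudge Z} plays the role of \eqref{E:L flux ineq 2}. In each case the plan is to re-run the conformal argument with $\mcl^0$ replaced by $\mcz^0$ and with the monotonicity identity \eqref{E:AM mono} of Lemma~\ref{L:dilat id2} in place of \eqref{E:L flux ineq}/\eqref{E:L div thm}.

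\emph{Nonnegativity.} I would copy the proof of Lemma~\ref{L:L>0}. Inspecting the formula for $\mcz^0$ in Lemma~\ref{L:dilat id2}, every summand is manifestly $\geq 0$ except $-\tfrac{t}{p+2}|u|^{p+2}$ (for the second one uses $|x|<t$, so $|\nabla u|^2-|\tfrac xt\!\cdot\!\nabla u|^2\geq 0$; for the first and last, $t>0$). If $Z(t_0)<0$ for some $t_0\in(0,T]$, then by dominated convergence and the definition of a strong solution there is $0<\delta<t_0$ with $Z_\delta(t):=\int_{|x|<t-\delta}\mcz^0_{u^\delta}(t-\delta,x)\bigl((t-\delta)^2-|x|^2\bigr)^\alpha\,dx$ still negative at $t=t_0$, where $u^\delta(t):=u(t+\delta)$. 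Since $u^\delta$ is a strong solution in $\{0<\tau\leq T-\delta,\ |x|<\tau\}$, \eqref{E:AM mono} shows $Z_\delta$ is nondecreasing on $(\delta,T]$, so $\limsup_{t\searrow\delta}Z_\delta(t)\leq Z_\delta(t_0)<0$. On the other hand, discarding the nonnegative terms, $Z_\delta(t)\geq-\tfrac{t-\delta}{p+2}(t-\delta)^{2\alpha}\|u(t)\|_{L^{p+2}(|x|<t)}^{p+2}$, and $\|u(t)\|_{L^{p+2}(|x|<t)}$ is bounded on $[\delta,T]$ by Lemma~\ref{L:Sob domain}; letting $t\searrow\delta$ forces $\liminf_{t\searrow\delta}Z_\delta(t)\geq 0$, a contradiction.

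\emph{The smudged bound.} For fixed $(s,y)$ with $|y|<s<T$ put $v(t',x'):=u(t'+s,x'+y)$, a strong solution of \eqref{E:eqn} in $\{0<t'\leq T-s,\ |x'|<t'\}$. Applying \eqref{E:AM mono} to $v$ on $[t_0',T-s]$, dropping the $m^2$ term, using $Z_v\geq 0$ (just proved), and sending $t_0'\searrow 0$ by monotone convergence gives
\[
\int_0^{T-s}\!\!\int_{|x'|<t'}\!\bigl|x'\!\cdot\!\nabla v+t'v_{t'}+\tfrac2p v\bigr|^2(t'^2-|x'|^2)^{\alpha-1}\,dx'\,dt'\ \lesssim\ Z_v(T-s).
\]
A direct expansion of $\mcz^0$, using that $u$ is a strong solution and Lemma~\ref{L:Sob domain} for the potential-energy term, shows $Z_v(T-s)\lesssim 1$ uniformly in $(s,y)$, with constant depending only on $d$, $T$ and the $H^1_x\times L^2_x$ norm of $(u(T),u_t(T))$ on $\{|x|<T\}$ (the factors $1/t'$ in $\mcz^0$ cause no trouble, by H\"older and Lemma~\ref{L:Sob domain}). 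Undoing the translation ($t=t'+s$, $x=x'+y$) turns this into
\[
\int_s^T\!\!\int_{|x-y|<t-s}\!\bigl|(x-y)\!\cdot\!\nabla u+(t-s)u_t+\tfrac2p u\bigr|^2\bigl((t-s)^2-|x-y|^2\bigr)^{\alpha-1}\,dx\,dt\ \lesssim\ 1.
\]
Next I would integrate over $(s,y)$ in the region $R:=\{0<s,\ |y|<s,\ s+|y|<4t_0\}$, which has volume $\lesssim t_0^{d+1}$, so the right side contributes $\lesssim t_0^{d+1}$; this mirrors the averaging over $R(\alpha)$ in Lemma~\ref{L:dilat id}. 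On the left I substitute $x-y=(t-s)\omega'$ with $|\omega'|<1$ (the weight becomes $(t-s)^{2\alpha-2}(1-|\omega'|^2)^{\alpha-1}$ and $dx=(t-s)^d\,d\omega'$) and then replace $y$ by $x$; this decouples the four-fold integral exactly as the passage to $\tilde\Omega(\alpha)$ does in Lemma~\ref{L:dilat id}. Restricting to the smaller region $\tilde\Omega:=\{(s,\omega',x,t):|\omega'|<1,\ t_0<t<2t_0,\ |x|<t,\ 0<t-s<\tfrac12(t-|x|)\}$ — which lies inside the set swept out by $R$, since there $|x-(t-s)\omega'|<\tfrac{t+|x|}2<s$ — reduces us, for each fixed $(t,x)$ in the slab, to the elementary integral
\[
\int_{|\omega'|<1}\!\int_0^{\rho}\!\bigl[\tau(\omega'\!\cdot\!\nabla u(t,x)+u_t(t,x))+\tfrac2p u(t,x)\bigr]^2\tau^{d+2\alpha-2}(1-|\omega'|^2)^{\alpha-1}\,d\tau\,d\omega',\qquad \rho:=\tfrac{t-|x|}2 .
\]
Carrying out the $\tau$- and $\omega'$-integrations (using $\int_{|\omega'|<1}\omega'_j(1-|\omega'|^2)^{\alpha-1}=0$ and $\int_{|\omega'|<1}\omega'_j\omega'_k(1-|\omega'|^2)^{\alpha-1}=\delta_{jk}\mu_2$ with $0<\mu_2<\mu_0/d$, $\mu_0:=\int_{|\omega'|<1}(1-|\omega'|^2)^{\alpha-1}$) produces a positive multiple of $\rho^{d+2\alpha+1}|\nabla u(t,x)|^2$ plus the quadratic form $\tfrac{\mu_0}{d+2\alpha+1}X^2+\tfrac{4\mu_0}{p(d+2\alpha)}XY+\tfrac{4\mu_0}{p^2(d+2\alpha-1)}Y^2$ in $X:=\rho^{(d+2\alpha+1)/2}u_t(t,x)$, $Y:=\rho^{(d+2\alpha-1)/2}u(t,x)$, whose discriminant is negative since $(d+2\alpha+1)(d+2\alpha-1)=(d+2\alpha)^2-1<(d+2\alpha)^2$; hence this form is positive definite and the whole expression is $\gtrsim_{d,p}\rho^{d+2\alpha+1}|\nabla_{t,x}u(t,x)|^2+\rho^{d+2\alpha-1}|u(t,x)|^2$. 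As $2\alpha=1-2s_c$, these exponents are $d+2-2s_c$ and $d-2s_c$, and integrating the lower bound over $\{t_0<t<2t_0,\ |x|<t\}$ and comparing with the upper bound $\lesssim t_0^{d+1}$ yields exactly \eqref{E:smudge Z}.

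\emph{Main obstacle.} The delicate step is the covering/change-of-variables: one must pick $R$ large enough that its forward light cones sweep out the slab $\{t_0<t<2t_0,\ |x|<t\}$ yet pick $\tilde\Omega$ small enough both to decouple the integral and to keep every point of $\tilde\Omega$ admissible, precisely as in the treatment of $\tilde\Omega(\alpha)$ in Lemma~\ref{L:dilat id}; the attendant positive-definiteness check is the analogue of the discriminant computation there. The only other point requiring care is the uniform bound $Z_v(T-s)\lesssim 1$, which is clean because $Z_v(t')\to0$ as $t'\to0$, so no degeneration occurs when $s$ approaches $T$.
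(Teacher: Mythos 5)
Your proposal is correct and follows essentially the same approach as the paper: for nonnegativity you reproduce the regularization argument of Lemma~\ref{L:L>0} verbatim with $\mcl^0,\mcd$ replaced by $\mcz^0$ and the monotonicity coming from Lemma~\ref{L:dilat id2}; for \eqref{E:smudge Z} you apply Lemma~\ref{L:dilat id2} on translated cones, integrate over a region of apexes of volume $O(t_0^{d+1})$, restrict to the decoupled region $\{t_0<t<2t_0,\,|x|<t,\,\tfrac{t+|x|}{2}<s<t,\,|x-y|<t-s\}$, and perform the inner $(y,s)$ (equivalently $(\omega',\tau)$) integral — exactly the paper's $\Omega$ and the analogue of the $\tilde\Omega(\alpha)$ computation in the proof of \eqref{E:L flux ineq 2}. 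The only difference is that you spell out the positive-definiteness of the resulting quadratic form, which the paper leaves implicit here (having done the analogous computation in Lemma~\ref{L:dilat id}); your discriminant check and exponent bookkeeping are both correct.
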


\begin{proof}
The proof that $Z(t)\geq 0$ is identical to that of Lemma~\ref{L:L>0}.

To prove \eqref{E:smudge Z} we argue much as we did for \eqref{E:L flux ineq 2}.  First we observe that applying Lemma~\ref{L:dilat id2}
to the light cone with apex $(s,y)$ yields
$$
\int_s^T \int_{|x-y|<t-s} \bigl| (x-y)\cdot\!\nabla u + (t-s) u_t + \tfrac2p u \bigr|^2 \bigl\{(t-s)^2-|x-y|^2\bigr\}^{\alpha-1} dx\,dt \lesssim 1,
$$
where the implicit constant depends on $T$ and the $H^1_x\times L^2_x$ norm of $(u(T),u_t(T))$ on the ball $\{|x|<T\}$.  Next we integrate this inequality over the region $|y|<s<2t_0$ and so deduce
$$
\int\!\!\int\!\!\int\!\!\int \bigl| (x-y)\cdot\!\nabla u + (t-s) u_t + \tfrac2p u \bigr|^2 \bigl\{(t-s)^2-|x-y|^2\bigr\}^{\alpha-1} dx\,dt\,dy\,ds
\lesssim t_0^{d+1},
$$
where the integral is over a region which contains
$$
\Omega := \{ (s,y,t,x) : t_0<t<2t_0,\ \tfrac{t+|x|}{2} < s < t,\ |x-y| < t-s,\text{ and } |x|<t\}.
$$
Freezing $(t,x)$ and integrating out $y$ and then $s$ produces the estimate \eqref{E:smudge Z}.
\end{proof}

%
%
%
%

\section{Bounds in light cones:  The super-conformal case}\label{S:superconf blow}

In this section, we consider the super-conformal case, $\frac12 < s_c < 1$.  Little seems to be known about the behaviour of local norms for blowup solutions in this case.  In particular, the work of Merle and Zaag \cite{MerleZaagAJM, MerleZaagMA, MerleZaagIMRN} only considers the conformal and sub-conformal cases $(0 < s_c \leq \frac12$).

The majority of this section will be devoted to a proof of the following theorem:

\begin{theorem}[Mass/Energy bounds] \label{T:cone bound superc}  Let $d \geq 2$, $m \in [0,1]$, and $p = \frac4{d-2s_c}$ with $\frac12 < s_c < 1$.  If $u$ is a strong solution to \eqref{E:eqn} in the light cone $\{(t,x):0 < t \leq T, |x|<t\}$, then $u$ satisfies
\begin{equation} \label{E:cone bound superc}
\begin{aligned}
\int_0^{T} \int_{|x|<t} \bigl(1-\tfrac{|x|}{t}\bigr)^2 & |\nabla_{t,x}u(t,x)|^2 + |u(t,x)|^{p+2}\, dx\, dt \lesssim 1,
\end{aligned}
\end{equation}
as well as the pointwise in time bounds
\begin{equation} \label{E:mass bound superc}
\int_{|x|< t} |u(t,x)|^2\, dx \lesssim t^{\frac{pd}{p+4}} \qtq{and} \int_{|x|< t} |u(t,x)|^{\frac{p+4}2}\, dx \lesssim 1
\end{equation}
for all $t \in (0,T]$.  The implicit constants in \eqref{E:cone bound superc} and \eqref{E:mass bound superc} depend on $d$, $p$, $T$, $\|u(T)\|_{H^1_x(|x|<T)}$, and $\|u_t(T)\|_{L^2_x (|x|<T)}$.
\end{theorem}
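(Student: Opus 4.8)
The plan is to extract everything from the dilation Lyapunov functional $L(t)=\int_{|x|<t}\mcl^0(t,x)\,dx$ of Lemma~\ref{L:dilat id}, together with the energy flux identity of Lemma~\ref{L:E flux}; throughout, implicit constants are allowed to depend on $d$, $p$, $T$, $\|u(T)\|_{H^1_x(|x|<T)}$, and $\|u_t(T)\|_{L^2_x(|x|<T)}$. First I would record that $L$ is bounded: since $(u(T),u_t(T))\in H^1_x\times L^2_x$ on $\{|x|<T\}$ and $p+2\le\tfrac{2d}{d-2}$ (because $s_c<1$), the Sobolev inequality gives $L(T)<\infty$, and discarding the sign-indefinite potential term in \eqref{E:frak l0} yields $L(T)\lesssim1$. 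In the super-conformal regime $p>\tfrac4{d-1}$, so the coefficient $\tfrac{p(d-1)-4}{2(p+2)}$ is positive; hence Lemma~\ref{L:dilat id} makes $L$ non-decreasing and Lemma~\ref{L:L>0} makes it non-negative, so $0\le L(t)\le L(T)\lesssim1$ for all $0<t\le T$. Setting $t_1=T$ and letting $t_0\downarrow0$ in \eqref{E:L flux ineq}, and using $L(t_0)\ge0$ and $m^2|u|^2\ge0$, then immediately gives the potential half of \eqref{E:cone bound superc}: $\int_0^T\int_{|x|<t}|u(t,x)|^{p+2}\,dx\,dt\lesssim L(T)\lesssim1$.

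For the weighted derivative term in \eqref{E:cone bound superc} I would use the energy flux identity. Writing $\widetilde E(t):=\int_{|x|<t}\tfrac{t^2-|x|^2}{t}\mce^0(t,x)\,dx$ and moving the interior potential term of \eqref{E:E flux} to the other side, that identity says that the three manifestly non-negative ``flux'' terms on RHS\eqref{E:E flux} integrate to $\widetilde E(t_1)-\widetilde E(t_0)+\int_{t_0}^{t_1}\int_{|x|<t}\bigl(1+\tfrac{|x|^2}{t^2}\bigr)\bigl(\tfrac1{p+2}|u|^{p+2}-\tfrac{m^2}2u^2\bigr)\,dx\,dt$. On the cone one has $(1+\tfrac{|x|}t)^2\ge(1-\tfrac{|x|}t)^2$ and $1+\tfrac{|x|^2}{t^2}\ge1\ge(1-\tfrac{|x|}t)^2$, so those flux terms dominate $\tfrac12(1-\tfrac{|x|}t)^2|\nabla_{t,x}u|^2$; bounding $1+\tfrac{|x|^2}{t^2}\le2$ and invoking the potential bound just proved gives $\int_{t_0}^{t_1}\int_{|x|<t}(1-\tfrac{|x|}t)^2|\nabla_{t,x}u|^2\,dx\,dt\lesssim\widetilde E(t_1)-\widetilde E(t_0)+1$. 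With $t_1=T$ one has $\widetilde E(T)\lesssim1$ after discarding the negative potential term and using $\tfrac{T^2-|x|^2}T\le T$ with the data bound; and $\widetilde E(t_0)\ge-\tfrac{t_0}{p+2}\int_{|x|<t_0}|u(t_0,x)|^{p+2}\,dx$, so, since $\int_0^T\!\bigl(\int_{|x|<t}|u(t,x)|^{p+2}\,dx\bigr)dt<\infty$ forces $t\mapsto t\int_{|x|<t}|u(t,x)|^{p+2}\,dx$ to have liminf $0$ as $t\downarrow0$ (otherwise $\int_0^\delta\tfrac{dt}t$ would converge), we get $\liminf_{t\downarrow0}\widetilde E(t)\ge0$. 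Picking $t_n\downarrow0$ with $\widetilde E(t_n)\to\liminf_{t\downarrow0}\widetilde E(t)\ge0$, taking $t_0=t_n$ and $t_1=T$ above, and letting $n\to\infty$ with monotone convergence, then completes the proof of \eqref{E:cone bound superc}.

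It remains to prove the pointwise bounds \eqref{E:mass bound superc}. The $L^2$ estimate is a formal consequence of the $L^{(p+4)/2}$ estimate: since $2<\tfrac{p+4}2$, H\"older on the ball $\{|x|<t\}$ (of volume $\sim t^d$) gives $\int_{|x|<t}|u|^2\le\bigl(\int_{|x|<t}|u|^{(p+4)/2}\bigr)^{4/(p+4)}(t^d)^{p/(p+4)}\lesssim t^{pd/(p+4)}$, so the task reduces to showing $\int_{|x|<t}|u(t,x)|^{(p+4)/2}\,dx\lesssim1$. For this I would first upgrade the local information at hand: applying the divergence form \eqref{E:L div thm} of the dilation inequality to the family of future light cones with apices $(s,y)$, $|y|<s$ --- all of which lie inside the original cone --- gives a bound uniform in $(s,y)$ on the mantle flux $\int_s^T\int_{|x-y|=t-s}\tfrac1{t-s}\bigl[(x-y)\cdot\nabla u+(t-s)u_t+\tfrac{d-1}2u\bigr]^2\,dS\,dt$, and averaging this over the $(s,y)$ whose forward cones meet a thin time-layer exactly as in the proof of \eqref{E:L flux ineq 2} produces (that argument is valid verbatim here, since for $p>\tfrac4{d-1}$ the interior potential term merely adds a further non-negative contribution) the weighted localized bound $\int_{t_0}^{(1+\alpha)t_0}\int_{|x|<\alpha t}(t-|x|)^{d+1}|\nabla_{t,x}u|^2+(t-|x|)^{d-1}|u|^2\,dx\,dt\lesssim\alpha\,t_0^{d+1}$, uniformly for $0<t_0\le\tfrac12T$ and $0<\alpha\le1$. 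The final and most delicate step is to convert this time-\emph{averaged} information --- combined with the coercivity $L(t)\ge0$ (which controls $\tfrac t{p+2}\int_{|x|<t}|u(t,x)|^{p+2}\,dx$ by $\int_{|x|<t}$ of the positive part of $\mcl^0$), with the potential bound already proved, and with the second-order differential inequality for the cone mass $y(t)=\int_{|x|<t}|u(t,x)|^2\,dx$ obtained directly from \eqref{E:eqn} (whose boundary contributions on the mantle are absorbed by the flux bound above) --- into the pointwise statement $\int_{|x|<t}|u(t,x)|^{(p+4)/2}\,dx\lesssim1$. I expect this passage from averaged to pointwise control, the analogue in usual coordinates of the Merle--Zaag bootstrap but now at super-conformal scaling, to be the main obstacle, the bookkeeping of the mantle terms being the principal source of difficulty.
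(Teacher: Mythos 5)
Your proof of the spacetime bound \eqref{E:cone bound superc} is correct and follows the paper's argument: the monotonicity and nonnegativity of $L$ (Lemmas~\ref{L:dilat id} and~\ref{L:L>0}) together with $L(T)\lesssim1$ give the potential term, and the energy flux identity plus the pointwise inequalities $(1+|x|/t)^2\ge(1-|x|/t)^2$ and $1+|x|^2/t^2\ge(1-|x|/t)^2$ give the weighted gradient term. Your argument that $\liminf_{t\downarrow0}\widetilde E(t)\ge0$ (if $\liminf t\int_{|x|<t}|u|^{p+2}>0$ then $\int_0^T\int|u|^{p+2}$ diverges) is a minor variant of the paper's, which instead pigeonholes a sequence $t_n\downarrow0$ with $t_n\|u(t_n)\|_{L^{p+2}(|x|<t_n)}^{p+2}\to0$; both are fine. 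The H\"older reduction of the $L^2_x$ bound in \eqref{E:mass bound superc} to the $L^{(p+4)/2}_x$ bound is also correct.

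The pointwise $L^{(p+4)/2}_x$ bound itself, however, is not proved, and you acknowledge as much. The extension of \eqref{E:L flux ineq 2} to $p>\tfrac4{d-1}$ that you sketch is plausible (the volume term in \eqref{E:L div thm} now has a favourable sign), but it is neither needed nor used by the paper, and it still yields only time-averaged control. The ``second-order differential inequality for the cone mass $y(t)$'' targets $\|u\|_{L^2_x}^2$ rather than the $L^{(p+4)/2}_x$ norm you correctly identified as the primary object, and you supply no mechanism converting averaged control to a supremum.

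The missing idea is elementary: for $C^1$ functions $f:[1,2]\to\R$,
\begin{equation*}
\sup_{1\le\lambda\le2}|f(\lambda)|\ \le\ \int_1^2|f(\lambda)|\,d\lambda+\int_1^2|f'(\lambda)|\,d\lambda.
\end{equation*}
Apply this with $f(\lambda):=\int_{|x|<t_0}\bigl|\lambda^{\frac{d-1}2}u(\lambda t_0,\lambda x)\bigr|^{\frac{p+4}2}\,dx$, which reparametrizes $\int_{|x|<t}|u(t)|^{\frac{p+4}2}\,dx$ along scaling rays. Differentiating in $\lambda$ produces precisely the directional derivative $x\cdot\nabla u+tu_t+\tfrac{d-1}2u$, for which the expansion \eqref{E:frak l0} of $\mcl^0$, combined with $0\le L(t)\lesssim1$ and the already-established potential bound, gives
\begin{equation*}
\int_{t_0}^{2t_0}\!\!\int_{|x|<t}\bigl|x\cdot\nabla u+tu_t+\tfrac{d-1}2u\bigr|^2\,dx\,dt\lesssim t_0^2.
\end{equation*}
The term $\int_1^2|f|$ is handled by H\"older from the $L^{p+2}_{t,x}$ potential bound; the term $\int_1^2|f'|$ is handled by Cauchy--Schwarz, pairing $|u|^{(p+2)/2}$ (controlled in $L^2_{t,x}$ by the potential bound) against the directional derivative. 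Collecting powers of $t_0$ produces $t_0^{(p(d-1)-4)/(2(p+2))}$, and it is exactly the super-conformality hypothesis $p(d-1)>4$ that makes this bounded uniformly in $t_0\in(0,T/2]$. This scaling-ray maximum-function argument is the heart of the pointwise bound, and it is the step your sketch does not reach.
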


\begin{proof}
We rely on the information provided by the Lyapunov functional $L$ introduced in the previous section.  Specifically, we will make use of
Lemmas~\ref{L:dilat id}~and~\ref{L:L>0}, which assert that
\begin{equation} \label{E:L superc}
L(t) := \int_{|x| < t} \mcl^0(t,x)\, dx
\end{equation}
is a nonnegative increasing function of time.

To keep formulae within margins, we will not keep track of the specific dependence on $T$ or $(u(T),u_t(T))$ in the  estimates that follow.

We first consider \eqref{E:cone bound superc}.  Combining Lemmas~\ref{L:dilat id}~and~\ref{L:L>0}, we immediately obtain
\begin{align} \label{E:p+2 superc}
\int_0^{T} \int_{|x|<t} |u(t,x)|^{p+2}\, dx\, dt
&\lesssim L(T)\lesssim 1.
\end{align}
Thus there exists a sequence $t_n \searrow 0$ such that
$$
\lim_{n \to \infty} t_n\|u(t_n)\|_{L_x^{p+2}(|x|<t_n)}^{p+2} = 0.
$$
Therefore, if we define
$$
g(t) := \int_{|x|<t} \tfrac{t^2-|x|^2}t \mce^0(t,x)\, dx,
$$
then
$$
\liminf_{n \to \infty} g(t_n) \geq 0.
$$
Thus, combining Lemma~\ref{L:E flux} and \eqref{E:p+2 superc}, we obtain
\begin{align*}
\int_0^{T} \int_{|x| < t} &\tfrac14\bigl(1+\tfrac{|x|}t\bigr)^2\bigl[u_t(t,x)+u_r(t,x)\bigr]^2 + \tfrac14\bigl(1-\tfrac{|x|}t\bigr)^2\bigl[u_t(t,x)-u_r(t,x)\bigr]^2\\
&\qquad +\tfrac12\bigl(1+\tfrac{|x|^2}{t^2}\bigr)|\nabslash u(t,x)|^2 \, dx\, dt\\
& \leq g(T) + \int_0^{T} \int_{|x| < t} \tfrac1{p+2}\bigl(1+\tfrac{|x|^2}{t^2}\bigr)|u(t,x)|^{p+2}\, dx\, dt \lesssim 1.
\end{align*}
This bounds each of the derivatives of $u$ with respect to the frame $\nabslash$, $\partial_t + \partial_r$, and $\partial_t - \partial_r$, which spans
all possible spacetime directions.  The estimate for the last of these is the weakest, since it deteriorates near the edge of the cone, and so dictates the
form of~\eqref{E:cone bound superc}.

We turn now to \eqref{E:mass bound superc}.  Observe that the first inequality follows from the second and H\"older's inequality.  Thus, it remains to establish
the second bound in \eqref{E:mass bound superc}.  With the estimates at hand, there are several ways to proceed.  The method we present below is informed by the needs of Section~\ref{S:conf blow}; in particular, it uses only the estimates available in that case.

We first notice that, as a consequence of \eqref{E:frak l0}, \eqref{E:cone bound superc}, and the monotonicity of $L(t)$,
\begin{align}\label{t0 bound}
\int_{t_0}^{2t_0} \!\!\int_{|x|<t} \! \bigl|x\cdot \nabla u + tu_t +\tfrac{d-1}2u \bigr|^2\, dx\, dt &\leq \int_{t_0}^{2t_0} \!2tL(t)\, dt
+ \int_{t_0}^{2t_0} \!\!\int_{|x|<t} \!\tfrac{2t^2}{p+2}|u|^{p+2}\, dx\, dt\notag\\
&\lesssim t_0^2
\end{align}
for all $0<t_0\leq \frac12 T$.  Moreover, by H\"older and \eqref{E:cone bound superc}, we can control the desired quantity but only on average in time:
\begin{align}\label{t0 bound q}
\int_{t_0}^{2t_0}\!\!\! \int_{|x|<t} |u(t,x)|^{\frac{p+4}2}\, dx\, dt
&\lesssim t_0^{\frac{p(d+1)}{2(p+2)}}\biggl[\int_{t_0}^{2t_0}\!\!\! \int_{|x|<t}|u(t,x)|^{p+2}\, dx\, dt\biggr]^{\frac{p+4}{2(p+2)}}\notag\\
&\lesssim t_0^{\frac{p(d+1)}{2(p+2)}}.
\end{align}

Next we will prove that the estimate \eqref{t0 bound q} together with the $L^2_{t,x}$ control \eqref{t0 bound} over the directional derivatives of $u$ inside the light cone imply the desired pointwise in time estimates.

Observe that for a $C^1$ function $f:[1,2]\to \R$,
\begin{align*}
\sup_{1\leq \lambda\leq 2} |f(\lambda)| &\leq \int_1^2 |f(\lambda)| \,d\lambda + \int _1^2 |f'(\lambda)|\,d\lambda.
\end{align*}
The second part of \eqref{E:mass bound superc} follows by applying this to $f(\lambda):= \int_{|x|<t_0} |\lambda^{\frac{d-1}2} u(\lambda t_0, \lambda x)|^{\frac{p+4}2} \, dx$ and using Cauchy--Schwarz, \eqref{E:cone bound superc}, \eqref{t0 bound}, and \eqref{t0 bound q}.  Indeed,
\begin{align*}
&\sup_{t_0\leq t\leq 2t_0} \int_{|x|<t} |u(t,x)|^{\frac{p+4}2}\, dx\\
&\quad \lesssim \sup_{1\leq \lambda\leq 2} |f(\lambda)|\\
&\quad\lesssim \frac1{t_0}\int_{t_0}^{2t_0} \!\!\!\int_{|x|<t} |u|^{\frac{p+4}2}\, dx\, dt\\
&\qquad +\frac1{t_0} \biggl(\int_{t_0}^{2t_0} \!\!\!\int_{|x|<t}\! |u|^{p+2}\, dx\, dt\biggr)^{\!1/2}
\biggl(\int_{t_0}^{2t_0} \!\!\!\int_{|x|<t}\bigl|x\cdot \nabla u + tu_t +\tfrac{d-1}2u \bigr|^2dx\, dt\biggr)^{\!1/2}\\
&\quad \lesssim t_0^{\frac{p(d-1)-4}{2(p+2)}} + 1\\
&\quad \lesssim_T 1.
\end{align*}
The last inequality relies on the super-conformality hypothesis, namely, $p(d-1)>4$.

This completes the proof of Theorem~\ref{T:cone bound superc}.
\end{proof}

We conclude this section with a corollary of Theorem~\ref{T:cone bound superc}.

\begin{corollary} \label{C:cone bound superc}
Let $d \geq 2$, $m \in [0,1]$, and $\frac12 < s_c < 1$.  Set $p=\frac4{d-2s_c}$.  Assume that there exists $0 < \eps \leq 1$ such that $u$ is a strong solution to \eqref{E:eqn} in the cone $\{(t,x):0 < t \leq T, |x|<(1+\eps)t\}$.  Then for each $0 < t_0 \leq \frac T2$,
\begin{equation} \label{E:dyadic bound superc}
\begin{aligned}
&\int_{t_0}^{2t_0} \int_{|x|<t} |\nabla_{t,x} u(t,x)|^2\, dx\, dt \lesssim 1,
\end{aligned}
\end{equation}
with the implicit constant depending on $d$, $p$, $\eps$, $T$, and the $H^1_x\times L_x^2$ norm of $(u(T), u_t(T))$ on the ball $\{|x|<T\}$.
\end{corollary}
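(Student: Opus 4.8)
The plan is to combine the weighted estimate \eqref{E:cone bound superc} of Theorem~\ref{T:cone bound superc}, applied both to the central cone and to a family of translated cones, using the extra aperture $\eps$ to compensate for the degeneracy of the weight $(1-|x|/t)^2$ near the lateral boundary.

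First, since $u$ is a strong solution in $\{0<t\le T,\ |x|<(1+\eps)t\}$ and hence in the sub-cone $\{0<t\le T,\ |x|<t\}$, Theorem~\ref{T:cone bound superc} gives
\begin{equation*}
\int_0^{T}\int_{|x|<t}\Bigl(1-\tfrac{|x|}{t}\Bigr)^2|\nabla_{t,x}u(t,x)|^2 + |u(t,x)|^{p+2}\,dx\,dt \lesssim 1 ,
\end{equation*}
with the implicit constant having the dependence claimed in the corollary.  On the portion $\{(t,x): t_0<t<2t_0,\ |x|\le t/2\}$ the weight is $\ge\tfrac14$, so this already yields $\int_{t_0}^{2t_0}\int_{|x|\le t/2}|\nabla_{t,x}u|^2\,dx\,dt\lesssim 1$.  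It remains to treat the outer region $\{(t,x): t_0<t<2t_0,\ t/2\le|x|<t\}$, where the weight degenerates.

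For the outer region I would use translated cones.  Because spacetime translation is a symmetry of \eqref{E:eqn} and of all the results of Section~\ref{S:local theory} (as observed at the start of Section~\ref{S:Lyapunov}), Theorem~\ref{T:cone bound superc} applies verbatim to any cone $\Gamma_{s,y}:=\{(t',x'): s<t'\le T,\ |x'-y|<t'-s\}$ on which $u$ is a strong solution: passing to $\tilde u(\tau,\xi)=u(\tau+s,\xi+y)$ one gets
\begin{equation*}
\int_s^{T}\int_{|x-y|<t-s}\Bigl(1-\tfrac{|x-y|}{t-s}\Bigr)^2|\nabla_{t,x}u(t,x)|^2 + |u(t,x)|^{p+2}\,dx\,dt \lesssim 1 ,
\end{equation*}
the implicit constant depending on $d,p$, on $T-s$, and on the $H^1_x\times L^2_x$ norm of $(u(T),u_t(T))$ over $\{|x-y|<T-s\}$.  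The key geometric point is that the outer region can be covered by finitely many cones $\Gamma_{s_i,y_i}$, $i=1,\dots,N$, with $N=N(d,\eps)$ \emph{independent of $t_0$}, chosen so that each cone (i) lies inside the fat cone $\{|x'|<(1+\eps)t'\}$, which forces $|y_i|\le(1+\eps)s_i$ and is exactly where the aperture $\eps$ is used; (ii) extends all the way up to time $T$, with $T-s_i\ge c(\eps)\,T>0$ and with top face $\{|x-y_i|<T-s_i\}$ contained in $\{|x|<(1+\eps)T\}$, so that the constant above is bounded uniformly in $i$ and in $t_0$; and (iii) contains its share of the outer region at depth bounded below, say with $|x-y_i|\le\tfrac12(t-s_i)$ there, so that the weight exceeds $\tfrac14$ on the relevant set.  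Concretely one may let $y_i$ run over a $c'(\eps)\,t_0$-net of $\{t_0/2\le|x|<2t_0\}$ (which has $\sim\eps^{-d}$ points) and set $s_i=|y_i|/(1+\tfrac\eps2)$; then $T-s_i\ge T-2t_0/(1+\tfrac\eps2)\ge c(\eps)T$ by $2t_0\le T$.  Restricting the $i$-th estimate to its cell and summing the $N$ contributions controls $\int_{t_0}^{2t_0}\int_{t/2\le|x|<t}|\nabla_{t,x}u|^2\,dx\,dt$ by a constant depending only on $d,p,\eps,T$ and the stated data norm, but not on $t_0$.  (Equivalently one can replace the finite cover by a continuous average of the translated estimates over the apex $(s,y)$, exactly as in the proof of \eqref{E:L flux ineq 2}.)  Adding the inner and outer bounds gives \eqref{E:dyadic bound superc}.

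The main obstacle is making (ii) and (iii) coexist: one must arrange the auxiliary cones so that each simultaneously stays inside the given fat cone, still reaches up to the \emph{full} time $T$ — this is essential, since a cone truncated at height $2t_0$ would have time-extent tending to $0$ as $t_0\to0$, and the constant in Theorem~\ref{T:cone bound superc} degenerates as the height of the cone shrinks — and yet contains the target points near the edge $|x|=t$ at controlled depth.  Checking that these three requirements are compatible, and that the number of cones needed depends only on $d$ and $\eps$, is the heart of the argument, and it is precisely here that the hypothesis of a strictly larger cone of existence is used.
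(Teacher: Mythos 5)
Your proposal is correct and follows essentially the same route as the paper: cover the strip $\{t_0\le t\le 2t_0,\ |x|<t\}$ by $N=N(d,\eps)$ translated light cones whose apexes lie in the fat cone and whose heights are bounded below uniformly in $t_0$, apply the weighted bound \eqref{E:cone bound superc} from Theorem~\ref{T:cone bound superc} to each, and use the extra aperture $\eps$ to keep the weight $(1-|x-y|/(t-s))^2$ bounded away from zero on the relevant portion. The only cosmetic difference is that the paper avoids the inner/outer split and the position‑dependent apex times by placing every apex at the single height $t_0/2$, choosing centers $x_j$ with $|x_j|<(1+\eps)t_0/2$ so that a cover of $\{|x|<t_0\}$ by balls of radius $(1-\eps/2)t_0/2$ propagates at unit speed and covers the whole strip at once; both constructions yield a count of cones independent of $t_0$.
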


We note that the assumption that $u$ is defined in the cone $\{(t,x):0 < t \leq T, |x|<(1+\eps)t\}$ is equivalent to the assumption that $(0,0)$ is not a characteristic point of the (backwards in time) blowup surface of $u$.

\begin{proof}
We begin with a simple covering argument.  There exist $N$, depending on $\eps$ and $d$, and a set $\{x_j\}_{j=1}^N$ with $|x_j|<(1+\eps)\frac{t_0}2$ such that
$$
\{x :|x|<t_0\} \subset \bigcup_{j=1}^N \{x:|x-x_j|<(1-\tfrac\eps2)\tfrac{t_0}2\}.
$$
Therefore,
\begin{align*}
&\{(t,x):t_0 \leq t \leq 2t_0, \, |x|<t\}\\
&\qquad  \subset \bigcup_{j=1}^N \{(t,x) :t_0 \leq t \leq 2t_0, \, |x-x_j|<(1-\tfrac\eps2)\tfrac{t_0}2 + (t-t_0)\}\\
&\qquad \subset \bigcup_{j=1}^N \{(t,x):t_0 \leq t \leq 2t_0, \, |x-x_j| < (1-\tfrac\eps{8})(t-\tfrac{t_0}2)\}.
\end{align*}
By assumption, $u$ is defined on each light cone
$$
\{(t,x):\tfrac12t_0<t \leq T, \, |x-x_j| < t-\tfrac{t_0}2 \},
$$
so, by \eqref{E:cone bound superc},
\begin{align*}
&\int_{t_0}^{2t_0} \int_{|x-x_j|<(1-\frac{\eps}8)(t-\frac{t_0}2)}|\nabla_{t,x}u(t,x)|^2\, dx\, dt \\
&\qquad \lesssim_\eps \int_{\frac{t_0}2}^{T} \int_{|x-x_j|<t-\frac{t_0}2} \bigl(1-\tfrac{|x-x_j|}{t-\tfrac{t_0}2}\bigr)^2|\nabla_{t,x}u(t,x)|^2\, dx\, dt
\lesssim_\eps 1.
\end{align*}
Summing this inequality over $1 \leq j \leq N$, we derive the claim.

\end{proof}

%
%
%
%

\section{Bounds in light cones:  The conformal and sub-conformal cases}\label{S:conf blow}

The goal of this section is to give pointwise in time upper bounds on the blowup rate of solutions to \eqref{E:eqn} in the conformal and sub-conformal cases,
that is, when $0<s_c\leq \frac12$.

\begin{theorem} \label{T:cone bound subc}  Let $d \geq 2$, $m \in [0,1]$, $0 < s_c \leq \tfrac12$, and $p=\frac4{d-2s_c}$.  If $u$ is a strong solution to \eqref{E:eqn} in the light cone $\{ (t,x) : 0<t\leq T \text{ and } |x|<t \}$, then
\begin{equation} \label{E:cone bound subc}
\int_{|x|<t/2} t^{-2s_c}|u(t,x)|^2 + t^{2(1-s_c)}|\nabla_{t,x}u(t,x)|^2\, dx \lesssim 1.
\end{equation}
The implicit constant depends on $d, s_c, T,$ and the $H^1_x\times L_x^2$ norm of $(u(T),u_t(T))$ on the ball $\{|x|<T\}$.
\end{theorem}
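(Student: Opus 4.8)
The plan is to organize the argument around the scaling-critical Lyapunov functional---$L$ of \eqref{E:L(t)} when $s_c=\tfrac12$, and $Z$ of \eqref{E:Z(t)} when $s_c<\tfrac12$---in the spirit of the proof of Theorem~\ref{T:cone bound superc} and of the Merle--Zaag estimate \eqref{E:temp cone bound}, but carried out in usual spacetime coordinates. When $t$ is bounded below by a fixed multiple of $T$ the asserted estimate is automatic from the strong-solution hypothesis, Lemma~\ref{L:Sob domain}, and finite speed of propagation, so the content is for small $t$; fix such a $t$ and work in the cone of aperture $t$. First I would record that $L$ (resp.\ $Z$) is nonnegative and nondecreasing by Lemmas~\ref{L:dilat id} and~\ref{L:L>0} (resp.\ by Corollary~\ref{C:Z id}) and finite at time $T$ (again by the strong-solution hypothesis and Lemma~\ref{L:Sob domain}), whence $L(t)\lesssim1$ and $Z(t)\lesssim1$ uniformly. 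Plugging this into the coercive expansion \eqref{E:frak l0} of $\mcl^0$ (resp.\ into the expression for $\mcz^0$ from Lemma~\ref{L:dilat id2}), restricting to $|x|<t/2$---where the weight $(t^2-|x|^2)^\alpha\sim t^{1-2s_c}$ and the tangential coefficient $1-\tfrac{|x|^2}{t^2}$ sit at their largest values---and discarding the single non-sign-definite term, one is left with
\[
\int_{|x|<t/2} t^{2(1-s_c)}|\nabla_{t,x}u(t,x)|^2 + t^{-2s_c}|u(t,x)|^2\,dx \;\lesssim\; 1 + t^{2-2s_c}\!\int_{|x|<t}|u(t,x)|^{p+2}\,dx .
\]
Thus the theorem reduces to the pointwise potential-energy bound
\[
\int_{|x|<t}|u(t,x)|^{p+2}\,dx \;\lesssim\; t^{2s_c-2}.
\]

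For this bound I would imitate the derivation of the second inequality in \eqref{E:mass bound superc}: apply $\sup_{1\le\lambda\le2}|f(\lambda)|\le\int_1^2|f|+\int_1^2|f'|$ to a functional $f(\lambda)$ assembled from $u(\lambda t,\cdot)$ and normalized so that, at $\lambda=1$, $f'$ is a power of $u$ paired against the dilation generator $x\cdot\nabla u+tu_t+\tfrac{d-1}{2}u$ (resp.\ with $\tfrac2p$ in place of $\tfrac{d-1}{2}$, matching $\mcz^0$). By Cauchy--Schwarz the $f'$-integral is dominated by the $L^2_{t,x}$-norm of the dilation generator over the slab $t\le s\le 2t$---which is $O(t)$ by the coercivity of the Lyapunov functional, exactly as in \eqref{t0 bound}---times an $L^2_{t,x}$-norm of a power of $u$, and the latter is supplied by the averaged flux bounds \eqref{E:L flux ineq 2} (for $s_c=\tfrac12$) and \eqref{E:smudge Z} (for $s_c<\tfrac12$); on that slab and in $|x|<s/2$, where their weights are $\sim t^{d+1}$, these give $\int\!\!\int |\nabla_{t,x}u|^2+s^{-2}|u|^2\lesssim t^{2s_c-1}$. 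In the super-conformal case the companion term $\int_1^2|f|$ was absorbed using $\int_t^{2t}\!\int_{|x|<s}|u|^{(p+4)/2}\lesssim t^{\frac{p(d+1)}{2(p+2)}}$ together with $\frac{p(d+1)}{2(p+2)}\ge1$; here $\frac{p(d+1)}{2(p+2)}\le1$, so this step must be replaced. I would close it with a short bootstrap: insert a crude a priori bound (from the averaged estimates and Lemma~\ref{L:Sob domain}) into the reduction of the first paragraph and into the Gagliardo--Nirenberg inequality of Lemma~\ref{L:GN} (localized by a cutoff at scale $t$, since $u$ lives only on $|x|<t$), and iterate. The Gagliardo--Nirenberg exponents balance so that the fixed point of the iteration is the critical power: writing the potential energy as $\lesssim(\int|u|^2)^{\frac{p(1-s_c)}2}\,(\int|\nabla_{t,x}u|^2+t^{-2}\!\int|u|^2)^{\frac{ps_c+2}2}$ and substituting $\int|u|^2\lesssim t^{2s_c}$, $\int|\nabla_{t,x}u|^2\lesssim t^{2s_c-2}$ reproduces $t^{\,ps_c(1-s_c)+(s_c-1)(ps_c+2)}=t^{2s_c-2}$ (note that $ps_c+2$ specializes to the exponent $(p+4)/2$ used in \eqref{E:mass bound superc} when $s_c=\tfrac12$).

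I expect this last step---absorbing $\int_1^2|f|$ and closing the bootstrap---to be the main obstacle. Unlike in the super-conformal regime, here the averaged-in-time control of $u$ inside the cone is of exactly the same strength as the pointwise quantity being chased, so the sup-versus-average argument has no slack; making it close requires tracking every weight and the localization cutoff at the sharp power of $t$, and being sure that the discrepancy between the region $|x|<t/2$ (where coercivity is applied) and $|x|<t$ (where the flux identities and the potential energy live) is absorbed into the boundary-vanishing weights of \eqref{E:L flux ineq 2} and \eqref{E:smudge Z} rather than lost. Once $\int_{|x|<t}|u(t,x)|^{p+2}\,dx\lesssim t^{2s_c-2}$ is in hand, substituting it back into the first-paragraph reduction yields \eqref{E:cone bound subc}; the cases $s_c=\tfrac12$ and $s_c<\tfrac12$ differ only in whether the $L$- or the $Z$-functional is used and in the bookkeeping of the weight $(t^2-|x|^2)^\alpha$.
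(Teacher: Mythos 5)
Your opening moves match the paper's: you reduce to small $t$, establish $L(t)\lesssim1$ (resp.\ $Z(t)\lesssim1$), use \eqref{E:L flux ineq 2} (resp.\ \eqref{E:smudge Z}) to get averaged control of all directional derivatives inside the cone, and then run the $\sup_\lambda|f|\le\int|f|+\int|f'|$ device to upgrade the mass bound to a pointwise one. This is precisely Lemma~\ref{L:ptwise mass subc} and \eqref{E:wk pt}, and up to that point the proposal is sound.

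The gap is in the bootstrap that you correctly identify as the crux. The Gagliardo--Nirenberg inequality you choose, $\int|u|^{p+2}\lesssim(\int|u|^2)^{p(1-s_c)/2}(\int|\nabla u|^2+t^{-2}\int|u|^2)^{(ps_c+2)/2}$, always carries the exponent $\tfrac{ps_c+2}{2}>1$ on the gradient. Combined with $L\gtrsim0$ (which gives $\int|\nabla u|^2\lesssim t^{-1}\cdot1+\int|u|^{p+2}$ schematically), this produces a self-improvement inequality with a \emph{supercritical} nonlinearity: the fixed point $t^{2s_c-2}$ you verify is real, but it is unstable under iteration, so inserting a crude a~priori bound does not converge to it. The paper avoids this by choosing a different interpolant. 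In the sub-conformal case it interpolates $L^{p+2}$ between $L^{(p+4)/2}$ (controlled pointwise via the sup-average step) and $\dot H^1$, which puts the exponent $\tfrac{pd}{8-p(d-2)}<1$ on the gradient; the bootstrap then closes by sublinearity with no smallness needed. In the conformal case this exponent is exactly $1$, and closure instead comes from the separate pointwise bound $\int_{|x-x_0|<\alpha t}|u|^{2d/(d-1)}\,dx\lesssim\alpha^{1/2}$ of \eqref{E:ptwise q subc}: localizing GN at scale $r\sim\alpha t$ (not at scale $t$, as you propose) inserts the factor $\alpha^{1/d}$ in \eqref{E:nonconc}, and taking $\alpha$ small supplies the absorption. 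Finally, you do not address the mismatch between the ball $|x-x_0|<r/2$ on the left and $|x-x_0|<r$ on the right of the resulting local inequality \eqref{almost}; the paper resolves this by averaging over $x_0$ against the weight $(R-|x_0|)^2$ with $r\sim\alpha(R-|x_0|)$, which is what produces the weighted global estimate \eqref{tada}. Without the correct interpolant (and, when $s_c=\tfrac12$, the $\alpha$-smallness) and without the covering average, the argument does not close.
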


For the nonlinear wave equation, that is, \eqref{E:eqn} with $m=0$, this theorem was proved by Merle and Zaag in \cite{MerleZaagIMRN}, building on earlier work
\cite{MerleZaagAJM,MerleZaagMA} that considered solutions defined in a spacetime slab.  This result describes the behaviour of solutions near a general
blowup surface $t=\sigma(x)$, as defined in the Introduction.  In particular, in the case of a non-characteristic point, a simple covering argument yields
\eqref{E:cone bound subc} with integration over the larger region $|x|<t$.

The arguments of Merle and Zaag adapt \emph{mutis mutandis} to the Klein--Gordon equation \eqref{E:eqn}, since the mass term always appears with the helpful sign.  However, our Lemma~\ref{L:dilat id} and Corollary~\ref{C:Z id} allow us to streamline the arguments of \cite{MerleZaagAJM}, \cite{MerleZaagMA}, and \cite{MerleZaagIMRN}.  We focus first on the conformal case; the discussion of the sub-conformal case can be found at the end of this section.

In the conformal case, our argument relies on \eqref{E:L flux ineq 2}, which gives control over all directional derivatives of the solution; this should be compared with Proposition~2.4 in \cite{MerleZaagMA} and Proposition~4.2 in \cite{MerleZaagIMRN}, which only provide control over a subset of directional derivatives.  An immediate consequence of \eqref{E:L flux ineq 2} is
\begin{align}\label{E:deriv on ball}
\int_{t_0}^{2t_0}\!\!\! \int_{|x|<t-\frac1{10}t_0} |\nabla_{t,x} u|^2 + t_0^{-2} |u|^2 +  t_0^{-2}\bigl|x\cdot\nabla u +tu_t +\tfrac{d-1}2 u\bigr|^2\, dx\, dt \lesssim 1,
\end{align}
uniformly in $0<t_0\leq \frac12 T$.   Applying \eqref{E:L flux ineq 2} to a spacetime translate of our solution yields
\begin{align}\label{E:deriv on alpha ball}
\int_{t_0}^{(1+\alpha) t_0}\!\!\! \int_{|x-x_0|<\alpha t} \bigl|(x-x_0)\cdot\nabla u +tu_t +\tfrac{d-1}2 u\bigr|^2\, dx\, dt \lesssim \alpha t_0^2,
\end{align}
uniformly for $0<\alpha\leq \frac1{10}$, $|x_0|<\frac45 t_0$, and $[t_0,(1+\alpha)t_0]\subseteq (0,T]$.  For comparison, see the proof of Proposition~3.1 in
\cite{MerleZaagMA} and Proposition~4.2 in \cite{MerleZaagIMRN}.

Next we transfer the estimate \eqref{E:deriv on ball} to a bound on the potential energy.  To do this, we will employ a translated version of the functional
$$
L(t) = \int_{|x|<t} \mcl^0(t,x)\, dx,
$$
introduced in Section~\ref{S:Lyapunov}; recall that $\mcl^0$ is defined as
$$
\mcl^0 = \tfrac1{2t}|x\cdot \nabla u + tu_t + \tfrac{d-1}2u|^2 + \tfrac{t}2(|\nabla u|^2 - |\tfrac{x}t \cdot \nabla u|^2) - \tfrac{(d-1)t}{2(d+1)}|u|^{\frac{2(d+1)}{d-1}} + \tfrac{d^2-1}{8t}u^2 + t\tfrac{m^2}2u^2.
$$
By Lemmas~\ref{L:dilat id}~and~\ref{L:L>0}, $L$ is a nonnegative increasing function of time.

Using this functional adapted to the cone $\{|x|<t-\frac1{10}t_0\}$, specifically the fact that  $L\geq 0$, we deduce
\begin{align}\label{E:pot on ball}
\int_{t_0}^{2t_0}\!\!\! \int_{|x|<t-\frac1{10}t_0} |u(t,x)|^{\frac{2(d+1)}{d-1}}\, dx\, dt \lesssim \text{LHS\eqref{E:deriv on ball}} \lesssim 1,
\end{align}
uniformly in $0<t_0\leq \frac12 T$.

To prove Theorem~\ref{T:cone bound subc}, we need to upgrade the averaged in time estimates obtained above to pointwise in time estimates.  The first step is the following result:

\begin{lemma}[Pointwise in time estimates on the mass and a critical norm] \label{L:ptwise mass subc}\leavevmode\\
Let $0<t_0\leq \frac12 T$.  For $t \in [t_0, 2t_0]$ we have
\begin{equation} \label{E:ptwise mass subc}
\int_{|x|<t-\frac1{10}t_0} |u(t,x)|^2\, dx \lesssim t_0
\end{equation}
and
\begin{equation} \label{E:ptwise q subc}
\int_{|x-x_0|<\alpha t} |u(t,x)|^{\frac{2d}{d-1}}\, dx \lesssim \alpha^{1/2},
\end{equation}
whenever $0<\alpha\leq \frac1{10}$ and $|x_0|<\frac45 t_0$.
\end{lemma}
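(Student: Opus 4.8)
The plan is to upgrade the time-averaged bounds \eqref{E:deriv on ball}, \eqref{E:pot on ball}, and \eqref{E:deriv on alpha ball} (together with \eqref{E:L flux ineq 2} itself) to the pointwise-in-time statements \eqref{E:ptwise mass subc} and \eqref{E:ptwise q subc}.  The mechanism is the elementary fact that for a $C^1$ function $f$ on $[a,b]$ one has $\sup_{[a,b]}|f| \le (b-a)^{-1}\int_a^b|f|\,d\lambda + \int_a^b|f'|\,d\lambda$; applied to a suitably rescaled spatial integral of $u$, the first term is controlled directly by the averaged estimate, while $f'$ produces cross terms of the form $\int u u_t$, $\int u\,(x\cdot\nabla u)$ and a Jacobian term, all handled by Cauchy--Schwarz and the same averaged estimates.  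Throughout, $t\sim t_0$ on the intervals in question, so powers of $t$ and $t_0$ may be traded freely.

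For \eqref{E:ptwise mass subc} I would set $h(\lambda):=\int_{|x|<(\lambda-\frac1{10})t_0}|u(\lambda t_0,x)|^2\,dx$ for $\lambda\in[1,2]$, equivalently $h(\lambda)=\int_{|y|<1}|u(\lambda t_0,(\lambda-\tfrac1{10})t_0\,y)|^2\,[(\lambda-\tfrac1{10})t_0]^d\,dy$.  Reverting to $(t,x)$, $\int_1^2 h(\lambda)\,d\lambda=t_0^{-1}\int_{t_0}^{2t_0}\int_{|x|<t-\frac1{10}t_0}|u|^2\,dx\,dt\lesssim t_0$ by the $t_0^{-2}|u|^2$ term of \eqref{E:deriv on ball}.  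Differentiating the rescaled form, $h'(\lambda)$ is a sum of a term proportional to $t_0\int_{|x|<(\lambda-1/10)t_0}u\,u_t\,dx$, a term proportional to $\int_{|x|<(\lambda-1/10)t_0}u\,(x\cdot\nabla u)\,dx$, and a Jacobian term bounded by $h(\lambda)$.  Integrating over $\lambda\in[1,2]$ and undoing the rescaling, the first two contributions are $\lesssim\int_{t_0}^{2t_0}\int_{|x|<t-\frac1{10}t_0}|u|(|u_t|+|\nabla u|)\,dx\,dt\lesssim(\int\int|u|^2)^{1/2}(\int\int|\nabla_{t,x}u|^2)^{1/2}\lesssim(t_0^2)^{1/2}\cdot1=t_0$, again by \eqref{E:deriv on ball}; the Jacobian term contributes $\lesssim\int_1^2 h\lesssim t_0$.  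Hence $\sup_{[1,2]}h\lesssim t_0$, which is \eqref{E:ptwise mass subc}.

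The proof of \eqref{E:ptwise q subc} follows the same scheme but is more delicate because of the gain $\alpha^{1/2}$.  Write $q:=\frac{2d}{d-1}$ and note that in the conformal case $p=\frac4{d-1}$, so $p+2=\frac{2(d+1)}{d-1}$, $q=\frac{p+4}2=\frac{pd}2$, and $2(q-1)=p+2$.  The only $\alpha$-gaining input is \eqref{E:deriv on alpha ball} (equivalently \eqref{E:L flux ineq 2} applied to a spacetime translate of $u$), but it lives on the short window $[t_0,(1+\alpha)t_0]$; on $\{t\in[t_0,(1+\alpha)t_0],\,|x-x_0|<\alpha t\}$ one has $t-|x-x_0|\gtrsim t_0$, so \eqref{E:L flux ineq 2} re-centered at $x_0$ yields $\int\int|\nabla_{t,x}u|^2\lesssim\alpha$ and $\int\int|u|^2\lesssim\alpha t_0^2$ on this region.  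Accordingly I would fix $t^*\in[t_0,2t_0]$ and run the $\sup\le$ average $+$ average-of-derivative argument for $g(\mu):=\int_{|x-x_0|<\alpha\mu t^*}|u(\mu t^*,x)|^q\,dx$ on $\mu\in[1,1+\alpha]$: the ball has volume $\sim(\alpha t_0)^d$ and $q<p+2$, so Hölder in $x$ converts $\|u\|_{L^q}$ into $\|u\|_{L^{p+2}}$ with the favourable factor $(\alpha t_0)^{d/(d+1)}$, the identity $2(q-1)=p+2$ lets the derivative cross terms be closed against $\|u\|_{L^{p+2}}$, and the pointwise mass bound \eqref{E:ptwise mass subc} (now available) controls $\|u(t)\|_{L^2}$ on the ball by $t_0^{1/2}$.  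Assembling these gives $\int_{|x-x_0|<\alpha t^*}|u(t^*,x)|^q\,dx\lesssim\alpha^{d/(d+1)}\le\alpha^{1/2}$, uniformly in $t^*$; since $[t_0,2t_0]$ is covered by $O(1/\alpha)$ windows $[t^*,(1+\alpha)t^*]$, this gives \eqref{E:ptwise q subc} for all $t\in[t_0,2t_0]$ (the case $\alpha\gtrsim1$ following similarly, run directly on $[t_0,2t_0]$, as in the treatment of \eqref{E:mass bound superc}).

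The hard part will be securing the correct power of $\alpha$ for the potential-energy integral $\int\int_{\{|x-x_0|<\alpha t\}}|u|^{p+2}\,dx\,dt$ over the short window.  The gradient bound $\int\int|\nabla u|^2\lesssim\alpha$ alone is insufficient: estimating $\|u\|_{L^{p+2}}$ on the ball via Gagliardo--Nirenberg (Lemma~\ref{L:GN}) and Poincaré (Lemma~\ref{L:Poincare}) introduces cutoff errors that cancel the $\alpha$-gain.  One must instead exploit the full quasi-derivative estimate \eqref{E:deriv on alpha ball}---the smallness of $\int\int|(x-x_0)\cdot\nabla u+tu_t+\tfrac{d-1}2u|^2$, which in similarity variables says $u$ is nearly dilation-stationary on the region---in tandem with the non-negativity and monotonicity of the Lyapunov functional $L$ from Lemmas~\ref{L:dilat id} and~\ref{L:L>0}.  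This is the step mirroring the energy/trapping argument of Giga--Kohn and Merle--Zaag, and it is where the bulk of the work resides.
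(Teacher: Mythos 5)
Your treatment of \eqref{E:ptwise mass subc} is correct and is only a mild variant of the paper's: the paper uses the conformally-scaled function $f(\lambda):=\int_{|x|<t_0}|\lambda^{(d-1)/2}u(\lambda t_0,\lambda x)|^2\,dx$ so that $f'$ directly produces the dilation combination $x\cdot\nabla u+tu_t+\tfrac{d-1}{2}u$, whereas your linear slide $h(\lambda)=\int_{|x|<(\lambda-\frac1{10})t_0}|u(\lambda t_0,x)|^2\,dx$ produces the two pieces $u_t$ and $x\cdot\nabla u$ separately. Both are closed against \eqref{E:deriv on ball} via Cauchy--Schwarz, so this is a matter of taste.

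For \eqref{E:ptwise q subc} there is a genuine gap, which you partly acknowledge yourself, and your proposed route diverges from the paper's in a way that matters. First, the interval matters: you run the mean-value inequality $\sup_I|g|\le\frac1{|I|}\int_I|g|+\int_I|g'|$ on the short window $I=[1,1+\alpha]$ (so $|I|^{-1}\sim\alpha^{-1}$), whereas the paper --- ``follows the argument used to establish \eqref{E:mass bound superc}'' --- runs it on $[1,2]$ where $|I|^{-1}\sim1$. On the short window, the H\"older bound $\int_1^{1+\alpha}|g|\,d\mu \lesssim |\{(t,x):t^*<t<(1+\alpha)t^*,\,|x-x_0|<\alpha t\}|^{1/(d+1)}\cdot t_0^{-1}\lesssim\alpha$ combined with the prefactor $\alpha^{-1}$ yields only $O(1)$; you never recover the factor $\alpha^{1/2}$ from the first term. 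Second, the way you propose to close the derivative cross-term --- Gagliardo--Nirenberg on the ball plus the pointwise mass bound $\|u(t)\|_{L^2(B)}\lesssim t_0^{1/2}$ --- needs a \emph{pointwise-in-time} bound on $\|\nabla u(t)\|_{L^2(B)}$, and that is exactly the quantity being bootstrapped later in the proof of Theorem~\ref{T:cone bound subc}; it is not yet available inside Lemma~\ref{L:ptwise mass subc}. The paper avoids this entirely: after differentiating $f(\lambda)=\int_{|y-x_0|<\alpha t}|u(t,y)|^q\,dy$ one gets $|u|^{q-1}\bigl|(y-x_0)\cdot\nabla u+tu_t+\tfrac{d-1}{2}u\bigr|$, and since $2(q-1)=p+2$ in the conformal case, a single Cauchy--Schwarz closes this against the time-averaged bounds $\iint|u|^{p+2}\lesssim 1$ (from \eqref{E:pot on ball}) and the $\alpha$-gaining directional-derivative estimate \eqref{E:deriv on alpha ball} --- with no Gagliardo--Nirenberg, no Poincar\'e, no cutoff errors, and no appeal to a yet-unproved gradient bound. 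Your closing paragraph correctly identifies that ``securing the correct power of $\alpha$ for the potential-energy integral'' is where the difficulty lies, but a proposal that leaves this unresolved has not proved the lemma; you should instead follow the paper's direct Cauchy--Schwarz route using the special relation $2(q-1)=p+2$.
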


\begin{proof}
The proof follows the argument used to establish \eqref{E:mass bound superc}.  To derive \eqref{E:ptwise mass subc}, one uses the function $f(\lambda):= \int_{|x|<t_0} |\lambda^{\frac{d-1}2} u(\lambda t_0,\lambda x)|^2\, dx$, while for \eqref{E:ptwise q subc} one uses the original version of $f$ with $p=\frac{4}{d-1}$.  We need two ingredients:  The first ingredient is an integral bound over the directional derivatives of $u$ on the appropriate cone; the role of the first ingredient in the current setting is played by \eqref{E:deriv on ball} and \eqref{E:deriv on alpha ball}.  The second ingredient we need is averaged in time estimates for the left-hand sides of \eqref{E:ptwise mass subc} and \eqref{E:ptwise q subc}; the role of the second ingredient is played by \eqref{E:deriv on ball} and
\begin{align*}
\int_{t_0}^{(1+\alpha)t_0} \!\!\! &\int_{|x-x_0|<\alpha t} |u(t,x)|^{\frac{2d}{d-1}}\, dx\, dt\\
&\lesssim t_0\alpha^{\frac{d}{d+1}}\biggl[\int_{t_0}^{(1+\alpha)t_0}\!\!\! \int_{|x-x_0|<\alpha t}|u(t,x)|^{\frac{2(d+1)}{d-1}}\, dx\, dt\biggr]^{\frac{d}{d+1}}
\lesssim t_0\alpha^{\frac{d}{d+1}},
\end{align*}
which follows from H\"older and \eqref{E:pot on ball}.
\end{proof}

The simple argument just used does not allow us to upgrade our integrated gradient or potential energy estimates to versions that are pointwise in time.
We will instead employ a bootstrap argument close to that in the work of Merle and Zaag.  The requisite smallness is provided by
\eqref{E:ptwise q subc} by choosing  $\alpha$ small enough.  Combining this estimate with the Gagliardo--Nirenberg inequality gives
\begin{align}
\int_{|x-x_0|<r} & |u(t,x)|^{\frac{2(d+1)}{d-1}}\, dx \notag\\
&\lesssim\biggl[\int_{|x-x_0|<r} |u(t,x)|^{\frac{2d}{d-1}}\, dx\biggr]^{\frac2d} \int_{|x-x_0|<r} |\nabla u(t,x)|^2 + \tfrac1{r^2}  |u(t,x)|^2\, dx \notag\\
&\lesssim \alpha^{1/d}\int_{|x-x_0|<r} |\nabla u(t,x)|^2 + \tfrac1{r^2}  |u(t,x)|^2\, dx, \label{E:nonconc}
\end{align}
uniformly for $0<\alpha\leq \frac1{10}$, $r<\alpha t$, and $|x_0|<\frac45 t$.

To obtain an inequality in the opposite direction, we use boundedness of the functional $L$ adapted to the cone $\{(s,y) : |y-x_0|<r+s-t\}$ together with
the observation
\begin{align*}
\bigl(1-\tfrac{|x|^2}{t^2}\bigr) \bigl|\nabla_{t,x} u|^2
&\lesssim t^{-2}|x\cdot \nabla u + tu_t + \tfrac{d-1}2u|^2 + (|\nabla u|^2 - |\tfrac{x}t \cdot \nabla u|^2) + t^{-2}u^2\\
&\lesssim t^{-1} \mcl^0 + |u|^{\frac{2(d+1)}{d-1}}.
\end{align*}
This gives
\begin{align}\label{E:grad from pot}
\int_{|x-x_0|<r}\bigl( 1-\tfrac{|x-x_0|^2}{r^2} \bigr) |\nabla_{t,x} u(t,x)|^2\, dx \lesssim   \tfrac1r + \int_{|x-x_0|<r} |u(t,x)|^{\frac{2(d+1)}{d-1}}\,dx,
\end{align}
where the coefficient of $1/r$ depends on the norm of $(u(T),u_t(T))$ via the monotonicity of $L$.

Combining \eqref{E:nonconc} and \eqref{E:grad from pot} yields
\begin{align}\label{almost}
\int_{|x-x_0|<\frac12 r} |\nabla u(t,x)|^2\, dx &\lesssim \tfrac1r +  \alpha^{1/d}\int_{|x-x_0|<r} |\nabla u(t,x)|^2 + \tfrac1{r^2}  |u(t,x)|^2\, dx,
\end{align}
which is not immediately amenable to bootstrap because the two regions of integration are different.  To remedy this, we set $R=\frac35t$ and $r= \frac\alpha3(R-|x_0|)$
and apply the following averaging operator to both sides:
$$
f(x_0) \mapsto \frac1{R^{d+2}} \int_{|x_0|<R}  (R-|x_0|)^2 f(x_0) \,dx_0.
$$
We note that 
$$
\bigl\{ (x_0,x) : |x_0|\!<\!R,\ |x-x_0| \!<\! \tfrac\alpha3(R-|x_0|) \bigr\} \subseteq \bigl\{ (x_0,x) : |x|\!<\!R,\ |x_0-x| \!<\! \tfrac\alpha2(R-|x|) \bigr\}
$$
and
$$
\bigl\{ (x_0,x) : |x_0|\!<\!R,\  |x_0-x|\! <\! \tfrac\alpha6(R-|x_0|) \bigr\} \supseteq \bigl\{ (x_0,x) : |x|\!<\!R,\ |x-x_0| \!<\! \tfrac\alpha7(R-|x|) \bigr\},
$$
and that $R-|x|\sim R-|x_0|$ on any of these sets.  Using Fubini, we deduce
\begin{align*}
\int_{|x|<R} \bigl( 1 - \tfrac{|x|}R\bigr)^{d+2}  &|\nabla_{t,x} u(t,x)|^2 \, dx\\
&\lesssim (\alpha R)^{-1} +\alpha^{1/d}\int_{|x|<R}\bigl( 1 - \tfrac{|x|}R\bigr)^{d+2} |\nabla u(t,x)|^2  \, dx \\
&\quad + (\alpha R)^{-2}\alpha^{1/d}\int_{|x|<R} \bigl( 1 - \tfrac{|x|}R\bigr)^{d} |u(t,x)|^2\, dx.
\end{align*}
Choosing $\alpha$ sufficiently small and recalling $R=\frac35t$ and \eqref{E:ptwise mass subc}, we obtain
\begin{align}\label{tada}
\int_{|x|<\frac35t} \bigl( 1 - \tfrac{5|x|}{3t}\bigr)^{d+2}  |\nabla_{t,x} u(t,x)|^2 \, dx \lesssim t^{-1},
\end{align}
which yields the requisite bound on the spacetime gradient of $u$.  To finish the proof of \eqref{E:cone bound subc}, we merely note
that the bound on the $L^2_x$ norm was obtained already in Lemma~\ref{L:ptwise mass subc}.  This completes the proof of Theorem~\ref{T:cone bound subc}
in the conformal (i.e., $s_c=1/2$) case.

Our argument for the sub-conformal case is similar, but slightly simpler, with \eqref{E:smudge Z} taking over the role played above
by \eqref{E:L flux ineq 2}.  In particular we have the following analogue of \eqref{E:deriv on ball}
\begin{align}\label{E:D ball}
\int_{t_0}^{2t_0}\!\!\! \int_{|x|<t-\frac1{10}t_0} |\nabla_{t,x} u|^2 + t_0^{-2} |u|^2 +  t_0^{-2}\bigl|x\cdot\nabla u +tu_t +\tfrac2p u\bigr|^2\, dx\, dt \lesssim t_0^{2s_c -1}.
\end{align}
From this and the fact that $Z\geq 0$, we deduce
\begin{align}\label{E:P ball}
\int_{t_0}^{2t_0}\!\!\! \int_{|x|<t-\frac1{10}t_0} |u(t,x)|^{p+2}\, dx\, dt \lesssim t_0^{2s_c -1}.
\end{align}

Using the same argument as in Lemma~\ref{L:ptwise mass subc} and~\eqref{E:mass bound superc}, modifying $f(\lambda)$ as needed, we obtain
\begin{equation}\label{E:wk pt}
\int_{|x|<\frac9{10} t} |u(t,x)|^2\, dx \lesssim t^{2 s_c}
\qtq{and}
\int_{|x|<\frac9{10} t} |u(t,x)|^{\frac{p+4}2}\, dx \lesssim t^{2s_c -1}.
\end{equation}

Let $\gamma:=\frac 12-s_c$.  Using the boundedness of the functional $Z$ associated to the cone with apex $(t-r,x_0)$, we obtain
\begin{align}\label{p4g}
\int_{|x-x_0|<r} \!\bigl(1 - \tfrac{|x-x_0|^2}{r^2}\bigr)^{\gamma + 1}|\nabla_{t,x} u|^2 \, dx
\lesssim r^{2s_c-2} + \int_{|x-x_0|<r} \!\bigl(1 - \tfrac{|x-x_0|^2}{r^2}\bigr)^{\gamma}|u|^{p+2} \, dx,
\end{align}
for all $|x_0|<\frac45 t$ and $0<r < \tfrac1{10} t$.  This plays the role of \eqref{E:grad from pot}.

In order to obtain an upper bound on the potential energy we use the Gagliardo--Nirenberg inequality:
\begin{align*}
\int_{|x-x_0|<r}  &  \bigl(1 - \tfrac{|x-x_0|^2}{r^2}\bigr)^{\gamma}|u|^{p+2} \, dx\\
&\lesssim \biggl[ \int_{|x-x_0|<r} |u|^{\frac{p+4}2} \, dx\biggr]^{\frac{8-2p(d-2)}{8-p(d-2)}} \biggl[ \int_{|x-x_0|<r}  |\nabla u|^{2} + r^{-2} |u|^2 \, dx\biggr]^{\frac{pd}{8-p(d-2)}}.
\end{align*}
Incorporating \eqref{E:wk pt} we deduce
\begin{align*}
\int_{|x-x_0|<r} \bigl(1 - \tfrac{|x-x_0|^2}{r^2}\bigr)^{\gamma} &|u|^{p+2} \, dx \\
&\lesssim t^{(2s_c-1)\frac{8-2p(d-2)}{8-p(d-2)}}\biggl[ r^{-2} t^{2s_c} + \int_{|x-x_0|<r} |\nabla u|^{2} \, dx\biggr]^{\frac{pd}{8-p(d-2)}}.
\end{align*}
Combining this estimate with \eqref{p4g} yields
\begin{align*}
\int_{|x-x_0|<r/2} |\nabla_{t,x} u|^2 \, dx
\lesssim r^{2s_c-2} + t^{2s_c-2}
    \biggl[ r^{-2} t^{2} + t^{2-2s_c} \!\!\int_{|x-x_0|<r} |\nabla u|^{2} \, dx\biggr]^{\frac{pd}{8-p(d-2)}}.
\end{align*}

The basic bootstrap relation follows by setting $r=\tfrac13(\frac45 t-|x_0|)$ and applying the averaging operator
$$
f(x_0) \mapsto \frac1{t^{d+2}} \int_{|x_0|< \frac45 t} f(x_0)  (\tfrac{4t}5 -|x_0| )^2 \,dx_0
$$
to both sides.  A little patience and Jensen's inequality then yield
\begin{align*}
\int_{|x|<\frac45 t} \bigl(1 - \tfrac{5|x|^2}{4t^2}\bigr)^{d + 2}& |\nabla_{t,x} u|^2 \, dx\\
&\lesssim t^{2s_c-2}  \biggl[ 1+ t^{2-2s_c}\int_{|x|<\frac45 t}
            \bigl(1 - \tfrac{5|x|^2}{4t^2}\bigr)^{d + 2} |\nabla u|^{2} \, dx\biggr]^{\frac{pd}{8-p(d-2)}},
\end{align*}
in much the same manner as in the conformal case.  Noting that the last power here is smaller than one, this inequality yields
$$
\int_{|x|<\frac45 t} \bigl(1 - \tfrac{5|x|^2}{4t^2}\bigr)^{d + 2} |\nabla_{t,x} u(t,x)|^2 \, dx \lesssim t^{2s_c-2}.
$$
This immediately implies the estimate on the spacetime gradient stated in Theorem~\ref{T:cone bound subc} in the sub-conformal case.  The stated estimate on the $L^2_x$-norm was given in \eqref{E:wk pt}.

This completes the proof of Theorem~\ref{T:cone bound subc}.


\end{document}